\newcommand{\er}{{\Diamond}}
\newcommand{\orr}{{\mathfrak o}}
\newcommand{\Fix}{{\rm Fix}}
\newcommand{\cl}{{\rm cl}}
\newcommand{\uEE}{{\und{\E}}}
\newcommand{\E}{{\mathbb E}}
\newcommand{\s}{{\mathfrak s}}
\newcommand{\Tx}{{\Tilde x}}
\newcommand{\TGa}{{\Tilde \Ga}}
\newcommand{\Tde}{{\Tilde \de}}
            \newcommand{\TLa}{{\Tilde\La}}
   \newcommand{\ubE}{{\und \bE}}
  \newcommand{\ubB}{{\und \bB}}
  \newcommand{\uVv}{{\und \Vv}}
  \newcommand{\us}{{\und{\mathfrak s}}}
 \newcommand{\ux}{{\underline{x}}}
    \newcommand{\urho}{{\underline{\rho}}}
 \newcommand{\uy}{{\underline{y}}}
    \newcommand{\uz}{{\underline{z}}}
  \newcommand{\uA}{{\underline{A}}}
    \newcommand{\uC}{{\underline{C}}}
  \newcommand{\uU}{{\underline{U}}}
      \newcommand{\uN}{{\und N}}
    \newcommand{\ubK}{{\underline{\bK}}}
   \newcommand{\uV}{{\underline{V}}}
  \newcommand{\uW}{{\underline{W}}}
   \newcommand{\uphi}{{\underline{\phi}}}
      \newcommand{\upsi}{{\underline{\psi}}}
\newcommand{\uKk}{{\underline{\Kk}}}
\newcommand{\Gg}{{\mathcal G}}
\newcommand{\Zz}{{\mathcal Z}}
\newcommand{\Aa}{{\mathcal A}}
\newcommand{\Mor}{{\rm Mor}}
\newcommand{\Id}{{\rm Id}}
\newcommand{\Obj}{{\rm Obj}}
\newcommand{\Trho}{{\Tilde \rho}}
\newcommand{\Tphi}{{\Tilde \phi}}
\newcommand{\und}{\underline}
\newcommand{\ul}{\underline}
\renewcommand{\Hat}{\widehat}
\newcommand{\Cc}{{\mathcal C}}
\newcommand{\Kk}{{\mathcal K}}
\newcommand{\im}{{\rm im\,}}
\newcommand{\less}{{\smallsetminus}}
\newcommand{\TU}{{\Tilde U}}
\newcommand{\TW}{{\Tilde W}}
\newcommand{\p}{{\partial}}
\newcommand{\al}{{\alpha}}
\newcommand{\be}{{\beta}}
\newcommand{\eps}{{\varepsilon}}
\newcommand{\de}{{\delta}}
\newcommand{\ga}{{\gamma}}
\newcommand{\Ga}{{\Gamma}}
\newcommand{\io}{{\iota}}
\newcommand{\La}{{\Lambda}}
\newcommand{\si}{{\sigma}}
\newcommand{\Br}{{\rm Br}}
\newcommand{\Bb}{{\mathcal B}}
\newcommand{\Ww}{{\mathcal W}}
\newcommand{\ov}{\overline}
\newcommand{\id}{{\rm id}}
\newcommand{\rd}{{\rm d}}
\newcommand{\rT}{{\rm T}}
\renewcommand{\Tilde}{\widetilde}
\newcommand{\TV}{{\Tilde V}}
\newcommand{\coker}{{\rm coker\,}}
\newcommand{\Ii}{{\mathcal I}}
\newcommand{\Vv}{{\mathcal V}}
\newcommand{\N}{{\mathbb N}}
\newcommand{\Q}{{\mathbb Q}}
\newcommand{\R}{{\mathbb R}}
\newcommand{\C}{{\mathbb C}}
\newcommand{\Z}{{\mathbb Z}}
\newcommand{\Hom}{{\rm Hom}}
\newcommand{\Hh}{{\mathcal H}}
\newcommand{\SSS}{{\smallskip}}
\newcommand{\bA}{{\bf A}}
\newcommand{\bB}{{\bf B}}
\newcommand{\bG}{{\bf G}}
\newcommand{\bE}{{\bf E}}
\newcommand{\bK}{{\bf K}}
\newcommand{\bX}{{\bf X}}
\newcommand{\bz}{{\bf z}}
\newcommand{\bZ}{{\bf Z}}
\newtheorem{theorem}{Theorem}[subsection]
\newtheorem{thm}[theorem]{Theorem}
\newtheorem{lemma}[theorem]{Lemma}
\newtheorem{prop}[theorem]{Proposition}
\newtheorem{definition}[theorem]{Definition}
\newtheorem{defn}[theorem]{Definition}
\newtheorem{example}[theorem]{Example}
\newtheorem{remark}[theorem]{Remark}
\newtheorem{rmk}[theorem]{Remark}
\numberwithin{figure}{subsection}
\numberwithin{equation}{subsection}
\newcommand{\MS}{{\medskip}}
\newcommand{\NI}{{\noindent}}
\newcommand{\ti}{\tilde}
\newcommand{\Ti}{\widetilde}
\newcommand{\pr}{{\rm pr}}
\newcommand{\lm}{\Lambda^{\rm max}\,}
   \newcounter{qcounter}
\newenvironment{enumilist}
   { \begin{list} {\rm (\roman{qcounter})\;}{\usecounter{qcounter}
     \setlength{\itemsep}{.5ex} \setlength{\leftmargin}{4.2ex} } }
   { \end{list} }
\newenvironment{itemlist}
   { \begin{list} {$\bullet$}
         {  \setlength{\itemsep}{.5ex} \setlength{\leftmargin}{2.5ex} } }
   { \end{list} }
\newcommand*{\longhookleftarrow}{\ensuremath{\leftarrow\joinrel\relbar\joinrel\rhook}}
\newcommand*{\longhookrightarrow}{\ensuremath{\lhook\joinrel\relbar\joinrel\rightarrow}}
\newcommand{\leftsub}[2]{{\vphantom{#2}}_{#1}{#2}}
\newcommand\quotient[2]{
        \mathchoice
            {
                \text{\raise1ex\hbox{$#1$}\Big/\lower1ex\hbox{$#2$}}%
            }
            {
                #1\,/\,#2
            }
            {
                #1\,/\,#2
            }
            {
                #1\,/\,#2
            }
    }
\newcommand\quot[2]{
                \text{\raise1ex\hbox{$#1$}/\lower1ex\hbox{$\scriptstyle#2$}}
  }
\newcommand\quo[2]{
                \text{\raise1ex\hbox{$#1\!\!$}/\lower1ex\hbox{$\!\scriptstyle#2$}}
  }
\newcommand\qu[2]{
                \text{\raise.8ex\hbox{$\scriptstyle#1\!$}/\lower.8ex\hbox{$\!\scriptstyle#2$}}
  }
\newcommand\qq[2]{
                \text{\raise.8ex\hbox{$#1\!$}/\lower.8ex\hbox{$#2$}}
}
 \title{Smooth Kuranishi atlases with isotropy}
 \author{Dusa McDuff}
  \thanks{partially supported by NSF grants DMS1308669, DMS 0844188, and DMS 0905191 }
\address{Department of Mathematics,
 Barnard College, Columbia University}
\email{dusa@math.columbia.edu}
\author{Katrin Wehrheim}
\address{Department of Mathematics, UC Berkeley}
\email{katrin@math.berkeley.edu}
\keywords{virtual fundamental cycle, virtual fundamental class, pseudoholomorphic curve, Kuranishi   structure, Gromov--Witten invariant, transversality, weighted branched manifold}
\subjclass[2010]{53D35,53D45,54B15,57R17,57R95}
\begin{document}
\maketitle
\begin{abstract}
Kuranishi structures were introduced in the 1990s by Fukaya and Ono for the purpose of assigning a virtual cycle to moduli spaces of pseudoholomorphic curves that cannot be regularized by geometric methods. 
Their core idea was to build such a cycle by patching local finite dimensional reductions, given by smooth sections that are equivariant under a finite isotropy group.

Building on our notions of topological Kuranishi atlases and perturbation constructions in the case of trivial isotropy, we develop a theory of Kuranishi atlases and cobordisms that transparently resolves the challenges posed by nontrivial isotropy. 
We assign to a cobordism class of weak Kuranishi atlases both a virtual moduli cycle (VMC -- a cobordism class of weighted branched manifolds) and a virtual fundamental class (VFC -- a Cech homology class). 
\end{abstract}

\tableofcontents
\section{Introduction}

\subsection{Overview}\label{ss:over}
\hspace{1mm}\\ \vspace{-3mm}

This is the third in a series \cite{MW1,MW2} of papers that construct a fundamental class for compact spaces $X$ that are modeled locally by the zero set of smooth sections $s_i:U_i\to E_i$ in finite rank bundles over finite dimensional manifolds.
While these obstruction bundles have fixed index, they may have varying rank, and thus an ambient space $\bigcup U_i / \!\!\sim$ naively constructed from the ambient manifolds of the local zero sets $s_i^{-1}(0)$ modulo transition data is lacking all topological controls (Hausdorffness, local compactness, in fact existence) that are needed for a perturbative construction ``$[X]:=\bigcup (s_i+\nu_i)^{-1}(0) / \!\!\sim \,$'' of the fundamental class.
Moreover, most interesting cases involve nontrivial isotropy groups that are captured in the local charts as finite symmetry groups $\Gamma_i$ of the sections $s_i$, so that $X$ is locally modeled by the quotients $s_i^{-1}(0)/\Gamma_i$.

Pioneered by Fukaya et al \cite{FO,FOOO}, this problem has been considered by symplectic topologists since the 1990s as a tool for ``counting curves'', i.e.\ assigning homological information to moduli spaces of pseudoholomorphic curves, such as the Gromov-Witten moduli spaces (in which isotropy arises from multiply covered curves).
In the case of trivial isotropy, a comprehensive solution was developed in \cite{MW1,MW2} by introducing notions of Kuranishi atlases which on the one hand can in practice be constructed from moduli spaces, and on the other hand have sufficient compatibility between the local models for the construction of a virtual fundamental class. 
This paper extends these techniques to the case of nontrivial isotropy, proving the following result.

\MS
 \NI {\bf Theorem A.}\,\,{\it
Let $\Kk$ be 
an oriented, $d$-dimensional, additive, smooth weak 
Kuranishi atlas on a compact metrizable space $X$. Then $\Kk$ determines 
\begin{itemlist}
\item
a {\bf virtual moduli cycle (VMC)} as cobordism class of weighted branched manifolds,
\item
a {\bf virtual fundamental class (VFC)} $[X]^{vir}_\Kk\in \check{H}_d(X;\Q)$ in \v{C}ech homology,
\end{itemlist}
both of which depend only on the cobordism class of $\Kk$.}
\MS

A more precise statement that also applies when $\Kk$ is a cobordism from an atlas $\Kk^0$ on $X^0$ to an atlas $\Kk^1$ on $X^1$  is given  in Theorem~\ref{thm:VMCF}.
The guiding idea of a Kuranishi atlas $\Kk$ is to start with a family of basic charts $(\bK_i)_{i=1,\dots,N}$, where 
each basic chart
$$
\bK_i  = \bigl(U_i, E_i,\Ga_i, s_i,\psi_i\bigl)
$$
is a tuple consisting of a domain $U_i$, obstruction space $E_i$, group $\Ga_i$, section $s_i:U_i\to E_i$, and footprint map 
$\psi_i:s_i^{-1}(0)\to  X$ inducing a homeomorphism from  $s_i^{-1}(0)/\Ga_i$ onto the ``footprint", an open subset $F_i\subset X$ such that $(F_i)_{i=1,\dots,N}$ covers $X$.  
The compatibility of these charts then involves transition charts 
$\bK_I = \bigl(U_I, E_I,\Ga_I, s_I,\psi_I\bigl)$ of the same type as the basic charts, but with $I\subset\{1,\dots,N\}$
such that  $F_I := \cap_{i\in I}F_i\ne \emptyset$.
Finally, the basic and transition charts are related by coordinate changes from $\bK_I$ to $ \bK_J$ whenever $I\subset J$. 
This gives rise to an ``\'etale-like" category $\bB_\Kk$ whose space of objects is
 $\bigsqcup_I U_I$, and whose morphisms are determined by the local group actions and the coordinate changes.    
The  category $\bB_\Kk$ is not a groupoid since some morphisms (those relating the different charts) are not invertible.  On the other hand, its spaces of objects and morphisms are very closely controlled, which enables us to carry out various geometric constructions, in particular the construction of perturbations, very explicitly. 
The realization $|\Kk|$ of $\bB_{\Kk}$  (the space of objects modulo the equivalence relation generated by the morphisms) is much larger than $X$, though it does contain a homeomorphic image of $X$ formed from the zero sets of the local sections $s_I$ that are induced by the Cauchy--Riemann operator.  As in \cite{MW2}, the class $[X]^{vir}_\Kk$ is constructed from the zero sets of suitable 
perturbations $\s_\Kk+\nu$ of the basic section $\s_\Kk = (s_I)$ of $\Kk$.

Even if $X$ is an orbifold so that no obstruction 
spaces are needed, our formulations are new.\footnote
{Our construction was outlined in \cite{Mcl}.  In \cite{Pardon}, Pardon independently takes a similar approach to handling the isotropy groups.}
Rather than being given by inclusions $U_I\supset U_{IJ}\hookrightarrow U_J$ as in the case with
trivial isotropy, our notion of coordinate changes in the presence of isotropy involves 
 equivariant covering maps $\Trho_{IJ}: (\TU_{IJ}, \Ga_J)\to (U_{IJ}, \Ga_I) \subset (U_I, \Ga_I)$ where $\TU_{IJ}$ is a suitable subset of the domain $U_J$ and $\TU_{IJ}\to U_{IJ}$ is a principal $\Ga_J/\Ga_I$-bundle. 
As the following result from  \cite{Mcorb} shows,  every orbifold has a structure of this kind.
\MS

\NI {\bf Proposition.}\,\, {\it   Every compact orbifold $Y$ has an orbifold atlas $\Kk$ with trivial obstruction spaces whose associated groupoid 
$\bG_{\Kk}$ is an orbifold structure on $Y$.   Moreover, there is a bijective correspondence between commensurability classes of such Kuranishi atlases and Morita equivalence classes of ep groupoids.}
\MS

To apply the above theory to moduli spaces  $X$ that arise in  geometric examples, one needs to develop methods for
constructing Kuranishi atlases on such $X$.  Some parts of this construction were detailed in the 2012 preprint  \cite{MW0}, and now appear in \cite{MW2}.  They will be extended in \cite{MW:gw} to include multiply covered curves (and hence nontrivial isotropy) as well as nodal curves.
Both McDuff~\cite{Mcn} and Pardon~\cite{Pardon} outline the needed construction for moduli spaces of closed stable maps, 
though neither approach is sufficient to give the smooth charts whose existence is assumed in the current paper.
In \cite{MW:gw} we will combine the same setup with an implicit function theorem from polyfold theory \cite{HWZ-abstract} to obtain compatible choices of smooth structures near nodal curves.
An alternative approach is to extend the VMC/VFC construction to less smooth sections.
In fact, Castellano \cite{Cast} proves a gluing theorem for Gromov--Witten moduli spaces that allows the construction of stratified smooth Kuranishi atlases
with $\Cc^1$-differentiability across strata, to which our construction applies with minor modifications.
He moreover shows that the resulting genus zero Gromov--Witten invariants satisfy the standard axioms.

\subsection
{Outline of the  construction}
\hspace{1mm}\\ \vspace{-3mm}

 This paper contains all relevant definitions and  a fair amount of review so 
 that it can be read independently of the previous papers in this series.
This outline will also be rather brief since the earlier papers 
give extensive explanation and justification for our approach:
\begin{itemlist}
\item[-] 
The first part of \cite{MW2} is a general discussion of different approaches to 
regularizing moduli spaces -- e.g.\ as VMC/VFC -- 
and explains important analytic background.
\item[-]  The paper  \cite{MW1} starts with an overview of the 
topological challenges that need to be addressed in constructing  
a VMC/VFC, and then
proves the basic  topological results needed to show that a filtered weak Kuranishi atlas
determines a tame Kuranishi atlas $\Kk$, well defined up to cobordism, whose realization $|\Kk|$ is Hausdorff and metrizable and contains
a homeomorphic copy of  the moduli space $X$.
\item[-]   The 
second part of \cite{MW2} carries out the full construction of the 
VMC as the zero set of a suitable perturbation of the canonical section $\s_\Kk$ in the case of trivial isotropy.
\end{itemlist}

We now discuss the  main steps in the construction below in more detail, highlighting the new features needed to deal with nontrivial isotropy.

\begin{itemlist}\item
 In order to simplify the abstract discussion, we decided to give a rather narrow definition of a Kuranishi atlas $\Kk$.
 Thus the domains of both the basic and transition charts are  
 group quotients $(U_I,\Ga_I)$, and the coordinate changes are determined by rather special
 equivariant covering maps $(\TU_{IJ},\Ga_J)\to (U_{IJ}, \Ga_I)$.  The basic theory  is set up in \S\ref{ss:grquot}; see 
 in particular  Definitions~\ref{def:cover} and Lemma~\ref{le:vep}.
If there were a need, one could  no doubt
replace these group quotients by more general \'etale groupoids and use more general covering maps and obstruction bundles, 
at the expense of revisiting the construction of perturbations.

\item Smooth atlases and coordinate changes are defined in \S\ref{ss:chart} and \S\ref{ss:Ks}.  Though in general the definitions are  similar  to those in the case with trivial isotropy, there is an important difference in the notion of coordinate change:  When $I\subset J$  this is 
 now given by a covering map from an appropriate  submanifold $\TU_{IJ}$ of the domain of the higher dimensional 
 domain $U_J$ onto an open subset $U_{IJ}$ of the lower dimensional domain $U_I$.  If the isotropy groups  are trivial, this map is a diffeomorphism with inverse equal to 
 the coordinate changes $\phi_{IJ}:U_{IJ}\to U_J$ considered in \cite{MW1,MW2}.
 Another small difference is that we build in the notion of additivity since at least some version of this is needed for the taming construction discussed below. (In some situations, for example when considering products, this formulation is too rigid; for appropriate generalizations see \cite{Mcn}.)

 \item 
An important feature of our definitions is that the quotients $\uU_I: = \qq{U_I}{\Ga_I}$ fit together to form an {\bf  intermediate atlas}, 
which Lemma~\ref{le:Ku3} shows to be 
a filtered topological atlas in the sense of ~\cite{MW1}.
In particular it has an associated category $\bB_{\uKk}$ with space of objects the orbifold $\Obj_{\bB_{\uKk} }: = \bigsqcup_I \uU_I$, 
and identical realization $|\uKk|=|\Kk|$.

\item
One difficulty in constructing a VFC for a given moduli space $X$ is that
in practice one cannot usually construct an atlas on $X$.  Instead one constructs a weak atlas, which is like an atlas except that one has less control of the domains of the charts and coordinate changes;
c.f.\ the various {\bf cocycle conditions}  discussed in Definition~\ref{def:cocycle} and Lemma~\ref {le:compos0}.
But a weak atlas does not even define a a category, let alone one 
 whose realization $|\bB_\Kk|=: |\Kk|$ has good topological properties. For example, we would like $|\Kk|$ to be Hausdorff and (in order to make local constructions possible) for the projection $\pi_\Kk: U_I\to |\Kk| $ to be a homeomorphism to its image.  
 Theorem~\ref{thm:K}  summarizes the main topological facts about $\Kk$ that are needed for subsequent constructions.  
 We achieve these via {\bf shrinking} and {\bf taming}.  
Our definitions were designed so that all
 the topological constructions of \cite{MW1}, such as  the taming, cobordism and reduction constructions, 
 apply to the  intermediate atlas $\uKk$ and then lift to  
 $\Kk$  because the quotient maps 
$U_I\to \uU_I$  are  proper.   However, we do need to take some care with the proof
of the linearity properties of the projection $\pr:|\bE_\Kk|\to |\bB_\Kk|$.

\item
Another important part of Theorem~\ref{thm:K}  is the claim that any two tame shrinkings of a weak atlas $\Kk$ are {\bf concordant}, i.e.\  cobordant over $[0,1]\times X$, 
which is required to show independence of the VMC/VFC from the choice of shrinking.
In  \S\ref{ss:cob} we give the precise definition of a cobordism atlas.  This is an immediate generalization of the notion of cobordism in \cite{MW1,MW2}, and the relevant proofs generalize easily.

 \item 
Given a weak atlas, the taming procedure gives us two categories $\bB_{\Kk}$ and $\bE_{\Kk}$ with a projection functor $\pr: \bE_{\Kk}\to \bB_{\Kk}$ and section functor $\s_\Kk: \bB_{\Kk}\to \bE_{\Kk}$.  However, even when the isotropy is trivial,
 the category has too many morphisms (i.e.\ the 
 chart domains 
 overlap too much) for us to be able to construct a 
 perturbation $\nu: \bB_{\Kk}\to \bE_{\Kk}$  that is transverse to $0$ (written $\s_\Kk+\nu\pitchfork 0$).  
We therefore  pass to a full subcategory 
$\bB_{\Kk}|_{\Vv}$ of $\bB_{\Kk}$ with objects $\Vv: = \bigsqcup V_I$ that does support suitable perturbations $\nu: 
\bB_{\Kk}|_{\Vv}\to \bE_{\Kk}|_{\Vv}$.  This subcategory $\bB_{\Kk}|_{\Vv}$ is called a {\bf reduction} of $\Kk$; c.f.\ Definition~\ref{def:vicin}.  Constructing it is akin to passing from the covering of a triangulated space by the stars of its vertices to the covering by the stars of its first barycentric subdivision.  
Again this construction can be done at the level of the intermediate category, so that the methods of \cite{MW1} immediately give us the required reductions.

\item
In the presence of nontrivial isotropy, we may still not be able to construct a transverse perturbation $\nu:\bB_{\Kk}|_{\Vv} \to  \bE_{\Kk}|_{\Vv}$ as a functor, since local perturbations $\nu_I$ are required to be $\Gamma_I$-equivariant.
In general, this can be resolved by using multivalued perturbations. 
Our setup allows for a simplified approach: We define perturbations $\nu=(\nu_I)_{I\in \Ii_\Kk}$  to be families of maps that are compatible with the covering maps $\rho_{IJ}$ but need not be $\Gamma_I$-equivariant.
We show in \S\ref{ss:red} that this construction inherits enough equivariance to yield an \'etale category that represents the zero set of the perturbed section $\s_\Kk|_\Vv + \nu$, assuming that this is transverse to $0$. The remaining morphisms are then added back in at the expense of weighting functions, which give the perturbed zero set the structure of a {\bf weighted branched manifold}.
More precisely, we construct the perturbed zero set in Theorem~\ref{thm:zero}
as the Hausdorff realization $|\bZ^\nu|_\Hh$ of an \'etale (but non proper) category $\bZ^\nu$ whose space of objects has one component $Z_{I} = (s_I|_{V_I} + \nu)^{-1}(0)$ for each $I\in \Ii_\Kk$, and whose branching locus and weighting function are explicitly determined by the reduction $\Vv$ and the isotropy groups.  

\item
For the convenience of the reader we prove the needed results about weighted branched manifolds and cobordisms in Appendix \ref{ss:br}. 
Moreover, the short paper \cite{Mcorb} explains the construction of $\bZ^\nu$ in the orbifold case.  This is much simpler, since the obstruction spaces, and hence also the section $\s_\Kk, \nu$  are zero.

\item
Moreover, we must ensure that the perturbed zero sets are compact and unique up to cobordism.
As we show in Proposition~\ref{prop:ext} the rather intricate construction in \cite{MW2} carries through in the current situation without essential change.

\item 
In \S\ref{ss:orient}  we extend the notion of orientation  to atlases with nontrivial isotropy.
As in \cite{MW2}, we define the orientation line bundle of $\Kk$ in two equivalent ways,
showing in
Proposition~\ref{prop:orient} that the  bundle $\det \s_\Kk$ (with local bundles  $(\det s_I)_{I\in \Ii_\Kk}$)  is isomorphic to
$\La_\Kk$ (with local bundles  $(\lm U_I\otimes (\lm E_I)^*)_{I\in \Ii_\Kk}$).  Most of the needed proofs can again be quoted directly from \cite{MW2}.     Lemma~\ref{le:locorient} explains how these bundles are used to orient local zero sets of sections.
 
\item
The final step is  to build the  homology class $[X]_\Kk^{vir}\in  \check H_d(X;\Q)$ from the zero set
$(\s_\Kk|_\Vv+\nu)^{-1}(0)$.   Many of the details here are again the same as in \cite{MW2}.  In particular, we build a geometric representative $|\bZ^\nu|_\Hh$ for 
this class that maps to the precompact \lq\lq neighbourhood"\footnote
{
In fact, $\io_\Kk(X)$ does {\it not} have a compact neighbourhood in $|\Kk|$; 
we should think of  $|\Vv|$ 
as the closest we can come to such  a  neighbourhood.}   
$|\Vv| = \bigcup_I \pi_\Kk(V_I)\subset |\Kk|$ of $\io_\Kk(X) = |\s_\Kk^{-1}(0)|$, and then define   $[X]_\Kk^{vir}$ by taking an appropriate inverse limit
in rational \v Cech homology.
 \end{itemlist}

\medskip
\noindent
{\bf Acknowledgements:}
We would like to thank
Mohammed Abouzaid,
Kenji Fukaya,
Tom Mrowka,
Kaoru Ono,
Yongbin Ruan,
Dietmar Salamon,
Bernd Siebert,
Cliff Taubes,
Gang Tian,
and
Aleksey Zinger
for encouragement and enlightening discussions about this project.
We moreover thank MSRI, IAS, BIRS and SCGP for hospitality.

\section{Smooth Kuranishi atlases with isotropy}\label{s:chart}

In this section we extend the notions of smooth Kuranishi charts and 
transition data introduced in \cite{MW2} to nontrivial isotropy and then discuss cobordisms and taming.
The main result is Theorem~\ref{thm:K}.

Throughout this section we fix $X$ to be a compact metrizable space.
The main change from \cite{MW2} is that the domains of the charts are no longer smooth manifolds, 
but rather group quotients. We begin by setting up notation for the latter.
As in \cite[Remark~5.1.2]{MW2} we assume all manifolds are smooth and second countable.

\subsection{Group quotients}\label{ss:grquot}\hspace{1mm}\\ \vspace{-3mm}

\begin{defn}\label{def:gq}
A  {\bf group quotient} is a pair $(U,\Ga)$ consisting of a smooth manifold $U$ and a 
finite group $\Ga$ together with a smooth action $\Ga\times U \to U$.
We will denote the quotient space by
$$
\uU: = \qq{U}{\Ga},
$$
giving it the quotient topology, and write $\pi: U\to \uU$ for the associated projection.
Moreover, we denote the {\bf stabilizer} of each $x\in U$ by
$$
\Ga^x := \{\ga\in \Ga \,|\, \ga x = x \} \subset \Ga .
$$
\end{defn}

We could consider a group quotient as a topological category with space of objects $U$ and morphisms $U\times \Ga$, but in the interest
of simplicity will often avoid doing this.

Both the basic and transition charts of Kuranishi atlases will be group quotients, related by coordinate changes that are composites of the following kinds of maps.

\begin{defn}\label{def:inject}
Let $(U,\Ga), (U',\Ga')$  be group quotients.
A {\bf group  embedding} 
$$
(\phi, \phi^\Ga): (U,\Ga)\to (U',\Ga')
$$
is a smooth embedding $\phi: U\to U'$ that is 
equivariant with respect to an injective group homomorphism
$\phi^\Ga:\Ga\to \Ga'$
and 
induces an injection $\uphi:\uU\to \uU'$ on the quotient  spaces.
We call a group embedding {\bf equidimensional} if $\dim U=\dim U'$.
\end{defn}
 
In a Kuranishi atlas we often consider embeddings $(\phi, \phi^\Ga): (U,\Ga)\to (U',\Ga)$ where $\dim U <\dim U'$ and $\phi^\Ga:\Ga\to \Ga':=\Ga$ is the identity map. 
On the other hand, group quotients of the same dimension are usually related
either by restriction or by coverings as follows.

\begin{defn}\label{def:grprestr}
Let $(U,\Ga)$ be a group quotient and $\uV\subset \uU$ an open subset. 
 Then the {\bf restriction of $(U,\Ga)$ to $\uV$} is the group quotient $(\pi^{-1}(\uV), \Ga)$.
\end{defn}

Note that the inclusion $\pi^{-1}(\uV)\to U$ induces an equidimensional group embedding $(\pi^{-1}(\uV), \Ga)\to (U,\Ga)$ that covers the inclusion $\uV\to\uU$.
The third kind of map that occurs in a coordinate change is a group covering.  This notion
is less routine; notice in particular the requirement in (ii) that $\ker \rho^\Ga$ act freely.
Further, 
the two domains $\TU, U$ will necessarily have the same dimension since they are related by a regular covering $\rho$.

\begin{defn}\label{def:cover}
Let $(U,\Ga)$  be a group quotient. A {\bf group covering} of $(U,\Ga)$ is a tuple 
$(\TU,\TGa,\rho,\rho^\Ga)$
 consisting of
\begin{enumerate}
\item a surjective group homomorphism $\rho^\Ga: \TGa\to \Ga$,
\item a group quotient $(\TU,\TGa)$ where $\ker \rho^\Ga$ 
 acts freely,
\item
a regular covering $\rho: \TU \to U$ that is the quotient map $\TU\to\qu{\TU}{\ker \rho^\Ga}$ composed with a diffeomorphism $\qu{\TU}{\ker \rho^\Ga}\cong U$ that is equivariant with respect to the induced $\Ga = \im (\rho^\Ga)$ action on both spaces.
\end{enumerate}
Thus $\rho:\TU\to U$ is equivariant with respect to $\rho^\Ga: \TGa\to \Ga$ 
and $\rho^\Ga$ acts transitively on the fibers of $\rho$.
We denote by $\urho: \ul{\TU}\to \uU$ the induced map on quotients.
\end{defn}

Next, we establish some basic properties of group quotients, in particular the fact that coverings  induce homeomorphisms between the quotients.  
Here and subsequently we denote a precompact inclusion by $V\sqsubset U$.

 \begin{lemma}\label{le:vep}
Let $(U,\Ga)$ be a group quotient.
\begin{enumerate}
\item
The projection $\pi: U\to \uU$ is open, closed, and proper.
In particular, any precompact set $P\sqsubset\uU$ has precompact preimage $\pi^{-1}(P)\sqsubset U$,
Moreover, $\uU$ is a separable, locally compact metric space.
\item
Every point $x\in U$ has a neighbourhood $U_x$ that is invariant under $\Ga^x$ 
and is such that inclusion $U_x\hookrightarrow U$ induces a homeomorphism from $\qu{U_x}{\Ga^x}$ to 
 $\pi(U_x)$.
In particular, the inclusion $(U_x,\Ga^x)\to \bigl(\pi^{-1}(\pi(U_x)), \Ga\bigr)$ is a group embedding.
\item
If $(\TU,\TGa,\rho,
\rho^\Ga)$ is a group covering of $(U,\Ga)$, then $\urho:   \ul{\TU}\to \uU$ is a homeomorphism
and $\rho^\Ga$ induces isomorphisms between the stabilizers $\TGa^y\to \Ga^{\rho(y)}$ for all $y\in \TU$.

\end{enumerate}
\end{lemma}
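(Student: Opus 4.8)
The plan is to prove the three parts in order, since later parts use earlier ones. For part (i), the key input is that $\Ga$ is finite, so $\pi\colon U\to\uU$ is a quotient by a finite group action. I would first observe that for any subset $A\subset U$ one has $\pi^{-1}(\pi(A))=\bigcup_{\ga\in\Ga}\ga A$, a finite union; hence $\pi$ is open (the union of open sets is open, and openness of $\pi$ is equivalent to saturated-openness of preimages of images) and closed (finite union of closed sets is closed, combined with the fact that $\Ga$ acts by homeomorphisms). Properness then follows because for compact $K\subset\uU$, $\pi^{-1}(K)$ is closed, and one can cover $K$ by finitely many images $\pi(C_j)$ of compact $C_j\subset U$ (using that $U$ is locally compact and $\pi$ open), so $\pi^{-1}(K)\subset\bigcup_j\bigcup_{\ga}\ga C_j$ is a closed subset of a compact set. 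The statement about precompact sets is immediate: $P\sqsubset\uU$ means $\cl(P)$ is compact, so $\pi^{-1}(\cl(P))$ is compact and contains $\cl(\pi^{-1}(P))$. Finally, $\uU$ is locally compact (push forward compact neighbourhoods via the open map $\pi$), second countable/separable (continuous open image of a second countable space), and Hausdorff; a locally compact, second countable Hausdorff space is metrizable by Urysohn, so $\uU$ is a separable locally compact metric space.

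For part (ii), I would use that $\Ga$ is finite and $U$ is Hausdorff to separate the finite orbit $\Ga x$: the points $\ga x$ for $\ga x\neq x$ can be separated from $x$ by disjoint open sets, and intersecting, one obtains an open $W\ni x$ with $\ga W\cap W=\emptyset$ whenever $\ga\notin\Ga^x$. Then set $U_x:=\bigcap_{\ga\in\Ga^x}\ga W$, which is $\Ga^x$-invariant, contains $x$, and still satisfies $\ga U_x\cap U_x=\emptyset$ for $\ga\notin\Ga^x$. This last property is exactly what is needed to see that the natural continuous map $\qu{U_x}{\Ga^x}\to\pi(U_x)$ is injective: if $\pi(y)=\pi(y')$ with $y,y'\in U_x$ then $y'=\ga y$ for some $\ga\in\Ga$, and $\ga U_x\cap U_x\neq\emptyset$ forces $\ga\in\Ga^x$. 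Surjectivity is clear, and it is open because $\pi$ is open (part (i)) when restricted appropriately — more precisely, $\pi(U_x)$ is open in $\uU$ and the restriction $\pi|_{U_x}$ is an open map onto it, which descends to an open map on $\qu{U_x}{\Ga^x}$. Hence it is a homeomorphism. The final sentence of (ii) is then a matter of checking the definition of group embedding: the inclusion $\iota\colon U_x\hookrightarrow\pi^{-1}(\pi(U_x))$ is a smooth embedding equivariant with respect to the inclusion $\Ga^x\hookrightarrow\Ga$, and induces an injection on quotients precisely by the homeomorphism just established together with the observation that $\pi(U_x)=\pi^{-1}(\pi(U_x))/\Ga$.

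For part (iii), I would argue that $\urho$ is a continuous bijection which is open, hence a homeomorphism. Continuity is clear. For surjectivity: $\rho$ is surjective onto $U$, so $\urho$ is surjective onto $\uU$. For injectivity: suppose $\urho(\ul y)=\urho(\ul{y'})$ for $y,y'\in\TU$; then $\rho(y')=\ga\cdot\rho(y)$ for some $\ga\in\Ga=\im(\rho^\Ga)$, write $\ga=\rho^\Ga(\ti\ga)$, replace $y$ by $\ti\ga\cdot y$ so that $\rho(y)=\rho(y')$; since $\rho^\Ga$ acts transitively on fibers of $\rho$, $y'=\ti\de\cdot y$ for some $\ti\de\in\ker\rho^\Ga$, so $\ul y=\ul{y'}$ in $\ul{\TU}$. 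Openness of $\urho$ follows from openness of $\rho$ (covering maps are open) and of the projections $\TU\to\ul{\TU}$, $U\to\uU$ (part (i)): given $\ul O$ open in $\ul{\TU}$, its preimage $O$ in $\TU$ is open and $\Ga$-invariant after saturating, $\rho(O)$ is open in $U$, and its image in $\uU$ equals $\urho(\ul O)$. For the stabilizer claim: fix $y\in\TU$. Equivariance of $\rho$ gives $\rho^\Ga(\TGa^y)\subset\Ga^{\rho(y)}$. For injectivity on stabilizers, if $\ti\ga\in\TGa^y\cap\ker\rho^\Ga$ then $\ti\ga\cdot y=y$, but $\ker\rho^\Ga$ acts freely by hypothesis (ii) of Definition~\ref{def:cover}, so $\ti\ga=\id$. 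For surjectivity onto $\Ga^{\rho(y)}$: given $\ga\in\Ga^{\rho(y)}$, pick any $\ti\ga\in\TGa$ with $\rho^\Ga(\ti\ga)=\ga$; then $\rho(\ti\ga\cdot y)=\ga\cdot\rho(y)=\rho(y)$, so $\ti\ga\cdot y$ and $y$ lie in the same $\rho$-fiber, hence $\ti\ga\cdot y=\ti\de\cdot y$ for some $\ti\de\in\ker\rho^\Ga$, and then $\ti\de^{-1}\ti\ga\in\TGa^y$ maps to $\ga$ under $\rho^\Ga$.

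I expect the main obstacle to be part (i), specifically assembling properness cleanly and then invoking metrizability, because it requires combining openness, closedness, local compactness, and second countability in the right sequence; once (i) is in place, parts (ii) and (iii) are essentially the standard "slice"-type arguments for finite group actions and the usual deck-transformation bookkeeping for regular coverings, with the only subtlety being the free action of $\ker\rho^\Ga$ which is explicitly built into Definition~\ref{def:cover}.
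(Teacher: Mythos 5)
Your proposal is correct and takes essentially the same approach as the paper. The only genuine variations are minor: for properness in (i) you cover the compact set $K\subset\uU$ by finitely many images $\pi(\mathrm{int}\,C_j)$ of compact neighbourhoods and observe $\pi^{-1}(K)$ is a closed subset of the compact union $\bigcup_j\bigcup_{\ga\in\Ga}\ga C_j$, whereas the paper refines an arbitrary open cover of $\pi^{-1}(\und V)$ directly; your route implicitly uses that $\uU$ is Hausdorff (so $\pi^{-1}(K)$ is closed), which is standard for finite group actions but worth saying explicitly. For the separable locally compact metric space claim you assemble local compactness, second countability and Hausdorffness and invoke Urysohn, while the paper cites a Munkres exercise on perfect (closed, continuous, compact-fibered) maps — both are fine. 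Parts (ii) and (iii) coincide with the paper in substance: the construction $U_x=\bigcap_{\ga\in\Ga^x}\ga W$, the injectivity check via $\ga U_x\cap U_x=\emptyset$ for $\ga\notin\Ga^x$, and the stabilizer isomorphism via freeness of $\ker\rho^\Ga$ and lift-then-correct are all identical; your argument that $\urho$ is a homeomorphism via ``continuous open bijection'' is a harmless substitute for the paper's structural factorization of $\urho$ through $\qu{\TU}{\ker\rho^\Ga}$.
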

\begin{proof}
Let $W\subset U$ be open.   
Then $\pi^{-1}(\pi(W)) = \bigcup_{\ga\in \Ga} \ga W$ is open since each 
$\ga W$ is the preimage, under the continuous action of $\ga^{-1}$,
of 
the open set $W$.
Hence, by definition of the quotient topology, $\pi(W)$ is open. This shows that $\pi$ is open. 
The same argument applied to the complement of a closed set shows that $\pi$ is closed.

To see that $ \pi$ is proper, consider a compact set $\und V\subset \uU$.  Given any open cover $(U_\al)_{\al\in A}$  of $\pi^{-1}(\und V)$, choose for each $x\in \pi^{-1}(\und V)$ an element $\al_x\in A$  such that $x\in U_{\al_x}$. Then for each 
$\ux\in \und V$ define
$$ 
\textstyle
\uW_{\ux}: = \bigcap_{x\in \pi^{-1}(\und x)} \pi(U_{\al_x}) \;\subset\; \uU.
$$
These are open sets since $\pi^{-1}(\ux)$ is finite and the map $\pi$ is open, and they cover the compact set $\und V$. So we may choose a finite subcover $(\uW_{\und x_i})_{i=1,\dots, n}$ of $\und V$. Then  
$(U_{\al_x})_{x\in \pi^{-1}\{\und x_1, \ldots,\und x_n\}}$ 
is a finite subcover of $\pi^{-1}(\und V)$. 
This shows that preimages of compact sets are compact, i.e.\ $\pi$ is proper.

To see that preimages of precompact sets $P\sqsubset \uU$ are precompact, it suffices to note that the continuity of $\pi$ gives $\ov{\pi^{-1}(P)}\subset \pi^{-1}(\ov P)$, so that $\ov{\pi^{-1}(P)}$ is compact because it is a closed subset of $ \pi^{-1}(\ov P)$, which is compact as preimage of the compact set $\ov{P}\subset \uU$.

To finish the proof of (i) we must show that $\uU$ is a separable, locally compact metric space. 
But $\uU$ inherits these properties from $U$ by  \cite[Ex.~31.7]{Mun} which applies to closed continuous surjective maps $\pi:X\to Y$ such that $\pi ^{-1}(y)$ is compact for all $y\in Y$.  

To prove (ii), first choose any open neighbourhood $V_x\subset U$ of $x$ that is disjoint from its images under the elements of $\Ga\less \Ga^x$, and then set
$$ \textstyle
U_x: = \bigcap_{\ga\in \Ga^x} \ga V_x.
$$
Then $U_x$ is open since $\Ga^x$ is finite and each $\ga V_x$ is open. Moreover, $U_x$ is
invariant under $\Ga^x$, and has the property that
its intersection with each $\Ga$-orbit is either empty or is a $\Ga^x$-orbit.  Thus the restriction of $\pi$ to $U_x$ is simply the quotient by the $\Ga^x$ action, so that $\qu{U_x}{\Ga^x}\to\pi(U_x)$ is the identity.

To prove the first claim in (iii), note that $\Ga$ acts on the partial quotient $\qu{\TU}{\ker \rho^\Ga}$ 
via its identification with $\im \rho^\Ga =  \qu{\TGa}{\ker \rho^\Ga}$ 
to induce a homeomorphism $\qu{\TU}{\TGa}\cong\qu{\bigl(\qu{\TU}{\ker \rho^\Ga}\bigr)}{\Ga}$.
Now $\urho$ is this identification composed with the homeomorphism $\qu{\bigl(\qu{\TU}{\ker \rho^\Ga}\bigr)}{\Ga}\to \qu{U}{\Ga}$ induced by the $\Ga$-equivariant diffeomorphism $\qu{\TU}{\ker \rho^\Ga}\cong U$.

As for the statement about stabilizers, notice that we have $\TGa^y\cap ( \ker \rho^\Ga) =\id$,
because $\ker\rho^\Ga$ acts freely. Thus $\rho^\Ga|_{\TGa^y}$ is injective. It takes values in $ \Ga^x$ for $x: = \rho(y)$ by the equivariance of $\rho$ with respect to $\rho^\Ga$.
To see that $\rho^\Ga|_{\TGa^y}:\TGa^y\to \Ga^x$ is surjective, fix an element $\de\in \Ga^x$. By surjectivity of $\rho^\Ga:\TGa\to \Ga$ we can choose a lift $\Tde\in (\rho^\Ga)^{-1}(\de)$. Since $\rho(\Tde y)=\rho^\Ga(\Tde)\rho(y)=\de x =\rho(y)$ and the fibers of $\rho$ are $\ker\rho^\Ga$ orbits, there is a unique $\ga\in \ker \rho^\Ga$ such that $\ga \Tde y = y$, and hence $\ga\Tde\in \TGa^y$.  Since $\rho^\Ga(\ga\Tde) = \rho^\Ga(\Tde)= \de$, this shows that the induced map on stabilizers $\TGa^y\to \Ga^x$ is surjective and hence an isomorphism. 
\end{proof}

\begin{rmk} \label{rmk:cover2}\rm 
In order to make our presentation more accessible we have chosen to require that the domains of our Kuranishi charts are explicit group quotients $(U,\Ga)$.  Instead we could have worked with \'etale proper groupoids $\Gg$ with the additional property that the realization map $\Obj_{\Gg}\to \Obj_{\Gg}/\!\!\sim$, that identifies two objects iff there is a morphism between them, is proper.
This extra properness assumption is proved for group quotients in Lemma~\ref{le:vep}~(i). 
We will see below that this properness allows us to deduce results about a Kuranishi atlas $\Kk$ from results of \cite{MW1} applied to the intermediate atlas $\uKk$ in which the charts have domains $\und U=\qu{U}{\Ga}$.
$\hfill\er$
\end{rmk}

\subsection{Kuranishi charts and coordinate changes}\label{ss:chart}\hspace{1mm}\\ \vspace{-3mm}

We begin by generalizing the notion of smooth Kuranishi chart (with trivial isotropy) from \cite{MW2} to the case of nontrivial finite isotropy.

\begin{rmk} \label{rmk:conv} \rm
To simplify language, we will not add the specifications ``smooth'' ,  ``nontrivial isotropy'' or ``additive" to Kuranishi charts, coordinate changes, and atlases in this paper. Hence a Kuranishi atlas in this paper is a generalization (allowing nontrivial isotropy) of the notion of smooth additive Kuranishi atlas in \cite{MW2}. 
We will see that it induces a filtered topological Kuranishi atlas in the sense of \cite{MW1}, given by the ``intermediate charts and coordinate changes'' introduced in Definitions~\ref{def:quotlev} and Remark~\ref{rmk:coord} below. So in this paper we will take ``intermediate'' to include the specification ``topological''. 
$\hfill\er$
\end{rmk}

\begin{defn}\label{def:chart}
A Kuranishi chart for $X$ is a tuple $\bK = (U,  E, \Ga,s,\psi)$ consisting of
 \begin{itemize}
 \item
the {\bf domain} $U$ which is a smooth finite dimensional manifold;
\item  a finite dimensional vector space $E$ called the {\bf obstruction space};
\item 
a finite {\bf isotropy group} $\Ga$ with a smooth action on $U$ and a linear action on~$E$;
\item a smooth $\Ga$-equivariant function $s:U\to E$,
called the {\bf section};
 \item
a continuous map $\psi : s^{-1}(0) \to X$ that induces a homeomorphism
$$
\und{\psi}:\und{s^{-1}(0)}: = \qq{s^{-1}(0)}{\Ga} \;\to\; F
$$
with open image $F\subset X$, called the {\bf footprint} of the chart.
\end{itemize}
The {\bf dimension} of $\bK$ is $\dim(\bK): =\dim U-\dim E$.
\end{defn}

In order to extend topological constructions from \cite{MW1} to the case of nontrivial isotropy, we will also consider the following notion of intermediate Kuranishi charts which have trivial isotropy but less smooth structure.

\begin{defn}\label{def:quotlev}
We associate to each Kuranishi chart $\bK = (U, E,\Ga, s, \psi)$ the {\bf intermediate chart} $\und{\bK}: = (\uU, \uEE, \us, \und{\psi})$ consisting of
\begin{itemize}
\item
the {\bf intermediate domain} $\uU:=\qu{U}{\Ga}$;
\item
the {\bf intermediate obstruction ``bundle''}, whose total space $\uEE:=\und{U\times E}$ is the quotient by the diagonal action of $\Ga$, with the projection $\pr:\uEE\to \uU, \Ga(u,e)\mapsto\Ga u$ and zero section $0: \uU\to \uEE, \Ga u \mapsto \Ga(u,0)$;
\item
the {\bf intermediate section} $\us:\uU\to\uEE$ induced by $\s=\id_U\times s :U \to U\times E$;
\item
the {\bf intermediate footprint map} $\upsi:\us^{-1}(\im 0)\to X$ induced by ${\psi:s^{-1}(0)\to X}$.
\end{itemize}
We write $\pi: U\to \uU$ for the projection from the Kuranishi domain. Moreover if a chart $\bK_I= 
(U_I, E_I,\Ga_I,s_I, \psi_I)$ has the label $I$, then $\ubK_I= (\uU_I, \uEE_I,\us_I, \upsi_I)$ 
and $\pi_I: U_I\to \uU_I$ denote the corresponding intermediate chart and projection.
\end{defn}

The intermediate charts and coordinate changes of a Kuranishi atlas (with isotropy) will form a topological Kuranishi atlas (without isotropy). 
For the charts, the following is a direct consequence of Lemma~\ref{le:vep}.

\begin{lemma}\label{le:topinterm}  
The intermediate chart $\und{\bK}$ is a topological chart in the sense of \cite[Definition~2.1.3]{MW1}. In other words,
\begin{itemize}
\item
the intermediate domain $\uU$ is a separable, locally compact metric space;
\item
the  intermediate  obstruction ``bundle'' $\pr :\uEE \to \uU$ is a continuous map between 
separable, locally compact metric spaces, so that 
the zero section $0: \uU \to \uEE$ is a continuous map with $\pr \circ 0 = \id_{\uU}$;
\item
the intermediate section $\us: \uU\to \uEE$ is a continuous map with $\pr \circ \us = \id_{\uU}$;
\item
the intermediate footprint map $\upsi : \us^{-1}(0) \to X$ is a homeomorphism onto 
the footprint $\psi(\s^{-1}(0))=F$, which is an open subset of $X$.
\end{itemize}
\end{lemma}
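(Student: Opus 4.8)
The plan is to verify the four bullet points of \cite[Definition~2.1.3]{MW1} one at a time, reducing each to a property of the group quotient $(U,\Ga)$ that is already available from Lemma~\ref{le:vep} and from the defining properties of a Kuranishi chart in Definition~\ref{def:chart}.

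First, the claim that $\uU$ is a separable, locally compact metric space is exactly the last assertion of Lemma~\ref{le:vep}~(i), applied to the group quotient $(U,\Ga)$ of the chart. For the obstruction ``bundle'', I would observe that the total space $\uEE = \und{U\times E}$ is the quotient of the smooth manifold $U\times E$ by the diagonal $\Ga$-action, so $(U\times E,\Ga)$ is again a group quotient and Lemma~\ref{le:vep}~(i) gives that $\uEE$ is separable, locally compact and metrizable. Continuity of $\pr:\uEE\to\uU$ and of the zero section $0:\uU\to\uEE$ follows because each is induced on quotients by an obviously continuous $\Ga$-equivariant map ($\pr$ from $\mathrm{pr}_U:U\times E\to U$, and $0$ from $u\mapsto(u,0)$), and a map out of a quotient by a group action is continuous iff its lift is; the identities $\pr\circ 0=\id_{\uU}$ follow by checking on representatives, $\Ga u\mapsto \Ga(u,0)\mapsto \Ga u$.

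Next, for the intermediate section: $\us:\uU\to\uEE$ is by definition induced by $\s=\id_U\times s$, which is continuous (indeed smooth) and $\Ga$-equivariant since $s$ is $\Ga$-equivariant by Definition~\ref{def:chart}; hence $\us$ is continuous, and $\pr\circ\us=\id_{\uU}$ again by evaluating on representatives, $\Ga u\mapsto \Ga(u,s(u))\mapsto \Ga u$. One small point worth spelling out is the identification $\us^{-1}(\im 0)=\qq{s^{-1}(0)}{\Ga}$, i.e.\ that $\us(\Ga u)$ lies in the image of the zero section precisely when $s(u)=0$: this holds because $\Ga(u,s(u))=\Ga(u',0)$ forces $u'=\ga u$ and $0=\ga\cdot s(u)=s(\ga u)$ for some $\ga$, so $s(u)=0$ as well; thus the domain of $\upsi$ is the one stated.

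Finally, the footprint statement: $\upsi$ is the map $\und\psi$ on $\qq{s^{-1}(0)}{\Ga}$ induced by $\psi$, and Definition~\ref{def:chart} already posits that $\und\psi$ is a homeomorphism onto an open subset $F\subset X$; combined with the identification of the previous step this is exactly the last bullet. I do not anticipate a genuine obstacle here — the lemma is essentially a bookkeeping translation of Definition~\ref{def:chart} plus Lemma~\ref{le:vep}~(i) into the language of \cite[Definition~2.1.3]{MW1} — but the step requiring the most care is the handling of $\uEE$: one must be slightly careful that the quotient of $U\times E$ by the diagonal action genuinely is locally compact and metrizable (which is why it is phrased as an instance of Lemma~\ref{le:vep}~(i) rather than argued from scratch) and that continuity passes correctly through the two different quotient maps $U\to\uU$ and $U\times E\to\uEE$ that appear in the definitions of $\pr$, $0$ and $\us$.
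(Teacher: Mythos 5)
Your proof is correct and is precisely the argument the paper intends: the paper gives no written proof at all, stating only that the lemma ``is a direct consequence of Lemma~\ref{le:vep}'', and your write-up supplies exactly the bookkeeping that remark suppresses. In particular you correctly identify the one non-obvious input — that $(U\times E,\Ga)$ with the diagonal action is itself a group quotient in the sense of Definition~\ref{def:gq}, so Lemma~\ref{le:vep}~(i) applies to $\uEE$ as well as to $\uU$ — and you correctly handle the universal property of the two quotient maps $U\to\uU$ and $U\times E\to\uEE$ to get continuity of $\pr$, $0$, and $\us$. One very minor stylistic point: in verifying $\us^{-1}(\im 0)=\qq{s^{-1}(0)}{\Ga}$, the step ``$\ga\cdot s(u)=0$ forces $s(u)=0$'' is worth attributing to the linearity and invertibility of the $\Ga$-action on $E$, which you implicitly use but do not name; otherwise nothing to add.
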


\begin{rmk}\rm  
(i)
The intermediate bundle $\pr:\uEE\to \uU$ is an orbibundle and hence has more structure 
than a general topological chart. In particular, it has a natural zero section $0: \uU\to \uEE$.  
Hence, when working with labeled charts $\ubK_I$, we will usually simply denote the projection and zero section by $\pr$ and $0$ rather than $\pr_I, 0_I$.
\MS

\NI (ii)  
We will find that many results from \cite{MW1}, in particular the taming constructions, carry over to nontrivial isotropy via the intermediate charts, since precompact subsets of $\und U$ lift to precompact subsets of $U$ by Lemma~\ref{le:vep}~(i).
An important exception is the construction of perturbations which must be done on the smooth spaces $U$.
$\hfill\er$
\end{rmk}

Next, as in \cite{MW1,MW2}, compatibility of Kuranishi charts will require restrictions and embeddings to common transition charts.

\begin{defn} \label{def:restr}
Let $\bK = (U,E, \Ga, s,\psi)$ be a Kuranishi chart and $F'\subset F$
an open subset of its footprint.
A {\bf restriction of $\bK$ to $\mathbf{\emph F\,'}$} is a Kuranishi chart of the form
$$
\bK' = \bK|_{\uU'} := \bigl(U', E, \Ga, s'=s|_{U'} \,,\, \psi'=\psi|_{s'^{-1}(0)}\, \bigr) 
\qquad\text{with}\qquad
U':=\pi^{-1}(\uU')
$$
given by  a choice of open subset $\uU'\subset \uU$ such that $\uU'\cap \upsi^{-1}(F) = \upsi^{-1}(F')$.

We call $\uU'$ the {\bf domain} of the restriction.
\end{defn}

Note that the restriction $\bK'$ in the above definition has footprint $\psi'(s'^{-1}(0))=F'$, and its domain group quotient $(U',\Ga)$ is the restriction of $(U,\Ga)$ to $\uU'$ in the sense of Definition~\ref{def:grprestr}.
Moreover, because the restriction of a chart is determined by a subset of the intermediate domain $\und{U}$, 
we can in the following use the existence result in \cite{MW1} for restrictions of topological charts to obtain restrictions of charts with isotropy.
Here we use the notation $\sqsubset$ to denote a precompact inclusion and we write ${\rm cl}_V(V')$ for the closure of a subset $V'\subset V$ in the relative topology of $V$.

\begin{lemma}\label{le:restr0}
Let $\bK$ be a Kuranishi chart. Then for any open subset $F'\subset F$ there  is a restriction $\bK'$ to $F'$ with domain $\uU'$ such that $U':=\pi^{-1}(\uU')$ satisfies ${\rm cl}_U(U')\cap s^{-1}(0) = \psi^{-1}({\rm cl}_X(F'))$.
Moreover, if $F'\sqsubset F$ is precompact, then $\uU'\sqsubset \uU$ can be chosen precompact so that $U'\sqsubset U$.
\end{lemma}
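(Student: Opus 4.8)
The strategy is to reduce to the analogous statement for topological charts in \cite{MW1} via the intermediate chart $\ubK$, and then lift back to $\bK$ using the properness of $\pi$. First I would recall that by Lemma~\ref{le:topinterm} the intermediate chart $\ubK = (\uU, \uEE, \us, \upsi)$ is a topological Kuranishi chart in the sense of \cite[Definition~2.1.3]{MW1}, with intermediate footprint map $\upsi:\us^{-1}(0)\to F$ a homeomorphism onto the open set $F = \psi(s^{-1}(0))\subset X$. The corresponding existence result for restrictions of topological charts in \cite{MW1} then provides, for the given open $F'\subset F$, an open subset $\uU'\subset \uU$ with $\uU'\cap\upsi^{-1}(F) = \upsi^{-1}(F')$ and with the closure property ${\rm cl}_{\uU}(\uU')\cap \us^{-1}(0) = \upsi^{-1}({\rm cl}_X(F'))$; and if $F'\sqsubset F$ then $\uU'$ may be chosen so that $\uU'\sqsubset\uU$.

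Next I would set $U' := \pi^{-1}(\uU')$ and verify directly from Definition~\ref{def:restr} that $\bK' := \bK|_{\uU'} = (U', E, \Ga, s|_{U'}, \psi|_{s'^{-1}(0)})$ is a restriction of $\bK$ to $F'$: this is immediate since $\uU'\cap\upsi^{-1}(F)=\upsi^{-1}(F')$ is exactly the defining condition, and $U'$ is $\Ga$-invariant as a $\pi$-preimage, so $(U',\Ga)$ is the restriction group quotient of Definition~\ref{def:grprestr}. It remains to transfer the two topological conclusions from $\uU$ to $U$. For the closure statement, I would use that $\pi$ is continuous, open and closed (Lemma~\ref{le:vep}~(i)): continuity and openness give ${\rm cl}_U(U') = {\rm cl}_U(\pi^{-1}(\uU')) = \pi^{-1}({\rm cl}_{\uU}(\uU'))$ (the inclusion $\subset$ is continuity; $\supset$ follows because $\pi$ is open, so $\pi(U\less{\rm cl}_U(U'))$ is an open set disjoint from $\uU'$, hence disjoint from ${\rm cl}_{\uU}(\uU')$). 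Then, since $s^{-1}(0) = \pi^{-1}(\us^{-1}(0))$ and $\psi^{-1}({\rm cl}_X(F')) = \pi^{-1}(\upsi^{-1}({\rm cl}_X(F')))$ by the definitions of $\us$ and $\upsi$, applying $\pi^{-1}$ to the intermediate identity ${\rm cl}_{\uU}(\uU')\cap\us^{-1}(0) = \upsi^{-1}({\rm cl}_X(F'))$ yields ${\rm cl}_U(U')\cap s^{-1}(0) = \psi^{-1}({\rm cl}_X(F'))$ as required.

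For the precompactness claim, if $F'\sqsubset F$ is chosen with $\uU'\sqsubset\uU$, then by Lemma~\ref{le:vep}~(i) the preimage of a precompact set under $\pi$ is precompact, so $U' = \pi^{-1}(\uU')\sqsubset U$. This finishes the argument. The main obstacle is not any single deep step but the bookkeeping of commuting preimages with closures and with the various structure maps; concretely, the one point requiring a little care is the identity ${\rm cl}_U(\pi^{-1}(\uU')) = \pi^{-1}({\rm cl}_{\uU}(\uU'))$, which genuinely uses that $\pi$ is both closed \emph{and} open rather than merely continuous, and it is here that the properness built into Lemma~\ref{le:vep}~(i) (equivalently, the explicit group-quotient structure emphasized in Remark~\ref{rmk:cover2}) does the work that would otherwise be problematic for a general topological quotient.
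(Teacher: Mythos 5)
Your proof is correct and follows the same route as the paper: invoke the restriction result \cite[Lemma~2.1.6]{MW1} for the intermediate chart $\ubK$, set $U':=\pi^{-1}(\uU')$, and transfer the closure and precompactness statements back to $U$ via Lemma~\ref{le:vep}~(i). The only slight imprecision is in the closing remark — your argument for ${\rm cl}_U(\pi^{-1}(\uU'))=\pi^{-1}({\rm cl}_{\uU}(\uU'))$ in fact uses continuity and openness of $\pi$ (closedness would give an alternative proof but is not actually invoked), though this does not affect correctness.
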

\begin{proof} 
By \cite[Lemma~2.1.6]{MW1} applied to the intermediate chart $\ubK$,
there is a subset $\und{U'}\subset \und{U}$ 
that defines a restriction of this topological chart, in particular satisfies $\und{U'}\cap \us^{-1}(0) = \upsi^{-1}(F')$, with the additional property
${\rm cl}_{\und{U}}(\und{U'})\cap \us^{-1}(0) = \upsi^{-1}({\rm cl}_X(F'))$. 
Further, we may assume that $\und{U'}$ is precompact in $\und U$ if $F'\sqsubset F$.
Then $U'=\pi^{-1}(\und{U'})$ is the required domain.  It inherits precompactness by Lemma~\ref{le:vep}~(i).
Further, the same lemma shows that $\pi^{-1}({\rm cl}_{\uU}(\und{U'})) = {\rm cl}_U(U')$.  
Hence applying $\pi^{-1}$ to the identity ${\rm cl}_{\und{U}}(\und{U'})\cap \us^{-1}(0) = \upsi^{-1}({\rm cl}_X(F'))$ implies that
${\rm cl}_U(U')\cap s^{-1}(0) = \psi^{-1}({\rm cl}_X(F'))$.
\end{proof}

Most definitions in \cite{MW2} extend, as the previous ones, with only minor changes to the case of nontrivial isotropy. However, the notion of smooth coordinate change \cite[Def.5.2.2]{MW2} needs to be generalized significantly to include a covering map. 
For simplicity we will formulate the definition in the situation that is relevant to 
additive Kuranishi atlases.\footnote{
While additivity was introduced as separate property in \cite{MW0},
it is both so crucial and natural that below in \S\ref{ss:Ks}, we will define the notion of Kuranishi atlas to be automatically additive.
}
That is, we suppose that a finite set of basic Kuranishi charts $(\bK_i)_{i\in \{1,\dots, N\}}$ is given such that for each  $I\subset \{1,\dots, N\}$ with $F_I: = \bigcap_{i\in I} F_i \ne \emptyset$ we have another Kuranishi chart $\bK_I$ with 
\begin{itemize}
\item[-] 
isotropy group $\Ga_I := \prod_{i\in I} \Ga_i$,
\item[-]  
obstruction space $E_I := \prod_{i\in I }E_i$ on which $\Ga_I$ acts with the product action,
 \item[-]  
footprint $F_I: = \bigcap_{i\in I} F_i$. 
\end{itemize}
Then for $I\subset J$ we have the natural splitting $\Ga_J\cong\Ga_I\times \Ga_{J\less I}$ with induced inclusion $\Ga_I \hookrightarrow \Ga_I\times \{\id\} \subset \Ga_J$ and projection $\rho_{IJ}^\Ga: \Ga_J\to \Ga_I$ with kernel $\Ga_{J\less I}$.
(Here,we include the case $I=J$, interpreting $\Ga_{\emptyset}: = \{\id\}$.)
Moreover, we have the natural inclusion $\Hat\phi_{IJ}:  E_I\to E_J$ which is equivariant with respect to the inclusion $\Ga_I\hookrightarrow\Ga_J$ and so that the complement of this inclusion $\Ga_{J\less I}$ acts trivially on the image $\Hat\phi_{IJ}(E_I)\subset E_J$.

\begin{defn} \label{def:change}  
Given  $I\subset J\subset \{1,\dots, N\}$ let $\bK_I$ and $\bK_J$ be Kuranishi charts as above with $F_I\supset F_J$.
A {\bf smooth coordinate change} $\Hat\Phi_{IJ}$ from $\bK_I$ to $\bK_J$ consists of
\begin{itemize}
\item 
a choice of 
domain $\uU_{IJ}\subset \uU_I$ such that  $\bK_I|_{\uU_{IJ}}$ is a restriction of $\bK_I$ to $F_{J}$,
\item 
the splitting $\Ga_J\cong\Ga_I\times \Ga_{J\less I}$ as above, and the induced inclusion $\Ga_I\hookrightarrow \Ga_J$ and projection $\rho_{IJ}^\Ga:\Ga_J\to \Ga_I$,
\item 
a $\Ga_J$-invariant submanifold $ \TU_{IJ}\subset U_J$ on which $\Ga_{J\less I}$ acts freely,
and the induced $\Ga_J$-equivariant inclusion $\Tphi_{IJ}:\TU_{IJ} \hookrightarrow U_J$,
\item
a group covering $(\TU_{IJ},\Ga_J,\rho_{IJ}, \rho_{IJ}^\Ga)$ 
of $(U_{IJ}, \Ga_I)$, where $U_{IJ}: = \pi_I^{-1}(\uU_{IJ})\subset U_I$,
\item
the linear equivariant injection $\Hat\phi_{IJ}:E_I\to E_J$ as above,
 \end{itemize}
such that the 
inclusions $\Tphi_{IJ},\Hat\phi_{IJ}$, and covering $\rho_{IJ}$ 
intertwine the sections and footprint maps,
\begin{align}\label{eq:change2}
s_{J}\circ\Tphi_{IJ}  & = \Hat\phi_{IJ}\circ s_I\circ \rho_{IJ} \quad \mbox{ on }  \TU_{IJ}, \\
\psi_{J}\circ\Tphi_{IJ}  & = \psi_I\circ \rho_{IJ} \qquad\quad\; \mbox{ on } 
s_J^{-1}(0)\cap \TU_{IJ} = 
\rho_{IJ}^{-1}(s_I^{-1}(0)).
\nonumber
\end{align}
Moreover, we denote $s_{IJ}:=s_I\circ\rho_{IJ}:\Ti U_{IJ}\to E_I$ and require the {\bf index condition}:
\begin{enumerate}
\item
The embedding $\Tphi_{IJ}:\TU_{IJ}\hookrightarrow  U_J$ identifies the kernels,
$$
\rd_u\Tphi_{IJ} \bigl(\ker\rd_u s_{IJ} \bigr) =  \ker\rd_{\Tphi_{IJ}(u)} s_J    \qquad \forall u\in \TU_{IJ};
$$
\item
the linear embedding $\Hat\phi_{IJ}:E_{I}\to E_J$ identifies the cokernels,
$$
\forall u\in \TU_{IJ} : \qquad
E_{I} = \im(\rd_u s_{IJ}) \oplus C_{u,I}  \quad \Longrightarrow \quad E_J = \im (\rd_{\Tphi_{IJ}(u)} s_J) \oplus \Hat\phi_{IJ}(C_{u,I}).
$$
\end{enumerate}
The subset $\uU_{IJ}\subset \uU_I$ is called the  {\bf domain} of the coordinate change,
while $\TU_{IJ}\subset U_J$ is its  {\bf lifted domain}.
\end{defn}

Recall that we have $\dim\TU_{IJ}= \dim U_I$ since $\rho_{IJ}: \TU_{IJ}\to U_{IJ}$ is a regular covering.
Moreover, $\rho_{IJ}$ identifies the kernels and images of $\rd s_{IJ}$ and $\rd s_I$, 
\begin{equation}\label{eq:indexcond}
\rd_u\rho_{IJ} (\ker\rd_u s_{IJ})=\ker\rd_{\rho_{IJ}(u)} s_I,
\qquad \im(\rd_u s_{IJ})= \im(\rd_{\rho_{IJ}(u)}) s_I \subset E_I ,
\end{equation}
and hence the index condition is equivalent to kernels and cokernels of $\rd_{\rho_{IJ}(u)} s_I$ and $\rd_u s_J$ being identified by the coordinate change. As in \cite[Lemma~5.2.5]{MW2} it is also equivalent to the {\bf tangent bundle condition}
\begin{equation}\label{tbc}
\rd_{\Tphi_{IJ} (u)} s_J : \;\quotient{\rT_{\Tphi_{IJ} (u)} U_J}{\rd_u\Tphi_{IJ}  (\rT_u \TU_{IJ})} \;\stackrel{\cong}\longrightarrow \; \quotient{E_J}{\Hat\phi_{IJ}(E_I)}\qquad\forall u\in \TU_{IJ}.
\end{equation}
This 
also shows that any two charts that are related by a coordinate change have the same dimension.
To keep our language similar to that in \cite{MW2}, we denote a coordinate change by  
$\Hat\Phi_{IJ} = (\Tphi_{IJ},\Hat\phi_{IJ},\rho_{IJ}): \bK_{I}|_{\uU_{IJ}}\to \bK_J$.
However, since the linear map $\Hat\phi_{IJ}$ is fixed by our conventions, the coordinate change $\Hat\Phi_{IJ}$ is in fact determined by a group covering $(\TU_{IJ},\Ga_J,\rho_{IJ}, \rho_{IJ}^\Ga)$ of $(\pi_I^{-1}(\uU_{IJ}), \Ga_I)$, where $\uU_{IJ}\subset \uU_I$ 
is a choice of domain with
$\uU_{IJ}\cap\upsi_I^{-1}(F_I)=\upsi_I^{-1}(F_J)$.

\begin{rmk}\label{rmk:change} \rm
(i)
In the case of trivial isotropy and with trivial covering $\rho_{IJ}=:\phi_{IJ}^{-1}$, this definition  
is the notion of coordinate change in \cite{MW2} with $\TU_{IJ}=\phi_{IJ}(U_{IJ})$.
Because $U_{IJ}\subset U_I$ is open,  the index condition together with the condition that $\TU_{IJ}$ is a submanifold of $U_J$ implies that $\TU_{IJ}$ is 
an open subset of $s_J^{-1}(E_I)$.
\MS

\NI (ii)
The following diagram of group embeddings and group coverings
is associated to each coordinate change:
\begin{equation}\label{eq:phiIJ}
\begin{array}{clcc}
&(\TU_{IJ},\Ga_J) 
& \stackrel {(\Tphi_{IJ}, \id)}{\longhookrightarrow } &(U_J,\Ga_J)
\vspace{.1in} \\
&\;\; \; \;\downarrow  (\rho_{IJ}, \rho_{IJ}^\Ga) &&\vspace{.1in} \\
(U_I,\Ga_I)\ \longhookleftarrow &(U_{IJ},\Ga_I)  &&
\end{array}
\end{equation}

\MS
\NI (iii) Since $\urho_{IJ}: \und{\TU}_{IJ} \to \uU_{IJ}$ is a homeomorphism by Lemma~\ref{le:vep}~(iii), each coordinate change $(\phi_{IJ},\Hat\phi_{IJ},\rho_{IJ}): \bK_{I}|_{\uU_{IJ}}\to \bK_J$
induces an injective map 
$$
\und{\phi}_{IJ}:=\und{\Tphi}_{IJ}\circ  \urho_{IJ}^{-1} : \uU_{IJ}\to \uU_J
$$ 
on the domain of the intermediate chart.
In fact, there is an induced 
coordinate change $\und{\Hat\Phi}_{IJ} : \ubK_{I}|_{\uU_{IJ}}\to \ubK_J$ 
between the intermediate charts, given by the bundle map $\und{\Hat\Phi}_{IJ} : \und{U_{IJ}\times E_I} \to \und{U_{J}\times E_J}$ which is induced by the multivalued map $(\Tphi_{IJ}\circ\rho_{IJ}^{-1})\times\Hat\phi_{IJ}$ and hence covers 
$\und{\Tphi}_{IJ} \circ  \urho_{IJ}^{-1}=:\uphi_{IJ}$.
This is a topological coordinate change in the sense of  \cite[Definition~2.2.1]{MW1}.
This means in particular that the map 
$$
\und{\Hat\Phi}_{IJ}: \und{U_{IJ}\times E_I} 
=: \uEE_I|_{\uU_{IJ}}:=\pr_I^{-1}(\uU_{IJ}) \to \uEE_J
$$
 is a topological embedding (i.e.\ homeomorphism to its image) 
 that satisfies the following:
\begin{itemize}
\item
It is a bundle map, i.e.\ we have $\pr_J \circ \und{\Hat\Phi}_{IJ} = \uphi_{IJ} \circ\pr_I |_{\pr_I^{-1}(\uU_{IJ})}$ for a topological embedding $\uphi_{IJ}: \uU_{IJ}\to \uU_J$, and it is linear in the sense that  $0_J \circ \uphi_{IJ}  =\und{ \Hat\Phi}_{IJ} \circ 0_I|_{U_{IJ}}$, where $0_I$ denotes the zero section 
$0_I: \uU_I\to \uEE_I$ in the chart $\ubK_I$.
\smallskip

\item
It intertwines the sections and footprints maps, i.e.
$$
\us_J \circ \uphi_{IJ}  = \und{\Hat\Phi}_{IJ} \circ \us_I|_{U_{IJ}},\qquad 
\uphi_{IJ}|_{\upsi_I^{-1}(F_I\cap F_J)}=\upsi_J^{-1} \circ\upsi_I.
$$
\end{itemize}
However, $\und{\Hat\Phi}_{IJ}$ has more smooth structure than a general topological coordinate change since $\uphi_{IJ}:\uU_{IJ}\to \uU_J$  preserves the orbifold structure and $\und{\Hat\Phi}_{IJ}$ is a map of orbibundles.

\MS
\NI (iv)
Conversely, suppose 
we are given a topological coordinate change $\und{\Hat\Phi}_{IJ}: \ubK_{I} \to \ubK_J$ with domain ${\uU_{IJ}}$.
Then any coordinate change from $\bK_I$ to $\bK_J$ that induces $\und{\Hat\Phi}_{IJ}$ is determined by the $\Ga_J$-invariant set $\TU_{IJ}:=\pi_J^{-1}(\uphi_{IJ}(\uU_{IJ}))$ and a choice of $\Ga_I$-equivariant homeomorphism between
$\qu{\TU_{IJ}}{\Ga_{J\less I}}$ and $U_{IJ} := \pi_I^{-1}(\uU_{IJ})$.
If we can choose this homeomorphism to be smooth, then we obtain a  smooth 
coordinate change $\bK_{I} \to \bK_J$ with domain ${\uU_{IJ}}$ provided that the index condition is satisfied, which is a condition on the relation between the set $\TU_{IJ}$ and the section $s_J$.
When constructing coordinate changes in the Gromov--Witten setting
in \cite{MW:gw}, we will see that there is a natural choice of this diffeomorphism since the covering maps $\rho_{IJ}$ are given by forgetting certain added marked points.
Further, the index condition is automatically satisfied
in this setting. 

\MS
\NI (v) Because $\TU_{IJ}$ is defined to be a subset of $U_J$ it is sometimes convenient 
to think of an element $\Tx\in 
\TU_{IJ}$ as an element in $U_J$, omitting the notation for the inclusion 
map $\Tphi_{IJ}:\TU_{IJ}\to U_J$. $\hfill\er$
 \end{rmk}

The next step is to consider restrictions and composites of coordinate changes. Restrictions 
exist analogously to
\cite[Lemma~5.2.6]{MW2}:
For $I\subset J$, given
a coordinate change $\Hat\Phi_{IJ}: \bK_I|_{\uU_{IJ}}\to \bK_J$ and
restrictions $\bK_I': = \bK_I|_{\uU_I'}$ and $\bK_J': = \bK_J|_{\uU_J'}$ whose footprints $F_I'\cap F_J'$ have nonempty intersection,
there is an induced {\bf restricted coordinate change}  
$\Hat\Phi_{IJ}|_{\uU_{IJ}'}:
\bK_I'|_{\uU_{IJ}'}\to \bK_J'$ for any
open
subset $\uU_{IJ}'\subset \uU_{IJ}$ satisfying the conditions
\begin{equation}\label{eq:coordres}
 \uU_{IJ}'\subset \uU_I'\cap \uphi_{IJ}^{-1}(\uU_J'),
\qquad  \uU_{IJ}'\cap \us_I^{-1}(0) = \upsi_I^{-1}(F_I'\cap F_J').
\end{equation}
However, coordinate changes now do not directly compose due to the coverings involved. The induced coordinate changes on the intermediate charts still compose directly, but the analog of  
\cite[Lemma~5.2.7]{MW2} is the following.

\begin{lemma}\label{le:compos} 
Let $I\subset J\subset K$ 
(so that automatically $F_I\supset F_J\supset F_K$)
and suppose that $\Hat\Phi_{IJ}: \bK_I\to \bK_J$ and  $\Hat\Phi_{JK}: \bK_J\to \bK_K$ are coordinate changes with domains ${\uU_{IJ}}$ and $\uU_{JK}$ respectively.
Then the following holds.
\begin{enumerate}
\item
The domain
$\uU_{IJK}:= \uU_{IJ}\cap \uphi_{IJ}^{-1}(\uU_{JK})\subset \uU_I$
defines a restriction 
$\bK_I|_{\uU_{IJK}}$ of $\bK_I$ 
 to $F_K$. 
\item
The composite $\rho_{IJK}: = \rho_{IJ}\circ \rho_{JK}:\TU_{IJK} \to U_{IJK}: =\pi_I^{-1}(\uU_{IJK})$ is defined on $\TU_{IJK}: = \pi_K^{-1}\bigl((\uphi_{JK}\circ\uphi_{IJ})(\uU_{IJK})\bigr)$ via the natural identification of $\rho_{JK}(\TU_{IJK})\subset U_J$ with a subset of $\TU_{IJ}$.
Together with the natural projection $\rho_{IK}^\Ga: \Ga_K\to \Ga_I$ with kernel $\Ga_{K\less I}$
(which factors $\rho_{IK}^\Ga= \rho_{IJ}^\Ga\circ\rho_{JK}^\Ga$),
this forms a group covering $(\TU_{IJK},\Ga_K,\rho_{IJK},\rho_{IK}^\Ga)$ of $(U_{IJK}, \Ga_I)$.
\item
The inclusion $\Tphi_{IJK}:\TU_{IJK}\hookrightarrow U_K$ together with 
the natural inclusion
$\Hat\phi_{IK}:E_I\to E_K$ (which factors $\Hat\phi_{IK}= \Hat\phi_{JK}\circ \Hat\phi_{IJ}$)
and $\rho_{IJK}$ satisfies  \eqref{eq:change2} and the index condition with respect to the indices $I,K$.
\end{enumerate}
Hence this defines a  {\bf composite coordinate change} $$
\Hat\Phi_{J K}\circ \Hat\Phi_{I J}: = \Hat\Phi_{IJK}= (\Tphi_{IJK}, \Hat\phi_{IK},\rho_{IJK})
$$
 from
$\bK_I$ to  $\bK_K$ with domain $\uU_{IJK}$.
\end{lemma}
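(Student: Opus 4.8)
The plan is to verify the three numbered assertions in turn, each of which reduces to a routine check once the correct identifications are made, and then to observe that together they package into a coordinate change by Definition~\ref{def:change}. The conceptual point throughout is that on the \emph{intermediate} level everything is already known: the induced coordinate changes $\und{\Hat\Phi}_{IJ}$ and $\und{\Hat\Phi}_{JK}$ of Remark~\ref{rmk:change}(iii) are topological coordinate changes in the sense of \cite{MW1}, and those compose directly (the analog of \cite[Lemma~2.2.4]{MW1}); so $\und{\Hat\Phi}_{JK}\circ\und{\Hat\Phi}_{IJ}$ is a topological coordinate change $\ubK_I|_{\uU_{IJK}}\to\ubK_K$ with domain $\uU_{IJK}:=\uU_{IJ}\cap\uphi_{IJ}^{-1}(\uU_{JK})$ and underlying map $\uphi_{IK}=\uphi_{JK}\circ\uphi_{IJ}$. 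Assertion (i) is then immediate: by \eqref{eq:coordres} and the fact that $\uphi_{IJ}$ intertwines sections and footprints, $\uU_{IJK}\cap\us_I^{-1}(0)=\upsi_I^{-1}(F_I\cap F_J\cap F_K)=\upsi_I^{-1}(F_K)$, which is exactly the condition that $\bK_I|_{\uU_{IJK}}$ be a restriction of $\bK_I$ to $F_K$ (Definition~\ref{def:restr}); precompactness statements, if needed, come from Lemma~\ref{le:vep}(i).

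For (ii) the main thing is to make sense of the composite covering map. First I would check that $\rho_{JK}$ maps $\TU_{IJK}:=\pi_K^{-1}(\uphi_{IK}(\uU_{IJK}))$ into $\TU_{IJ}$: indeed $\urho_{JK}$ carries $\urho_{JK}^{-1}(\cdots)$ onto $\uphi_{IJ}(\uU_{IJK})\subset\uU_J$, and $\und\Tphi_{IJ}$ identifies the latter with $\und\TU_{IJ}\subset\uU_J$, so by the homeomorphism property of $\urho_{JK}$ in Lemma~\ref{le:vep}(iii) and properness of $\pi_K$ we get that $\rho_{JK}(\TU_{IJK})$, viewed via $\Tphi_{IJ}$ inside $U_J$, lands in the $\Ga_J$-invariant set $\TU_{IJ}$. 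Then $\rho_{IJK}:=\rho_{IJ}\circ\rho_{JK}:\TU_{IJK}\to U_{IJK}$ is a composite of two regular coverings, hence a regular covering; one checks it is $\rho_{IK}^\Ga$-equivariant from the equivariance of the factors and the factorization $\rho_{IK}^\Ga=\rho_{IJ}^\Ga\circ\rho_{JK}^\Ga$ of the projections $\Ga_K\to\Ga_J\to\Ga_I$, with composite kernel $\Ga_{K\less I}$. That $\ker\rho_{IK}^\Ga$ acts freely on $\TU_{IJK}$ follows because $\Ga_{J\less I}$ acts freely (from $\Hat\Phi_{IJ}$) and $\Ga_{K\less J}$ acts freely (from $\Hat\Phi_{JK}$), and these two subgroups generate $\Ga_{K\less I}\cong\Ga_{J\less I}\times\Ga_{K\less J}$ — this uses the product structure $\Ga_K\cong\Ga_I\times\Ga_{J\less I}\times\Ga_{K\less J}$ coming from additivity. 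Finally the fiber-transitivity of $\rho_{IK}^\Ga$ on fibers of $\rho_{IJK}$ is a diagram chase combining the corresponding properties of $\rho_{IJ}$ and $\rho_{JK}$; this is the check I expect to be slightly fiddly, since one has to keep track of how $\ker\rho_{JK}^\Ga$ moves points inside a $\rho_{IJ}$-fiber.

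For (iii): the inclusion $\Tphi_{IJK}:\TU_{IJK}\hookrightarrow U_K$ is just $\Tphi_{JK}$ restricted to the $\Ga_K$-invariant submanifold $\TU_{IJK}\subset\TU_{JK}\subset U_K$, so it is an equivariant embedding; that $\TU_{IJK}$ is a submanifold on which $\Ga_{K\less I}$ acts freely was settled in (ii). The intertwining relations \eqref{eq:change2} for the triple $I,K$ follow by composing the two given sets of relations: $s_K\circ\Tphi_{JK}=\Hat\phi_{JK}\circ s_J\circ\rho_{JK}$ restricted to $\TU_{IJK}$, then feeding in $s_J\circ\Tphi_{IJ}=\Hat\phi_{IJ}\circ s_I\circ\rho_{IJ}$ under the identification of $\rho_{JK}(\TU_{IJK})$ with a subset of $\TU_{IJ}$, and using $\Hat\phi_{IK}=\Hat\phi_{JK}\circ\Hat\phi_{IJ}$; the footprint identity is analogous. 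For the index condition with respect to $I,K$, the cleanest route is via the tangent bundle condition \eqref{tbc}, which by \cite[Lemma~5.2.5]{MW2} (and its analog quoted in the excerpt) is equivalent to the kernel/cokernel conditions: the condition for the pair $J,K$ gives an isomorphism $\rd s_K:\rT U_K/\rd\Tphi_{JK}(\rT\TU_{JK})\to E_K/\Hat\phi_{JK}(E_J)$, while the condition for $I,J$, transported along the covering $\rho_{JK}$ using \eqref{eq:indexcond} which lets one replace $\rd s_J$-data at $\rho_{JK}(u)$ by $\rd s_{JK}$-data at $u$, gives $\rd s_J:\rT\TU_{JK}|_{\TU_{IJK}}/\rd\Tphi_{IJK,J}(\rT\TU_{IJK})\to \Hat\phi_{JK}(E_J)/\Hat\phi_{IK}(E_I)$ up to the identifications, where I write $\Tphi_{IJK,J}$ for the inclusion $\TU_{IJK}\hookrightarrow\TU_{JK}$; composing these two short exact sequences of quotients yields the desired isomorphism $\rT U_K/\rd\Tphi_{IJK}(\rT\TU_{IJK})\to E_K/\Hat\phi_{IK}(E_I)$. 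Assembling (i)--(iii), the tuple $(\Tphi_{IJK},\Hat\phi_{IK},\rho_{IJK})$ satisfies every clause of Definition~\ref{def:change} for the pair $I\subset K$ with domain $\uU_{IJK}$, so it is a coordinate change $\bK_I\to\bK_K$, which we name $\Hat\Phi_{JK}\circ\Hat\Phi_{IJ}$. The main obstacle is bookkeeping: correctly and consistently using the homeomorphisms $\urho$ of Lemma~\ref{le:vep}(iii) and the implicit inclusions $\Tphi$ to view $\rho_{JK}(\TU_{IJK})$ simultaneously as a subset of $U_J$ and of $\TU_{IJ}$, so that the two given coordinate changes can be chained without ambiguity.
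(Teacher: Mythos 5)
Your proposal follows the paper's route quite closely: reduce part (i) to the composition of intermediate (topological) coordinate changes in \cite{MW1}; check part (ii) by identifying $\rho_{JK}(\TU_{IJK})$ with a subset of $\TU_{IJ}$ and composing the coverings; establish the intertwining relations and index condition in part (iii) by chaining the two given coordinate changes and using the tangent bundle form \eqref{tbc}. Parts (i) and (iii) are essentially correct and match the paper's argument.

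However, there is a genuine logical gap in part (ii), in the argument that $\Ga_{K\less I}=\ker\rho_{IK}^\Ga$ acts freely on $\TU_{IJK}$. You assert that because $\Ga_{J\less I}$ and $\Ga_{K\less J}$ each act freely and generate $\Ga_{K\less I}\cong\Ga_{J\less I}\times\Ga_{K\less J}$, the product acts freely. This implication is false in general: two subgroups of a group can each act freely on a space while the group they generate fails to act freely. (Consider $\Z/2\times\Z/2$ acting on a two-element set with each factor acting by the transposition; each factor acts freely, but the diagonal element acts trivially, so the full group does not act freely.) Moreover, as stated your premise is not even clearly in place: the free $\Ga_{J\less I}$-action coming from $\Hat\Phi_{IJ}$ lives on $\TU_{IJ}\subset U_J$, whereas $\TU_{IJK}$ lives in $U_K$; you need to transport through $\rho_{JK}$ before $\Ga_{J\less I}$ acts on anything in sight. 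The correct argument, which is what the paper does, is the tower version: $\Ga_{K\less J}$ acts freely on $\TU_{IJK}\subset\TU_{JK}$ (from $\Hat\Phi_{JK}$), and then $\Ga_{J\less I}\cong\Ga_{K\less I}/\Ga_{K\less J}$ acts freely on the quotient $\qq{\TU_{IJK}}{\Ga_{K\less J}}$, because that quotient is $\Ga_J$-equivariantly identified (via $\rho_{JK}$) with a subset of $\TU_{IJ}$ on which $\Ga_{J\less I}$ acts freely (from $\Hat\Phi_{IJ}$). This extension argument (normal subgroup free on $X$, quotient free on $X/N$ $\Rightarrow$ full group free on $X$) is valid, and fills the gap; as written, your step does not.
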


\begin{proof} 
The corresponding statement for the induced coordinate changes for the intermediate charts is proved in
\cite[Lemma~2.2.5]{MW1}.   Thus claim (i) follows from part (i) of \cite[Lemma~2.2.5]{MW1}.

To see that $\rho_{IJK}$ in (ii) is well defined, we need to verify the inclusion 
$\rho_{JK}(\TU_{IJK})\subset \TU_{IJ}$, or (due to equivariance) equivalently
$\urho_{JK}(\und{\TU}_{IJK})\subset \und{\TU}_{IJ}$.
For that purpose we drop the natural identifications $\und{\Tphi}_{IJ}:\und{\TU}_{IJ}\to \uU_J$ from the notation so that the intermediate coordinate changes are $\und{\phi}_{IJ}=\urho_{IJ}^{-1} : \uU_{IJ}\to \und{\TU}_{IJ} \subset \uU_J$
and the inclusion follows from
\begin{align*}
\urho_{JK}(\und{\TU}_{IJK})
& \ =
\urho_{JK}\bigl((\uphi_{JK}\circ\uphi_{IJ})\bigl(
\uU_{IJ}\cap \uphi_{IJ}^{-1}(\uU_{JK})\bigr)\bigr)  \\
& \ =
(\urho_{JK}\circ \uphi_{JK}) \bigl( \und{\TU}_{IJ} \cap \uU_{JK}
\bigr)
\;\; = \;\;  \und{\TU}_{IJ} \cap \uU_{JK}.
\end{align*}
Next, observe that composites of group covering maps are also group covering maps.  
In particular, since $\Ga_{K\less J}$ acts freely on $\TU_{IJK}\subset \TU_{JK}$ and $\Ga_{J\less I}$ acts freely on the quotient $\qu{\TU_{IJK}}{\Ga_{K\less J}}$  
(because it 
is identified $\Ga_J$-equivariantly with a subset of $\TU_{IJ}$),
the group $\Ga_{K\less I} 
\cong \Ga_{K\less J}\times \Ga_{J\less I}$ acts freely on $\TU_{IJK}$.

To prove 
(iii), first observe that \eqref{eq:change2} holds for the index pair $IK$ because it holds for $IJ$ and $JK$:
\begin{eqnarray*}
 s_{K}\circ \Tphi_{IJK} &=&   \Hat\phi_{JK}\circ s_J\circ \rho_{JK}\big|_{\TU_{IJK}}
 \\ &=&
  \Hat\phi_{JK}\circ (\Hat\phi_{IJ}\circ s_I\circ \rho_{IJ})\circ \rho_{JK}\big|_{\TU_{IJK}} =
   \Hat\phi_{IK}\circ s_I\circ \rho_{IJK}
   \qquad \mbox{ on } \TU_{IJK},
   \\ 
   \psi_{K}\circ \Tphi_{IJK}&=& \psi_J\circ \rho_{JK}
= \psi_I\circ \rho_{IJ}\circ \rho_{JK}  = \psi_I\circ \rho_{IJK} 
\qquad\qquad\qquad \mbox{ on } {s_{K}^{-1}(0)\cap \TU_{IJK}} .
\end{eqnarray*}
Finally, it is easiest to check the index condition in the form given in \eqref{tbc}, i.e.\ we need to 
establish isomorphisms
for all $u\in \TU_{IJK}$,
\begin{equation}\label{tbc1}
\rd_{\Tphi_{IJK}(u)} s_K : \;\quotient{\rT_{\Tphi_{IJK} (u)} U_K}{\rd_u\Tphi_{IJK}  (\rT_u \TU_{IJK})} \;\stackrel{\cong}\longrightarrow \; \quotient{E_K}{\Hat\phi_{IK}(E_I)}. 
\end{equation}
Here and below we will suppress the natural embedding $\Tphi_{IJK}: \TU_{IJK}\to U_K$ from the notation, hence identifying e.g.\ $u\in \TU_{IJK}$ with $\Tphi_{IJK}(u)\in U_K$.
With that, the quotient on the left is naturally identified with the normal fiber
$\qu{\rT_u U_K}{\rT_u \TU_{IJK}}$ to the submanifold $\TU_{IJK}$ of $U_K$. 
Next, $\TU_{IJK}$ is by construction a submanifold of $\TU_{JK}$, which in turn is a submanifold of $U_K$, hence this normal fiber
is isomorphic to the direct sum of the normal fiber of  
$\TU_{IJK}$ in  $\TU_{JK}$ together with that of $\TU_{JK}$ in  $U_K$, 
$$
\qq{\rT_u U_K}{\rT_u \TU_{IJK}} \;\cong\; \qq{\rT_u U_K}{\rT_u \TU_{JK}} \oplus \qq{\rT_u \TU_{JK}}{\rT_u \TU_{IJK}}.
$$
By the index condition for $\Hat\Phi_{JK}$, the map $\rd_u s_K$ 
restricted to the first summand 
induces an isomorphism $\qu{\rT_u U_K}{\rT_u \TU_{JK}}\stackrel{\cong}\to \qu{E_K}{\Hat\Phi_{JK}(E_J)}$.
Considering the second summand, recall that on $\TU_{JK}$ we have $s_K=s_J\circ \rho_{JK}$, where $\rho_{JK}: \TU_{JK}\to U_{JK}$ is a local diffeomorphism onto an open subset of $U_J$. 
It maps $\TU_{IJK}$ to $\rho_{JK}(\TU_{IJK})  = \TU_{IJ}\cap U_{JK}$ so that, with $v: = \rho_{JK}(u)$, the map $\rd_u\rho_{JK}$ induces an isomorphism 
$\qu{\rT_u \TU_{JK}}{\rT_u \TU_{IJK}}\stackrel{\cong}\to \qu{\rT_v U_J}{\rT_v \TU_{IJ}}$.
Thus the restriction of $\rd_u s_K$ to the second summand induces the isomorphism
$$   
\rd_v s_J\circ \rd_u\rho_{JK}\;: \; \qq{\rT_u \TU_{JK}}{\rT_u \TU_{IJK}}\stackrel{\cong}\longrightarrow \qq{\rT_v U_J}{\rT_v \TU_{IJ}}
\stackrel{\cong}\longrightarrow  \qq{E_J}{\Hat\Phi_{IJ}(E_I)},
$$
where the second isomorphism results from the index condition for $\Hat\Phi_{IJ}$.
Putting this all together, $\rd_u s_K$ induces an isomorphism from $\qu{\rT_u U_K}{\rT_u \TU_{IJK}}$ to 
$$
\qq{E_K}{\Hat\Phi_{JK}(E_J)}\oplus \qq{E_J}{\Hat\Phi_{IJ}(E_I)} \; \cong \;  \qq{E_K}{\Hat\Phi_{IK}(E_I)},
$$
 where in the last step we used the fact that $\Hat\Phi_{JK}: E_J \to E_K$ is the natural inclusion. 
This establishes the isomorphism \eqref{tbc1} and thus completes the proof.  
\end{proof}

\begin{rmk}\label{rmk:coord}\rm  
The composition $ \Hat\Phi_{IJK}: \bK_I\to \bK_K$ induces a coordinate change $\und{ \Hat\Phi}_{IJK}: \ubK_I\to \ubK_K$ 
on the intermediate charts.  This agrees with the composition of the intermediate coordinate changes $\und{ \Hat\Phi}_{IJ}, \und{ \Hat\Phi}_{JK}$ as defined for topological charts in 
 \cite[Lemma~2.2.5]{MW1}.
$\hfill\er$
\end{rmk}

Next, the cocycle conditions from \cite[Definition~2.3.2]{MW1} have direct generalizations.

\begin{defn}  \label{def:cocycle}
Let $\bK_\al$ for $\al = I,J,K$ be Kuranishi charts with $I\subset J\subset K$ and let $\Hat\Phi_{\al\be}:\bK_\al|_{\uU_{\al\be}}\to \bK_\be$ for $(\al,\be) \in \{(I,J), (J,K), (I,K)\}$ be coordinate changes. We say that this triple
$\Hat\Phi_{I J}, \Hat\Phi_{J K}, \Hat\Phi_{I K}$ satisfies the
\begin{itemlist} \item {\bf weak cocycle condition} if $\Hat\Phi_{J K}\circ \Hat\Phi_{I J} \approx \Hat\Phi_{I K}$ are equal on the overlap in the sense
\begin{align}\label{eq:wcocycle}
\rho_{IK} &= \rho_{IJ}\circ \rho_{JK} \quad \mbox{ on } \; \TU_{IK}\cap \rho_{JK}^{-1}(\TU_{IJ}\cap U_{JK});
\end{align}
\item {\bf cocycle condition}
if $\Hat\Phi_{J K}\circ \Hat\Phi_{I J} \subset \Hat\Phi_{I K}$, i.e.\  $\Hat\Phi_{I K}$ extends the composed coordinate change in the sense that \eqref{eq:wcocycle}  holds and
\begin{eqnarray}\label{eq:cocycle}
& \uU_{IJ}\cap \uphi_{IJ}^{-1}(\uU_{JK})\subset  \uU_{IK};
\end{eqnarray}
\item {\bf strong cocycle condition}
if $\Hat\Phi_{J K}\circ \Hat\Phi_{I J} = \Hat\Phi_{I K}$ are equal as coordinate changes, that is if \eqref{eq:wcocycle} 
  holds and
\begin{eqnarray}\label{strong cocycle}
& \uU_{IJ}\cap \uphi_{IJ}^{-1}(\uU_{JK})=  \uU_{IK}.
\end{eqnarray}
\end{itemlist}
 \end{defn}

We stated these last two conditions on the level of the intermediate category because, as we now show, they imply corresponding identities on the level of the Kuranishi atlas.

\begin{lemma}\label{le:compos0}
\begin{enumerate}
\item Condition \eqref{eq:wcocycle} implies
$$
\uphi_{IK} = \uphi_{JK} \circ \uphi_{IJ}\quad  \mbox{ on } \; \uU_{IK}\cap \bigl(\uU_{IJ}\cap \uphi_{IJ}^{-1}(\uU_{JK})\bigr);
$$
\item The cocycle condition \eqref{eq:cocycle} implies that 
$$
\rho_{IK} \; =\; \rho_{IJ}\circ \rho_{JK} \quad \mbox{ on } \;  \rho_{JK}^{-1}(\TU_{IJ}\cap U_{JK})\subset \TU_{IK}.
$$
\item The strong cocycle condition \eqref{strong cocycle} implies that 
$$
\rho_{IK} \; =\; \rho_{IJ}\circ \rho_{JK} \quad \mbox{ on } \;  \rho_{JK}^{-1}(\TU_{IJ}\cap U_{JK})= \TU_{IK}.
$$
\end{enumerate}
\end{lemma}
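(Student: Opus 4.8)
The plan is to deduce each statement in Lemma~\ref{le:compos0} from the corresponding already-proven facts about the intermediate charts, using the fact that the quotient maps $\urho_{IJ}$ are homeomorphisms (Lemma~\ref{le:vep}~(iii)) and that the $\rho_{IJ}$ are $\Ga$-equivariant coverings of these homeomorphisms. For part (i), the point is purely topological: the weak cocycle condition \eqref{eq:wcocycle} says $\rho_{IK}=\rho_{IJ}\circ\rho_{JK}$ wherever both sides are defined, and passing to quotients turns this into the stated equality of the induced maps $\uphi_{\al\be}=\und{\Tphi}_{\al\be}\circ\urho_{\al\be}^{-1}$ on the common domain. I would verify that the domain on which the intermediate identity holds is exactly the image under $\pi$ of the domain in \eqref{eq:wcocycle}, which follows because $\pi$ is open and surjective and the sets involved ($\TU_{IK}$, $\rho_{JK}^{-1}(\TU_{IJ}\cap U_{JK})$) are $\Ga$-invariant. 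Alternatively one can simply cite \cite[Lemma~2.2.5 or 2.3.x]{MW1} for the intermediate coordinate changes, since \eqref{eq:wcocycle} descends to the weak cocycle condition for the $\und{\Hat\Phi}_{\al\be}$.

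For parts (ii) and (iii), the extra input is the inclusion \eqref{eq:cocycle} (resp.\ equality \eqref{strong cocycle}) of intermediate domains, which forces $\TU_{IK}$ to contain (resp.\ equal) the lifted domain of the composite. Concretely: by definition $\TU_{IK}=\pi_K^{-1}(\uphi_{IK}(\uU_{IK}))$ and the lifted domain of the composite $\Hat\Phi_{JK}\circ\Hat\Phi_{IJ}$ is $\pi_K^{-1}\bigl((\uphi_{JK}\circ\uphi_{IJ})(\uU_{IJ}\cap\uphi_{IJ}^{-1}(\uU_{JK}))\bigr)$, and part (i) together with \eqref{eq:cocycle} identifies the latter with $\pi_K^{-1}(\uphi_{IK}(\uU_{IJ}\cap\uphi_{IJ}^{-1}(\uU_{JK})))$, which is contained in $\TU_{IK}$ precisely because $\uU_{IJ}\cap\uphi_{IJ}^{-1}(\uU_{JK})\subset\uU_{IK}$. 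So $\rho_{JK}^{-1}(\TU_{IJ}\cap U_{JK})\subset\TU_{IK}$. On this set $\rho_{IK}$ and $\rho_{IJ}\circ\rho_{JK}$ are both defined, and they agree by the weak cocycle condition \eqref{eq:wcocycle} (which is part of \eqref{eq:cocycle}), giving (ii). For (iii) one repeats the argument with equality throughout: \eqref{strong cocycle} gives $\uU_{IJ}\cap\uphi_{IJ}^{-1}(\uU_{JK})=\uU_{IK}$, hence $\pi_K^{-1}(\uphi_{IK}(\cdots))=\TU_{IK}$, i.e.\ $\rho_{JK}^{-1}(\TU_{IJ}\cap U_{JK})=\TU_{IK}$, and equality of the two composites on this common domain again follows from \eqref{eq:wcocycle}.

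The main obstacle — though it is really a bookkeeping point rather than a genuine difficulty — is keeping straight the various "natural identifications" suppressed in the notation: $\TU_{IJ}$ is a subset of $U_J$ via $\Tphi_{IJ}$, and $\rho_{JK}(\TU_{IJK})$ is identified with a subset of $\TU_{IJ}$ (as in Lemma~\ref{le:compos}~(ii)), so one must check that the domain of definition of $\rho_{IJ}\circ\rho_{JK}$, namely $\rho_{JK}^{-1}(\TU_{IJ}\cap U_{JK})$, is correctly matched with the $\pi_K$-preimage of the intermediate composite domain. This is exactly the computation $\urho_{JK}(\und{\TU}_{IJK})=\und{\TU}_{IJ}\cap\uU_{JK}$ carried out in the proof of Lemma~\ref{le:compos}, so I would reuse that identity rather than redo it. Everything else is formal: $\Ga$-invariance of the relevant sets, openness/surjectivity of $\pi$, and the homeomorphism property of $\urho$ from Lemma~\ref{le:vep}~(iii).
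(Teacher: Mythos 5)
Your proposal is correct and takes essentially the same route as the paper: pass to the intermediate category via the identities $\pi_\al\circ\rho_{\al\be}=\urho_{\al\be}\circ\pi_\be$ and $\uphi_{\al\be}=\urho_{\al\be}^{-1}$, use the preimage descriptions $U_{\al\be}=\pi_\al^{-1}(\uU_{\al\be})$ and $\TU_{\al\be}=\pi_\be^{-1}\bigl(\uphi_{\al\be}(\uU_{\al\be})\bigr)$ together with $\Ga$-invariance, and let the inclusion (resp.\ equality) of intermediate domains from \eqref{eq:cocycle} (resp.\ \eqref{strong cocycle}) drive the inclusion (resp.\ equality) of lifted domains, with agreement of the maps then automatic from \eqref{eq:wcocycle}. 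Your shortcut of reusing $\urho_{JK}(\und{\TU}_{IJK})=\und{\TU}_{IJ}\cap\uU_{JK}$ from Lemma~\ref{le:compos} in place of redoing the set-theoretic chain is the same computation the paper carries out inline in its proof of (ii).
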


\begin{proof}  
By definition we have $\urho_{\al\be} \circ \pi_\be = \pi_\al \circ\rho_{\al\be}$ when $\al\subset \be$, 
so that condition \eqref{eq:wcocycle}  implies $\urho_{IK}= \urho_{IJ}\circ \urho_{JK}$ on 
$\pi_K\bigl(\TU_{IK}\cap \rho_{JK}^{-1}(\TU_{IJ}\cap U_{JK})\bigr)$.
The identity $\uphi_{\al\be} =  \urho_{\al\be}^{-1}$ from Remark~\ref{rmk:change}~(iii) then implies $\uphi_{IK} = \uphi_{JK} \circ \uphi_{IJ}$ on 
\begin{eqnarray*}\label{eq:setss1}  
\urho_{IK}\Bigl(\pi_K\bigl(\TU_{IK}\cap \rho_{JK}^{-1}(\TU_{IJ}\cap U_{JK})\bigr)\Bigr)
&=&
\pi_I \Bigl(\rho_{IK}\bigl(\TU_{IK} \cap \rho_{JK}^{-1}(\TU_{IJ}\cap U_{JK})\bigr)\Bigr)\\
&=&
\pi_I \Bigl(\rho_{IK}(\TU_{IK}) \cap \rho_{IJ} \circ\rho_{JK}\bigl( \rho_{JK}^{-1}(\TU_{IJ}\cap U_{JK})\bigr)\Bigr)\\
&=& \pi_I  \Bigl(U_{IK} \cap \rho_{IJ} (\TU_{IJ}\cap U_{JK})\Bigr)
\\ &=&
\uU_{IK}\cap\bigl(\uU_{IJ}\cap  \urho_{IJ}(\uU_{JK})\bigr)
\\ &=& \uU_{IK}\cap \bigl(\uU_{IJ}\cap \uphi^{-1}_{IJ}(\uU_{JK})\bigr),
\end{eqnarray*}
where the second equality uses the fact that $\rho_{IK} = \rho_{IJ}\circ \rho_{JK}$ on $\TU_{IK} \cap \rho_{JK}^{-1}(\TU_{IJ}\cap U_{JK})$ and the last uses $\urho_{IJ} = \uphi_{IJ}^{-1}$.
This  proves (i).

Using in addition the identities 
$U_{\al\be} = \pi_\al^{-1}(\uU_{\al\be})$  and $\TU_{\al\be} = 
\pi_\be^{-1}\bigl(\uphi_{\al\be}(\uU_{\al\be})\bigr)$ 
the cocycle condition \eqref{eq:cocycle} implies the inclusion claimed in (ii),
\begin{align*}
 \rho_{JK}^{-1}(\TU_{IJ}\cap U_{JK}) 
 &= ( \pi_J \circ \rho_{JK})^{-1}\bigl( \uphi_{IJ}(\uU_{IJ}) \cap  \uU_{JK}  \bigr) \\
 &= ( \uphi_{IJ} \circ \urho_{IK} \circ \pi_K )^{-1}\bigl( \uphi_{IJ}(\uU_{IJ}) \cap  \uU_{JK}  \bigr) \\
 &= ( \urho_{IK} \circ \pi_K )^{-1}\bigl( \uU_{IJ} \cap \uphi_{IJ}^{-1}( \uU_{JK})  \bigr) \; \subset \; \pi_K^{-1}\bigl(  \urho_{IK}^{-1}(\uU_{IK} )\bigr)  \;=\; \TU_{IK} .
 \end{align*}
The proof of (iii) is the same, with the strong cocycle condition implying equality in the second to last step.
\end{proof}

\subsection{Kuranishi atlases} \label{ss:Ks} \hspace{1mm}\\ \vspace{-3mm}

With the notions of Kuranishi charts and coordinate changes with nontrivial isotropy in place, we can now directly extend the notion of smooth Kuranishi atlas from \cite[Definition~6.1.3]{MW2}.
For comparison with the notions of smooth and topological Kuranishi atlas from \cite{MW1,MW2}, see Remark~\ref{rmk:conv}.

\begin{defn}\label{def:Ku}
A {\bf (weak) Kuranishi atlas of dimension $\mathbf d$} on a compact metrizable space $X$ is a tuple
$$
\Kk=\bigl(\bK_I,\Hat\Phi_{I J}\bigr)_{I, J\in\Ii_\Kk, I\subsetneq J}
$$
consisting of a covering family of basic charts $(\bK_i)_{i=1,\ldots,N}$ of dimension $d$
and transition data $(\bK_J)_{|J|\ge 2}$, $(\Hat\Phi_{I J})_{I\subsetneq J}$ for $(\bK_i)_{i=1,\ldots,N}$, where:
\begin{itemlist}
\item
A {\bf covering family of basic charts} for $X$ is a finite collection $(\bK_i)_{i=1,\ldots,N}$ of Kuranishi charts for $X$ whose footprints cover $X=\bigcup_{i=1}^N F_i$.
\item
{\bf Transition data} for a covering family $(\bK_i)_{i=1,\ldots,N}$ is a collection of Kuranishi charts $(\bK_J)_{J\in\Ii_\Kk,|J|\ge 2}$ and coordinate changes $(\Hat\Phi_{I J})_{I,J\in\Ii_\Kk, I\subsetneq J}$ as follows:
\begin{enumerate}
\item
$\Ii_\Kk$ denotes the set of subsets $I\subset\{1,\ldots,N\}$ for which the intersection of footprints is nonempty,
$$
F_I:= \; {\textstyle \bigcap_{i\in I}} F_i  \;\neq \; \emptyset \;.
$$
\item  For each $J\in\Ii_\Kk$ with $|J|\ge 2$, 
$\bK_J$ is a Kuranishi chart for $X$ with footprint $F_J=\bigcap_{i\in J}F_i$,
group  $\Ga_J =\prod_{j\in J} \Ga_j$, and obstruction space
 $ E_J
= {\textstyle \prod_{j\in J}}  E_j$.
Further,  for one element sets $J=\{i\}$ we denote $\bK_{\{i\}}:=\bK_i$.
\item
$\Hat\Phi_{I J} = \bigl(\rho_{IJ}, \rho^\Ga_{IJ},\Hat\phi_{IJ}\bigr)$ is a coordinate change $\bK_{I} \to \bK_{J}$ for every $I,J\in\Ii_\Kk$ with $I\subsetneq J$,
where $\rho^\Ga_{IJ}: \Ga_J\to \Ga_I$ is the natural projection $\prod_{j\in J} \Ga_j \to \prod_{i\in I} \Ga_i$ and
$\Hat\phi_{IJ}:E_I\to E_J$ is the natural inclusion $\prod_{i\in I}  E_j\to \prod_{j\in J}  E_j$.
\end{enumerate}
\end{itemlist}
Moreover, for a weak atlas  we require that the  weak cocycle condition in Definition~\ref{def:cocycle} hold
for every triple $I,J,K\in\Ii_K$ with $I\subsetneq J \subsetneq K$, while for an atlas the cocycle condition must hold for all such triples.
\end{defn}

\begin{rmk}\label{rmk:addit}\rm  
Note that we have built {\bf additivity} in the sense of \cite[Definition~6.1.4]{MW2} into the above definitions.  
Namely, for each $I\in \Ii_\Kk$ the natural embeddings 
$\Hat \phi_{iI}: E_i\to E_I = {\textstyle \prod_{\ell\in I}}  E_\ell $ 
induce the identity isomorphism
\begin{equation}\label{eq:addd}
{\textstyle \prod_{i\in I}} \;\Hat\phi_{iI}: \; {\textstyle \prod_{i\in I}} \; E_i \;\stackrel{\cong}\longrightarrow \; E_I  
= {\textstyle \prod_{\ell\in I}}  E_\ell ,
\end{equation}
and for $I\subset J$ the linear map $\Hat \phi_{IJ}: E_I\to E_J$ is the induced inclusion $\prod_{i\in I} E_i\to \prod_{i\in J} E_i$.
Further, each group $\Ga_I$ is the product $\prod_{i\in I} \Ga_i$ and we use the 
natural projections $\rho^\Ga_{IJ}: \Ga_J\to \Ga_I$  in the group covering maps of the coordinate changes.
Hence, when $I\subset J\subset K$ the projections $\rho^\Ga_{\bullet\bullet}$ and linear inclusions $\Hat\phi_{\bullet\bullet}$ are automatically compatible:
 $$
 \rho^{\Ga}_{IK} = \rho^{\Ga}_{IJ}\circ \rho^{\Ga}_{JK}, \qquad
\Hat\phi_{IK} = \Hat\phi_{JK}\circ \Hat\phi_{IJ}. 
$$
Thus  when $I\subset J$ we will almost always write  $E_I\subset E_J$ for the subspace  
$\phi_{IJ}(E_I)\subset E_J$, and similarly
we
have a natural identification of
$\Ga_J$ with $\Ga_I\times \Ga_{J\less I}$.
$\hfill\er$
\end{rmk}

\begin{rmk}\rm
Although it seems that many
interdependent 
choices are needed in order to construct a
Kuranishi atlas, this is somewhat deceptive.   For example, in the Gromov--Witten case  considered in 
\cite{MW:gw} (see also \cite{Mcn}), the geometric choices involved in the construction of a family of basic charts $(\bK_i)_{i=1,\dots, N}$ essentially induce the transition data as well.
Namely, each basic chart $\bK_i$  is constructed by adding a certain tuple $\vec w_i$ of marked points to the domains of the stable maps $(f,\bz)$, given by the preimages of a fixed hypersurface of $M$ in a fixed set of disjoint disks.
The group $\Ga_i$ acts by permuting these disks, which has a rather nontrivial effect when viewing the chart in a local slice -- in which the first three marked points are fixed.
However, the transition charts $\bK_J$ are constructed very similarly: Elements of the domain $U_J$ consist of stable maps $(f,\bz)$ 
together with $|J|$ sets of added tuples of marked points $(\vec w_j)_{j\in J}$,
each lying in an appropriate set of disks and mapping to certain hypersurfaces.
Each factor $\Ga_j$ of the group $\Ga_J$ acts by permuting the components of the $j$-th set of disks,  leaving the others alone.
Moreover, the covering map $\TU_{IJ}\to U_I$ simply forgets the extra tuples $(\vec w_j)_{j\in J\less I}.$
Thus it is immediate from the construction that the group $\Ga_{J\less I}$ acts freely on the subset $\TU_{IJ}$ of $U_J$, and that the covering map is equivariant in the appropriate sense.
Further, when $I\subset J\subset K$ the compatibility condition
$\rho_{IK} = \rho_{IJ}\circ \rho_{JK}$ holds whenever both sides are defined.  

Furthermore, the stabilization process explained in \cite{MW:gw} allows us to directly work with products of obstruction spaces $E_I: = \prod_{i\in I}  E_i$;  there is no need for a transversality requirement such as Sum Condition II$'$ in \cite[\S4.3]{MW0}. 
In fact, already each $E_i$ is a product of the form $E_i = \prod_{\ga\in \Ga_i}  (E_{0i})_\ga$, on which $\Ga_i$ acts by permutation of the $|\Ga_i|$ copies of a vector space $E_{0i}$.  

Therefore, just as in the case with no isotropy, once given the geometric choices that determine the basic charts, we naturally obtain an additive weak Kuranishi atlas in which the only new choices are those of the domains $\uU_I=\uU_{II}$ and $\uU_{IJ}$  of the transition charts and coordinate changes which are required to intersect the zero set $\us_I^{-1}(0)$ in $\upsi_I^{-1}(F_J)$.
Note that there is no simple hierarchy by which one could organize these choices 
to automatically fulfill the cocycle condition. Hence concrete constructions will usually only satisfy a weak cocycle condition.
However, we show below that any weak (automatically additive) atlas can be ``tamed" so that it satisfies the strong cocycle condition, and hence in particular gives a Kuranishi atlas.
$\hfill\er$
\end{rmk}

Given a (weak) atlas  $\Kk=\bigl(\bK_I,\Hat\Phi_{I J}\bigr)_{I, J\in\Ii_\Kk, I\subsetneq J}$,
we define the associated {\bf intermediate atlas}
$\uKk:=(\bigl(\ubK_I,\und{\Hat\Phi}_{I J}\bigr)_{I, J\in\Ii_\Kk, I\subsetneq J})$ 
to consist of the intermediate charts and coordinate changes.
The next Lemma shows that the intermediate atlas is a (weak) 
topological atlas in the sense of \cite[Definition~3.1.1]{MW1}, 
and that it is 
{\bf filtered} in the sense that there are closed sets $\uEE_{IJ}\subset \uEE_J: =  \und{U_J\times E_J}$ for each $I\subset J$ 
that satisfy the following conditions (c.f.\ \cite[Definition~3.1.3]{MW1})
\begin{enumerate}
\item $\uEE_{JJ}= \uEE_J$ and $\uEE_{\emptyset J} = \im 0_J$ for all $J\in\Ii_\Kk$;
\item 
$\und{\Hat\Phi}_{JK}\bigl(
\pr_J^{-1}(\uU_{JK})\cap
\uEE_{IJ}\bigr) = \uEE_{IK}\cap \pr_K^{-1}(\im \uphi_{JK})$ for all $I,J,K\in\Ii_\Kk$ with
${I\subset J\subsetneq K}$;
\item  $\uEE_{IJ}\cap \uEE_{HJ} = \uEE_{(I\cap H)J}$ for all $I,H,J\in\Ii_\Kk$ with $I, H \subset J$;
\item  $\im \uphi_{IJ}$ is an open subset of $\s_J^{-1}(\uEE_{IJ})$
for all $I,J\in \Ii_\Kk$ with $I\subsetneq J$.
\end{enumerate}

\begin{lemma} \label{le:Ku3}  
Let $\Kk$ be a weak Kuranishi atlas.
Then the intermediate atlas $\uKk$ is a filtered  weak topological Kuranishi atlas,
with filtration $\uEE_{IJ}: = \und{U_J\times \Hat\phi_{IJ}(E_I)}$, using the conventions 
$E_{\emptyset}:=\{0\}$ and $\Hat\phi_{JJ}:=\id_{E_J}$. 
\end{lemma}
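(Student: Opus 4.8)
The plan is to verify, one bullet at a time, the axioms of a filtered weak topological Kuranishi atlas from \cite[Definitions~3.1.1 and~3.1.3]{MW1}, reducing each to the corresponding already-established fact about the intermediate charts and coordinate changes. First I would recall that Lemma~\ref{le:topinterm} already shows each $\ubK_I$ is a topological chart, and that Remark~\ref{rmk:change}~(iii) shows each $\und{\Hat\Phi}_{IJ}$ is a topological coordinate change; together with Remark~\ref{rmk:coord} (compatibility of composition on the intermediate level with the composition defined in \cite[Lemma~2.2.5]{MW1}) and the fact that the weak cocycle condition for $\Kk$ was stated in Definition~\ref{def:cocycle} directly on the intermediate level, one gets that $\uKk$ is a weak topological Kuranishi atlas. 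So the substance is the \emph{filtered} part: checking that the sets $\uEE_{IJ}:=\und{U_J\times\Hat\phi_{IJ}(E_I)}$ satisfy conditions (i)--(iv).

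For condition (i), $\uEE_{JJ}=\und{U_J\times E_J}=\uEE_J$ by the convention $\Hat\phi_{JJ}=\id_{E_J}$, and $\uEE_{\emptyset J}=\und{U_J\times\{0\}}=\im 0_J$ by $E_\emptyset=\{0\}$. For condition (iii), $\Hat\phi_{IJ}(E_I)\cap\Hat\phi_{HJ}(E_H)=\Hat\phi_{(I\cap H)J}(E_{I\cap H})$ because of additivity (Remark~\ref{rmk:addit}): under the identification $E_J=\prod_{j\in J}E_j$ the subspace $\Hat\phi_{IJ}(E_I)$ is exactly the product over the factors indexed by $I$, so intersecting two such coordinate subspaces gives the coordinate subspace indexed by $I\cap H$; taking the diagonal $\Ga_J$-quotient with $U_J$ is compatible with this since the relevant subspaces are $\Ga_J$-invariant. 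For condition (ii), I would use that $\Tphi_{IJ}$ and $\Hat\phi_{IJ}$ are the natural inclusions and $\rho_{JK}$ covers $\uphi_{JK}$: the induced bundle map $\und{\Hat\Phi}_{JK}$ sends $\pr_J^{-1}(\uU_{JK})\cap\und{U_J\times\Hat\phi_{IJ}(E_I)}$ onto $\und{\TU_{JK}\times\Hat\phi_{IJ}(E_I)}\subset\pr_K^{-1}(\im\uphi_{JK})$, and since $\Hat\phi_{IK}=\Hat\phi_{JK}\circ\Hat\phi_{IJ}$ this is precisely $\uEE_{IK}\cap\pr_K^{-1}(\im\uphi_{JK})$; the only point needing care is that $\und{\Hat\Phi}_{JK}$ is a homeomorphism onto its image (Remark~\ref{rmk:change}~(iii)) so that images of closed sets behave correctly, and that $\TU_{JK}$ is the right $\Ga_K$-invariant preimage. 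For condition (iv), $\im\uphi_{IJ}$ is open in $\us_J^{-1}(\uEE_{IJ})$: upstairs, $\TU_{IJ}$ is a $\Ga_J$-invariant submanifold of $U_J$ contained in $s_J^{-1}(\Hat\phi_{IJ}(E_I))$ by \eqref{eq:change2}, and the index condition (equivalently the tangent bundle condition \eqref{tbc}) forces $\TU_{IJ}$ to be \emph{open} in $s_J^{-1}(\Hat\phi_{IJ}(E_I))$ — this is the argument indicated in Remark~\ref{rmk:change}~(i); passing to quotients via the open proper projection $\pi_J$ of Lemma~\ref{le:vep}~(i) and using that $\urho_{IJ}$ is a homeomorphism (Lemma~\ref{le:vep}~(iii)) gives that $\im\uphi_{IJ}=\und{\TU_{IJ}}$ is open in $\us_J^{-1}(\uEE_{IJ})$.

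I expect the main obstacle to be condition (iv), specifically proving that $\TU_{IJ}$ is open in $s_J^{-1}(\Hat\phi_{IJ}(E_I))$ rather than merely a submanifold contained in it. This is a dimension-count plus transversality argument: the index condition says exactly that the normal directions to $\TU_{IJ}$ in $U_J$ map isomorphically under $\rd s_J$ to $E_J/\Hat\phi_{IJ}(E_I)$, so $s_J$ is transverse to $\Hat\phi_{IJ}(E_I)$ along $\TU_{IJ}$ and the preimage $s_J^{-1}(\Hat\phi_{IJ}(E_I))$ has the same dimension as $\TU_{IJ}$ near each of its points; since $\TU_{IJ}$ is a submanifold of full dimension inside it, it is open there. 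A secondary point requiring attention is that all the filtration sets and subspaces in question are $\Ga_J$-invariant, so that forming the diagonal quotient with $U_J$ commutes with the set-theoretic operations (intersection, image under $\und{\Hat\Phi}_{JK}$, taking interior) used in conditions (ii)--(iv); this is where Lemma~\ref{le:vep}~(i),(iii) — openness, properness, and the homeomorphism $\urho_{IJ}$ — does the bookkeeping, exactly as flagged in Remark~\ref{rmk:cover2}. Everything else is a direct translation of the definitions plus the additivity built into Definition~\ref{def:Ku}.
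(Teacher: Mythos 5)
Your proposal follows essentially the same route as the paper: conditions (i) and (iii) fall out of the built-in additivity, (ii) from the $\Ga_K$-quotient of the corresponding upstairs identity, and (iv) from the index condition giving openness of $\im\Tphi_{IJ}$ in $s_J^{-1}(\Hat\phi_{IJ}(E_I))$ (the paper simply quotes \cite[Lemma~5.2.5]{MW2} here, which encapsulates the transversality/dimension-count argument you spell out), with all quotient bookkeeping handled by Lemma~\ref{le:vep}. The one small point you omit is verifying that the filtration sets $\uEE_{IJ}=\und{U_J\times\Hat\phi_{IJ}(E_I)}$ are in fact \emph{closed} in $\uEE_J$, which the paper dispatches by noting $U_J\times\Hat\phi_{IJ}(E_I)$ is closed in $U_J\times E_J$ and the projection to the quotient is a closed map by Lemma~\ref{le:vep}~(i).
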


\begin{proof}    
Lemma~\ref{le:topinterm} and Remark~\ref{rmk:change}~(iii) assert that $\uKk$ consists of topological Kuranishi charts and coordinate changes. The intermediate basic charts cover $X$ since they have the same footprints as the basic charts of $\Kk$, and this also implies that the intermediate transition charts have the prescribed footprints.
Moreover, the weak cocycle condition for $\Kk$ transfers to $\uKk$ by Lemma~\ref{le:compos0}~(i), and the same holds for the cocycle condition since its definition \eqref{eq:cocycle} is in terms of the intermediate domains.

Next, to see that $\uEE_{IJ}$ defines a filtration on $\uKk$, we need
a mild generalization of \cite[Lemma~6.3.1]{MW2}.
First note that $\und{U_J\times \Hat\phi_{IJ}(E_I)}\subset \und{U_J\times E_J}$ is closed since 
$U_J\times \Hat\phi_{IJ}(E_I)\subset U_J\times E_J$ is closed and the projection 
$U_J\times E_J\to \und{U_J\times E_J}$ is 
a closed 
map by Lemma~\ref{le:vep}~(i).
The filtration
property (i) above holds by definition, and property 
(iii) holds because additivity implies
$$
\Hat\phi_{IJ}(E_I)  \; \cap \; \Hat\phi_{HJ}(E_H) \;=\; 
 \Hat\phi_{(I\cap H) J}(E_{I\cap H }) .
 $$
Moreover, because $\und{\Hat\Phi_{JK}}=\und{\phi_{JK}\times\Hat\phi_{JK}}$, property (ii) follows by quotienting  the next identity by the group $\Ga_K$:
\begin{align*}
\Hat\Phi_{JK}\bigl( U_{JK}\times \Hat\phi_{IJ}(E_I)\bigr) \;
&= \; \im\phi_{JK} \times \Hat\phi_{JK}\bigl(\Hat\phi_{IJ}(E_I)\bigr) \;=\;  \im\phi_{JK} \times \Hat\phi_{IK}(E_I) \\
&= \; \bigl(U_K\times \Hat\phi_{IK}(E_I)\bigr) \cap \bigl( \im \phi_{JK} \times E_K \bigr).
\end{align*}

Finally, to check property (iv) we first apply \cite[Lemma~5.2.5]{MW2} to the embedding $\Tphi_{IJ}:\TU_{IJ}\to U_J$, which satisfies the index condition, i.e.\ identifies kernel and cokernel of $\rd s_J$ and $\rd s_I$ (the latter being pulled back with the covering $\rho_{IJ}$). 
It implies that $\im \Tphi_{IJ}$ is an open subset of $s_J^{-1}(E_I)$.
This openness is preserved in the $\Ga_J$ quotient, since Lemma~\ref{le:vep} applies to the projection $s_J^{-1}(E_I)\to \qu{s_J^{-1}(E_I)}{\Ga_J}= \s_J^{-1}(\und{U_J\times E_I})=\s_J^{-1}(\uEE_{IJ})$, which maps $\im \phi_{IJ}$ to $\im \uphi_{IJ}$.
\end{proof}

If $\Kk$ is a Kuranishi atlas, then the topological atlas  $\uKk$ also satisfies the cocycle conditions, and 
hence by \cite[Lemma~2.3.7]{MW1} there is an
{\bf intermediate domain category} $\bB_{\uKk}$ with objects 
$
\Obj_{\bB_{\uKk}} : = {\textstyle  \bigsqcup_{I\in \Ii_\Kk}} \uU_I
$
equal to the disjoint  union of the intermediate domains, and  morphisms
$$
\Mor_{\bB_{\uKk}} : = {\textstyle  \bigsqcup_{I\subset J}} \uU_{IJ}
$$
given by the 
intermediate
coordinate changes $\uphi_{IJ}: \uU_{IJ}\to \uU_J$, where 
the identity maps $\uphi_{II}$ on $\uU_{II}=\uU_I$ are included.
Thus the source and target maps are
$$
s\times t: \;\uU_{IJ} \to \uU_I\times \uU_J\subset \Obj_{\bB_{\uKk}}\times \Obj_{\bB_{\uKk}},\quad
(I,x)\mapsto \bigl( (I,x), (J,\uphi_{IJ}(x))\bigr).
$$
The following gives the analogous categorical interpretation for the Kuranishi atlas itself.

\begin{defn}\label{def:catKu}
Given a  Kuranishi atlas $\Kk$ we define its {\bf domain category} $\bB_\Kk$ to consist of
the space of objects 
$$
\Obj_{\bB_\Kk}:=  \bigsqcup_{I\in \Ii_\Kk} U_I \ = \ \bigl\{ (I,x) \,\big|\, I\in\Ii_\Kk, x\in U_I \bigr\}
$$
and the space of morphisms
$$
\Mor_{\bB_\Kk}:= \bigsqcup_{I,J\in \Ii_\Kk, I\subset J} \TU_{IJ}\times \Ga_I \ = \
 \bigl\{ (I,J,y,\gamma) \,\big|\, I\subset J, y\in \TU_{IJ}, \gamma\in \Ga_I \bigr\}.
$$
Here we denote $\TU_{II}:= U_I$ for $I=J$, and for $I\subsetneq J$ use
the lifted domain $\TU_{IJ}\subset U_J$ of the restriction $\bK_I|_{\uU_{IJ}}$ to $F_J$
that is part of the coordinate change $\Hat\Phi_{IJ} : \bK_I|_{\uU_{IJ}}\to \bK_J$.
Source and target of these morphisms are given by
\begin{equation}\label{eq:Bcomp}
(I,J,y,\gamma)\in \Mor_{\bB_\Kk}\bigl((I,\ga^{-1} \rho_{IJ}(y)), \ (J,\Tphi_{IJ}(y))\bigr),
\end{equation}
where we denote $\Tphi_{II} = \id$.
Composition\footnote
{
Note that we write compositions in the categorical ordering here.
Moreover, recall that $\Tphi_{JK}:\TU_{JK}\to U_K$ is the canonical inclusion of the subset $\TU_{JK}\subset U_K$.
We then identify $z =\Tphi_{IK}^{-1}(\Tphi_{JK}(z))$, since 
composability of the morphisms implies $z\in \rho_{JK}^{-1}( \TU_{IJ} \cap U_{JK})$
and the cocycle condition ensures that  $\rho_{JK}^{-1}( \TU_{IJ} \cap U_{JK})$ is contained in $\TU_{IK}$, where both are considered as subsets of $U_K$.
} 
is defined by
$$
\bigl(I,J,y, \ga\bigr)\circ \bigl(J,K,z,\de\bigr)
:= \bigl(I,K,z = \Tphi_{IK}^{-1}(\Tphi_{JK}(z)),
\rho^\Ga_{IJ}(\de) \ga \bigr)
$$
whenever $\de^{-1} \rho_{JK}(z)=\Tphi_{IJ}(y)$.
\MS

The {\bf obstruction category} $\bE_\Kk$ is defined in complete analogy to $\bB_\Kk$ to consist of the spaces of objects $\Obj_{\bE_\Kk}:=\bigsqcup_{I\in\Ii_\Kk} U_I\times E_I$ and morphisms
$$
\Mor_{\bE_\Kk}: = {\textstyle \bigsqcup}_{I\subset J, I,J\in\Ii_\Kk} \TU_{IJ}\times E_I\times \Ga_I,
$$
with source and target maps
$$
 (I,J,y, e,\ga) \mapsto \bigl(I,\ga^{-1}\rho_{IJ}(y), \ga^{-1} e\bigr) , \qquad  (I,J,y,e,\ga) \mapsto \bigr(J,\Tphi_{IJ}(y),\Hat\phi_{IJ}(e)),
$$
and composition defined by
$$
\bigl(I,J,y, e, \ga\bigr)\circ \bigl(J,K,z,f,\de\bigr)
:= \bigl(I,K, \Tphi_{IK}^{-1}(\Tphi_{JK}(z)),f, \rho^\Ga_{IJ}(\de) \ga \bigr)
$$
for any $I\subset J \subset K$ and $(y,e,\ga)\in \TU_{IJ}\times E_I\times \Ga_I, (z,f,\de)\in  \TU_{JK}\times E_J\times \Ga_J$ such that $\rho^\Ga_{IJ}(\de^{-1}) \rho_{JK}(z)=\Tphi_{IJ}(y)$
and $\de^{-1}f = e$.
\end{defn}

\begin{lemma}\label{le:Kcat}  
If $\Kk$ is a Kuranishi atlas, then 
the categories $\bB_{\Kk}, \bE_{\Kk}$  are well defined. 
\end{lemma}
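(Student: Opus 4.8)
The plan is to verify directly that both $\bB_\Kk$ and $\bE_\Kk$ satisfy the category axioms: that the stated source and target maps are well defined, that composition is well defined and takes values in the morphism space with the correct source and target, that composition is associative, and that the tuples $(I,I,x,\id)$ are identity morphisms. For $\bB_\Kk$ essentially the only point that uses the hypothesis that $\Kk$ is an atlas (rather than merely a weak atlas) is the well-definedness of composition, since the composition formula involves the identification $z = \Tphi_{IK}^{-1}(\Tphi_{JK}(z))$; I expect this to be the main obstacle. The case of $\bE_\Kk$ will then follow by the same argument together with the extra bookkeeping of the obstruction component and the linear maps $\Hat\phi_{IJ}$.

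First I would handle composition in $\bB_\Kk$. Given composable morphisms $(I,J,y,\ga)$ and $(J,K,z,\de)$, the composability condition $\de^{-1}\rho_{JK}(z) = \Tphi_{IJ}(y)$ says that $\rho_{JK}(z)$ lies in the $\Ga_J$-orbit of $\Tphi_{IJ}(y)\in\TU_{IJ}$; since $\TU_{IJ}$ is $\Ga_J$-invariant this gives $z\in\rho_{JK}^{-1}(\TU_{IJ}\cap U_{JK})$, all sets being regarded as subsets of $U_K$. Now Lemma~\ref{le:compos0}~(ii), applied to the triple $I\subset J\subset K$ via the cocycle condition~\eqref{eq:cocycle}, gives $\rho_{JK}^{-1}(\TU_{IJ}\cap U_{JK})\subset\TU_{IK}$ together with $\rho_{IK} = \rho_{IJ}\circ\rho_{JK}$ on this set. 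Hence $z\in\TU_{IK}$, the identification $z = \Tphi_{IK}^{-1}(\Tphi_{JK}(z))$ makes sense, and $(I,K,z,\rho^\Ga_{IJ}(\de)\ga)$ is a morphism of $\bB_\Kk$ (its group component lies in $\Ga_I$ since $\rho^\Ga_{IJ}:\Ga_J\to\Ga_I$). I would then check that the target of the composite, $(K,\Tphi_{IK}(z)) = (K,\Tphi_{JK}(z))$, agrees with the target of $(J,K,z,\de)$, and that its source agrees with the source of $(I,J,y,\ga)$: using the $\rho^\Ga_{IJ}$-equivariance of the covering $\rho_{IJ}$ from Definition~\ref{def:cover} and the composability relation, one computes $\rho_{IK}(z) = \rho_{IJ}(\rho_{JK}(z)) = \rho_{IJ}(\de\cdot\Tphi_{IJ}(y)) = \rho^\Ga_{IJ}(\de)\cdot\rho_{IJ}(y)$, so that $(\rho^\Ga_{IJ}(\de)\ga)^{-1}\rho_{IK}(z) = \ga^{-1}\rho_{IJ}(y)$, as required.

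Associativity I would then verify for composable $(I,J,y,\ga)$, $(J,K,z,\de)$, $(K,L,w,\eps)$: both iterated composites are defined because the intermediate matching conditions are exactly those produced by the source/target computation above, and both equal $(I,L,w',\,\cdot\,)$ with $w' = \Tphi_{IL}^{-1}(\Tphi_{KL}(w))$, since $\Tphi_{IL},\Tphi_{JL},\Tphi_{KL}$ are the inclusions of subsets of $U_L$ and the needed containments follow by iterating Lemma~\ref{le:compos0}~(ii). The group components agree because additivity (Remark~\ref{rmk:addit}) gives $\rho^\Ga_{IK} = \rho^\Ga_{IJ}\circ\rho^\Ga_{JK}$, whence $\rho^\Ga_{IK}(\eps)\,\rho^\Ga_{IJ}(\de)\,\ga = \rho^\Ga_{IJ}(\rho^\Ga_{JK}(\eps)\,\de)\,\ga$. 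For the unit axioms one checks that $(I,I,x,\id)$ has source and target $(I,x)$ (because $\rho_{II} = \Tphi_{II} = \id$ and $\rho^\Ga_{II} = \id$) and is a two-sided identity, directly from the composition formula and $\rho^\Ga_{IJ}(\id) = \id$. Finally, for $\bE_\Kk$ the first, third and fourth components transform exactly as in $\bB_\Kk$, the obstruction component is unchanged under composition and transformed by $\Hat\phi_{IJ}$ under the target map, and the required compatibility $\Hat\phi_{IK} = \Hat\phi_{JK}\circ\Hat\phi_{IJ}$ is again part of additivity, so no new difficulty arises. (One cannot simply transport the category structure from the intermediate category $\bB_{\uKk}$ of Lemma~\ref{le:Ku3}, which is a category by \cite[Lemma~2.3.7]{MW1}, because its morphism spaces $\uU_{IJ}$ remember strictly less than $\TU_{IJ}\times\Ga_I$; hence the direct verification above is needed.)
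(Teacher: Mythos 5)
Your proposal is correct and follows essentially the same route as the paper's proof: use Lemma~\ref{le:compos0}~(ii) (the cocycle condition) to show that the third coordinate $z$ of the composite lies in $\TU_{IK}$, then verify by direct computation — using the $\rho^\Ga_{IJ}$-equivariance of $\rho_{IJ}$ and $\Ga_J$-invariance of $\TU_{IJ}$ — that source and target match, and check associativity via $\rho^\Ga_{IK} = \rho^\Ga_{IJ}\circ\rho^\Ga_{JK}$. The concluding remark about why one cannot simply transport the structure from $\bB_{\uKk}$ is a nice piece of context that the paper leaves implicit, but it does not change the substance of the argument.
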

\begin{proof}  
We must check that the composition of morphisms in $\bB_{\Kk}$ is well defined, 
has identities, 
and is associative; the proof for $\bE_\Kk$ is analogous.
We begin by checking that $z=\Tphi_{IK}^{-1}(\Tphi_{JK}(z))$ lies in the lifted domain $\TU_{IK}$ of $\Hat\Phi_{IK}$.
For that purpose we drop the natural inclusions $\Tphi_{* *}$ from the notation and
note that the composition $\bigl(I,J,y,\ga\bigr)\circ \bigl(J,K,z,\de\bigr)$ is defined only when the target of $\bigl(I,J,y,\ga\bigr)$ equals the source of $\bigl(J,K,z,\de\bigr)$; i.e.\ when $y = \de^{-1} \rho_{JK}(z)$.
So the cocycle condition in Lemma~\ref{le:compos0}~(ii) implies that $z\in \rho_{JK}^{-1}\bigl(\de  y\bigr)$ is contained in 
$\rho_{JK}^{-1}\bigl(\TU_{IJ}\cap U_{JK}\bigr)\subset \TU_{IK}$, as claimed.
This means that $(I,K,z,\rho^\Ga_{IJ}(\de) \ga)$ is a well defined morphism of $\bB_\Kk$. Its source is 
$$
\bigl(\rho^\Ga_{IJ}(\de) \ga \bigr)^{-1}\rho_{IK}(z)
= 
\ga^{-1} \rho^\Ga_{IJ}(\de)^{-1} \rho_{IJ} ( \de y )
=
\ga^{-1} \rho_{IJ}(y),
$$
which coincides with the source of $\bigl(I,J,y,\ga\bigr)$ as required.
Finally, the target of the composed morphism, $z=\Tphi_{IK}\bigl(\Tphi_{IK}^{-1}(\Tphi_{JK}(z)\bigr)$ coincides with the target $\Tphi_{JK}(z)$ of $\bigl(J,K,z,\de\bigr)$.
This shows that composition is well defined.
The identity morphisms are given by $\bigl(I,I,x,\id\bigr)$ for all $x\in U_{II}:=U_I$.
To check associativity we consider $I\subset J \subset K\subset L$ and suppose that the three morphisms
$\bigl(I,J,y,\ga\bigr), \bigl(J,K,z,\de\bigr),\bigl(K,L,w,\si \bigr)$ are composable. Then we have
\begin{align*}
\bigl(I,J,y,\ga\bigr)\circ \Bigl(\bigl(J,K,z,\de\bigr) \circ \bigl(K,L,w,\si)\bigr) \Bigr) 
&\;=\;
\bigl(I,J,y,\ga\bigr)\circ \bigl(J,L,w, \rho^\Ga_{JK}(\si)\de \bigr)\\
&\;=\;
\bigl(I,L,w,   \rho^\Ga_{IJ}\bigl(\rho^\Ga_{JK}(\si) \de \bigr)  \ga\bigr),
\end{align*}
and associativity follows from comparing this expression with
\begin{align*}
\Bigl( \bigl(I,J,y,\ga\bigr)\circ \bigl(J,K,z,\de\bigr) \Bigr) \circ \bigl(K,L,w,\si\bigr)
&\;=\;
\bigl(I,K,z, \rho^\Ga_{IJ}(\de)\ga \bigr) \circ \bigl(K,L,w,\si\bigr)\\
&\;=\;
\bigl(I,L,w, \rho^\Ga_{IK}(\si) \rho^\Ga_{IJ}(\de)  \ga \bigr).
\end{align*}
This completes the proof.
\end{proof}

For the rest of this subsection we will make the standing assumption that $\Kk$ is a Kuranishi atlas, 
i.e.\ satisfies the cocycle condition (not just the weak cocycle condition).
Given the categorical interpretation of domains and obstruction spaces of Kuranishi charts, we can now express the bundles, sections, and footprint maps as functors.

\begin{itemlist}
\item
The obstruction category $\bE_\Kk$ is a bundle over $\bB_\Kk$ in the sense that there is a functor
$\pr_\Kk:\bE_\Kk\to\bB_\Kk$ that is given on objects and morphisms by projection 
$(I,x,e)\mapsto (I,x)$ and $(I,J,y,e,\ga)\mapsto(I,J,y,\ga)$.

\item
The  sections $s_I$ induce a smooth section of this bundle, i.e.\ a functor  $\s_\Kk:\bB_\Kk\to \bE_\Kk$ which acts  
smoothly on the spaces of objects and morphisms, and whose composite with the projection $\pr_\Kk: \bE_\Kk \to \bB_\Kk$ is the identity. More precisely, $\s_\Kk$ is given by $(I,x)\mapsto (I,x, s_I(x))$ on objects and by $(I,J,y,\ga)\mapsto (I,J,y, s_I(y),\ga)$ on morphisms.
\item
The zero sections also fit together to give a functor $0_\Kk: \bB_\Kk\to \bE_\Kk$
given by $(I,x)\mapsto (I,x, 0)$ on objects and by $(I,J,y,\ga)\mapsto (I,J,y,0,\ga)$ on morphisms.

\item
The footprint maps $\psi_I$ give rise to a surjective functor $\psi_\Kk: \s_\Kk^{-1}(0):=\bigsqcup_{I\in\Ii_\Kk} s_I^{-1}(0) \to \bX$ 
to the category $\bX$ with object space $X$ and trivial morphism spaces.
It is given by $(I,x)\mapsto \psi_I(x)$ on objects and by $(I,J,y,\ga)\mapsto {\rm id}_{\psi_J(\Tphi_{IJ}(y))} = {\rm id}_{\psi_I(\ga^{-1} \rho_{IJ}(y))}$ on morphisms.
\end{itemlist}

As in \cite{MW1}, we denote by $|\Kk|$ resp.\  $|\uKk|$ the {\bf realization} of the category $\bB_\Kk$ resp.\ $\bB_\uKk$.
This is the topological space obtained as the quotient of the object space by the equivalence relation generated by the morphisms.
The next Lemma fits the quotient maps $\pi_\Kk: \Obj_{\bB_\Kk} \to |\Kk|, \; (I,x)\mapsto [I,x]$ and
$\pi_\uKk: \Obj_{\bB_\uKk} \to |\uKk|, \; (I,\ux)\mapsto [I,\ux]$
into a commutative diagram that will allow us to identify the realizations $|\Kk|\cong |\uKk|$ as topological spaces.

\begin{lemma}\label{le:und}   
If $\Kk$ is a Kuranishi atlas, then there is a functor
$\rho_\Kk: \bB_\Kk\to \bB_\uKk$ that is given on objects by the
quotient maps $U_I \to \uU_I, x\mapsto \ux$,  and on morphisms by the group coverings $\rho_{IJ}$ together with a quotient,
$$
 \TU_{IJ}\times \Ga_I \; \to \;  \uU_{IJ}, \qquad
 \bigl(I,J, y, \ga\bigr) \; \mapsto \; \bigl(I,J,\ul{\rho_{IJ}(y)}\bigr) .
$$
It induces a homeomorphism $|\rho_\Kk| : |\Kk| \to |\uKk|$ between the realizations that fits into a commutative diagram
\[
\xymatrix{
 \Obj_{\bB_\Kk}   \ar@{->}[d]^{\pi_\Kk} \ar@{->}[r]^{\rho_\Kk}   & \Obj_{\bB_\uKk} \ar@{->}[d]^{\pi_\uKk}   \\
|\Kk| \ar@{->}[r]^{|\rho_\Kk|}  & |\uKk| .
}
\]
\end{lemma}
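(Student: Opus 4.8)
The plan is to proceed in three stages: first verify that $\rho_\Kk$ is a well-defined functor; second identify $|\Kk|$ and $|\uKk|$ with quotients of $\bigsqcup_I U_I$ and $\bigsqcup_I\uU_I$ by the equivalence relations generated by their morphisms; and third compare these two quotients, using that the projections $\pi_I:U_I\to\uU_I$ are open, continuous, proper surjections (Lemma~\ref{le:vep}~(i)).

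For functoriality, the formula $(I,J,y,\ga)\mapsto(I,J,\ul{\rho_{IJ}(y)})$ lands in $\Mor_{\bB_\uKk}=\bigsqcup_{I\subset J}\uU_{IJ}$ since $\rho_{IJ}(y)\in U_{IJ}=\pi_I^{-1}(\uU_{IJ})$. Compatibility with source and target is immediate from the identity $\uphi_{IJ}=\und{\Tphi}_{IJ}\circ\urho_{IJ}^{-1}$ of Remark~\ref{rmk:change}~(iii), the fact that $\ga^{-1}\rho_{IJ}(y)$ and $\rho_{IJ}(y)$ have the same image in $\uU_I$, and that $\urho_{IJ}$ is a homeomorphism onto $\uU_{IJ}$ (Lemma~\ref{le:vep}~(iii)), which in particular gives $\uphi_{IJ}(\ul{\rho_{IJ}(y)})=\ul{\Tphi_{IJ}(y)}$. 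For composition one invokes Lemma~\ref{le:compos0}~(ii): if $(I,J,y,\ga)$ and $(J,K,z,\de)$ are composable then $\rho_{JK}(z)=\de\,\Tphi_{IJ}(y)$, hence $\rho_{IK}(z)=\rho_{IJ}(\rho_{JK}(z))=\rho^\Ga_{IJ}(\de)\,\rho_{IJ}(y)$, so $\ul{\rho_{IK}(z)}=\ul{\rho_{IJ}(y)}$; this both shows that the image morphisms are composable in $\bB_\uKk$ and that the composite there equals the image of the composite in $\bB_\Kk$. Identities are preserved since $\rho_{II}=\id$. Being a functor, $\rho_\Kk$ descends to a continuous map $|\rho_\Kk|:|\Kk|\to|\uKk|$ (the realizations carry the quotient topology), and the square commutes by construction on objects.

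It remains to show $|\rho_\Kk|$ is a homeomorphism. Write $q:=\bigsqcup_I\pi_I:\bigsqcup_I U_I\to\bigsqcup_I\uU_I$, let $R_\Kk$ be the equivalence relation on $\bigsqcup_I U_I$ generated by the morphisms of $\bB_\Kk$, and $R_\uKk$ the one on $\bigsqcup_I\uU_I$ generated by the morphisms of $\bB_\uKk$, so $|\Kk|=(\bigsqcup_I U_I)/R_\Kk$ and $|\uKk|=(\bigsqcup_I\uU_I)/R_\uKk$. Two observations suffice. First, the stabilizer morphisms $(I,I,x,\ga)$ identify $x$ with $\ga x$, so every fiber of $q$ (a $\Ga_I$-orbit) lies in a single $R_\Kk$-class. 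Second, the generating relations correspond under $q$: each morphism of $\bB_\Kk$ maps under $\rho_\Kk$ to a morphism of $\bB_\uKk$, and conversely each generator $\ux\sim\uphi_{IJ}(\ux)$ of $R_\uKk$ with $\ux\in\uU_{IJ}$ is the $q\times q$-image of a generator $\rho_{IJ}(y)\sim\Tphi_{IJ}(y)$ of $R_\Kk$, for any $y\in\TU_{IJ}$ with $\ul{\rho_{IJ}(y)}=\ux$, which exists because $\urho_{IJ}$ is onto $\uU_{IJ}$ and $\pi_J$ is onto the quotient $\und{\TU}_{IJ}$, and because $\uphi_{IJ}(\ul{\rho_{IJ}(y)})=\ul{\Tphi_{IJ}(y)}$. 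Together these two facts imply that $(q\times q)(R_\Kk)$ is an equivalence relation containing every generator of $R_\uKk$ and contained in $R_\uKk$, hence equals $R_\uKk$; the standard bookkeeping for quotients by surjections then shows that the map $|\rho_\Kk|$ induced by $q$ is a bijection. Finally $|\rho_\Kk|$ is open: for open $W\subset|\Kk|$ one checks, using the same two facts, that $\pi_\uKk^{-1}\bigl(|\rho_\Kk|(W)\bigr)=q\bigl(\pi_\Kk^{-1}(W)\bigr)$, which is open because $\pi_\Kk^{-1}(W)$ is open and $q$ is open by Lemma~\ref{le:vep}~(i). A continuous open bijection is a homeomorphism.

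I expect the main obstacle to be the bookkeeping in the bijectivity step: showing that $(q\times q)(R_\Kk)=R_\uKk$ and that, combined with the collapsing of $q$-fibers into $R_\Kk$-classes, this forces $|\rho_\Kk|$ to be injective. The functoriality verification and the openness argument are then routine, given Remark~\ref{rmk:change}~(iii), Lemma~\ref{le:compos0}, and Lemma~\ref{le:vep}~(i).
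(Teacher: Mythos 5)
Your proposal is correct and follows essentially the same approach as the paper: verify functoriality, show $|\rho_\Kk|$ is bijective because $\rho_\Kk$ is surjective on both objects and morphisms, and deduce openness from the openness of the projections $\pi_I$ (Lemma~\ref{le:vep}~(i)). The main difference is that you unpack the injectivity argument more explicitly — the paper's one-line claim that ``surjective on morphisms'' gives injectivity really does require your additional observation that each fiber of $q$ (a $\Ga_I$-orbit) collapses into a single $R_\Kk$-class, since without that one could not paste together the liftings of a zigzag of morphisms in $\bB_\uKk$; your version makes this intermediate step, and the verification that $(q\times q)(R_\Kk)$ is a genuine equivalence relation, visible rather than implicit.
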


\begin{proof}  
To see that $\rho_\Kk$ is a functor, recall that $(y,\ga)\in \TU_{IJ}\times \Ga_I$ represents a morphism from $\ga^{-1}\rho_{IJ}(y)$ to $y\in U_J$. On the other hand, $\und{\rho_{IJ}(y)}=\urho_{IJ}(\uy)\in \uU_{IJ}$ represents a morphism from $\und{\rho_{IJ}(y)}=\und{\ga^{-1}\rho_{IJ}(y)}$ to $\uphi_{IJ}\bigl(\und{\rho_{IJ}(y)}\bigr) = \uy$, which shows compatibility of $\rho_\Kk$ with source and target maps. Compatibility with composition as in \eqref{eq:Bcomp} follows from $\urho_{IK}(\uz)=\urho_{IJ}(\uy)$ when $\uy= \urho_{JK}(\uz)$.

Next, any functor such as $\rho_\Kk$ induces a map $|\rho_\Kk|$ between the realizations that is defined exactly by the above commutative diagram.
The map $|\rho_\Kk|$ is surjective because the functor $\rho_\Kk$ is surjective on the level of objects. It is injective because $\rho_\Kk$ is surjective on the level of morphisms.

To check that $|\rho_\Kk|$ is open and continuous note that $|\rho_\Kk|(U)=V$ is equivalent to 
$\rho_\Kk^{-1}\bigl( \pi_\Kk^{-1}(U)\bigr) = \pi_\uKk^{-1}(V)$. 
Since $\rho_\Kk$ is continuous and open by Lemma~\ref{le:vep}~(i), and $|\Kk|,|\uKk|$ are equipped with the quotient topologies, the openness of $U\subset |\Kk|$, $\pi_\Kk^{-1}(U)$, $\pi_\uKk^{-1}(V)$, and $V\subset |\uKk|$ are all equivalent. This proves that $|\rho_\Kk|$ is a homeomorphism.
\end{proof}

\begin{rmk}\rm  
\rm (i)   
If $\Kk$ is a Kuranishi atlas with trivial isotropy groups $\Ga_I=\{\id\}$, then the intermediate atlas $\uKk$ has the exact same object space and naturally diffeomorphic morphism spaces, only the direction of the maps in the coordinate changes are reversed from $\rho_{IJ}:\TU_{IJ} \to U_{IJ}\subset U_I$ to $\uphi_{IJ}=\rho_{IJ}^{-1}:U_{IJ} \to \TU_{IJ}\subset U_J$.
In this special case, $\uKk$ is a Kuranishi atlas in the sense of \cite{MW2}, and Lemma~\ref{le:und} identifies the atlases and their realizations.

\NI
\rm (ii)
In general, the spaces of objects and morphisms of the intermediate category are orbifolds, and there is at most one morphism between any pair of objects. 
However, just as in the case of trivial isotropy, we do not attempt to make this category into a groupoid by formally inverting the morphisms 
and then adding all resulting composites, since doing so would in general give components of the morphism space 
without
orbifold structure; c.f.\ \cite[Remark~6.1.7]{MW2}.
This objection does not apply if all the obstruction spaces are trivial. It is shown in \cite{Mcn,Mcorb} 
that every such atlas can be completed to a groupoid without changing its realization.
$\hfill\er$
\end{rmk}

In complete analogy to Lemma~\ref{le:und}, the obstruction categories $\bE_\Kk$ and $\bE_\uKk$ of the Kuranishi atlas $\Kk$ and the intermediate atlas $\uKk$ also fit into a commutative diagram that identifies their realizations $ |\bE_\Kk|\cong  |\bE_\uKk|$.
Moreover, these two diagrams also intertwine the section functors $\s_\Kk, \s_\uKk$ and their realizations:
\begin{equation}\label{eq:master}
\xymatrix{
& \Obj_{\bB_\Kk} \ar@{->}[l]_{\rho_\Kk}   \ar@{->}[d]^{\pi_{\Kk}} \ar@{->}[r]^{\s_{\Kk}}  & 
\Obj_{\bE_\Kk}   \ar@{->}[d]^{\pi_{\bE_\Kk}} \ar@{->}[r]  & \Obj_{\bE_\uKk} \ar@{->}[d]^{\pi_{\bE_\uKk}} \ar@{<-}[r]^{\s_{\uKk}}  
& \Obj_{\bB_\uKk}   \ar@{->}[d]^{\pi_{\uKk}}  \ar@{<-}[r]^{\rho_\Kk}  & \\
& |\Kk|  \ar@{->}[l]_{|\rho_\Kk|}\ar@{->}[r]^{|\s_{\Kk}|}  & |\bE_\Kk| \ar@{->}[r]  & |\bE_\uKk| 
& |\uKk| \ar@{->}[l]_{|\s_{\uKk}|}  \ar@{<-}[r]^{|\rho_\Kk|} & 
}
\end{equation}
There are analogous diagrams for the projection functors $\pr_\Kk, \pr_\uKk$ and zero sections $0_\Kk, 0_\uKk$, which identify the induced maps between the realizations as stated below.

\begin{lemma} \label{le:realization} 
Let $\Kk$ be a Kuranishi atlas. 
\begin{enumilist}
\item
The functors ${\rm pr}_\Kk:\bE_\Kk\to\bB_\Kk$ and ${\pr}_\uKk: \ubE_\Kk\to \ubB_\Kk$ induce the same continuous map
$$
|{\rm pr}_\Kk|:|\bE_\Kk| \to |\Kk|,
$$
which we call the {\bf obstruction bundle} of $\Kk$, although its fibers generally do not have the structure of a vector space.  
\item
The zero sections $0_\Kk:\bB_\Kk\to \bE_\Kk$, $0_\uKk:\bB_\uKk\to \bE_\uKk$ 
as well as the
section functors $\s_\Kk:\bB_\Kk\to \bE_\Kk$, $\s_\uKk:\bB_\uKk\to \bE_\uKk$ 
induce the same continuous maps
$$
|0_\Kk|\cong |0_\uKk| \, : \;  |\Kk|\to |\bE_\Kk| ,  \qquad  |\s_\Kk|\cong |\s_\uKk| \, : \;  |\Kk|\to |\bE_\Kk|,
$$
which are sections in the sense that
$|\pr_\Kk|\circ|0_\Kk| = {\rm id}_{|\Kk|} = |\pr_\Kk|\circ|\s_\Kk|$. 
\item[\rm (iii)]
There is a natural homeomorphism from the realization of the subcategory $\s_\Kk^{-1}(0)$
to the zero set of $|\s_\Kk|$, with the relative topology induced from $|\Kk|$,
$$
\bigr| \s_\Kk^{-1}(0)\bigr| \;=\; \quotient{\s_\Kk^{-1}(0)}{\sim}
\;\overset{\cong}{\longrightarrow}\;
|\s_\Kk|^{-1}(|0_\Kk|) \,:=\; \bigl\{[I,x] \,\big|\, |\s_\Kk|([I,x])= |0_\Kk|([I,x])  \bigr\}  \;\subset\; |\Kk| .
$$
\end{enumilist}
\end{lemma}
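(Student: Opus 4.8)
The plan is to prove all three parts by reducing everything to the corresponding statements for the intermediate atlas $\uKk$ via the commutative diagrams, using that $\uKk$ is a (filtered, weak) topological Kuranishi atlas by Lemma~\ref{le:Ku3} so that the results of \cite{MW1} apply directly to it, and that the vertical projection maps $\pi$ are quotients by the finite groups $\Ga_I$, hence open, closed and proper by Lemma~\ref{le:vep}~(i). For (i), the key point is that the bundle projection functor $\pr_\Kk$ fits into the same kind of square as $\rho_\Kk$ does in Lemma~\ref{le:und}: on objects it intertwines the quotient maps $U_I\times E_I\to\uEE_I$ and $U_I\to\uU_I$, and on morphisms it is compatible with $\rho_\Kk$ and with the morphism-level quotient. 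I would first invoke the obstruction-category analogue of Lemma~\ref{le:und} (which the paragraph preceding this lemma already asserts) to get a homeomorphism $|\bE_\Kk|\cong|\bE_\uKk|$ and a commuting square, then note that the two functors $\pr_\Kk:\bE_\Kk\to\bB_\Kk$ and $\pr_\uKk:\bE_\uKk\to\bB_\uKk$ commute with these identifications, so the induced maps on realizations agree; continuity is automatic since realization maps of functors between topological categories are continuous. The remark that fibers need not be vector spaces is just an observation, no proof needed.

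**Part (ii).** For the zero section $0_\Kk$ and the section $\s_\Kk$, the argument is identical in structure: each is a functor $\bB_\Kk\to\bE_\Kk$, and diagram~\eqref{eq:master} exhibits $\s_\Kk$ (and similarly $0_\Kk$) as intertwining $\rho_\Kk$ with the analogous functor $\s_\uKk$ on the intermediate atlas. So $|\s_\Kk|$ and $|\s_\uKk|$ are the same map $|\Kk|\cong|\uKk|\to|\bE_\Kk|\cong|\bE_\uKk|$ after applying the homeomorphisms of Lemma~\ref{le:und} and its obstruction-category analogue. Continuity again follows formally. The section property $|\pr_\Kk|\circ|0_\Kk|=\id_{|\Kk|}=|\pr_\Kk|\circ|\s_\Kk|$ follows by applying the realization functor to the identities of functors $\pr_\Kk\circ 0_\Kk=\id_{\bB_\Kk}=\pr_\Kk\circ\s_\Kk$, which hold because they hold chart-by-chart ($\pr_I\circ 0_I=\id$ and $\pr_I\circ(\id\times s_I)=\id$) and the morphism-level formulas in Definition~\ref{def:catKu} are visibly compatible.

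**Part (iii).** Here I would argue more directly. The subcategory $\s_\Kk^{-1}(0)$ has object space $\bigsqcup_I s_I^{-1}(0)$ and morphisms those of $\bB_\Kk$ whose source and target both lie in the zero sets; its realization is the quotient by the restricted equivalence relation. There is an obvious map to $|\Kk|$ induced by inclusion, and I would show it is a homeomorphism onto $|\s_\Kk|^{-1}(|0_\Kk|)$. Injectivity: if $[I,x]$ and $[J,y]$ are identified in $|\Kk|$ with $x\in s_I^{-1}(0)$, $y\in s_J^{-1}(0)$, then the chain of morphisms connecting them in $\bB_\Kk$ passes through objects whose images under $|\s_\Kk|$ equal those under $|0_\Kk|$; since $\s_\Kk$ and $0_\Kk$ are functors and $\pi_{\bE_\Kk}$ has the stated properties, each intermediate object actually lies in some $s_K^{-1}(0)$ (because a point $u\in U_K$ with $[K,u,s_K(u)]=[K,u,0]$ in $|\bE_\Kk|$ must have $s_K(u)=0$ — the morphisms of $\bE_\Kk$ act by group elements and linear injections $\Hat\phi$, which are injective, so $s_K(u)$ and $0$ in the same $\Ga_K$-orbit-plus-$\Hat\phi$ class forces $s_K(u)=0$), so the chain already lies in $\s_\Kk^{-1}(0)$. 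Surjectivity onto $|\s_\Kk|^{-1}(|0_\Kk|)$ is the same computation read backwards. For the topology, one uses that $\pi_\Kk$ restricted to $\bigsqcup_I s_I^{-1}(0)$ is the composite of the inclusion into $\Obj_{\bB_\Kk}$ with $\pi_\Kk$, and that $\s_\Kk^{-1}(0)\subset\Obj_{\bB_\Kk}$ carries the subspace topology by definition, so the map to $|\Kk|$ is continuous; it is a homeomorphism onto its image because the equivalence relation on $\s_\Kk^{-1}(0)$ is exactly the restriction of the one on $\Obj_{\bB_\Kk}$ (which is the content of the injectivity argument above).

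**Main obstacle.** The routine parts are the diagram-chases for (i) and (ii); the genuine content is the injectivity argument in (iii), i.e.\ verifying that the equivalence relation generated by morphisms of $\bB_\Kk$, when restricted to points where the section vanishes, does not ``leave'' the zero set — equivalently that if $|\s_\Kk|([I,x])=|0_\Kk|([I,x])$ then already $x\in s_I^{-1}(0)$. This is where one must use the precise structure of $\bE_\Kk$ from Definition~\ref{def:catKu}: morphisms act on the fiber coordinate by $e\mapsto\Hat\phi_{IJ}(\ga^{-1}e)$ or its inverse, and since each $\Hat\phi_{IJ}$ is an injective linear map and each $\ga$ acts linearly, the only way $s_I(x)$ and $0$ can represent the same point of $|\bE_\Kk|$ is if $s_I(x)=0$ outright. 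Once this linear-algebra observation is in hand, together with the analogous statement at the intermediate level (where it is \cite[Lemma~3.1.?]{MW1} or follows from the orbibundle structure), the rest of (iii) is formal. I do not expect serious difficulty in (i) or (ii) beyond invoking the already-asserted obstruction-category analogue of Lemma~\ref{le:und}.
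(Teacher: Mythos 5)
Your proposal is correct and takes essentially the same approach as the paper, which itself simply invokes the commutative diagrams such as \eqref{eq:master} and refers the remaining verifications to the trivial-isotropy case in \cite[Lemma~6.1.9]{MW2}; in particular you correctly identify the key technical point for part (iii), namely the linearity observation that morphisms of $\bE_\Kk$ preserve whether the fiber coordinate vanishes, since each $\Hat\phi_{IJ}$ is a linear injection and each $\ga\in\Ga_I$ acts by a linear isomorphism. One step you should make explicit at the end of (iii): the continuous bijection $|\s_\Kk^{-1}(0)|\to |\s_\Kk|^{-1}(|0_\Kk|)$ is not automatically a homeomorphism merely because the two equivalence relations coincide on $\s_\Kk^{-1}(0)$; you also need that $\s_\Kk^{-1}(0)=\bigsqcup_I s_I^{-1}(0)$ is a \emph{closed} saturated subset of $\Obj_{\bB_\Kk}$ (saturatedness is exactly what your linearity argument proves, and closedness follows from continuity of each $s_I$), and then apply the standard fact that a quotient map restricted to a closed saturated subspace is again a quotient map onto its image.
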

\begin{proof}
The induced maps on the realizations are identified by commutative diagrams such as \eqref{eq:master}.
The continuity and other identities are proven exactly as in \cite[Lemma~6.1.9]{MW2} for the case of trivial isotropy.
\end{proof}

Next, we extend the notion of metrizability to Kuranishi atlases with nontrivial isotropy.
In the case of trivial isotropy, recall from \cite[Definition~6.1.13]{MW2} that an admissible metric
is a bounded metric $d$ on the set $|\Kk|$ such that for each $I\in \Ii_\Kk$ the pullback metric $d_I:=(\pi_\Kk|_{U_I})^*d$ on $U_I$ induces the given topology on the manifold $U_I$. 
However, in the presence of isotropy, it makes no sense to try to pull this metric back to $U_I$ since the pullback of a metric by a noninjective map is no longer a metric.
Instead, we use the fact that the realizations $|\Kk|\cong |\uKk|$ of the Kuranishi atlas and its intermediate atlas are canonically identified, which allows us to work with admissible metrics on $|\uKk|$, which is the realization of a topological Kuranishi atlas $\uKk$ with trivial isotropy and given metrizable topologies on the domains $\uU_I=\qu{U_I}{\Ga_I}$.

\begin{defn}\label{def:metrizable}  
Let $\Kk$ be a Kuranishi atlas. Then an {\bf admissible metric} on $|\Kk|\cong |\uKk|$ is a bounded metric on this set (not necessarily compatible with the topology of the realization) such that for each $I\in \Ii_\Kk$ the pullback metric $\und d_I:=(\pi_\uKk|_{\uU_I})^*d$ on $\uU_I$ induces the given quotient topology on $\uU_I=\qu{U_I}{\Ga_I}$.

A {\bf metric Kuranishi atlas} is a pair $(\Kk,d)$ consisting of a Kuranishi atlas $\Kk$ together with a choice of  admissible metric $d$ on $|\Kk|$.
\end{defn}

We finish this subsection with two comparisons of our notion of Kuranishi atlas -- on the one hand with orbifolds, and on the other hand with Kuranishi structures.

\begin{example}\label{ex:foot}\rm  
If the obstruction spaces are trivial, i.e.\ $E_I = \{0\}$ for all $I$, then the two categories 
$\bB_\Kk, \bE_\Kk$ are equal, and their realization is an orbifold.  
A first nontrivial example is a ``football" $X = S^2$ with two basic Kuranishi charts $(U_1, \Ga_1=\Z_2, \psi_1)$, $(U_2, \Ga_2=\Z_3,\psi_2)$ covering neighbourhoods $\upsi_i(\uU_i)\subset S^2$ of the northern resp.\ southern hemisphere with isotropy of order $2$ resp.\ $3$ at the north resp.\ south pole.
We may moreover assume that the overlap 
$\upsi_1(\uU_1)\cap \upsi_2(\uU_2)=\uA$ 
is 
an annulus around the equator.
The restrictions of the basic charts to $\uA$ are $(A_1, \Z_2)$ and $(A_2, \Z_3)$, where both 
$A_i= \psi_i^{-1}(\uA)$ are annuli, but the freely acting isotropy groups are different.
There is no functor between these restrictions because the coverings $A_1\to \uA$ and $A_2\to \uA$ are incompatible.  
However, they both have functors (i.e.\ coordinate changes) to a common free covering, namely the pullback defined by the diagram
\[
\xymatrix{
U_{12}   \ar@{->}[d] \ar@{->}[r]   & A_1 \ar@{->}[d]^{\pi_1}   \\
A_2 \ar@{->}[r]^{\pi_2}  & X
}
\]
i.e.\ $U_{12} := \{(x,y)\in A_1\times A_2 \,|\, \pi_1(x) = \pi_2(y)\}$
with group $\Ga_{12}: = \Ga_1\times \Ga_2 = \Z_2 \times \Z_3$.
The corresponding footprint map $\psi_{12}:U_{12}\to \uA$ is the $6$-fold covering of the annulus, and the  coordinate changes from $(U_i,\Ga_i,\psi_i)|_{\uA}$ to $(U_{12},\Ga_{12},\psi_{12})$ are the coverings $\TU_{i,12} := U_{12} \to A_i =: U_{i,12}$ in the diagram.
Therefore the category $\bB_\Kk$ in this example has index set $\Ii_\Kk = \{1,2, 12\}$, 
objects the disjoint union $\bigsqcup_{I\in \Ii_\Kk} U_I$, 
and
morphisms
$$ 
\Bigl({\textstyle  \bigsqcup_{I\in \Ii_\Kk}} U_I\times \Ga_I  \Bigr) \cup \Bigl({\textstyle  \bigsqcup_{i=1,2} U_{12}\times \Ga_i  }\Bigr) ,
$$
where for $i=1,2$ the elements in $U_{12}\times \Ga_i$ represent the morphisms from $U_{i} $ to $U_{12}$.

This
simple construction does not work for arbitrary orbifolds since the (set theoretic) pullback $U_{12}$ considered above will not be a smooth manifold if any point in 
$\psi_1(U_1)\cap \psi_2(U_2)$ 
has nontrivial stabilizer.  However, we show in \cite{Mcorb} that the construction can 
be generalized to show that every orbifold has a Kuranishi atlas with trivial obstruction spaces.
$\hfill\er$
\end{example}

\begin{rmk}[Relation to Kuranishi structures]  
\label{rmk:FOOO1}\rm 
A Kuranishi structure in the sense of \cite[App.~A]{FOOO}
and \cite{FOOO12} 
consists of a Kuranishi chart $\bK_p$ at every point $p\in X$ and coordinate changes $\bK_q|_{U_{qp}}\to \bK_p$ whenever $q\in F_p$, that satisfy a suitable weak cocycle condition.
Much as in the case of Kuranishi atlases with trivial isotropy (see \cite[Remark~6.1.15]{MW2}),
a weak 
Kuranishi atlas in the sense of Definition~\ref{def:Ku} induces a Kuranishi structure. 
Indeed, given a covering family of basic charts $(\bK_i)_{i=1,\dots,N}$ with footprints $F_i$, we may choose a family of compact subsets $C_i\subset F_i$ that also cover $X$.
Then we use the transition data $(\bK_I,\Hat\Phi_{IJ})$ and weak cocycle conditions to obtain a Kuranishi structure as follows:

\begin{itemlist}
 \item
For any $p\in X$ we define $\bK_p:= \bK_{I_p}|_{U_p}$ to be a restriction of $\bK_{I_p}$, where $I_p: =  \{i \,|\, p\in C_i\}$, and $U_p\subset U_{I_p}$ is an open subset such that the footprint $F_p:=\psi_{I_p}(s_{I_p}^{-1}(0)\cap U_p)$ is a neighbourhood of $p$ and contained in $\cap_{i\in I_p} F_i\less \cup_{i\notin I_p} C_i$.
Here we use a more general notion of restriction than Definition~\ref{def:restr} in that we allow for a domain $U_p$ that is invariant only under a subgroup $\Ga_p\subset \Ga_{I_p}$ such that the induced map $\qu{U_p}{\Ga_p}\to \qu{U_{I_p}}{\Ga_{I_p}}$ is a homeomorphism to its image.
More precisely, to satisfy the minimality requirements of \cite[App~A1.1]{FOOO}, we choose a lift $x_p\in \pi^{-1}(p)\cap U_{I_p}$, set $\Ga_p:= \Ga^{x_p}_{I_p}$ to be its stabilizer in $\Ga_{I_p}$, and take the domain $U_p\subset U_{I_p}$ to be a $\Ga^{x_p}_{I_p}$-invariant neighbourhood of $x_p$, which exists with the required topological properties by Lemma~\ref{le:vep}~(ii).

\item 
For $q\in F_p$ we 
have $I_q\subset I_p$ since by construction $F_p\cap C_i=\emptyset$ for $i\not\in I_p$. 
So we obtain a coordinate change\footnote{
While \cite{FOOO} denotes this coordinate change by $\phi_{pq}$, we will write $\Hat\Phi_{qp}$ for  
consistency with our notation $\Phi_{IJ}:\bK_I\to \bK_J$.}
 $\Hat\Phi_{qp}: \bK_q\to \bK_p$ from a suitable restriction of $\Hat\Phi_{I_q I_p}$ to a $\Ga^{x_q}_q$-invariant neighbourhood $U_{qp}\subset U_q$ of $x_q$.  
 More precisely, we choose $U_{qp}\subset U_q$ small enough so that the projection $\rho_{I_q I_p}: U_p\cap \TU_{I_q I_p}\to U_{I_q I_p}$ has a continuous section over $U_{qp}$.
We denote its image by $\TU_{qp}$ and thus obtain an embedding $\phi_{qp}:= \rho_{I_q I_p}^{-1} : U_{qp}\to \TU_{qp}\subset U_p \cap \TU_{I_q I_p}$. 
Since the projection $\rho_{I_q I_p}$ induces an isomorphism on stabilizer subgroups by Lemma~\ref{le:vep}~(iii), this is equivariant with respect to a suitable injective homomorphism $h_{qp}:\Ga_q\to \Ga_p$ and induces an injection $\uphi_{qp}: \uU_{qp}: = \qu{U_{qp}}{\Ga_q} \to \uU_{p}: = \qu{U_{p}}{\Ga_p}$.
Since the map $\uU_{qp}= \qu{U_{qp}}{\Ga_q}\to \uU_{I_q} = \qu{U_{I_q}}{\Ga_{I_q}}$ is a homeomorphism to its image by construction of $\uU_q\to \uU_{I_q}$ above, and similarly for $p$,
we can identify $\uphi_{qp}$ with a suitable restriction of the map $\uphi_{I_qI_p}$ underlying the coordinate change $\Hat\Phi_{I_qI_p}$ in the given Kuranishi atlas.
The coordinate change $\Hat\Phi_{qp} = (U_{qp},\uphi_{qp})$ is then given by the domain $U_{qp}$ and the restriction of $\uphi_{I_qI_p}$ to $\uU_{qp}\subset \uU_q$.

Further, the weak cocycle condition for $\Kk$ implies the compatibility condition required by \cite{FOOO}, namely for all triples 
$p,q,r\in X$ with  $ q\in F_p$ and $r\in  \psi_q(U_{qp}\cap s_q^{-1}(0))\subset F_q\cap F_p$, the equality
$\uphi_{qp}\circ \uphi_{rq} = \uphi_{rp}$  holds  on the common domain
$\uphi_{rq}^{-1}(\uU_{qp})\cap \uU_{rp}$ of the maps in this equation.

\item 
This atlas satisfies the effectivity condition required by \cite{FOOO} only if the action of $\Ga_p$ on $U_p$ is 
locally effective in the sense that $s_p^{-1}(0)$ has a $\Ga_p$-invariant open neighbourhood that is disjoint from the interior of the fixed point set $\Fix(\ga)\subset U_p$ for each $\ga\in \Ga_p\less \{\id\}$.
\end{itemlist}

With this construction, we lose the distinction between basic charts and transition charts, and also
in general can no longer recover the original  transition charts with their group actions from the Kuranishi structure.  Indeed, \cite{FOOO12} works with a ``good coordinate system" (an analog of our notion of reduction in Definition \ref{def:vicin}), that is defined on the orbifold level, i.e.\ on the level of the intermediate category.
 Though it is not clear how relevant the extra information  contained in a Kuranishi atlas is to 
 the question of how to define Gromov--Witten invariants
 for closed curves,
 it might prove useful in other situations, for example  in the case of orbifold Gromov--Witten invariants, or in the recent work of Fukaya et al \cite{FOOO15} where the authors consider a process that rebuilds a Kuranishi structure  from a coordinate system. 
 Further, our categorical formulation makes it very easy to give an 
  explicit description and construction for sections as in Definition~\ref{def:sect} below.
$\hfill\er$
\end{rmk}

\subsection{Kuranishi cobordisms and concordance}\label{ss:cob}\hspace{1mm}\\ \vspace{-3mm}

This section extends the notions of cobordism and concordance developed in \cite[\S4]{MW1} and  \cite[\S6.2]{MW2} to the case of smooth Kuranishi atlases with nontrivial isotropy. It is a straightforward generalization that can be skipped until precise concordance notions are needed in the proof of Theorem~\ref{thm:K}.
We begin by summarizing the topological cobordism notions from \cite[\S4.1]{MW1}.

A {\bf collared cobordism} $(Y, \io^0_Y,\io^1_Y)$ is a separable, locally compact, metrizable space $Y$ together with disjoint (possibly empty) closed subsets $\p^0 Y,$ $ \p^1 Y\subset Y$ and {\bf collared neighbourhoods}
$$
\io_Y^0:  [0,\eps)\times  \p Y^0  \to Y, \qquad  \io_Y^1:  (1-\eps, 1]\times  \p Y^1  \to Y
$$
for some $\eps>0$. The latter are homeomorphisms onto disjoint open neighbourhoods of $\p^\al Y\subset Y$, extending the inclusions $\io_Y^\al(\al,\cdot) : \p^\al Y\hookrightarrow Y$  for $\al=0,1$.
We call $\p^0 Y$ and $\p^1 Y$ the {\bf boundary components} of $(Y, \io^0_Y,\io^1_Y)$.
The main example is the {\bf trivial cobordism} $Y = [0,1]\times X$ with the natural inclusions $\io_Y^\al: A^\al_\eps\times X \to [0,1]\times X$, where we denote
$$
A_\eps^0: = [0,\eps)  \qquad\text{and} \qquad A_\eps^1: = (1-\eps,1] \qquad\text{for} \ 0<\eps<\tfrac 12 .
$$
Next, a subset $F\subset Y$ is {\bf collared} if there is $0<\delta\le\eps$ such that for $\al=0,1$ we have
\begin{equation}\label{eq:collset}
F \cap \im (\io_Y^\al)\ne \emptyset
\;\; \Longleftrightarrow\;\;
F \cap \im (\io_Y^\al)
= \io_Y^\al( A^\al_\delta\times \p^\al F) ,
\end{equation}
where the intersections with the boundary components $\partial^\al F :=  F \cap \p^\al Y $ may be empty.

In the notion of Kuranishi cobordism, we will require all charts and coordinate changes to be of product form  in sufficiently small collars, as follows.

\begin{defn} \label{def:CCC}  
Let $(Y, \io_Y^0, \io_Y^1)$ be a compact collared cobordism.
\begin{itemlist}
\item
Given a Kuranishi chart $\bK^\al=(U^\al,E^\al,\Ga^\al,s^\al,\psi^\al)$ for $\p^\al Y$ and an open subset $A\subset[0,1]$, the {\bf product chart} for $[0,1] \times \p^\al Y$ with footprint $A\times F^\al$ is
$$
A\times \bK^\al  :=\bigl(A \times U^\al, E^\al, \,\Ga^\al,  s^\al\circ{\rm pr}_{U^\al},\, \id_{A}\times \psi^\al \bigr) ,
$$
where $\Ga^\al$ acts trivially on the first factor of $A\times U^\al$ and ${\rm pr}_{U^\al}:A \times U^\al\to U^\al$ is the evident projection.
\item
Given a coordinate change $\Hat\Phi^\al_{IJ} = (\Tphi_{IJ}^\al, \Hat\phi_{IJ}^\al,\rho_{IJ}^\al) :\bK^\al_I\to\bK^\al_J$ between Kuranishi charts for $\p^\al Y$ with lifted domain $\TU_{IJ}^\al$, 
and open subsets $A_I,A_J\subset[0,1]$, the {\bf product coordinate change}
$(A_I\cap A_J)\times \bK^\al_I \to A_J\times  \bK^\al_J$  
is 
$$
\id_{A_I\cap A_J} \times \Hat\Phi^\al_{IJ}  :  \bigl(\id_{A_I\cap A_J}\times \Tphi_{IJ}^\al, \Hat\phi_{IJ}: = \Hat\phi_{IJ}^\al,\ \id_{A_I\cap A_J}\times \rho_{IJ}^\al\bigl)
$$
with  the lifted domain $(A_I\cap A_J)\times \TU_{IJ}^\al$.
\item
A  {\bf Kuranishi chart with collared boundary} for $(Y, \io_Y^0, \io_Y^1)$
is a tuple $\bK = (U,E ,\Ga, s,\psi )$ as in Definition~\ref{def:chart}, with the following collar form requirements:
\begin{enumerate}
\item
The footprint $F\subset  Y$ is collared with at least one nonempty boundary $\p^\al F$.
\item 
The domain is a collared cobordisms $(U,\io^0_U, \io^1_U)$, whose boundary components $\partial^\al U$ are nonempty iff $\p^\al F\ne \emptyset$. It is smooth in the sense that $U$ is a manifold with boundary $\p U= \p^0 U \sqcup \p^1 U$ and $\io^\al_U$ are tubular neighbourhood diffeomorphisms.
\item 
If $\partial^\al F \neq\emptyset$ then there is a {\bf restriction of $\bK$ to the boundary $\p^\al Y$}, that is a Kuranishi chart $\partial^\al\bK  = (\p^\al U^\al, E, \Ga, s^\al, \psi^\al)$ for $\p^\al Y$ with the isotropy group $\Ga$ and obstruction space $E$ of $\bK$ and footprint $\p^\al F$, and an embedding of the product chart $A_\eps^\al\times \p^\al \bK$ into $\bK$ for some $\eps>0$ in the following sense:
The boundary embedding $\iota_U^\al$ is $\Ga$-equivariant and the following diagrams commute:
 $$
  \begin{array} {ccc}
\phantom{right} A_\eps^\al\times  \p^\al U & \stackrel{\io_U^\al} \longrightarrow & \!\!\!U \\
 s^\al \circ\pr_{\p^\al U} \downarrow&&\downarrow{s}  \\
\phantom{rightright}E & \stackrel{\id_E} \longrightarrow & E \; .
\end{array}
\qquad
  \begin{array} {ccc}
(\id_{A_\eps^\al}\times  s^\al)^{-1}(0) & \stackrel{\io_U^\al} \longrightarrow &{s^{-1}(0)} \\
 \id_{A_\eps^\al}\times \psi^\al\downarrow\;\;\;\;\;&&\downarrow{\psi}  \\
\phantom{right}{A_\eps^\al\times \p^\al Y} & \stackrel{\io_Y^\al} \longrightarrow &{Y} \; .
\end{array}
$$
\end{enumerate}
\item
Let $\bK _I,\bK _J$ be Kuranishi charts for $(Y, \io_Y^0, \io_Y^1)$ 
such that only $\bK _I$ or both $\bK _I,\bK _J$ have collared boundary.
Then a {\bf coordinate change with collared boundary} $\Hat\Phi_{IJ} :\bK _I\to\bK _J$ 
with domain $\uU_{IJ}$ satisfies the conditions in Definition~\ref{def:change}, with the following collar form requirement:
\begin{enumerate}
\item
The lifted domain $\TU _{IJ}\subset U _J$ as well as $U _{IJ}\subset U _I$ are collared subsets.
\item
If $F_J\cap \p^\al Y \ne \emptyset$ then $F_I\cap \p^\al Y \ne \emptyset$ and there is a 
{\bf restriction of $\Hat\Phi_{IJ} $ to the boundary $\p^\al Y$}, that is a
coordinate change $\partial^\al\Hat\Phi_{IJ} : \partial^\al\bK _I \to \partial^\al\bK _J$
such that the restriction of $\Hat\Phi_{IJ}$ to $\und {U_{IJ}\cap \io_{U_I}^\al(A^\al_\eps\times \p^\al U_I)}$  pulls back via the collar inclusions $\io^\al_{U_I},  \io^\al_{U_J}$ to the product coordinate change ${\rm id}_{A^\al_\eps} \times \partial^\al\Hat\Phi_{IJ} $ for some $\eps>0$.
In particular we have 
\begin{align*}
(\iota_{U_J}^\al)^{-1}(\TU _{IJ})
\cap \bigl(A^\al_\eps\times \partial^\al U _J \bigr)
&\;=\; A^\al_\eps\times \partial^\al \TU _{IJ},  \\
(\iota_{U_I}^\al)^{-1}(U_{IJ})
\cap \bigl(A^\al_\eps \times \partial^\al U _I \bigr)
&\;=\;
A^\al_\eps \times \partial^\al U _{IJ} .
\end{align*}
\item
If $F_J\cap \p^\al Y= \emptyset$ but $F_I\cap \p^\al Y\ne \emptyset$, then $U_{IJ}\subset U_I$ is collared with $\p^\al U_{IJ}=\emptyset$. As a consequence we have $U_{IJ} \cap \io_{U_I}^\al(A^\al_\eps\times \p^\al \TU _I)=\emptyset$ for some $\eps>0$.
\end{enumerate}
\end{itemlist}
\end{defn}

\begin{defn}\label{def:CKS}
A {\bf (weak) Kuranishi cobordism} on a compact collared  cobordism  $(Y, \io_Y^0, \io_Y^1)$
is a tuple $\Kk  = \bigl( \bK_{I} , \Hat\Phi_{IJ} \bigr)_{I,J\in \Ii_{\Kk}}$
of basic charts and transition data as in Definition~\ref{def:Ku}  with the following collar form requirements:
\begin{itemlist}
\item
The charts of 
$\Kk$  are either Kuranishi charts with collared boundary or standard Kuranishi charts whose footprints are precompactly contained in 
$Y\less  (\p^0 Y\cup \p^1 Y)$.
\item
The coordinate changes  
$\Hat\Phi_{IJ}: \bK_{I} \to \bK_{J}$ 
are either standard coordinate changes on $Y\less  (\p^0 Y\cup \p^1 Y)$  
between pairs of standard charts, or coordinate changes with collared boundary between 
pairs of charts, of which at least the first has collared boundary.
\end{itemlist}
We say that $\Kk$ has {\bf uniform collar width $\de> 0$} if all domains and coordinate changes have the required collar form over intervals $A^\al_\eps$ of length $\eps> \de$.
\end{defn}

\begin{rmk}\label{rmk:restrict}\rm 
Let $\Kk$ be a (weak) Kuranishi cobordism on  $(Y, \io_Y^0,\io_Y^1)$.

\noindent
(i)
$\Kk$ induces by restriction 
(weak) Kuranishi atlases $\partial^\al\Kk$ on the boundary components $\p^\al Y$ for $\al=0,1$ with
\begin{itemlist}
\item 
basic charts $\p^\al\bK_i$ given by restriction of  basic charts of $\Kk$ with $F_i\cap \p^\al Y\neq\emptyset$;
\item 
index set $\Ii_{\p^\al\Kk}=\{I\in\Ii_{\Kk}\,|\, F_I\cap  \p^\al Y\neq\emptyset\}$;
\item 
transition charts $\p^\al\bK_I$ given by restriction of transition charts of $\Kk$;
\item
coordinate changes $\p^\al\Hat\Phi_{IJ}$ given by restriction of coordinate changes of $\Kk$.
\end{itemlist}

\noindent
(ii)
The charts and coordinate changes of $\Kk$ induce intermediate charts and coordinate changes as in Definition~\ref{def:quotlev} and Remark~\ref{rmk:change}~(iii). These fit together to form a filtered (weak) topological cobordism $\uKk$ in the sense of \cite[Definitions~4.1.12]{MW1} by a direct generalization of Lemma~\ref{le:Ku3}.
Its boundary restrictions are the intermediate Kuranishi atlases $\p^\al\uKk = \und{\p^\al\Kk}$ induced by the boundary restrictions $\p^\al\Kk$. 

\noindent
(iii)
As in \cite[Remark~4.1.11]{MW1} we can think of the virtual neighbourhood $|\Kk|$ as a collared cobordism with boundary components $\p^0|\Kk|\cong|\p^0\Kk|$ and $\p^1|\Kk|\cong|\p^1\Kk|$, with the exception that $|\Kk|$ is usually not locally compact or metrizable.
More precisely, if $\Kk$ has collar width $\eps>0$, then the inclusions $\iota^\al_{U_I} : A^\al_\eps\times U^\al_I \hookrightarrow U_I$ induce topological embeddings
$$
\iota_{|\Kk|}^0: [0,\eps) \times  |\p^0\Kk|  \hookrightarrow |\Kk|,  
\qquad
\iota_{|\Kk|}^1: (1-\eps,1]\times  |\p^1\Kk|   \hookrightarrow |\Kk| 
$$
to open neighbourhoods of the closed subsets $\p^\al|\Kk| := \quo{\bigsqcup_{I\in\Ii_{\p^\al\Kk}} \io^\al_{U_I}(\{\al\}\times U^\al_I)}{\sim} \subset |\Kk|$. 
$\hfill\er$
\end{rmk}

With this language in hand, one obtains cobordism relations between (weak) Kuranishi atlases in complete 
analogy with \cite[Definition~4.1.8]{MW1} and \cite[Definition~6.2.10]{MW2}.
For the uniqueness results in this paper, the more important notion is the following.
Here we use the notion of tameness, a refinement of the strong cocycle condition that is formalized in Definition~\ref{def:tame} below.

\begin{defn}\label{def:Kcobord}
Two (weak/tame) Kuranishi atlases $\Kk^0, \Kk^1$ on the same compact metrizable space $X$ are said to be  {\bf (weakly/tamely) concordant} if there exists a (weak/tame) Kuranishi cobordism $\Kk$ on the trivial cobordism $Y = [0,1]\times X$ whose boundary restrictions are $\p^0\Kk=\Kk^0$ and $\p^1\Kk=\Kk^1$.
More precisely, there are injections $\iota^\al:\Ii_{\Kk^\al} \hookrightarrow \Ii_{\Kk}$ for $\al=0,1$ such that $\im\iota^\al=\Ii_{\partial^\al\Kk}$ and we have
$$
\bK^\al_I = \p^\al \bK_{\iota^\al(I)}, \qquad
\Hat\Phi^\al_{IJ} = \p^\al \Hat\Phi_{\iota^\al(I) \iota^\al (J)} 
\qquad
\forall  I,J\in\Ii_{\Kk^\al}.
$$
Moreover, two metric Kuranishi atlases $(\Kk^0,d_0), (\Kk^1,d_1)$ are called {\bf metric concordant} if they are concordant as above with $\Kk$ a Kuranishi cobordism whose realization $|\Kk|\cong|\uKk|$ supports an admissible, $\eps$-collared metric $d$ in the sense of \cite[Definition~4.2.1]{MW1} for the intermediate cobordism atlas $\uKk$ such that $d|_{\p^\al|\Kk|} = d_\al$ for $\al=0,1$.
 \end{defn}

\subsection{Tameness and shrinkings}\label{ss:tame}\hspace{1mm}\\ \vspace{-3mm}

As in the case of trivial isotropy, we must adjust the Kuranishi atlas in order for its realization $|\Kk|$ to have good topological properties, for example, so that it is Hausdorff and has ``enough" compact subsets.
We essentially already dealt with these problems in \cite{MW1} by 
\begin{itemlist}
\item
introducing notions of tameness and preshrunk shrinking for topological Kuranishi atlases which ensure the desired topological properties of the realization;
\item
constructing tame shrinkings of filtered weak topological Kuranishi atlases;
\item
proving that tame shrinkings are unique up to tame concordance.
\end{itemlist}
In order to apply these results to smooth Kuranishi atlases with nontrivial isotropy, recall first that we built additivity into the notion of Kuranishi atlas, and showed in Lemma~\ref{le:Ku3} that the resulting intermediate atlases are naturally filtered by $\bigl(\uEE_{IJ}: = \und{U_{J}\times \Hat\phi_{IJ}(E_I)}\bigr)_{I\subset J}$.
The same holds for Kuranishi cobordisms by Remark~\ref{rmk:restrict}~(ii).
We can thus extend the notions of tameness to the case of nontrivial isotropy by working at the level of the intermediate category.

\begin{defn} \label{def:tame}
A weak Kuranishi  atlas or cobordism is {\bf tame} if 
its intermediate atlas is tame in the sense of \cite[Definition~3.1.10]{MW1},
that is 
for all $I,J,K\in\Ii_\Kk$ we have
\begin{align}\label{eq:tame1}
\und{U}_{IJ}\cap \und{U}_{IK}&\;=\; \und{U}_{I (J\cup K)}\qquad\qquad\quad\;\;\,\forall I\subset J,K ;\\
\label{eq:tame2}
\und{\phi}_{IJ}(\und{U}_{IK}) &\;=\; \uU_{JK}\cap \us_J^{-1}\bigl( \uEE_{IK}\bigr) \qquad\forall I\subset J\subset K.
\end{align}
Here we allow equalities between $I,J,K$, using the notation 
$\und{U}_{II}:=\und{U_I}$ and $\und{\phi}_{II}:={\rm Id}_{\und{U}_I}$.
\end{defn}

Similarly, a shrinking of a Kuranishi atlas or cobordism will arise exactly from a shrinking $\bigl(\uU'_I\sqsubset \uU_I\bigr)_{I\in\Ii_\Kk}$ of the intermediate atlas in the sense of \cite[Definition~3.3.2]{MW1}.
Recall here that shrinkings of cobordisms are necessarily given by collared subsets $\uU'_I\sqsubset \uU_I$.

\begin{defn}
\label{def:shr}
Let $\Kk=(\bK_I,\Hat\Phi_{I J})_{I, J\in\Ii_\Kk, I\subsetneq J}$ be a weak Kuranishi atlas or cobordism. 
A {\bf shrinking} of $\Kk$ is a weak Kuranishi atlas resp.\ cobordism
$\Kk'=(\bK_I',\Hat\Phi_{I J}')_{I, J\in\Ii_{\Kk'}, I\subsetneq J}$ as follows:
\begin{enumerate}
\item 
The footprint cover $(F_i')_{i=1,\ldots,N}$ is a shrinking of the cover $(F_i)_{i=1,\ldots,N}$, that is 
$F_i'\sqsubset F_i$ are precompact open subsets so that $X=\bigcup_{i=1,\ldots,N}F_i'$ and $F_I': = \bigcap_{i\in I} F_i'$ is nonempty whenever $F_I$ is, so that the index sets $\Ii_{\Kk'} = \Ii_\Kk$ agree.
\item
For each $I\in\Ii_\Kk$ the chart $\bK'_I$ is the restriction of $\bK_I$ to a precompact domain $\und U_I'\sqsubset \und U_I$ as in Definition \ref{def:restr}.
\item
For each $I,J\in\Ii_\Kk$ with $I\subsetneq J$ the coordinate change $\Hat\Phi_{IJ}'$ is the restriction of $\Hat\Phi_{IJ}$  to the open subset $\und U'_{IJ}: =  \und{\phi}_{IJ}^{-1}(\und U'_J)\cap \und U'_I$
 as in equation \eqref{eq:coordres}.
\end{enumerate}

Moreover, a {\bf tame shrinking} of $\Kk$ is a shrinking that is tame in the sense of Definition~\ref{def:tame}.
Finally, a {\bf preshrunk tame shrinking} of $\Kk$ is a tame shrinking $\Kk''$ that is obtained as shrinking of a tame shrinking $\Kk'$ of $\Kk$.
\end{defn}

With this language in place, we can directly generalize \cite[Theorem~6.3.9]{MW2}.
Recall here that by \cite[Example~2.4.5]{MW1} the quotient topology on $|\Kk|$ is never metrizable except in the most trivial cases. In fact, for any point $x\in \ov{U_{IJ}}\less U_{IJ}$ where $\dim U_I <\dim U_J$ the projection $\pi_\Kk(x)$ does not have a  countable neighbourhood basis in $|\Kk|$ with respect to the quotient topology. So an admissible metric will almost always induce a different topology on $|\Kk|$, which we will make no use of in the following statement.

\begin{thm}\label{thm:K}
\begin{enumerate}\item
Any weak Kuranishi atlas or cobordism $\Kk$ has a preshrunk tame shrinking $\Kk'$.
\item
For any tame Kuranishi atlas or cobordism $\Kk'$, the realizations $|\Kk'|$ and $|\bE_{\Kk'}|$ are Hausdorff in the quotient topology, and for each $I\in \Ii_{\Kk'}$ the projection maps $\pi_{\uKk'}: \uU_I'\to |\Kk'|$ and $\pi_{\bE_{\uKk'}}:\und {U'_I\times E_I}\to |\bE_{\Kk'}|$ are homeomorphisms onto their images.
\item
For any preshrunk tame shrinking $\Kk'$ as in (i), there exists an admissible metric on the set $|\Kk'|$.
If $\Kk$ is a cobordism, then the metric can also be taken to be collared.
\item
Any two metric preshrunk tame shrinkings of a weak Kuranishi atlas are metric tame concordant.
\end{enumerate}
\end{thm}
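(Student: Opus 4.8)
The plan is to deduce all four statements from the corresponding results of \cite{MW1}, applied to the intermediate weak topological Kuranishi atlas $\uKk$, and then transfer back to $\Kk$. Three facts make this transfer work: the realizations are canonically identified, $|\Kk|\cong|\uKk|$ and $|\bE_\Kk|\cong|\bE_\uKk|$, by Lemma~\ref{le:und} and its obstruction-space analogue; tameness, shrinkings, collaredness and admissible metrics were all \emph{defined} at the level of the intermediate atlas (Definitions~\ref{def:tame}, \ref{def:shr}, \ref{def:CCC}, \ref{def:metrizable}); and the quotient maps $\pi_I: U_I\to\uU_I$ are proper, so precompact subsets of $\uU_I$ pull back to precompact subsets of $U_I$ (Lemma~\ref{le:vep}(i)). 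By Lemma~\ref{le:Ku3}, and Remark~\ref{rmk:restrict}(ii) in the cobordism case, $\uKk$ is indeed a filtered weak topological Kuranishi atlas resp.\ cobordism, so the \cite{MW1} machinery applies. Throughout, the argument parallels the proof of \cite[Theorem~6.3.9]{MW2}.

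For (i), the existence theorem for preshrunk tame shrinkings of filtered weak topological Kuranishi atlases (resp.\ cobordisms) from \cite{MW1} produces a family of precompact open (resp.\ collared) subsets $\uU_I'\sqsubset\uU_I$ forming a preshrunk tame shrinking of $\uKk$. By Definition~\ref{def:shr} this family \emph{is} the data of a shrinking $\Kk'$ of $\Kk$ with $\und{\Kk'}=\uKk'$, and $\Kk'$ is a preshrunk tame shrinking (and, in the cobordism case, has the required collar form), since all of these are conditions on $\uKk'$. For (ii), Lemma~\ref{le:und} and its analogue give homeomorphisms $|\Kk'|\cong|\uKk'|$ and $|\bE_{\Kk'}|\cong|\bE_{\uKk'}|$ intertwining the projections $\pi_{\uKk'}$ and $\pi_{\bE_{\uKk'}}$; since $\uKk'$ is a tame topological Kuranishi atlas, the corresponding result of \cite{MW1} shows $|\uKk'|$ and $|\bE_{\uKk'}|$ are Hausdorff and $\pi_{\uKk'}: \uU_I'\to|\uKk'|$ and $\pi_{\bE_{\uKk'}}:\und {U'_I\times E_I}\to|\bE_{\uKk'}|$ are homeomorphisms onto their images, and these properties transport across the identifications.

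For (iii), Definition~\ref{def:metrizable} says an admissible metric on $|\Kk'|\cong|\uKk'|$ is precisely an admissible metric on the realization of the topological atlas $\uKk'$ in the sense of \cite{MW1}, which exists for any preshrunk tame shrinking by \cite{MW1}; in the cobordism case the collared version applies verbatim. For (iv), given two metric preshrunk tame shrinkings $(\Kk^0,d_0)$, $(\Kk^1,d_1)$ of $\Kk$, their intermediate atlases are two metric preshrunk tame shrinkings of $\uKk$, so by the uniqueness result of \cite{MW1} there is a filtered tame topological Kuranishi cobordism on $[0,1]\times X$ with boundary restrictions $\uKk^0$, $\uKk^1$ carrying a collared admissible metric $d$ restricting to $d_0$, $d_1$. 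This cobordism is built by \cite{MW1} inside the product weak cobordism $[0,1]\times\uKk$, with domains $\uW_I\subset[0,1]\times\uU_I$ that are in product form over collars near the two ends; pulling these back under $\id_{[0,1]}\times\pi_I$ yields $\Ga_I$-invariant collared submanifolds of $[0,1]\times U_I$, and together with the product coordinate changes inherited from $\Kk$ this assembles into a Kuranishi cobordism $\Kk^{[0,1]}$ on $[0,1]\times X$ whose intermediate atlas is the given topological cobordism and whose boundary restrictions are $\Kk^0$, $\Kk^1$. With the metric $d$, this exhibits the metric tame concordance.

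The main obstacle is the last lifting step. One must verify that the concordance supplied by \cite{MW1}, which a priori lives only on the intermediate (orbifold) level, genuinely satisfies all the requirements of a Kuranishi cobordism in Definition~\ref{def:CKS}: smooth domains, $\Ga_I$-actions, equivariant collar embeddings $\iota^\al_{U_I}$, and the index, strong cocycle and tameness conditions. The point is that the \cite{MW1} construction interpolates domains that are in product form over the collars near the two ends, so the smooth structures and group actions descend from $\Kk$ via the proper quotient maps of Lemma~\ref{le:vep}, the collar embeddings are $\Ga_I$-equivariant because they come from the product charts $A^\al_\eps\times\bK_I$, the index condition is inherited chart-by-chart from $\Kk$, and the tameness and cocycle conditions are conditions on the intermediate level that were already arranged in the \cite{MW1} concordance. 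The remaining bookkeeping --- restricting coordinate changes to the shrunk domains, checking the uniform collar width, and matching boundary identifications via the injections $\iota^\al$ of Definition~\ref{def:Kcobord} --- is routine and follows \cite[\S6.2]{MW2}.
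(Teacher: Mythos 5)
Your proposal is correct and follows essentially the same route as the paper: apply the results of \cite{MW1} to the intermediate atlas $\uKk$, then transfer back via the identification $|\Kk|\cong|\uKk|$ and the fact that tameness, shrinking, collaring, and admissible metrics are by definition conditions on the intermediate level. The extra care you devote to the lifting step in (iv) is sound but slightly overstated as a difficulty: since shrinkings and cobordism domains of $\Kk$ are \emph{defined} (Definitions~\ref{def:shr}, \ref{def:CCC}) by choices of subsets of the intermediate domains, a tame metric concordance of $\uKk^0$ to $\uKk^1$ produced inside $[0,1]\times\uKk$ automatically constitutes a shrinking of $[0,1]\times\Kk$ and hence a Kuranishi cobordism, with smoothness, equivariance, collar form, and the index condition all inherited by restriction from $[0,1]\times\Kk$ -- which is why the paper can simply quote \cite[Proposition~4.2.3]{MW1}.
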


\begin{proof}
Since tameness, shrinking, and admissible metrics are all defined on the level of intermediate atlases, and we are only concerned with homeomorphism properties of the intermediate projections, we can simply quote in the case of Kuranishi atlases
\cite[Proposition~3.3.5]{MW1} for (i), 
\cite[Proposition~3.1.13]{MW1} for (ii), and
\cite[Proposition~3.3.8]{MW1} for (iii).
Moreover, \cite[Proposition~4.2.3]{MW1} proves (iv) as well as (i), (iii) for Kuranishi cobordisms, and (ii) for 
cobordisms is established in \cite[Lemma~4.1.15]{MW1}.
\end{proof}

\section{From Kuranishi atlases to the Virtual Fundamental Class}

In this section, \S\ref{ss:orient} discusses orientations,  \S\ref{ss:red} establishes the notions of reductions and perturbations. The main result here is Theorem~\ref{thm:zero} which shows that the zero set of a suitable
perturbation $\s_\Kk + \nu$ of the canonical section $\s_\Kk$ has the structure of a compact weighted branched manifold. 
The construction of such perturbations is deferred to Proposition~\ref{prop:ext}, and is followed by the construction of VMC and VFC in Theorem~\ref{thm:VMCF}.

\subsection{Orientations} \label{ss:orient}   \hspace{1mm}\\ \vspace{-3mm}

This section extends the theory of orientations of weak Kuranishi atlases from \cite[\S8.1]{MW2} to the case with nontrivial isotropy. Since we use the method of determinant bundles, we first need to generalize the notions of vector bundles and isomorphisms.

\begin{defn} \label{def:bundle}
A {\bf vector bundle} $\La=\bigl(\La_I,\Ti\phi_{IJ}^\La\bigr)_{I,J\in\Ii_\Kk}$ {\bf over a weak Kuranishi atlas} $\Kk$  consists of local bundles and compatible transition maps as follows:
\begin{itemlist}
\item
For each $I\in \Ii_\Kk$ a vector bundle $\La_I \to U_I$ with an action of $\Ga_I$ on $\La_I$ that covers the given action on $U_I$.
\item
For each $I\subsetneq J$ a $\Ga_J$-equivariant map
$\Ti\phi_{IJ}^\La:  \rho^*_{IJ}(\La_I|_{U_{IJ}})\to \La_J$
that is a linear isomorphism on each fiber and covers the embedding $\Tphi_{IJ}: \TU_{IJ}\to U_J$.
Here $\Ga_J\cong\Ga_I\times \Ga_{J\less I}$ acts on $\rho^*_{IJ}(\La_I|_{U_{IJ}}) \to \TU_{IJ}$ by the pullback action of $\Ga_I$ together with the natural identification of the fibers of $\rho^*_{IJ}(\La_I|_{U_{IJ}})$ along $\Ga_{J\less I}$-orbits in $\TU_{IJ}$.
\item
For each $I\subsetneq J\subsetneq K$, we have the weak cocycle condition 
$$
\Tilde \phi^\La_{IK} =  \Tilde \phi^\La_{JK}\circ \rho_{JK}^*( \Tilde \phi^\La_{IJ})\;\mbox{ on } \;\rho^{-1}_{JK}(\Tphi_{IJ}(\TU_{IJ}))\cap \TU_{IK}.
$$ 
\end{itemlist}  
A {\bf section} of a vector bundle $\La$ over $\Kk$ is a collection of smooth $\Ga_I$-equivariant sections $\si=\bigl( \si_I: U_I\to \La_I \bigr)_{I\in\Ii_\Kk}$ that are compatible with the pullbacks $\rho_{IJ}^*$ and bundle maps $\Ti\phi^\La_{IJ}$ in the sense that there are commutative diagrams for each $I\subsetneq J$,
\[
\xymatrix{
\La_I|_{U_{IJ}}    &   \ar@{->}[l]_{\rho_{IJ}}
  \rho_{IJ}^*(\La_I|_{U_{IJ}})   \ar@{->}[r]^{\;\;\;\;\;\;\Tilde\phi^\La_{IJ}}    &  \La_J  \\
U_{IJ} \ar@{->}[u]^{\si_I}       &  \ar@{->}[l]_{\rho_{IJ}}
  \TU_{IJ}  \ar@{->}[u]^{\rho_{IJ}^*(\si_I)}   \ar@{->}[r]^{\Tphi_{IJ}}  & U_J  \ar@{->}[u]_{\si_J} .
}
\]
\end{defn}

\begin{defn} \label{def:prodbun}
If $\La=\bigl(\La_I,\Ti\phi_{IJ}^\La\bigr)_{I,J\in\Ii_\Kk}$ is a bundle over $\Kk$ and $A\subset [0,1]$ is an interval, then the {\bf product bundle} $
A\times \La$ over $A\times \Kk$ is the tuple $\bigl(A\times \La_I,\id_A\times\Ti\phi_{IJ}^\La\bigr)_{I,J\in\Ii_\Kk}$. 
Here and in the following we denote by $A\times \La_I\to A\times U_I$ the pullback bundle of $\La_I\to U_I$ under the projection $\pr_{U_I}: A\times U_I\to U_I$.
\end{defn}

\begin{defn} \label{def:cbundle}  
A {\bf vector bundle over a weak Kuranishi cobordism} $\Kk$ 
is a collection $\La= \bigl(\La_I,\Ti\phi_{IJ}^\La\bigr)_{I,J\in\Ii_\Kk}$
of vector bundles and bundle maps as in Definition~\ref{def:bundle}, together with a choice of isomorphism from its collar restriction to a product bundle.
More precisely, this requires for $\al=0,1$ the choice of a {\bf restricted vector bundle} $\La|_{\p^\al\Kk}= \bigl( \La^\al_I \to \partial^\al U_I, \Ti\phi^{\La,\al}_{IJ}\bigr)_{I,J \in \Ii_{\p^\al\Kk}}$ over $\p^\al\Kk$, and, for some $\eps>0$ less than the collar width of $\Kk$, a choice of lifts of the embeddings $\io^\al_I$ for $I\in\Ii_{\p^\al\Kk}$ to $\Ga_I$-equivariant bundle isomorphisms 
$\ti\io^{\La,\al}_I : A^\al_\eps\times \La^\al_I \to \La_I|_{\im\io^\al_I}$ such that, with $A: = A^\al_\eps$
and 
$\rho^*\ti\io^{\La,\al}_I :=   \rho_{IJ}^* \circ \ti\io^{\La,\al}_I \circ \bigl(\id_A \times (\rho_{IJ}^\al)_*\bigr)$
the following diagrams commute
\[
\xymatrix{ A\times \La_I^\al \ar@{->}[d]   \ar@{->}[r]^
{\ti\io^{\La,\al}_I}    &   \La_I|_{\im\io^\al_I} \ar@{->}[d]   \\
A\times  \partial^\al U_I \ar@{->}[r]^{\io^\al_I}   & \im\io^\al_I \subset U_I
}
\qquad\qquad
\xymatrix{
A\times    (\rho_{IJ}^\al)^*\bigl(\La_I^\al|_{\p^\al U_{IJ}}\bigr) 
  \ar@{->}[r]
   ^{\rho^*\ti\io^{\La,\al}_I} \ar@{->}[d]_  
 {\id_A\times \Ti\phi^{\La,\al}_{IJ}}  & 
  \rho_{IJ}^* \bigl(\La_I |_{\io^\al_I(A\times \p^\al U_{IJ})}\bigr)
  \ar@{->}[d]^
  {\Ti\phi^\La_{IJ}}  \\
 A\times   \La_J^\al\ar@{->}[r]^{\ti\io^{\La,\al}_{J}}
    &  \La_J|_{\im\io^\al_J}  
}
\]

A {\bf section} of a vector bundle $\La$ over a Kuranishi cobordism as above is a compatible collection $\bigl(\si_I:U_I\to \La_I\bigr)_{I\in\Ii_\Kk}$ of  
equivariant 
sections as in Definition~\ref{def:bundle} that in addition have product form in the collar. 
That is we require that for each $\al=0,1$ there is a {\bf restricted section} $\si|_{\p^\al\Kk}= ( \si^\al_I :\partial_\al U_I \to \La^\al_I)_{I\in\Ii_{\p^\al\Kk}}$ of $\La|_{\p^\al\Kk}$ such that for $\eps>0$ sufficiently small we have $(\ti\io^{\La,\al}_I)^*\si_I   = \id_{A^\al_\eps}\times  \si^\al_I$.
 \end{defn}

In the above definition,  we implicitly work with an isomorphism $(\ti\io^{\La,\al}_I)_{I\in\Ii_{\p^\al\Kk}}$, that satisfies all but the product structure requirements of the following notion of isomorphisms on Kuranishi cobordisms.

\begin{defn} \label{def:buniso}
An {\bf isomorphism} $\Psi: \La\to \La'$ between vector bundles over $\Kk$ is a collection
$(\Psi_I: \La_I\to \La'_I)_{I\in \Ii_\Kk}$ of $\Ga_I$-equivariant bundle isomorphisms covering the identity on $U_I$, that intertwine the transition maps,
i.e.\ $\Ti\phi^{\La'}_{IJ}\circ\rho^*_{IJ}(\Psi_I) = \Psi_J \circ \Ti \phi^\La_{IJ}|_{\TU_{IJ}}$
for all $I\subsetneq J$.

If $\Kk$ is a Kuranishi cobordism then we additionally require $\Psi$ to have product form in the collar. That is we require that for each $\al=0,1$ there is a restricted isomorphism $\Psi|_{\p^\al\Kk}= ( \Psi^\al_I :\La^\al_I \to \La'_I\,\!\!^\al)_{I\in\Ii_{\p^\al\Kk}}$ from $\La|_{\p^\al\Kk}$ to $\La'|_{\p^\al\Kk}$ such that for $\eps>0$ sufficiently small we have 
$(\ti\io'_I)^{\La,\al} \circ \bigl(\id_A\times \Psi^\al_I \bigr) = \Psi_I \circ \ti\io^{\La,\al}_I$ 
on $A^\al_\eps\times \partial^\al U_I $.
\end{defn}

%
%
%
%

Note that, although the compatibility conditions are the same, the canonical section 
$\s_\Kk = ( s_I : U_I\to E_I)_{I\in\Ii_\Kk}$ of a Kuranishi atlas does not form a section of a 
vector bundle since the obstruction spaces $E_I$ are in general not of the same dimension, 
hence no bundle isomorphisms $\Ti\phi_{IJ}^\La$ as above exist.
Nevertheless, we will see that, 
there is a natural bundle associated with the section $\s_\Kk$, namely its determinant line bundle, 
and that this line bundle is isomorphic to a bundle constructed by combining the determinant lines of the obstruction spaces $E_I$ and the domains $U_I$.
 
\begin{rmk}\rm 
If $\La$ is a bundle over 
a Kuranishi atlas $\Kk$ (rather than a weak atlas),
then it is straightforward to verify that the union $\sqcup_I \La_I$ of the  local bundles form the objects of a category with projection to the Kuranishi category $\bB_\Kk$.  We did not formulate the above definitions in this language since 
orientations in applications to moduli spaces (e.g.\ Gromov--Witten as in \cite{MW:gw}) 
will usually be constructed on a weak Kuranishi atlas, which does not form a category.
\hfill$\er$
\end{rmk}
 
Here and in the following we will exclusively work with finite dimensional vector spaces.
First recall that the determinant line of a vector space $V$ is its maximal exterior power $\lm V := \wedge^{\dim V}\,V$, with $\wedge^0\,\{0\} :=\R$.
More generally, the {\bf determinant line of a linear map} $D:V\to W$ is defined to be 
\begin{equation}\label{eq:detlin}
\det(D):= \lm\ker D \otimes \bigl( \lm \bigl( \qu{W}{\im D} \bigr) \bigr)^*.
\end{equation}
In order to construct isomorphisms between determinant lines, we will need to fix various conventions, in particular pertaining to the ordering of factors in their domains and targets.
We begin by noting that every isomorphism $F: Y \to Z$ between finite dimensional vector spaces induces an isomorphism
\begin{equation}\label{eq:laphi}
\La_F :\; \lm Y   \;\overset{\cong}{\longrightarrow}\; \lm Z , \qquad
y_1\wedge\ldots \wedge y_k \mapsto F(y_1)\wedge\ldots \wedge F(y_k) .
\end{equation}
For example, 
the fact that  $ \ga\circ s_I: = s_I\circ \ga:U_I\to E_I$ for all $\ga\in \Ga_I$ implies that
\begin{equation}\label{eq:bunga}
\ga_\La: = \La_{\rd_x \ga}\otimes \bigl(\La_{[\ga]^{-1}}  \bigr)^*: \det(\rd_x s_I)\to \det(\rd_{\ga x} s_I)
\end{equation}
is an isomorphism, where $[\ga]: \qu{E_I}{\coker \rd_{ x} s_I}\to \qu{E_I}{\coker \rd_{\ga x} s_I}$ is the induced map.
Further,
if $I\subsetneq J$ and  $\Tx\in \TU_{IJ}$ is such that $\rho_{IJ}(\Tx) = x$, 
then because 
$$
s_I\circ \rho_{IJ}=:s_{IJ}: \TU_{IJ}\to E_I
$$ 
the derivative $\rd_{\Tx}\rho_{IJ}:  \ker \rd s_{IJ}\to \ker \rd s_I$ induces an isomorphism 
$$
\La_{\rd_\Tx\rho_{IJ}}\otimes \La_{\Id}:\det(\rd_\Tx s_{IJ})  \to \det(\rd_x s_{I})
$$
and composition with pullback by $\rho_{IJ}$ defines an isomorphism
\begin{equation}\label{eq:bunrho}
P_{IJ}(\Tx) : \det(\rd_\Tx s_{IJ})  \to \rho_{IJ}^*(\det \rd_x s_I) .
\end{equation}
Further, it follows from 
the index condition in Definition~\ref{def:change} that  with $y:= \Tphi_{IJ}(\Tx)$ the map 
\begin{equation}\label{eq:bunIJ}
\TLa_{IJ}(\Tx): = \La_{\rd_\Tx\Tphi_{IJ}} \otimes 
\bigl(\La_{[\Hat\phi_{IJ}]^{-1}}\bigr)^*
\, :\; \det(\rd_\Tx s_{IJ}) \to \det(\rd_{y} s_J)
\end{equation}
is an isomorphism, induced by the isomorphisms $\rd\Tphi_{IJ}:\ker\rd s_{IJ}\to\ker\rd s_J$ and
$[\Hat\phi_{IJ}] : \qu{E_I}{\im\rd s_I}\to\qu{E_J}{\im\rd s_J}$.  
We can therefore define the determinant bundle $\det(\s_\Kk)$ of a Kuranishi atlas. A second, isomorphic, determinant line bundle $\det(\Kk)$ with fibers $\lm \rT_x U_I \otimes \bigl( \lm E_I \bigr)^*$ will be constructed in Proposition~\ref{prop:orient}.

\begin{defn} \label{def:det} 
The {\bf determinant line bundle} of a weak Kuranishi atlas (or cobordism) $\Kk$ is the vector 
bundle $\det(\s_\Kk)$ given by the line bundles 
$$
\det(\rd s_I):=\bigcup_{x\in U_I} \det(\rd_x s_I) \;\to\; U_I \qquad 
\text{for}\; I\in\Ii_\Kk, 
$$
with $\Ga_I$ actions given by the isomorphisms $\ga_\La$ of \eqref{eq:bunga},
and the isomorphisms 
$\Tphi^\La_{IJ}(\Tx):= \La_{IJ}(\Tx) \circ P_{IJ}(\Tx)^{-1}$  
in \eqref{eq:bunrho} and \eqref{eq:bunIJ} for $I\subsetneq J$ and 
$\Tx\in U_{IJ}$.
\end{defn}

To show that  $\det(\s_\Kk)$ is well defined, in particular that 
$\Tx\mapsto \La_{IJ}(\Tx)$
 is smooth, 
we introduce some further natural\footnote{
Here a ``natural" isomorphism is one that is functorial, i.e.\ it commutes with 
the action on both sides induced by a vector space isomorphism.}
isomorphisms 
and fix various ordering conventions.

\begin{itemlist}
\item
For any subspace $V'\subset V$ the {\bf splitting isomorphism}
\begin{equation}\label{eq:VW}
\lm V\cong \lm V'\otimes \lm\bigl( \qu{V}{V'}\bigr)
\end{equation}
is given by completing a basis $v_1,\ldots,v_k$ of $V'$ to a basis $v_1,\ldots,v_n$ of $V$ and
mapping $v_1\wedge \ldots \wedge v_n \mapsto (v_1\wedge \ldots \wedge v_k) \otimes ([v_{k+1}]\wedge \ldots\wedge [v_n])$.
\item
For each isomorphism $F:Y\overset{\cong}{\to} Z$ the {\bf contraction isomorphism} 
\begin{equation} \label{eq:quotable}
\mathfrak{c}_F \,:\; \lm Y  \otimes  \bigl( \lm Z \bigr)^* \;\overset{\cong}{\longrightarrow}\; \R , 
\end{equation}
is given by the map
$\bigl(y_1\wedge\ldots \wedge y_k\bigr) \otimes \eta \mapsto \eta\bigl(F(y_1)\wedge \ldots 
\wedge F(y_k)\bigr)$.
\item
For any space $V$ we use the {\bf duality isomorphism}
\begin{equation}\label{eq:dual} 
\lm V^* \;\overset{\cong}{\longrightarrow}\; (\lm V)^*, \qquad
 v_1^*\wedge\dots\wedge v_n^* 
\;\longmapsto\;
(v_1\wedge\dots\wedge v_n)^* ,
\end{equation}
which corresponds to the natural pairing
$$
 \lm V \otimes \lm V^*   \;\overset{\cong}{\longrightarrow}\;  \R , \qquad
 \bigl(v_1\wedge\dots\wedge v_n\bigr) \otimes \bigl(\eta_1\wedge\dots\wedge \eta_n\bigr)
 \;\mapsto\;
 \prod_{i=1}^n \eta_i(v_i) 
$$
via the general identification (which in the case of line bundles $A,B$ maps $\eta\neq0$ to a nonzero homomorphism, i.e.\ an isomorphism)
\begin{equation}\label{eq:homid}
\Hom(A\otimes B,\R) \;\overset{\cong}{\longrightarrow}\; \Hom(B, A^*)
,\qquad
H \;\longmapsto\; \bigl( \; b \mapsto H(\cdot \otimes b) \; \bigr) .
\end{equation}
\end{itemlist}

\MS\NI
Next, we combine the above isomorphisms to obtain a more elaborate 
contraction isomorphism.

\begin{lemma}[Lemma~8.1.7  in \cite{MW2}]\label{lem:get} 
Every linear map $F:V\to W$ together with an isomorphism $\phi:K\to \ker F$ induces an isomorphism
\begin{align}\label{Cfrak}
\mathfrak{C}^{\phi}_F \,:\; \lm V \otimes \bigl(\lm W \bigr)^* 
&\;\overset{\cong}{\longrightarrow}\;  \lm K \otimes \bigl(\lm \bigl( \qu{W}{F(V)}\bigr) \bigr)^*  
\end{align}
given by
\begin{align}
(v_1\wedge\dots v_n)\otimes(w_1\wedge\dots w_m)^* &\;\longmapsto\;
\bigl(\phi^{-1}(v_1)\wedge\dots \phi^{-1}(v_k)\bigr)\otimes \bigl( [w_1]\wedge\dots [w_{m-n+k}] \bigr)^* ,
\notag
\end{align}
where $v_1,\ldots,v_n$ is a basis for $V$ with ${\rm span}(v_1,\ldots,v_k)=\ker F$, and $w_1,\dots, w_m$ is a basis for $W$ whose last $n-k$ vectors are $w_{m-n+i}=F(v_i)$ for $i=k+1,\ldots,n$.

In particular, for every linear map $D:V\to W$ we may pick $\phi$ as the inclusion $K=\ker D\hookrightarrow V$ to obtain an isomorphism
$$
\mathfrak{C}_{D} \,:\;  \lm V \otimes \bigl(\lm W \bigr)^* \;\overset{\cong}{\longrightarrow}\;  \det(D) .
$$ 
\end{lemma}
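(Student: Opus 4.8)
The plan is to verify directly that the stated formula defines a well-defined linear isomorphism, and that it is independent of the choices of bases involved. First I would observe that the map $\mathfrak{C}^\phi_F$ is built out of the natural isomorphisms just listed: using the splitting isomorphism \eqref{eq:VW} applied to $\ker F\subset V$ we have $\lm V\cong \lm(\ker F)\otimes\lm(\qu{V}{\ker F})$, and $F$ induces an isomorphism $\bar F: \qu{V}{\ker F}\overset{\cong}\to F(V)$, while $\phi:K\to\ker F$ gives $\lm(\ker F)\cong\lm K$ via \eqref{eq:laphi}. Dually, applying \eqref{eq:VW} to $F(V)\subset W$ gives $\lm W\cong \lm F(V)\otimes\lm(\qu{W}{F(V)})$, hence $(\lm W)^*\cong(\lm F(V))^*\otimes(\lm(\qu{W}{F(V)}))^*$ via \eqref{eq:dual}. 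Combining these and contracting the $\lm(\qu{V}{\ker F})$ factor against the $(\lm F(V))^*$ factor through the contraction isomorphism \eqref{eq:quotable} for $\bar F$ (composed with $\La_\phi$ on the kernel factors) produces the claimed target $\lm K\otimes(\lm(\qu{W}{F(V)}))^*$. So conceptually $\mathfrak{C}^\phi_F$ is the composite
\[
\lm V\otimes(\lm W)^*\;\cong\;\lm K\otimes\lm(\tfrac{V}{\ker F})\otimes(\lm F(V))^*\otimes(\lm(\tfrac{W}{F(V)}))^*\;\overset{\id\otimes\,\mathfrak{c}_{\bar F}\otimes\id}{\longrightarrow}\;\lm K\otimes(\lm(\tfrac{W}{F(V)}))^*,
\]
which makes it automatically an isomorphism of one-dimensional spaces.

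Next I would check that the explicit formula in the statement agrees with this composite on a generating simple tensor, using a basis $v_1,\dots,v_n$ of $V$ adapted to $\ker F=\operatorname{span}(v_1,\dots,v_k)$ and a basis $w_1,\dots,w_m$ of $W$ whose final $n-k$ vectors are $F(v_{k+1}),\dots,F(v_n)$. Tracing through the splitting isomorphisms one gets $v_1\wedge\dots\wedge v_n\mapsto (v_1\wedge\dots\wedge v_k)\otimes([v_{k+1}]\wedge\dots\wedge[v_n])$ and $(w_1\wedge\dots\wedge w_m)^*\mapsto$ the corresponding decomposition, and $\mathfrak{c}_{\bar F}$ pairs $[v_{k+1}]\wedge\dots\wedge[v_n]$ against the dual of $F(v_{k+1})\wedge\dots\wedge F(v_n)=w_{m-n+k+1}\wedge\dots\wedge w_m$, producing the scalar $1$ by the adaptedness of the bases; applying $\La_{\phi^{-1}}$ to the kernel factor then yields exactly $(\phi^{-1}(v_1)\wedge\dots\wedge\phi^{-1}(v_k))\otimes([w_1]\wedge\dots\wedge[w_{m-n+k}])^*$. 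Since $\mathfrak{C}^\phi_F$ has been identified with a composite of canonical isomorphisms, it is in particular basis-independent, so the formula gives a well-defined map; linearity is clear from the formula. For the last sentence, taking $\phi$ to be the inclusion $\ker D\hookrightarrow V$ makes $\lm K=\lm\ker D$ and the target literally $\det(D)=\lm\ker D\otimes(\lm(\qu{W}{\operatorname{im} D}))^*$, giving $\mathfrak{C}_D$.

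The main obstacle I expect is purely bookkeeping: getting all the sign/ordering conventions in \eqref{eq:VW}, \eqref{eq:quotable}, \eqref{eq:dual}, \eqref{eq:homid} to line up so that the scalar coming out of the contraction is genuinely $+1$ (and not $-1$) under the adapted-basis convention stated, and confirming that the map is independent of the choice of adapted bases rather than merely independent up to sign. This is exactly the kind of verification that is best handled by exhibiting $\mathfrak{C}^\phi_F$ as the canonical composite displayed above, since each constituent isomorphism has already been pinned down, and then remarking that the formula is the expression of that composite in coordinates; a direct basis-change computation (replacing the adapted bases by other adapted bases and checking the two prescriptions transform identically) is also available as a fallback. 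Given that this is quoted verbatim as Lemma~8.1.7 of \cite{MW2}, I would in fact simply cite that proof, noting only the trivial point that nothing here involves the isotropy groups, so the argument carries over unchanged.
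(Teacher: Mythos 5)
The paper gives no proof here: the lemma is stated as ``Lemma~8.1.7 in \cite{MW2}'' and the proof is the citation. Your closing remark — that one should simply quote that source, since nothing in the statement or proof involves isotropy — exactly matches what the paper does, so on the key point the approaches coincide.

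That said, your reconstruction of the argument is correct, and it is worth recording why. Factoring $\mathfrak{C}^\phi_F$ through the splitting isomorphism for $\ker F\subset V$ and $F(V)\subset W$, the map $\La_{\phi^{-1}}$ on the kernel factor, and the contraction $\mathfrak{c}_{\bar F}$ pairing $\lm(\qu{V}{\ker F})$ against $(\lm F(V))^*$ does produce the stated target, and the explicit formula is exactly this composite evaluated on an adapted pair of bases. The basis-independence you flag as the main bookkeeping hazard comes out cleanly: if $(v_i')$ is another adapted basis of $V$ and $(w_j')$ a compatible adapted basis of $W$, the change-of-basis matrices are block-triangular, with the bottom-right blocks of both equal to the same matrix $A_{22}$ (the transition on $V/\ker F\cong F(V)$). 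In the tensor $\lm V\otimes(\lm W)^*$ the two copies of $\det(A_{22})$ cancel, leaving only $\det(A_{11})$ from the kernel factor and $\det(B_{11})^{-1}$ from the cokernel factor, which are precisely the scalars by which $\phi^{-1}(v_1)\wedge\dots\wedge\phi^{-1}(v_k)$ and $([w_1]\wedge\dots\wedge[w_{m-n+k}])^*$ rescale. So the prescription is genuinely well defined, not merely well defined up to sign, and the fallback basis-change computation you mention does close the argument.
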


\begin{rmk}\label{rmk:get}\rm 
 If $F$ is equivariant with respect to  actions of the group $\Ga$ on $V,W$ and we equip $K$ with the induced $\Ga$ action so that $\phi$ is also equivariant,
 then  the above isomorphism $\mathfrak{C}^{\phi}_F$ is  equivariant with respect to the action of $\Ga$ on $\lm V \otimes \bigl(\lm W \bigr)^* $ given  by the maps $\La_\ga\otimes (\La_{\ga^{-1}})^*
$  on  $\lm V \otimes \bigl(\lm W \bigr)^*$ and by the corresponding maps $\La_\ga\otimes (\La_{[\ga]^{-1}})^*
$ on $\lm K \otimes \bigl(\lm \bigl( \qu{W}{F(V)}\bigr) \bigr)^* $, where  $\La_{[\ga]}$ is as in \eqref{eq:bunga}.
\hfill$\er$
\end{rmk}

With these notations in hand, we can now prove one of the main results of this section.

\begin{prop}\label{prop:det0}  
For any weak Kuranishi atlas, $\det(\s_\Kk)$ is a well defined line bundle over $\Kk$.
Further, if $\Kk$ is a weak Kuranishi cobordism, then $\det(\s_\Kk)$ can be given product form on the collar of $\Kk$ with restrictions $\det(\s_\Kk)|_{\p^\al\Kk} = \det(\s_{\p^\al\Kk})$ for $\al= 0,1$.
The required bundle isomorphisms from the product $A^\al_\eps\times \det(\s_{\p^\al\Kk})$ to the collar restriction  $(\io^\al_\eps)^*\det(\s_\Kk)$ are given in \eqref{orient map}.
\end{prop}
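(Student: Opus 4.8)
The plan is to verify in turn the three assertions of Definition~\ref{def:bundle} for the collection $\det(\s_\Kk) = \bigl(\det(\rd s_I), \Tphi^\La_{IJ}\bigr)$, and then to upgrade to the cobordism statement. First I would check that each $\det(\rd s_I) \to U_I$ is a smooth line bundle carrying a $\Ga_I$-action covering the given action on $U_I$: smoothness of $x\mapsto \det(\rd_x s_I)$ is a standard fact about determinant lines of a smoothly varying family of Fredholm (here finite-dimensional) operators, once one notes that locally on $U_I$ one can choose a complement $C_x$ to $\im \rd_x s_I$ depending smoothly on $x$ and apply the contraction isomorphism $\mathfrak{c}$ from \eqref{eq:quotable}; the $\Ga_I$-action is by the maps $\ga_\La$ of \eqref{eq:bunga}, and equivariance of $s_I$ makes this a genuine action (functoriality of $\La_{(-)}$ gives $(\ga\ga')_\La = \ga_\La\circ\ga'_\La$). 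I would phrase the smooth local triviality precisely via Lemma~\ref{lem:get}: the isomorphism $\mathfrak{C}_{\rd_x s_I}: \lm \rT_x U_I \otimes (\lm E_I)^* \to \det(\rd_x s_I)$ identifies $\det(\rd s_I)$ locally with the smooth bundle $\lm \rT U_I \otimes (\lm E_I)^*$, which settles smoothness cleanly and will moreover be reused in Proposition~\ref{prop:orient}.

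Next I would treat the transition maps $\Tphi^\La_{IJ}(\Tx) = \TLa_{IJ}(\Tx)\circ P_{IJ}(\Tx)^{-1}$. The key points: (a) $P_{IJ}(\Tx): \det(\rd_\Tx s_{IJ}) \to \rho_{IJ}^*(\det\rd_x s_I)$ of \eqref{eq:bunrho} is an isomorphism because $\rd_\Tx\rho_{IJ}$ restricts to an isomorphism on kernels and an identity on cokernels by \eqref{eq:indexcond}; (b) $\TLa_{IJ}(\Tx)$ of \eqref{eq:bunIJ} is an isomorphism precisely by the index condition (Definition~\ref{def:change}(i),(ii)), which says $\rd_\Tx\Tphi_{IJ}$ identifies $\ker\rd s_{IJ}$ with $\ker\rd s_J$ and $\Hat\phi_{IJ}$ identifies the cokernels; (c) smoothness of $\Tx\mapsto \Tphi^\La_{IJ}(\Tx)$ follows by conjugating through the $\mathfrak{C}_{(-)}$ identifications of step one, which reduce it to $\La_{\rd\Tphi_{IJ}}\otimes(\La_{[\Hat\phi_{IJ}]^{-1}})^*$ on the trivializing bundles $\lm\rT U \otimes (\lm E)^*$, manifestly smooth; (d) $\Ga_J$-equivariance with respect to the pullback action on $\rho_{IJ}^*(\det\rd s_I|_{U_{IJ}})$: for $\ga\in\Ga_I$ this is the compatibility of $\La_{(-)}$ with the defining intertwining relations \eqref{eq:change2} of the coordinate change, and for $\ga\in\Ga_{J\less I}$ it is the identification of fibers along $\Ga_{J\less I}$-orbits built into Definition~\ref{def:bundle}, using that $s_I\circ\rho_{IJ} = s_I\circ\rho_{IJ}$ is $\Ga_{J\less I}$-invariant. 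Finally, the weak cocycle condition $\Tilde\phi^\La_{IK} = \Tilde\phi^\La_{JK}\circ\rho^*_{JK}(\Tilde\phi^\La_{IJ})$ on the appropriate overlap reduces — after composing out the $P$'s via $\rho_{IK} = \rho_{IJ}\circ\rho_{JK}$ (Lemma~\ref{le:compos0}, valid since $\Kk$ is an atlas, or on the overlap for a weak atlas by \eqref{eq:wcocycle}) and using the factorizations $\Hat\phi_{IK} = \Hat\phi_{JK}\circ\Hat\phi_{IJ}$, $\Tphi_{IJK} = \Tphi_{JK}\circ\Tphi_{IJK}$ from Lemma~\ref{le:compos} — to the functoriality relation $\La_{G\circ F} = \La_G\circ\La_F$ for the induced maps on top exterior powers, together with the analogous relation for the quotient maps $[\Hat\phi_{\bullet\bullet}]$; this is exactly the determinant-line bookkeeping already carried out in \cite[\S8.1]{MW2} in the trivial-isotropy case, now applied fiberwise at $\Tx\in\TU_{IJK}$.

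For the cobordism statement I would invoke the collar product form of all the data: over the collar $\im\io^\al_I \cong A^\al_\eps\times\p^\al U_I$ the section $s_I$ is $s^\al_I\circ\pr_{\p^\al U_I}$ by Definition~\ref{def:CCC}, so $\rd_{(t,x)}s_I = \rd_x s^\al_I$ (pulled back along the projection), whence there is a canonical identification $\det(\rd_{(t,x)}s_I) \cong \det(\rd_x s^\al_I)$, and letting $t$ vary gives the bundle isomorphism $A^\al_\eps\times\det(\rd s^\al_I) \to \det(\rd s_I)|_{\im\io^\al_I}$ — this is the map to be recorded as \eqref{orient map}. Compatibility of these collar isomorphisms with the $\Tphi^\La_{IJ}$ follows from the collar product form of the coordinate changes (Definition~\ref{def:CCC}, the product coordinate change $\id_{A^\al_\eps}\times\p^\al\Hat\Phi_{IJ}$), together with the fact that forming $\det$ commutes with taking products with an interval — again because $\rd s_I$ is constant in the collar direction. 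The main obstacle I expect is not any single deep point but the sheer amount of careful bookkeeping: keeping track of orderings of wedge factors so that the contraction, splitting, duality, and $\mathfrak{C}^\phi_F$ isomorphisms compose consistently, and verifying the $\Ga_{J\less I}$-equivariance along orbits precisely — but all of this is routine given the machinery of Lemma~\ref{lem:get}, Remark~\ref{rmk:get}, and the conventions \eqref{eq:VW}--\eqref{eq:homid}, and the trivial-isotropy skeleton of the argument is already in \cite[Lemma~8.1.7, \S8.1]{MW2}. So the proof will essentially consist of (i) reducing smoothness and the isomorphism properties to the $\mathfrak{C}_{(-)}$-trivializations, (ii) reducing the cocycle identity to functoriality of $\La_{(-)}$, and (iii) reading off the collar isomorphisms from the product structure.
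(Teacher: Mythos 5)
Your overall architecture is reasonable, but there is a genuine gap in the smoothness arguments (your step (c) and the treatment of the $\Ga_I$-action), and a conceptual slip in how you propose to trivialize.

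First, the slip: you propose to give $\det(\rd s_I)$ its smooth structure by declaring $\mathfrak{C}_{\rd_x s_I}: \lm \rT_x U_I \otimes (\lm E_I)^* \to \det(\rd_x s_I)$ to be a trivialization, and to justify this by ``choosing a complement $C_x$ to $\im \rd_x s_I$ depending smoothly on $x$.'' That is not possible in general: $\coker\rd_x s_I$ can jump in dimension, so a smooth family of complements does not exist. The correct move (which the paper makes, following \cite[Proposition~8.1.8]{MW2}) is a \emph{stabilization}: fix a single $\Ga_I^{x_0}$-equivariant injection $R_I:\R^N\to E_I$ covering the cokernel at $x_0$, so that $\rd_x s_I\oplus R_I$ is surjective for $x$ near $x_0$ and $\ker(\rd_x s_I\oplus R_I)$ varies smoothly. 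The local trivializations $\Hat T_{I,x}$ are built from this over-complement, not from a literal complement.

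Second, the gap in your step (c). You claim that conjugating $\Tphi^\La_{IJ}(\Tx)$ through the $\mathfrak{C}$-identifications ``reduces it to $\La_{\rd\Tphi_{IJ}}\otimes(\La_{[\Hat\phi_{IJ}]^{-1}})^*$ on the trivializing bundles $\lm\rT U\otimes(\lm E)^*$, manifestly smooth.'' This is not what the conjugated map is. The factor $\La_{[\Hat\phi_{IJ}]^{-1}}$ is a map on cokernels $\qu{E_I}{\im\rd s_I}\to \qu{E_J}{\im\rd s_J}$, which is defined only on the abstract determinant fibers and does not act on $\lm E_I, \lm E_J$. What you actually get upon conjugation is the map $\Tilde{\mathfrak{C}}_{IJ}(\Tx)^{-1}$ of \eqref{CIJ} — whose construction requires choosing a smooth $\Ga_J$-equivariant normal bundle $N_{IJ}$ to $\im\Tphi_{IJ}$ and the resulting family of projections $\pr_{N_{IJ}}(y)$ — and whose smoothness is far from manifest. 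This is precisely the content of Proposition~\ref{prop:orient}; if you invoke it at this stage you have a circularity, since that proposition presupposes the present one.

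Third, you verify that $\ga\mapsto\ga_\La$ is an \emph{algebraic} action but never show it is a \emph{smooth} action — and this is the genuinely new difficulty that nontrivial isotropy introduces. The factor $(\La_{[\ga]^{-1}})^*$ again acts on cokernels, which do not vary smoothly. The paper handles this by taking the second stabilizing injection $R_I':=\ga\circ R_I$ near $\ga x_0$, showing $R_I'$ is $\Ga_I^{\ga x_0}$-equivariant (via conjugation $c_\ga$), and observing that the action of $\ga$ then lifts to the identity on $\R^N$, making $\ga_\La$ smooth by inspection of the $\Hat T_{I,x}$-trivializations. Your proposal cannot produce this argument because it has dispensed with $R_I$ altogether.

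In short, your plan replaces the stabilized trivializations $\Hat T_{I,x}$ by the canonical contraction $\mathfrak{C}_{\rd s_I}$, which effectively tries to build $\det(\s_\Kk)$ as $\det(\Kk)$ directly. That route is available in principle but forces you to prove the smoothness and independence-of-$N_{IJ}$ of $\Tilde{\mathfrak{C}}_{IJ}$ from scratch, exactly the work the paper defers to Proposition~\ref{prop:orient}; it does not make the smoothness of $\Tphi^\La_{IJ}$ or of $\ga_\La$ ``manifest.'' The cobordism part of your proposal (collar product form of $s_I$ gives the canonical isomorphism \eqref{orient map}) is fine and matches the paper.
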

\begin{proof}  
We use the same local trivializations of $\det(\rd s_I)$ as in the proof of the analogous result 
 \cite[Proposition~8.1.8]{MW2} for trivial isotropy, and must check that  these are compatible with   
 the isotropy group actions and 
 coordinate changes.  We will begin by defining these trivializations, referring to \cite{MW2} for many details of proofs.

Let $x_0\in U_I$, and  denote its  stabilizer subgroup  in $\Ga_I$ by $\Ga_I^{x_0}$.  
Take a subspace of $E_I$ that covers the cokernel of $\rd_{x_0} s_I$, 
sweep it out to obtain a  $\Ga_I^{x_0}$-invariant subspace $E'\subset E_I$,
then choose an  isomorphism $\R^N\cong E'$ and equip $\R^N$ with the 
pullback action of $\Ga_I^{x_0}$ denoted $(\ga,v)\mapsto \ga\cdot_{x_0} v$.  
The resulting equivariant map $R_I: (\R^N,\Ga_I^{x_0})\to (E_I,\Ga_I^{x_0})$
covers the cokernel of $\rd_{x} s_I$ for all $x$ in some neighbourhood $O$ of $x_0$.

Thus  $\rd_x s_I\oplus R_I$ is surjective for $x\in O$, and as in equation~(8.1.9) of \cite{MW2} 
we may define a trivialization of $\det(\rd s_I)|_O$ as follows:
\begin{align}\notag
\Hat T_{I,x} \,:\; 
\lm \ker(\rd_x s_I \oplus R_I) 
&\;\overset{\cong}{\longrightarrow}\; 
\qquad\qquad \det(\rd_x s_I)  \\ \label{eq:HatTx}
\ov v_1\wedge \ldots \wedge \ov v_n
&\;\longmapsto\; 
 (v_1\wedge\dots v_k)\otimes  \bigl( [R_I(e_1)]\wedge\dots [R_I(e_{N-n+k})] \bigr)^*,
\end{align}
where $\ov v_i=(v_i,r_i)$ is a basis of $\ker(\rd_x s_I \oplus R_I)\subset \rT_x U_I\times \R^N$ such that $v_1,\ldots,v_k$ span $\ker \rd_x s_I$ (and hence $r_1=\ldots=r_k=0$), and $e_{1},\ldots, e_{N}$ is a positively ordered normalized basis of $\R^N$ (that is $e_1\wedge\ldots e_N = 1 \in \R \cong \lm\R^N$) such that $R_I(e_{N-n+i}) = \rd_x s_I(v_i)$ for $i = k+1,\ldots, n$. In particular, the last $n-k$ vectors span $\im \rd_x s_I \cap \im R_I \subset E_I$, and thus the first $N-n+k$ vectors $[R_I(e_1)],\dots, [R_I(e_{N-n+k})]$ span the cokernel $\qu{E_I}{\im\rd_x s_I}\cong\qu{\im R_I}{\im\rd_x s_I\cap \im R_I}$.
We prove in \cite[Proposition~8.1.8]{MW2} that 
 these trivializations do not depend on the choice of injection $R_I:\R^N\to E_I$. In other words,
 if $R_I':\R^{N'}\to E_I$ is another $\Ga_I$-equivariant injection that also maps onto the cokernel of $\rd_{x_0} s_I$, then 
 there is a bundle isomorphism
 $$
 \Psi: \lm \ker(\rd s_I\oplus R_I)|_O\to \lm \ker(\rd s_I\oplus R_I')|_O
 $$
which is necessarily   $\Ga_I$-equivariant and such that
$\Hat T_I = \Hat T_I' \circ \Psi$.
Thus  $\det(\rd s_I)$ is a smooth line bundle over $U_I$ for each $I\in\Ii_\Kk$.

It remains to check that the action $\ga\in \Ga_I$ on $$
\det(\rd s_I) =\lm( \ker \rd s_I)\otimes \lm (\coker \rd s_I)^*
$$ 
is smooth.
We prove this by choosing suitable trivializations near $x_0$ and $\ga x_0$ and then lifting the action of $\ga$ to a smooth action on the  domains
$\ker (\rd s_I\oplus R_I)$ of the trivializations.  
To this end, first consider the trivialization $T_{I,x}$ defined near $x_0\in U_I$ by 
a $\Ga_I^{x_0}$-equivariant injection $R_I: (\R^N,\Ga_I^{x_0}) \to (E_I,\Ga_I^{x_0})$, 
and for $\ga\in \Ga_I$ the associated trivialization 
$T_{I,\ga x}'$ defined near $\ga x_0\in U_I$ by 
$$
R_I': = \ga \circ R_I: (\R^N,\Ga_I^{\ga x_0}) \to (E_I,\Ga_I^{\ga x_0}),
$$
where 
$(\R^N,\Ga_I^{x_0})$ denotes $\R^N$ with the $\Ga_I^{x_0}$-action $\de: v \mapsto \de \cdot_{x_0} v$ and 
$(\R^N,\Ga_I^{\ga x_0})$ denotes $\R^N$ with the $\Ga_I^{\ga x_0}$-action $$
\de': v \mapsto  \de' \cdot_{\ga x_0} v: =  \ga^{-1}\de' \ga \cdot_{x_0} v, 
$$
 which is well defined because conjugation by $\ga$ defines an isomorphism
$
c_\ga: \Ga_I^{\ga x_0}\to \Ga_I^{x_0},\,\de'\mapsto \ga^{-1} \de' \ga.
$
Then 
$R_I' = \ga \circ R_I$ is $\Ga_I^{\ga x_0}$-equivariant because when $\de'\in \Ga_I^{\ga x_0}$
\begin{align*}
R_I'(\de'\cdot_{\ga x_0} v)&  = R_I'(\ga^{-1} \de'\ga\cdot_{x_0} v) = \ga R_I( \ga^{-1} \de'\ga\cdot_{x_0}  v) = 
\ga ({\ga^{-1} \de'\ga}) R_I(v)\\
&  = \ga\ga^{-1} \de' \ga R_I(v)
=   \de' \ga R_I(v) = \de'\circ R_I'(v),
\end{align*}
where the fourth equality holds because the full group $\Ga_I$ acts on $E_I$.
Thus the diagram
\[
\xymatrix{
&(\R^N,\Ga_I^{x_0})\ar@{->}[d]^{(\id,c_{\ga}^{-1})}\ar@{->}[r]^{(R_I,\id)}& (E_I, \Ga_I^{x_0})\ar@{->}[d]^{(\ga,c_{\ga}^{-1})}\\
&(\R^N,\Ga_I^{\ga x_0})\ar@{->}[r]^{(R_I',\id)}& (E_I, \Ga_I^{\ga x_0})
}
\]
commutes; in other words, the action of the element $\ga\in \Ga_I$ on $E_I$ lifts to the identity map of $\R^N$.  Hence the definition \eqref{eq:HatTx} of the maps $\Hat T_{I,x}$ implies that  the following diagram commutes
\[
\xymatrix{
\lm\bigl(\ker (\rd_x s_I\oplus R_I)\bigr)\ar@{->}[d]^{\La_{\rd_x\ga\oplus \id_{\R^N}}}   \ar@{->}[r]^{\;\;\;\qquad\Hat T_{I,x}}    &  
\det \rd_x s_I   \ar@{->}[d]^ {\La_{\rd_x\ga} \otimes (\La_{[ \ga^{-1}]})^*}  \\
\lm\bigl(\ker (\rd_{\ga x} s_I\oplus R_I')\bigr)      
 \ar@{->}[r]^{\qquad\;\;\Hat T_{I,\ga x}}  &  \det \rd_{\ga x} s_I   .
}
\]
Since the map $\La_{\rd_x\ga\oplus \id_{\R^N}}$  is smooth, 
so is $\ga_\La: = \La_{\rd_x\ga} \otimes (\La_{[ \ga^{-1}]})^*$.  
Thus
 $\det(\rd s_I)$ is a $\Ga_I$-equivariant smooth line bundle over $U_I$ for each $I\in\Ii_\Kk$.

Next note that because $\Ga_{J\less I}$ acts freely on $\TU_{IJ}$, the stabilizer subgroup 
$\Ga_J^{\Tx_0}$
of a point $\Tx_0
\in \rho_{IJ}^{-1}(x_0)$ is taken isomorphically to $\Ga_I^{x_0}$ by the projection $\rho_{IJ}^\Ga:\Ga_J\to \Ga_I$.
For simplicity we will identify these groups. Since $s_{IJ}: \TU_{IJ}\to E_I$ is the composite $s_I\circ \rho_{IJ}$, 
we may therefore  trivialize the bundle $\det(\rd_{\Tx} s_{IJ})$ near $\Tx_0\in \rho_{IJ}^{-1}(x_0)$ by
using the same injection $R_I:\R^N\to E_I$, now considered as a $\Ga_J^{\Tx_0}$-equivariant map.  
Since the  diagram
\[
\xymatrix{
\lm\bigl(\ker (\rd_\Tx s_{IJ}\oplus R_I)\bigr)\ar@{->}[d]^{\La_{\rho_{IJ}\oplus \id_{\R^N} }}  \ar@{->}[r]^{\;\;\;\qquad\Hat T_{I,\Tx}}    &  
\det \rd_\Tx s_{IJ}   \ar@{->}[d]^ {\La_{\rho_{IJ}} \otimes (\La_\id)^*}  \\
\lm\bigl(\ker( \rd_{x} s_I\oplus R_I)\bigr)      
 \ar@{->}[r]^{\qquad\;\;\Hat T_{I,x}}  &  \det \rd_{x} s_I   
}
\]
commutes, 
the isomorphism 
$P_{IJ}(\Tx):\det \rd_{\Tx} s_{IJ}\to \det \rd_{x} s_{I}$ of \eqref{eq:bunrho} is  smooth.
Moreover
the equivariance of the covering map $\rho_{IJ}: (\TU_{IJ},\Ga_J) \to (U_{IJ},\Ga_I)$ and
the identity $s_I\circ \rho_{IJ} = s_{IJ}: \TU_{IJ}\to E_I$ imply that it is  equivariant.
Therefore, to complete the proof that the transition maps $\Tphi_{IJ}^\La$ are smooth, we 
must check that the map  
$$
\TLa_{IJ}(\Tx): = \La_{\rd_\Tx\Tphi_{IJ}} \otimes 
\bigl(\La_{[\Hat\phi_{IJ}]^{-1}}\bigr)^*
\, :\; \det(\rd_\Tx s_{IJ}) \to \det(\rd_{y} s_J)
$$
in
\eqref{eq:bunIJ}
is  equivariant and smooth.  Its equivariance follows from the equivariance of  its constituent maps
$\Tphi_{IJ}$ and 
$\Hat\phi_{IJ}$.    To see that it is smooth, it suffices to 
show that the composite
$\La_{IJ}(x) : = \TLa_{IJ}(\rho_{IJ}^{-1}(x))$ is smooth in some neighbourhood $O$ of $x_0\in U_{IJ}$ where $\rho_{IJ}^{-1}: O\to \TU_{IJ}$ is a local inverse for the covering map $\rho_{IJ}$.    But if we define $\phi_{IJ}(x) : = \Tphi_{IJ}(\rho_{IJ}^{-1}(x)): O\to U_J$, then $\La_{IJ}(x) = 
\La_{\rd_x\phi_{IJ}} \otimes 
\bigl(\La_{[\Hat\phi_{IJ}]^{-1}}\bigr)^*$ 
is identical to the map of the same name in \cite[equation~(8.1.11)]{MW2}, so that smoothness follows by the Claim proved as part of \cite[Proposition~5.1.8]{MW2}.
This  completes the proof that $\det(s_\Kk)$ is a vector bundle over $\Kk$.

In the case of a weak Kuranishi cobordism $\Kk$, 
Proposition~8.1.8 in \cite{MW2} also constructs smooth bundle isomorphisms from
the collar restrictions to the product bundles $A^\al_\eps\times \det(\s_{\p^\al\Kk})$
of the form
\begin{equation}\label{orient map}
\ti\io^{\La,\al}_I (t,x)
 :=  \bigl( \La_{\rd_{(t,x)}\io^\al_I} \circ \wedge_1 \bigr) 
\otimes \bigl(\La_{\id_{E_I}}\bigr)^*
\;:\; A^\al_\eps\times  \det(\rd_x s^\al_I) \;\to\;  \det(\rd_{\io^\al_I(x,t)} s_I) ,
\end{equation}
where  $\wedge_1 \,:\; \lm \ker \rd_x s^\al_I \;\to\; \lm \bigl(\R\times  \ker \rd_x s^\al_I \bigr) , 
\eta \;\mapsto\; 1\wedge \eta$.
These are equivariant because they are induced by the equivariant map $\io^\al_I$, and are compatible with the coordinate changes because the collar embeddings $\io^\al_I$ are.  This completes the proof.
\end{proof}

We next use the determinant bundle $\det(s_\Kk)$ to define the notion of an orientation of a Kuranishi atlas.

\begin{defn}\label{def:orient} 
A  weak Kuranishi atlas or Kuranishi cobordism $\Kk$ is {\bf orientable} if there exists a nonvanishing section $\si$ of the bundle $\det(\s_\Kk)$ (i.e.\ with $\si_I^{-1}(0)=\emptyset$ for all $I\in\Ii_\Kk$).
An {\bf orientation} of $\Kk$ is a choice of nonvanishing section $\si$ of $\det(\s_\Kk)$. 
An {\bf oriented Kuranishi atlas or cobordism} is a pair $(\Kk,\si)$ consisting of a  Kuranishi atlas or cobordism and an orientation $\si$ of $\Kk$.

For an oriented Kuranishi cobordism $(\Kk,\si)$, the {\bf induced orientation of the boundary} $\p^\al\Kk$ for $\al=0$ resp.\ $\al=1$ is the orientation of $\p^\al\Kk$,
$$
\p^\al\si \,:=\; \Bigl( \bigl( (\ti\io^{\La,\al}_I)^{-1}  
\circ\si_I \circ \io^\al_I \bigr)\big|_{\{\al\}\times \partial^\al U_I  } \Bigr)_{I\in\Ii_{\p^\al\Kk}}
$$
given by the isomorphism $(\ti\io^{\La,\al}_I)_{I\in\Ii_{\p^\al\Kk}}$ in \eqref{orient map} between a collar neighbourhood of the boundary in $\Kk$ and the product Kuranishi atlas $A^\al_\eps\times \p^\al\Kk$, followed by restriction to the boundary $\p^\al \Kk=\p^\al\bigl(A^\al_\eps\times \p^\al \Kk\bigr)$, where we identify $\{\al\}\times \partial^\al U_I  \cong \partial^\al U_I$.

With that, we say that two oriented weak Kuranishi atlases $(\Kk^0,\si^0)$ and $(\Kk^1,\si^1)$ are {\bf oriented cobordant} if there exists a weak Kuranishi cobordism $\Kk$ from $\Kk^0$ to $\Kk^1$ and a section $\si$ of $\det(\s_{\Kk})$ such that  $\partial^\al\si=\si^\al$ for $\al=0,1$.
\end{defn}

\begin{rmk}\label{rmk:orientb}\rm  
Here we have defined the induced orientation on the boundary $\p^\al \Kk$ of a cobordism so that it is completed to an orientation of the collar by adding the  positive unit vector $1$ along $A^\al_\eps\subset \R$  rather than the more usual outward normal vector.  
In particular, by formula \cite[(8.1.12)]{MW2} $\eta_1,\dots, \eta_n$ is a positively ordered basis for $\rT_x U^\al_I$ exactly if  $1, \eta_1,\dots, \eta_n$ is a positively ordered basis for $\rT_x (A^\al_\eps\times U^\al_I)$.  
$\hfill\er$
\end{rmk}

\begin{lemma}\label{le:cK}
Let $(\Kk,\si)$ be an oriented weak Kuranishi atlas or cobordism. 
\begin{enumerate}\item
The orientation  $\si$ induces a canonical orientation $\si|_{\Kk'}:=(\si_I|_{U'_I})_{I\in\Ii_{\Kk'}}$ on each shrinking $\Kk'$ of $\Kk$ with domains $\bigl(U'_I\subset U_I\bigr)_{I\in\Ii_{\Kk'}}$.
\item
In the case of a Kuranishi cobordism $\Kk$, the restrictions to boundary and shrinking commute, that is
$(\si|_{\Kk'})|_{\p^\al\Kk'} = (\si|_{\p^\al\Kk})|_{\p^\al\Kk'}$.
\item 
In the case of a weak Kuranishi atlas $\Kk$, the orientation $\si$ on $\Kk$ induces an 
orientation  
$\si^{[0,1]}$ 
on $[0,1]\times \Kk$, 
which induces the given orientation $\p^\al\si^{[0,1]}=\si$ of the boundaries $\p^\al([0,1]\times \Kk) = \Kk$ for $\al=0,1$.
\end{enumerate}
\end{lemma}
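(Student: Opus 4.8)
The plan is to prove Lemma~\ref{le:cK} by reducing each claim to the corresponding statement for the underlying intermediate data, or directly to definitions, since all three assertions are essentially formal consequences of the construction of $\det(\s_\Kk)$ in Proposition~\ref{prop:det0} together with the restriction and product operations on bundles recorded in Definitions~\ref{def:prodbun}--\ref{def:buniso}.

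For part (i): a shrinking $\Kk'$ of $\Kk$ has domains $U_I'\subset U_I$ that are $\Ga_I$-invariant open subsets, and its charts, coordinate changes, isotropy groups, obstruction spaces, and covering maps are all obtained by restriction. Hence for each $I$ the restricted section $s_I|_{U_I'}$ has $\rd(s_I|_{U_I'})=\rd s_I$ along $U_I'$, so $\det(\rd(s_I|_{U_I'})) = \det(\rd s_I)|_{U_I'}$ as $\Ga_I$-equivariant line bundles, and the transition maps $\Tphi^\La_{IJ}$ of $\det(\s_{\Kk'})$ are literally the restrictions of those of $\det(\s_\Kk)$ (the covering $\rho_{IJ}'$ is the restriction of $\rho_{IJ}$, and $\Hat\phi_{IJ}$ is unchanged). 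Therefore $\det(\s_{\Kk'}) = \det(\s_\Kk)|_{\Kk'}$, the restriction of a vector bundle over $\Kk$ to the shrinking in the evident sense; and $\si|_{\Kk'} = (\si_I|_{U_I'})_{I}$ is a section of it that is nonvanishing because each $\si_I$ is. First I would state this restriction compatibility as the content of the argument and verify that $\si|_{\Kk'}$ satisfies the compatibility diagrams of Definition~\ref{def:bundle}, which it does because they are the restrictions of the diagrams satisfied by $\si$.

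For part (ii): here $\Kk$ is a Kuranishi cobordism and $\Kk'$ a shrinking (hence given by collared subsets $U_I'$). Both sides are sections of $\det(\s_{\p^\al\Kk'})$, so it suffices to check they agree on each $\partial^\al U_I'$. By definition $(\si|_{\p^\al\Kk})|_{\p^\al\Kk'}$ applies first the boundary-restriction formula of Definition~\ref{def:orient} (using the collar isomorphism $\ti\io^{\La,\al}_I$ of \eqref{orient map}) and then restricts to $\partial^\al U_I'$, while $(\si|_{\Kk'})|_{\p^\al\Kk'}$ restricts $\si_I$ to $U_I'$ first and then applies the boundary-restriction formula for the cobordism $\Kk'$, whose collar isomorphism is the restriction of $\ti\io^{\La,\al}_I$ to $A^\al_\eps\times\partial^\al U_I'$. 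Since the collar embeddings $\io^\al_I$ for $\Kk'$ are the restrictions of those for $\Kk$ (this is built into Definition~\ref{def:shr} via the fact that shrinkings of cobordisms are collared), the two composites $ (\ti\io^{\La,\al}_I)^{-1}\circ\si_I\circ\io^\al_I$ agree wherever both are defined, and restricting to $\{\al\}\times\partial^\al U_I'$ gives the claimed equality. I would write this as a short diagram-chase, emphasizing only that restriction of the collar data is the one nontrivial compatibility being invoked.

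For part (iii): the product $[0,1]\times\Kk$ has charts $[0,1]\times\bK_I$ with $s_I\circ\pr_{U_I}$ as section, so $\rd(s_I\circ\pr_{U_I})_{(t,x)}$ has kernel $\R\times\ker\rd_x s_I$ and the same cokernel as $\rd_x s_I$; thus $\det(\rd(s_I\circ\pr_{U_I}))_{(t,x)} \cong \lm(\R\times\ker\rd_x s_I)\otimes(\lm\coker\rd_x s_I)^* \cong \R\otimes\det(\rd_x s_I)$ via the map $\eta\mapsto 1\wedge\eta$ on the kernel factor — exactly the isomorphism $\wedge_1$ appearing in \eqref{orient map}. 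Define $\si^{[0,1]}_I(t,x)$ to be $\si_I(x)$ transported through this canonical isomorphism; compatibility with the product coordinate changes $\id_{[0,1]}\times\Hat\Phi_{IJ}$ and with the product $\Ga_I$-action (which is trivial on the $[0,1]$ factor) is immediate since $\wedge_1$ is natural. That $\si^{[0,1]}$ is nonvanishing is clear. Finally, the boundary restriction $\p^\al\si^{[0,1]}$ is computed by Definition~\ref{def:orient} using precisely the collar isomorphism \eqref{orient map} with $\io^\al_I$ the inclusion $\{\al\}\times U_I\hookrightarrow [0,1]\times U_I$, whose derivative is the identity on $\rT_x U_I$ and sends $1\in\R$ to the collar direction; so the composite $(\ti\io^{\La,\al}_I)^{-1}\circ\si^{[0,1]}_I\circ\io^\al_I$ undoes exactly the $\wedge_1$ that was inserted, returning $\si_I$. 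Hence $\p^\al\si^{[0,1]}=\si$ for $\al=0,1$.

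The main obstacle is bookkeeping rather than mathematics: one must be careful that the collar isomorphism $\ti\io^{\La,\al}_I$ used in the definition of the induced boundary orientation is genuinely the same map whether one restricts before or after shrinking (part (ii)), and that in part (iii) the canonical isomorphism identifying $\det(\rd(s_I\circ\pr_{U_I}))$ with the pullback of $\det(\rd s_I)$ is compatible with the already-fixed collar isomorphism \eqref{orient map} on $[0,1]\times\Kk$, so that $\p^\al\si^{[0,1]}$ really returns $\si$ on the nose and not merely up to a sign. Given the explicit formula \eqref{orient map} and the "positively ordered basis" convention recorded in Remark~\ref{rmk:orientb}, this sign works out, but it is the point that deserves the most care in writing.
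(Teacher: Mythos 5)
Your proposal is correct. The paper's own proof consists only of the citation ``See the proof of Lemma~8.1.11 in \cite{MW2}'' (the trivial-isotropy analogue), so you are supplying the content that the citation defers to, and you do so by essentially the argument that one would find there: restriction of determinant bundles and sections commutes with the bundle operations by definition, the collar data of a shrunk cobordism is the restriction of the original collar data (built into Definition~\ref{def:shr}), and for the product $[0,1]\times\Kk$ the map $\wedge_1$ of \eqref{orient map} is the canonical identification $\det(\rd(s_I\circ\pr_{U_I}))\cong \pr_{U_I}^*\det(\rd s_I)$, so that transporting $\si_I$ through it gives a product-form section whose boundary restriction at either endpoint returns $\si_I$. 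You also correctly single out the only genuinely subtle point --- that $\p^0\si^{[0,1]}=\p^1\si^{[0,1]}=\si$ with no sign flip because the boundary orientation convention of Remark~\ref{rmk:orientb} adds $1$ along $A^\al_\eps$ for both $\al=0,1$ rather than using the outward normal. The one small addition I would make explicit in part~(iii) is that $\si^{[0,1]}_I$ is independent of $t$ by construction, which is precisely the product-form requirement of Definition~\ref{def:cbundle} for a section on a cobordism; you gesture at this but it is worth stating.
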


\begin{proof}  See the proof of Lemma~8.1.11 in \cite{MW2}.
\end{proof}

As in \cite{MW2}, in order to orient the zero sets of a perturbed section $\s_\Kk+ \nu$  
we will work with a ``more universal" determinant bundle $\det(\Kk)$ over $\Kk$ that is
%
%
%
constructed from the determinant bundles of the zero sections in each chart. 
Since the zero section $0_\Kk$ does not satisfy the index condition, 
we need to construct different transition maps for $\det(\Kk)$, which will now depend on the section $\s_\Kk$.
For this purpose, we
again use contraction isomorphisms from Lemma~\ref{lem:get}.  
On the one hand, this provides families of isomorphisms
\begin{equation}\label{Cds}
\mathfrak{C}_{\rd_x s_I} \,:\; \lm \rT_x U_I \otimes \bigl( \lm E_I \bigr)^* \;\overset{\cong}{\longrightarrow}\;  \det(\rd_x s_I)
\qquad\text{for} \; x\in U_I ,
\end{equation}
which, by Remark~\ref{rmk:get}, are equivariant with the respect to the action of $\ga\in \Ga_I$ on 
$\lm \rT U_I \otimes \bigl( \lm E_I \bigr)^*$ given by
\begin{equation}\label{eq:bunga1}
\Hat\ga_\La: = \La_{\rd_x \ga}\otimes \bigl(\La_{\ga^{-1}}  \bigr)^*:  
\; \lm \rT_x U_I \otimes \bigl( \lm E_I \bigr)^*\to 
\lm \rT_{\ga x} U_I \otimes \bigl( \lm E_I \bigr)^*
\end{equation}
and the corresponding action on $\det(\rd_x s_I)$  in equation  \eqref{eq:bunga}.

%
%

On the other hand, recall that the tangent bundle condition \eqref{tbc} implies that $\rd s_J$ restricts to an isomorphism $\qu{\rT_y U_J}{\rd_\Tx\Tphi_{IJ}(\rT_\Tx \TU_{IJ})}\overset{\cong}{\to} \qu{E_J}{\Hat\phi_{IJ}(E_I)}$ for $y=\Tphi_{IJ}(\Tx)$. 
\footnote{
Here and subsequently, we will distinguish between the manifold
$\TU_{IJ}$ and its image $\im \Tphi_{IJ}$ in $U_J$,
denoting points of $\TU_{IJ}$ by $\Tx$, with $y = \Tphi_{IJ}(x)\in U_J$ and $x=\rho_{IJ}(\Tx) \in U_{IJ}$.}
Therefore, if we choose a $\Ga_J$-equivariant smooth normal bundle $N_{IJ} = \bigcup_{y\in \im \Tphi_{IJ}} N_{IJ,y}
\subset  \rT_{y} U_J$ to 
the submanifold 
$\im \Tphi_{IJ} \subset U_J$, then 
the subspaces 
$\rd_y s_J(N_{IJ,y})$ 
form a smooth family of subspaces of $E_J$ that are complements to
$\Hat\phi_{IJ}(E_I)$.
Hence letting $\pr_{N_{IJ}}(y) : E_J \to \rd_y s_J(N_{IJ,y}) \subset E_J$
be the 
smooth family of 
projections with kernel  
$\Hat\phi_{IJ}(E_I)$,
we obtain 
a smooth family of 
linear maps
$$
F_{\Tx} \,:= \;
\pr_{N_{IJ}}(y) 
\circ \rd_y s_J \,:\; \rT_y U_J \;\longrightarrow\; 
E_J
\qquad\text{for}\;  y=  \Tphi_{IJ} (\Tx)
$$
with images $\im F_\Tx=\rd_y s_J(N_{IJ,y})$,
and also isomorphisms to their kernel
$$
\phi_\Tx  \,:= \;  \rd_\Tx\Tphi_{IJ} \,:\; \rT_\Tx \TU_{IJ} \;\overset{\cong}{\longrightarrow}\;  \ker F_\Tx =  \rT_y (\im \Tphi_{IJ}) \;\subset\; \rT_y U_J .
$$
By Lemma~\ref{lem:get} these induce isomorphisms 
$$
\mathfrak{C}^{\phi_\Tx}_{F_\Tx} \,:\; \lm \rT_{\Tphi_{IJ}(\Tx)} U_J \otimes \bigl(\lm E_J \bigr)^* 
 \;\overset{\cong}{\longrightarrow}\;  \lm \rT_\Tx \TU_{IJ} \otimes  \Bigl(\lm \Bigl(\qq{E_J}
{\im F_\Tx}
 \Bigr) \Bigr)^* .
$$
We may combine this with 
the  isomorphism $\lm  \rT_\Tx \TU_{IJ}\to\rho_{IJ}^*\bigl( \lm  \rT_{x} U_{I}\bigr)$
induced by $\rd_\Tx\rho_{IJ}$, where $x:=\rho_{IJ}(\Tx)$, and the dual of the  isomorphism $\lm \bigl(\qu{E_J}
{\rd_y s_J(N_{IJ,y})} \bigr) \cong \lm E_I$ induced 
via \eqref{eq:laphi} by 
$\pr^\perp_{N_{IJ}}(y)  \circ\Hat\phi_{IJ} : E_I \to \qu{E_J} {\rd_y s_J(N_{IJ,y})}$ 
to obtain for each $\Tx\in \TU_{IJ}$  an isomorphism
\begin{align} \label{CIJ}
\Tilde{\mathfrak{C}}_{IJ}(\Tx) \,: \;
 \lm \rT_{y} U_J \otimes \bigl(\lm E_J \bigr)^*  
\;\overset{\cong}{\longrightarrow}\;  \rho_{IJ}^*(\lm \rT_{x} U_I) \otimes \bigl(\lm E_I \bigr)^*  
\end{align}
with
$$
y: = \Tphi_{IJ}(\Tx), \qquad x: = \rho_{IJ}(\Tx),
$$
given by  the composite of $ \mathfrak{C}^{\phi_\Tx}_{F_\Tx}$ with the map 
\begin{align*}
& \bigl(\La_{\rd_\Tx\rho_{IJ}})
\otimes 
\bigl(\La_{(\pr^\perp_{
N_{IJ}}(y)\circ\Hat\phi_{IJ})^{-1}}\bigr)^* : \\
& \qquad  \lm \rT_\Tx \TU_{IJ} \otimes  \Bigl(\lm \Bigl(\qq{E_J}
{\im F_\Tx}
 \Bigr) \Bigr)^*  \;{\longrightarrow}\;
\rho_{IJ}^*(\lm \rT_{ x} U_I) \otimes \bigl(\lm E_I \bigr)^* .
\end{align*}

\begin{prop}\label{prop:orient} 
\begin{enumerate}
\item 
Let $\Kk$ be a weak Kuranishi atlas. 
Then there is a well defined line bundle $\det(\Kk)$ over $\Kk$ given by the line bundles $\La^\Kk_I := \lm \rT U_I\otimes \bigl(\lm E_I\bigr)^* \to U_I$ for $I\in\Ii_\Kk$,
with group actions as in \eqref{eq:bunga1}
 and the transition maps $\Tilde{\mathfrak{C}}_{IJ}^{-1}:
 \rho_{IJ}^*(\La^\Kk_I |_{U_{IJ}}) \to \La^\Kk_J |_{\im\Tphi_{IJ}}$ 
from \eqref{CIJ} for $I\subsetneq J$. 
In particular, the latter isomorphisms are independent of the choice of 
normal bundle $N_{IJ}$.

Furthermore, the contractions $\mathfrak{C}_{\rd s_I}: \La^\Kk_I \to \det(\rd s_I)$ from \eqref{Cds} define 
an isomorphism $\Psi^{s_\Kk}:=\bigl(\mathfrak{C}_{\rd s_I}\bigr)_{I\in\Ii_\Kk}$ from $\det(\Kk)$ to $\det(\s_\Kk)$.
\item 
If $\Kk$ is a weak Kuranishi cobordism, then the determinant bundle $\det(\Kk)$ defined as in (i)  
can be given a product structure on the collar 
such that its 
boundary restrictions are
$\det(\Kk)|{_{\p^\al\Kk}} = \det(\p^\al\Kk)$ for $\al= 0,1$. 

Further, the isomorphism $\Psi^{s_\Kk}: \det(\Kk) \to \det(s_\Kk)$ defined as in (i) has product structure on the collar with restrictions $\Psi^{s_\Kk}|_{\p^\al \Kk}=\Psi^{s_{\p^\al\Kk}}$ for $\al=0,1$.
\end{enumerate}
\end{prop}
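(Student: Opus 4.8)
The plan is to reduce everything to the manipulation of the natural isomorphisms introduced just before the statement, checking three things for each piece of data: well-definedness (independence of the normal bundle $N_{IJ}$ and smoothness), equivariance, and the weak cocycle condition; and then, in part (ii), the product structure in the collar. Most of the work is organizational: the corresponding statement for trivial isotropy is \cite[Proposition~8.1.9]{MW2}, and the main new content is to check that the added group actions are respected.

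First I would address part (i). The local bundles $\La_I^\Kk = \lm \rT U_I\otimes (\lm E_I)^*$ are manifestly smooth $\Ga_I$-equivariant line bundles over $U_I$, with the action \eqref{eq:bunga1}; smoothness of the $\Ga_I$-action follows since $\La_{\rd_x\ga}$ and $\La_{\ga^{-1}}$ depend smoothly on $x$. For the transition maps, I would first verify that $\Tilde{\mathfrak C}_{IJ}(\Tx)$ is independent of the choice of $\Ga_J$-equivariant normal bundle $N_{IJ}$: two such choices differ by a smooth family of isomorphisms of the quotient $\qu{\rT_y U_J}{\rT_y(\im\Tphi_{IJ})}$, and one checks that the induced change in $\mathfrak C_{F_\Tx}^{\phi_\Tx}$ is exactly cancelled by the change in the map $\pr^\perp_{N_{IJ}}(y)\circ\Hat\phi_{IJ}$; this is the same linear-algebra computation as in \cite{MW2}, now performed equivariantly. (Existence of a $\Ga_J$-equivariant normal bundle follows by averaging any normal bundle over the finite group $\Ga_J$, or by using a $\Ga_J$-invariant metric on $U_J$.) Smoothness of $\Tx\mapsto\Tilde{\mathfrak C}_{IJ}(\Tx)$ is inherited from the smoothness of $\pr_{N_{IJ}}$, $\rd s_J$, $\rd\Tphi_{IJ}$, $\rd\rho_{IJ}$ and from Lemma~\ref{lem:get}. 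For equivariance: $\Tphi_{IJ}$, $\rho_{IJ}$, $\Hat\phi_{IJ}$ are all equivariant with respect to $\rho^\Ga_{IJ}:\Ga_J\to\Ga_I$ and $\Ga_J$, and $N_{IJ}$ was chosen $\Ga_J$-invariant, so each constituent map in the composite defining $\Tilde{\mathfrak C}_{IJ}$ intertwines the relevant actions; combining with Remark~\ref{rmk:get} gives that $\Tilde{\mathfrak C}_{IJ}^{-1}$ is $\Ga_J$-equivariant in the sense of Definition~\ref{def:bundle}. The weak cocycle condition $\Tilde\phi^\La_{IK}=\Tilde\phi^\La_{JK}\circ\rho^*_{JK}(\Tilde\phi^\La_{IJ})$ over the appropriate overlap follows from the analogous identity in \cite{MW2} (which is a functoriality statement for the contraction isomorphisms under composition of the embeddings $\Tphi_{IJ},\Tphi_{JK}$ and the coverings $\rho_{IJ},\rho_{JK}$, using the factorizations $\Hat\phi_{IK}=\Hat\phi_{JK}\circ\Hat\phi_{IJ}$ and $\rho_{IK}=\rho_{IJ}\circ\rho_{JK}$ from Lemma~\ref{le:compos}); here one simply notes that all the maps involved are equivariant, so the identity of \cite{MW2} holds verbatim in the presence of isotropy. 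Finally, that $\Psi^{s_\Kk}=(\mathfrak C_{\rd s_I})_{I\in\Ii_\Kk}$ is an isomorphism $\det(\Kk)\to\det(\s_\Kk)$ amounts to checking that $\mathfrak C_{\rd s_I}$ is $\Ga_I$-equivariant (this is exactly Remark~\ref{rmk:get} applied to $F=\rd_x s_I$, $\phi=$ inclusion of $\ker$) and that it intertwines the transition maps $\Tilde{\mathfrak C}_{IJ}^{-1}$ on the $\det(\Kk)$-side with $\Tphi^\La_{IJ}=\TLa_{IJ}\circ P_{IJ}^{-1}$ of Definition~\ref{def:det} on the $\det(\s_\Kk)$-side; the latter compatibility is again the linear-algebra identity from \cite{MW2}, and equivariance of all pieces makes it descend.

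For part (ii), over a Kuranishi cobordism the same local bundles and transition maps are used, so by part (i) $\det(\Kk)$ is a well-defined bundle over $\Kk$; it remains to exhibit the product structure in the collar. For this I would follow \cite[Proposition~8.1.9]{MW2}: the collar embeddings $\io^\al_I$ are $\Ga_I$-equivariant tubular-neighbourhood diffeomorphisms of product form, so $\La_{\rd\io^\al_I}\circ\wedge_1$ (tensored with $(\La_{\id_{E_I}})^*$) gives $\Ga_I$-equivariant bundle isomorphisms $A^\al_\eps\times\La^{\p^\al\Kk}_I\to\La^\Kk_I|_{\im\io^\al_I}$, and these are compatible with the transition maps $\Tilde{\mathfrak C}_{IJ}^{-1}$ because the $\io^\al_I$ are compatible with coordinate changes (Definition~\ref{def:CCC}) and one may choose the normal bundles $N_{IJ}$ to be of product form in the collar, i.e.\ $N_{IJ}|_{\im\io^\al_J}\cong A^\al_\eps\times N^\al_{IJ}$, using a collared $\Ga_J$-invariant metric. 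This identifies $\det(\Kk)|_{\p^\al\Kk}=\det(\p^\al\Kk)$. The same collar argument applied to $\mathfrak C_{\rd s_I}$ — using that $\rd s_I$ has product form in the collar, $s_I\circ\io^\al_I=s^\al_I\circ\pr$ — shows $\Psi^{s_\Kk}$ has product structure with $\Psi^{s_\Kk}|_{\p^\al\Kk}=\Psi^{s_{\p^\al\Kk}}$, consistently with the product structure on $\det(\s_\Kk)$ established in Proposition~\ref{prop:det0} via the isomorphisms \eqref{orient map}.

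\textbf{Main obstacle.} The genuinely new point, relative to \cite{MW2}, is the bookkeeping of the two different group actions across a coordinate change — the action of $\Ga_I$ on $\La^\Kk_I$ pulled back along $\rho_{IJ}$ versus the action of $\Ga_J$ on $\La^\Kk_J$, with $\Ga_{J\less I}$ acting on the fibers of $\rho^*_{IJ}\La^\Kk_I$ via identification along $\Ga_{J\less I}$-orbits as in Definition~\ref{def:bundle}. Verifying that $\Tilde{\mathfrak C}_{IJ}^{-1}$ respects these actions requires unwinding how $\Ga_{J\less I}$, which acts freely on $\TU_{IJ}$ and fixes $\Hat\phi_{IJ}(E_I)\subset E_J$ pointwise, acts on the normal directions $N_{IJ}$ and on the quotient $\qu{E_J}{\im F_\Tx}$; one must confirm the map $\pr^\perp_{N_{IJ}}(y)\circ\Hat\phi_{IJ}:E_I\to\qu{E_J}{\rd_y s_J(N_{IJ,y})}$ intertwines the $\Ga_I$-action (via $\rho^\Ga_{IJ}$) on $E_I$ with the $\Ga_J$-action on the quotient, which uses that $\Ga_J$ permutes the $\Ga_{J\less I}$-orbit of $\Tx$ and that $N_{IJ}$ was chosen $\Ga_J$-invariant. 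Once this is set up carefully, everything else is an equivariant reprise of the trivial-isotropy arguments, and can legitimately be quoted from \cite{MW2}.
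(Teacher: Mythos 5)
Your proposal is correct, but it takes a more direct and laborious route than the paper. You verify each required property of $\Tilde{\mathfrak{C}}_{IJ}^{-1}$ separately: independence of $N_{IJ}$, smoothness, equivariance, the weak cocycle condition, and finally the compatibility with $\mathfrak{C}_{\rd s_I}$. The paper instead starts from the bundle $\det(\s_\Kk)$, which it already knows is well defined by Proposition~\ref{prop:det0}, and declares the transition maps for $\det(\Kk)$ to be the conjugates $\Ti\phi^\La_{IJ}:= \mathfrak{C}_{\rd s_J}^{-1} \circ \TLa_{IJ} \circ \rho_{IJ}^*( \mathfrak{C}_{\rd s_I})$. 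With this definition, the cocycle condition, equivariance, independence of $N_{IJ}$, and the fact that $\Psi^{\s_\Kk}$ is a bundle isomorphism are all immediate: nothing in the definition involves $N_{IJ}$ at all, and $\Psi^{\s_\Kk}$ intertwines transition maps by construction. The entire content then reduces to the single commuting diagram \eqref{cclaim} showing $\Ti\phi^\La_{IJ}=\Tilde{\mathfrak{C}}_{IJ}^{-1}$, which the paper checks at a fixed point by comparing with \cite[Proposition~8.1.12]{MW2}. So what you flag as the ``main obstacle'' --- carefully tracking the two group actions through $\Tilde{\mathfrak{C}}_{IJ}$ and the normal bundle $N_{IJ}$ --- is precisely what the paper avoids confronting head on: it transfers all equivariance questions to the (already-established) equivariance of $\TLa_{IJ}$ and $\mathfrak{C}_{\rd s_I}$. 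Your approach is logically sound --- in particular your cancellation argument for $N_{IJ}$-independence and your observation that equivariant normal bundles exist by averaging are both fine, and would work --- but it requires you to independently control the choice of $N_{IJ}$ across a triple overlap $I\subsetneq J\subsetneq K$ when verifying the cocycle condition, a compatibility issue that the paper's reformulation sidesteps entirely.
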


\begin{proof}
To begin, note that each $\La^\Kk_I = \lm \rT U_I \otimes \bigl(\lm E_I\bigr)^*$ is a smooth line bundle over $U_I$, since it inherits local trivializations 
from the tangent bundle $\rT U_I\to U_I$. Moreover the action of  $\Ga_I$ on $U_I\times E_I$ induces a smooth action on  $\La_I^\Kk$ given by \eqref{eq:bunga1}
that covers its action on $U_I$.  Thus $\La^\Kk_I\to U_I$ is a smooth $\Ga_I$-equivariant bundle.
We showed in \cite[Proposition~8.1.12]{MW2}  that 
the isomorphisms $\mathfrak{C}_{\rd_x s_I}$ from \eqref{Cds} are smooth in this trivialization,
 where $\det(\rd s_I)$ is trivialized via the maps $\Hat T_{I,x}$ as in Proposition~\ref{prop:det0}.
Since  $\mathfrak C_{\rd s_I}$ is equivariant, we can define preliminary transition maps 
\begin{equation}\label{tiphi}
\Ti\phi^\La_{IJ}:= \mathfrak{C}_{\rd s_J}^{-1} \circ \TLa_{IJ} \circ \rho_{IJ}^*( \mathfrak{C}_{\rd s_I})
\,:\; \rho_{IJ}^*(\La^\Kk_I|_{U_{IJ}})\to \La^\Kk_J \qquad\text{for}\; I\subsetneq J \in \Ii_\Kk
\end{equation}
by the transition maps \eqref{eq:bunIJ} of $\det(\s_\Kk)$, the isomorphisms \eqref{Cds} and the pullback by $\rho_{IJ}$.
These define a line bundle $\La^\Kk:=\bigl(\La^\Kk_I, \Ti\phi^\La_{IJ} \bigr)_{I,J\in\Ii_\Kk}$ since the weak cocycle condition follows directly from that for the $\TLa_{IJ}$. Moreover, this automatically makes the family of bundle isomorphisms $\Psi^\Kk:=\bigl(\Tilde{\mathfrak{C}}_{\rd s_I}\bigr)_{I\in\Ii_\Kk}$ an isomorphism from $\La^\Kk$ to $\det(\s_\Kk)$. 
It remains to see that $\La^\Kk=\det(\Kk)$ and $\Psi^\Kk=\Psi^{\s_\Kk}$, i.e.\ we claim equality of transition maps $\Ti\phi^\La_{IJ}=\Tilde{\mathfrak{C}}_{IJ}^{-1}$. 
This also shows that $\Tilde{\mathfrak{C}}_{IJ}^{-1}$ and thus $\det(\Kk)$ is independent of the choice of normal bundle $N_{IJ}$ in \eqref{CIJ}.

So to finish the proof of (i), it suffices to establish the following commuting diagram at a fixed 
$\Tx\in U_{IJ}$ with $x = \rho_{IJ}(\Tx), y=\Tphi_{IJ}(\Tx)$,
\begin{equation}\label{cclaim}
\xymatrix{
\lm  \rT_x U_I \otimes \bigl( \lm E_I \bigr)^* \quad
 \ar@{->}[r]^{ \qquad\quad\mathfrak{C}_{\rd_x s_I}} 
 &
\quad \det(\rd_x s_I)  \\
\rho_{IJ}^*(\lm \rT_x U_{I}) \otimes \bigl( \lm E_I \bigr)^*\quad
 \ar@{->}[r]^{\qquad\quad\rho_{IJ}^*(\mathfrak{C}_{\rd_x s_I})} 
   \ar@{->}[u]^{\rho_{IJ}}  
&
\quad\rho_{IJ}^*(\det(\rd_{x} s_I)) \ar@{->}[u]^{\rho_{IJ}}   \ar@{->}[d]^{\TLa_{IJ}(\Tx)}
\\
 \lm \rT_y U_J \otimes \bigl( \lm E_J \bigr)^*\quad
 \ar@{->}[r]^{\qquad\quad \mathfrak{C}_{\rd_y s_J}}  
 \ar@{->}[u]^{\Tilde{\mathfrak{C}}_{IJ}(\Tx)}
&
\quad\det(\rd_y s_J) .
}
\end{equation}
However, the composition $y  \mapsto \rho_{IJ}\circ \Tilde{\mathfrak{C}}_{IJ}(\Tphi_{IJ}^{-1}(\Tx))$ of the lefthand vertical maps is precisely the
map denoted by $y\mapsto \mathfrak{C}_{IJ}(x)$ in equation (8.1.15) of \cite{MW2}, while, as in the proof of 
Proposition~\ref{prop:det0} above, the right hand vertical maps combine to
$\La_{IJ}(x) = \TLa(\rho_{IJ}^{-1}(x)): \det(\rd_x s_I)\to \det(\rd_y s_J)$, where $\rho_{IJ}^{-1}$ is a local inverse to $\rho_{IJ}$.  Therefore the desired result follows from the commutativity of the following diagram:
$$
\xymatrix{
\lm  \rT_x U_I \otimes \bigl( \lm E_I \bigr)^* 
 \ar@{->}[r]^{ \qquad\quad\mathfrak{C}_{\rd_x s_I}}  
 &
\det(\rd_x s_I)  \ar@{->}[d]^{\La_{IJ}
(x)} \\
 \lm \rT_y U_J \otimes \bigl( \lm E_J \bigr)^*
 \ar@{->}[r]^{\qquad\quad\mathfrak{C}_{\rd_y s_J}}  
 \ar@{->}[u]^{\mathfrak{C}_{IJ}
 (x)} 
&
\det(\rd_y s_J) ,
}
$$
which is established in  \cite[Proposition~8.1.12]{MW2}.

For part (ii) the same arguments apply to define a bundle $\det(\Kk)$ and isomorphism $\Psi^{\s_\Kk}$.  The required product structure on a collar follows as in \cite{MW2}.
%
%
%
This completes the proof.
\end{proof}

We end this section by explaining how orientations of a Kuranishi atlas induce compatible orientations on local zero sets of transverse sections.

\begin{lemma}  \label{le:locorient}
Let $(\Kk,\si)$ be a d-dimensional oriented, tame Kuranishi atlas/cobordism,
and for some $I\in\Ii_\Kk$ let $f:W\to E_I$ be a smooth section over an open subset $W\subset U_I$ that is transverse to $0$. 
\begin{enumerate}
\item
The zero set $Z_f : = f^{-1}(0)\subset U_I$ inherits the structure of a smooth oriented $d$-dimensional submanifold.  
\item
The action of any $\ga\in \Ga_I$ on $U_I$ induces an orientation preserving diffeomorphism $Z_f\to Z_{\ga*f}$ to the zero set of $\ga\!*\!f: \ga(W) \to E_I,\;  x\mapsto \ga f(\ga^{-1}(x))$.
\item
Suppose in addition that $f(W)\subset\Hat\phi_{HI}(E_H)$, $\TW_{HI}:=W\cap\TU_{HI}\neq\emptyset$, and $\rho_{HI}|_{\TW_{HI}}$ is injective for some ${H\subset I}$. Then $\rho_{HI}$ induces an orientation preserving diffeomorphism $Z_f\to Z_{\rho_{HI}*f}$ to the zero set of $\rho_{HI}\!*\!f: \rho_{HI}(W) \to E_H, x\mapsto \Hat\phi_{HI}^{-1}\bigl( f(\rho_{HI}^{-1}(x))\bigr)$.
\item
If $\Kk$ is a cobordism, suppose in addition that $\bK_I$ is a chart that intersects the boundary $\p^\al\Kk$, with $W=\io_I^\al(A^\al_\eps \times W^\al)$ for some $W^\al\subset \p^\al W$, and $f(\io_I^\al(t,x))=f^\al(x)$ for some transverse section $f^\al: W^\al \to E_I$.
Then $(\p^\al\Kk,\p^\al\si)$ induces an oriented smooth structure on $Z_{f^\al}\subset W^\al$ by (i), $Z_f \subset U_I$ is a submanifold with boundary and $j^\al_I:=\io_I(\al,\cdot)$ is a diffeomorphism $Z_{f^\al} \to \p Z_f$ that preserves resp.\ reverses orientations in case $\al=1$ resp.\ $\al=0$. 
\end{enumerate}
\end{lemma}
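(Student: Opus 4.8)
The plan is to deduce Lemma~\ref{le:locorient} from the corresponding statement in \cite[Lemma~8.3.?]{MW2} for trivial isotropy, adding only the equivariance and covering arguments, just as the rest of \S\ref{ss:orient} has done. The key structural input is the isomorphism $\Psi^{\s_\Kk}:\det(\Kk)\to\det(\s_\Kk)$ of Proposition~\ref{prop:orient}, which lets us carry the orientation $\si$ over to a nonvanishing section of $\det(\Kk)$ with fibers $\lm\rT U_I\otimes(\lm E_I)^*$; this is the bundle one actually uses to orient zero sets.

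First I would prove (i). Since $f$ is transverse to $0$ on $W\subset U_I$, the zero set $Z_f=f^{-1}(0)$ is a smooth submanifold of dimension $\dim U_I-\dim E_I=d$ by the implicit function theorem, with $\rT_x Z_f=\ker\rd_x f$. At each $x\in Z_f$ there is a canonical isomorphism $\lm\rT_x Z_f\cong\det(\rd_x f)$ (since $\coker\rd_x f=0$), and one checks as in \cite{MW2} that $\det(\rd_x f)$ and $\det(\rd_x s_I)$ are canonically isomorphic when $f$ and $s_I$ have the same ``framing data'' --- more precisely, one uses the contraction isomorphism $\mathfrak C_{\rd_x f}:\lm\rT_x U_I\otimes(\lm E_I)^*\to\det(\rd_x f)$ from Lemma~\ref{lem:get}, which is defined for \emph{any} linear map $V\to W$, in particular for $\rd_x f$. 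Thus the orientation $\si_I(x)\in\det(\rd_x s_I)$, transported via $(\mathfrak C_{\rd_x s_I})^{-1}$ to $\lm\rT_x U_I\otimes(\lm E_I)^*$ and then via $\mathfrak C_{\rd_x f}$ into $\det(\rd_x f)\cong\lm\rT_x Z_f$, gives a nonvanishing section of $\lm\rT Z_f$, i.e.\ an orientation of $Z_f$. Smoothness of this orientation follows from smoothness of the contractions, established in Proposition~\ref{prop:orient}.

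For (ii), the action of $\ga\in\Ga_I$ is a diffeomorphism $W\to\ga(W)$ carrying $f$ to $\ga\!*\!f$ and hence $Z_f$ onto $Z_{\ga*f}$; that it is orientation preserving is exactly the $\Ga_I$-equivariance of $\det(\s_\Kk)$ under the maps $\ga_\La$ of \eqref{eq:bunga} together with the equivariance of $\mathfrak C_{\rd s_I}$ under $\Hat\ga_\La$ of \eqref{eq:bunga1} (Remark~\ref{rmk:get}): the orientation section $\si_I$ is $\Ga_I$-equivariant by Definition~\ref{def:orient}, so its image orientation on $Z_f$ is intertwined with that on $Z_{\ga*f}$. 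For (iii), under the hypotheses $\rho_{HI}|_{\TW_{HI}}$ is a diffeomorphism onto $\rho_{HI}(W)$, and $f(W)\subset\Hat\phi_{HI}(E_H)$ means $f$ factors through the subbundle; then $\rho_{HI}\!*\!f$ is transverse to $0$ in $U_H$ and $\rho_{HI}$ maps $Z_f$ diffeomorphically onto $Z_{\rho_{HI}*f}$. Orientation-compatibility here is the content of the transition map $\TLa_{HI}$ (or equivalently $\Tilde{\mathfrak C}_{HI}$) of the determinant bundle: $\rho_{HI}$ identifies $\ker\rd f$ with $\ker\rd(\rho_{HI}*f)$ and $\Hat\phi_{HI}$ identifies the (trivial) cokernels, so the induced map on determinant lines is precisely $\Tphi^\La_{HI}$ restricted to the zero set, which by construction of the orientation $\si$ as a section of $\det(\s_\Kk)$ respects $\si_H,\si_I$. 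One must note the absence of the index condition is harmless since here $\coker=0$ on the nose.

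Finally (iv) is the collar statement: in a collar $W=\io_I^\al(A^\al_\eps\times W^\al)$ with $f=f^\al\circ\pr_{W^\al}$, we get $Z_f=\io_I^\al(A^\al_\eps\times Z_{f^\al})$, so $Z_f$ is a manifold with boundary $\p Z_f=\io_I^\al(\{\al\}\times Z_{f^\al})$ and $j^\al_I$ is the obvious diffeomorphism. The orientation comparison is governed by the product-structure isomorphism $\ti\io^{\La,\al}_I$ of \eqref{orient map}, which adds the unit vector $1\in\R$ along $A^\al_\eps$ as the \emph{first} factor; by the boundary-orientation convention of Remark~\ref{rmk:orientb}, a positively ordered basis $\eta_1,\dots,\eta_d$ of $\rT_x Z_{f^\al}$ corresponds to the positively ordered basis $1,\eta_1,\dots,\eta_d$ of $\rT_{(\al,x)}Z_f$; comparing this with the outward-normal convention on $\p Z_f$ shows that $j^\al_I$ preserves orientation for $\al=1$ and reverses it for $\al=0$ (the sign coming from $1$ being inward at $\al=0$ and outward at $\al=1$). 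I expect the main obstacle to be bookkeeping the various ordering and sign conventions in (iii) and (iv) --- making sure the contraction isomorphisms $\mathfrak C$, the splitting isomorphism \eqref{eq:VW}, and the duality convention \eqref{eq:dual} are threaded through consistently so that ``orientation preserving'' means the same thing on both sides --- rather than any genuinely new difficulty beyond what \cite{MW2} already handled; the equivariance and covering aspects are routine given Proposition~\ref{prop:orient} and Lemma~\ref{le:vep}.
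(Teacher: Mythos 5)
Your proposal follows essentially the same route as the paper: deduce (i), (iii), (iv) from the trivial-isotropy result \cite[Proposition~8.1.13]{MW2} via the contraction isomorphisms $\mathfrak C_{\rd f}$, $\mathfrak C_{\rd s_I}$, and then verify the new $\Ga_I$-equivariance statement (ii) directly. The paper, like you, first records that the orientation on $Z_f$ is obtained by transporting $\si_I(z)\in\det(\rd_z s_I)$ through $(\mathfrak C_{\rd_z s_I})^{-1}$ and $\mathfrak C_{\rd_z f}$, notes for (iii) that injectivity lets one write $\rho_{HI}\!*\!f=\phi_{HI}^*f$ for an embedding $\phi_{HI}$, and quotes \cite{MW2} for (iv).

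One small imprecision worth flagging in your (ii): you invoke Remark~\ref{rmk:get} for the equivariance of the contraction, but that remark assumes $F$ is $\Ga$-equivariant, whereas $f$ here is an arbitrary section (not $\Ga_I$-equivariant). What is actually needed is the weaker naturality/functoriality of $\mathfrak C$ under an intertwining vector-space isomorphism (the footnote to Lemma~\ref{lem:get}): the isomorphisms $\rd_z\ga:\rT_zW\to\rT_{\ga z}(\ga W)$ and $\ga:E_I\to E_I$ conjugate $\rd_z f$ into $\rd_{\ga z}(\ga\!*\!f)$, and one must check that $\mathfrak C$ intertwines the corresponding determinant-line maps. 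The paper does this by writing out and verifying a commutative diagram directly on chosen bases, rather than by quoting Remark~\ref{rmk:get}. Your underlying idea is correct; just cite the naturality of $\mathfrak C$ rather than its equivariance for an already-equivariant $F$, or verify the diagram as the paper does.
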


\begin{proof} 
Except for (ii) these local claims follow directly from the corresponding parts of the proof of \cite[Proposition~8.1.13]{MW2}.
For (iii) note that the injectivity assumption allows us to write $\rho_{HI}\!*\!f = \phi_{HI}^*f$ for an embedding $\phi_{HI}:\rho_{HI}(\TW_{HI})\to W$.
Before we can prove (ii), recall that the orientation on $Z_f$ is induced from the orientation of the Kuranishi atlas/cobordism $\si_I:U_I\to\det(\rd s_I)$ via the isomorphisms for $z\in Z_f$
\begin{align*}
 \lm \rT_z Z_f & \;=\;  \lm \ker \rd_z f \;\cong\; \lm \ker \rd_z f \otimes \R  \;=\;  \det(\rd_z f)  , \\
\mathfrak{C}_{\rd_z f}\,:\;   & \det(\rd_z f) \;\longrightarrow\; \lm \rT_z U_I \otimes \bigl( \lm E_I\bigr)^* , \\
\mathfrak{C}_{\rd_z s_I} \,:\;  &
 \det(\rd_z s_I) \;\longrightarrow\; \lm \rT_z U_I \otimes \bigl( \lm E_I\bigr)^* .
\end{align*}
Now to prove that $\ga\in\Ga_I$ acts by an orientation preserving diffeomorphism, note that a smooth group action always acts by diffeomorphisms. Restriction to $Z_f$ of the action by $\ga\in\Ga_I$ thus yields a diffeomorphism to its image, which is easily seen to be the zero set of $\ga^*f$. 
To show that this diffeomorphism is compatible with the induced orientations at $z\in Z_f$ and $\ga z \in Z_{\ga\! *\! f}$, we begin by noting that the action of $\ga$ is $\La_{\rd_z\ga} : \lm \rT_z Z_f \to \lm \rT_{\ga z} Z_{\ga\! *\! f}$.
On the other hand, the orientations $\si_I(z)$ and $\si_I(\ga z)$ are by assumption intertwined by the isomorphism $\La_{\rd_z\ga} \otimes (\La_{[ \ga^{-1}]})^* : \det \rd_z s_I  \to  \det \rd_{\ga z} s_I$, 
and by Proposition~\ref{prop:orient}~(i) this implies that their pullbacks to  $\lm \rT_x U_I \otimes  (\La^{\max}E_I)^*$ for $x=z,\ga z$ are intertwined
by $\La_{\rd_z \ga} \otimes \bigl( \La_{\ga^{-1}} \bigr)^*$.
Thus it remains to prove that the following diagram commutes:
\[
 \xymatrix{
 \lm \rT_z U_I \otimes  (\La^{\max}E_I)^*  \ar@{->}[r]^{\mathfrak{C}_{\rd_z f}\qquad} \;\; \ar@{->}[d]_{\La_{\rd_z \ga}\otimes (\La_{\ga^{-1}})^* }
 & \;\; \lm \ker (\rd_z f) \otimes \R \;\cong\; \lm \rT_z Z_f\ar@{->}[d]^{\La_{\rd_z \ga}\otimes  \id_\R}
 \\
 \lm \rT_{\ga z} U_I\otimes  (\La^{\max}E_I)^* \quad  \ar@{->}[r]_{\mathfrak{C}_{\rd_{\ga z} 
 (\ga\! *\!f)} \qquad\qquad} 
 & \quad \lm \ker (\rd_{\ga z} (\ga\! *\!f)) \otimes \R \;\cong\; \lm \rT_{\ga z} Z_{\ga\! *\! f}
}
\]
By Lemma~\ref{lem:get}, $\mathfrak{C}_{\rd_z f}$ is given by
$
(v_1\wedge\dots v_n)\otimes(w_1\wedge\dots w_m)^* \;\mapsto\;(v_1\wedge\dots v_k)$,
where $v_1,\dots,v_n$ is any basis for $\rT_zU_I$ whose first $k$ elements span $\ker \rd_z f$, and 
$w_1,\dots,w_m$ is a basis for $E_I$, and similarly for  $\mathfrak{C}_{\rd_{\ga z} (\ga\! *\!f)}$. 
Therefore, if we denote $v_i': = \rd_z \ga (v_i)$ and $w_j' :=  \ga w_j$, we find that
$(\La_{\ga^{-1}})^* (w_1\wedge\dots w_m)^* = (w_1'\wedge\dots w_m')^*$
and thus the diagram commutes as required,
\begin{align*}
& \mathfrak{C}_{\rd_{\ga z} (\ga\! *\!f)} \bigl( \La_{\rd_z \ga}\otimes (\La_{\ga^{-1}})^* \bigl( (v_1\wedge\dots v_n)\otimes(w_1\wedge\dots w_m)^* \bigr)  \bigr) \\
&\qquad= 
\mathfrak{C}_{\rd_{\ga z} (\ga\! *\!f)}\bigl(
(v_1'\wedge\dots v_n')\otimes(w_1'\wedge\dots w_m')^* \bigr) \\
& \qquad= (v_1'\wedge\dots v_k') 
\;=\;
\La_{\rd_z \ga} ( v_1\wedge\dots v_n) 
\;=\;
\La_{\rd_z \ga} \bigl( \mathfrak{C}_{\rd_z f} \bigl( (v_1\wedge\dots v_n)\otimes(w_1\wedge\dots w_m)^* \bigr)  \bigr) .
\end{align*}
This completes the proof.
\end{proof}

\subsection{Perturbed zero sets}\label{ss:red}\hspace{1mm}\\ \vspace{-3mm}

With Theorem~\ref{thm:K} providing existence and uniqueness of tame shrinkings,
the second part of the proof of Theorem~A is the construction of the VMC/VFC from the 
zero sets of suitable perturbations $\s_\Kk  + \nu$ of the canonical section $\s_\Kk$ 
of a tame Kuranishi atlas or cobordism. 
In this section, we describe a suitable class of perturbations $\nu$, and prove that the
corresponding perturbed zero sets are compact weighted branched manifolds -- a notion from
\cite{Mcbr} that we review in Appendix~\ref{ss:br}.
The existence and uniqueness of such perturbations will be established in \S\ref{ss:A}, as part of the perturbative construction of VMC and VFC.
The main work is done by the setup in this section, which will put us into a situation in which the construction of perturbations and the resulting VMC/VFC can essentially be copied from \cite{MW2}.
Since the construction of perturbations requires tameness and the notion of weighted branched manifolds requires an orientation in \cite{Mcbr}, we will -- unless otherwise stated -- work with an oriented tame Kuranishi atlas or cobordism $\Kk$.
\MS

As in the case of trivial isotropy, one cannot in general find transverse perturbations $s_I+\nu_I \pitchfork 0$ that are also compatible with the coordinate changes $\Hat\Phi_{IJ}$.
Instead, we will construct perturbations over the following notion of a reduced atlas that still covers $X$ but generally does not form a Kuranishi atlas.

\begin{defn} {\rm $\!\!$ \cite[Definition~5.1.2]{MW1}}  
\label{def:vicin}  
A {\bf (cobordism) reduction} of a tame Kuranishi atlas/cobordism $\Kk$ is an open subset $\Vv=\bigsqcup_{I\in \Ii_\Kk} V_I \subset \Obj_{\bB_\Kk}$ i.e.\ a tuple of (possibly empty) open subsets $V_I\subset U_I$, satisfying the following conditions:
\begin{enumerate}
\item $V_I = \pi_I^{-1}(\uV_I)$  for each $I\in \Ii_\Kk$, i.e.\ $V_I$ is pulled back from the intermediate category and so is $\Ga_I$-invariant;
\item
$V_I\sqsubset U_I $ for all $I\in\Ii_\Kk$, and if $V_I\ne \emptyset$ then $V_I\cap s_I^{-1}(0)\ne \emptyset$;
\item
if $\pi_\Kk(\ov{V_I})\cap \pi_\Kk(\ov{V_J})\ne \emptyset$ then
$I\subset J$ or $J\subset I$;
\item
the zero set $\iota_\Kk(X)=|s_\Kk|^{-1}(0)$ is contained in 
$
\pi_\Kk(\Vv) \;=\; {\textstyle{\bigcup}_{I\in \Ii_\Kk}  }\;\pi_\Kk(V_I).
$
\end{enumerate}
If $\Kk$ is a cobordism, then we require in addition that $\Vv$ is collared in the following sense: 
\begin{enumerate}
\item[(v)]
For each $\al\in\{0,1\}$ and $I\in \Ii_{\p^\al\Kk}\subset\Ii_{\Kk}$ 
there exists $\eps>0$ and a subset $\partial^\al V_I\subset \partial^\al U_I$ such that $\partial^\al V_I\ne \emptyset$ iff 
$V_I \cap \psi_I^{-1}\bigl( \partial^\al F_I \bigr)\ne \emptyset$,
and 
$$
(\iota^\al_I)^{-1} \bigl( V_I \bigr) \cap \bigl(A^\al_\eps \times  \partial^\al U_I \bigr)
 \;=\; A^\al_\eps \times \partial^\al V_I .
$$
\end{enumerate}
We call 
$\partial^\al\Vv := \bigsqcup_{I\in\Ii_{\p^0\Kk}} \partial^\al V_I \subset \Obj_{\bB_{\p^\al\Kk}}$  
the {\bf boundary restriction} of $\Vv$ to 
$\p^\al\Kk$.
\end{defn}

\begin{remark}\label{rmk:vicin}\rm 
(i) The notion of (cobordism) reduction is equivalent to saying that $\Vv:=\bigsqcup_{I\in \Ii_\Kk} V_I \subset \Obj_{\bB_\Kk}$ is the lift $V_I:= \pi_I^{-1}(\uV_I)$ of a (cobordism) reduction $\uVv=\bigsqcup_{I\in \Ii_\Kk} \uV_I \subset \Obj_{\bB_\uKk}$ of the intermediate Kuranishi atlas/cobordism.
Thus existence and uniqueness of reductions is proven in \cite[Theorem~5.1.6]{MW1}.
\MS

\NI
(ii)
The restrictions $\partial^\al\Vv$ of a cobordism reduction $\Vv$ of a Kuranishi cobordism $\Kk$ are reductions of the restricted Kuranishi atlases $\p^\al\Kk$ for $\al=0,1$.
In particular condition (ii) holds because part~(v) of Definition~\ref{def:vicin} implies that if $\p^\al V_I\ne \emptyset$ then $\p^\al V_I \cap \psi_I^{-1}\bigl( \partial^\al F_I\bigr)\ne \emptyset$.
Note that  condition (v) also implies that $V_I\subset U_I$ is a collared subset in the sense of 
\eqref{eq:collset}.
$\hfill\er$
\end{remark}

Given a reduction $\Vv$, we define the {\bf reduced domain category} $\bB_\Kk|_\Vv$ and the {\bf reduced obstruction category} $\bE_\Kk|_\Vv$ to be the full subcategories of $\bB_\Kk$ and $\bE_\Kk$ with objects $\bigsqcup_{I\in \Ii_\Kk} V_I$ resp.\ $\bigsqcup_{I\in \Ii_\Kk} V_I\times E_I$, and denote by 
$\s_\Kk|_\Vv:\bB_\Kk|_\Vv\to \bE_\Kk|_\Vv$ the section given by restriction of $\s_\Kk$. 
Now one might hope to find transverse perturbation functors $\s_\Kk|_\Vv+\nu : \bB_\Kk|_\Vv \to \bE_\Kk|_\Vv$ by iteratively constructing $\nu_I:V_I\to E_I$ as in \cite{MW2}, where compatibility with the morphisms can be ensured by working along the partial order $\subsetneq$ on $\Ii_\Kk$, using the separation property (iii) of a reduction. However, we also have to ensure compatibility with the morphisms given by the action of nontrivial isotropy groups $\Ga_I$. Depending on their action, we might not even be able to even find a $\Ga_I$-equivariant perturbation $\nu_I$ in a single chart such that $s_I+\nu_I\pitchfork 0$.
In general, this can be resolved by using multivalued perturbations such as in the perturbative construction of the Euler class of an orbibundle -- explained e.g.\ in \cite{FO} as motivation for perturbations in Kuranishi structures.
We could also formulate our perturbation scheme in these terms, but due to the particularly simple setup -- notably additivity $\Ga_I=\prod_{i\in I} \Ga_i$ of the isotropy groups -- we can construct the ``multivalued perturbations'' as single-valued section functors $\nu: \bB_\Kk|_\Vv^{\less\Ga} \to \bE_\Kk|_\Vv^{\less\Ga}$ over a {\bf pruned domain category} $\bB_\Kk|_\Vv^{\less\Ga}$ that is obtained in Lemma~\ref{le:prune} 
from the reduced domain category $\bB_\Kk|_\Vv$ by forgetting sufficiently many morphisms to obtain trivial isotropy. It is to this category that the iterative perturbation scheme of \cite{MW2} will be applied in \S\ref{ss:A} to obtain a suitable class of transverse perturbations $\nu$.
Once a zero set is cut out transversely from $\bB_\Kk|_\Vv^{\less\Ga}$, we will then show in Theorem~\ref{thm:zero} that adding some of the isotropy morphisms back in -- at the expense of adding weights to corresponding branches of the solution set -- yields the structure of a weighted branched manifold on the Hausdorff quotient of the perturbed solution set 
$|(\s_\Kk|_\Vv^{\less\Ga} + \nu)^{-1}(0)|\subset |\bB_\Kk|_\Vv^{\less\Ga}|$. 
This perturbed solution set is not a subset of the virtual neighbourhood $|\Kk|$, but 
its Hausdorff quotient supports a fundamental class by Proposition~\ref{prop:fclass} and the inclusion 
$\io^\nu: (\s_\Kk|_\Vv^{\less\Ga} + \nu)^{-1}(0)\to\Obj_{\Bb_\Kk}$ induces a continuous map
$|\io^\nu|_\Hh: |(\s_\Kk|_\Vv^{\less\Ga} + \nu)^{-1}(0)|_\Hh \to |\Kk|$ 
that will represent the virtual fundamental cycle of $\Kk$.

We will describe the pruned categories in terms of the sets
$$
\TV_{IJ}: = V_J\cap \rho_{IJ}^{-1}(V_I) = V_J\cap \pi_{\Kk}^{-1}\bigl(\pi_\Kk(V_I)\bigr)\subset \TU_{IJ}.
$$ 
Note that $\TV_{IJ}$ is invariant under the action of $\Ga_J$, and 
is an open subset of the closed submanifold  $\TU_{IJ}=s_J^{-1}(E_I)$ of $V_J$, where the last equality holds by
the  tameness condition \eqref{eq:tame2}.
Further
if $F\subset I\subset J$, 
\begin{equation}\label{eq:HIJ}
V_J\cap \rho_{IJ}^{-1}(\TV_{FI}) = \TV_{IJ}\cap \TV_{FJ}= V_J\cap \pi_{\Kk}^{-1}\bigl(\pi_\Kk(V_I)\cap \pi_\Kk(V_F)\bigr)\subset \TU_{FJ}.
\end{equation}
In fact, the second equality  above holds for any pair of subsets $F,I\subset J$.  However, because $\Vv$ is a reduction, the intersection is empty unless
$F,I$ are nested, i.e.\ either $F\subset I$ or $I\subset F$.
Finally, the group $\Ga_{I\less F}$ acts freely on $\TU_{FI}$ (by Definition~\ref{def:change} for a coordinate change)  and hence also on $\TV_{FI}$.  If $I=F$ we define $\Ga_{I\less F}: = \Ga_\emptyset : = \{\id\}$.

\begin{lemma} \label{le:prune}
Let $\Vv$ be a (cobordism) reduction of a tame Kuranishi atlas/cobordism~$\Kk$.
Then there are well defined categories -- the {\bf pruned domain category} $\bB_\Kk|_\Vv^{\less\Ga}$ and the 
{\bf pruned obstruction category} $\bE_\Kk|_\Vv^{\less\Ga}$ -- obtained from $\bB_\Kk$ and $\bE_\Kk$ as follows:

\begin{itemlist}
\item
The object spaces are given by restriction to the reduction $\Vv=\bigsqcup_{I\in\Ii_\Kk} V_I \subset \Obj_{\bB_\Kk}$ as
$$
\Obj_{\bB_\Kk|_\Vv^{\less\Ga}} := {\textstyle \bigsqcup_{I\in\Ii_\Kk}} V_I \; \subset\; \Obj_{\bB_\Kk}, \qquad
\Obj_{\bE_\Kk|_\Vv^{\less\Ga}} := {\textstyle \bigsqcup_{I\in\Ii_\Kk}} V_I \times E_I \; \subset\; \Obj_{\bE_\Kk}.
$$
\item 
The morphism spaces are   open 
 subsets of $\Mor_{\bB_\Kk}$ resp.\  
$\Mor_{\bE_\Kk}$ with components
$$
\Mor_{\bB_\Kk|_\Vv^{\less\Ga}} := {\textstyle \bigsqcup_{I,J\in\Ii_\Kk}} \Mor_{\bB_\Kk|_\Vv^{\less\Ga}}(V_I,V_J) , \qquad
\Mor_{\bE_\Kk|_\Vv^{\less\Ga}} := {\textstyle \bigsqcup_{I,J\in\Ii_\Kk}} \Mor_{\bE_\Kk|_\Vv^{\less\Ga}}(V_I,V_J), 
$$
given by $\Mor_{\ldots}(V_I,V_J)=\emptyset$ unless $I\subset J$, in which case the morphisms are given in terms of 
the open
 subsets $\TV_{IJ}: =V_J\cap \rho_{IJ}^{-1}(V_I)\subset \TU_{IJ}$
 as
\begin{align*}
\Mor_{\bB_\Kk|_\Vv^{\less\Ga}}(V_I,V_J)
&:= \;  \TV_{IJ}\times \{\id\}
\;\subset \;
 \TU_{IJ} \times \Ga_I  = \Mor_{\bB_\Kk}(U_I,U_J), \\
\Mor_{\bE_\Kk|_\Vv^{\less\Ga}}(V_I,V_J)
&:=\; \TV_{IJ}\times E_I\times  \{\id\}
\;\subset \;
 \TU_{IJ} \times E_I \times \Ga_I  = \Mor_{\bE_\Kk}(U_I,U_J) . 
\end{align*}
\item
All structure maps (source, target, identity, and composition) are given by restriction of the structure maps of $\bB_\Kk$ resp.\ $\bE_\Kk$ in Definition~\ref{def:catKu}.
\end{itemlist}
These pruned categories are nonsingular in the sense that there is at most one morphism between any two objects. Moreover, the projection and section functors $\pr_\Kk:\bE_\Kk \to \bB_\Kk$, $\s_\Kk:\bB_\Kk \to \bE_\Kk$ restrict to well defined functors
$\pr_\Kk|_\Vv^{\less\Ga}:\bB_\Kk|_\Vv^{\less\Ga} \to \bE_\Kk|_\Vv^{\less\Ga}$
and
$\s_\Kk|_\Vv^{\less\Ga}:\bB_\Kk|_\Vv^{\less\Ga} \to \bE_\Kk|_\Vv^{\less\Ga}$
with $\pr_\Kk|_\Vv^{\less\Ga}\circ \s_\Kk|_\Vv^{\less\Ga} =\id_{\bB_\Kk|_\Vv^{\less\Ga}}$.
\end{lemma}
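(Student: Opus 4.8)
\textbf{Proof plan for Lemma~\ref{le:prune}.}
The plan is to verify the three assertions in turn: that the pruned categories are well-defined categories, that they are nonsingular, and that the projection and section functors descend. The key observation that makes all of this work is the \emph{nestedness} forced by the reduction: by Definition~\ref{def:vicin}~(iii) and the displayed identity \eqref{eq:HIJ}, whenever $\TV_{IJ}\cap\TV_{FJ}\ne\emptyset$ we must have $F\subset I$ or $I\subset F$, so all the index sets that occur together in a composable chain are totally ordered. I would first check that the structure maps of $\bB_\Kk$ restrict to the proposed subsets. Source and target of a morphism $(I,J,y,\id)\in \TV_{IJ}\times\{\id\}$ are $(I,\rho_{IJ}(y))$ and $(J,\Tphi_{IJ}(y))$; since $y\in\TV_{IJ}=V_J\cap\rho_{IJ}^{-1}(V_I)$, both $\rho_{IJ}(y)\in V_I$ and $\Tphi_{IJ}(y)\in V_J$ (using $\TV_{IJ}\subset V_J$), so source and target land in $\Obj_{\bB_\Kk|_\Vv^{\less\Ga}}$. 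The identity morphism $(I,I,x,\id)$ at $x\in V_I$ lies in $\TV_{II}\times\{\id\}=V_I\times\{\id\}$ as required.

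The main work is checking that composition is internal, i.e.\ that the composite of two pruned morphisms is again pruned. Given composable $(I,J,y,\id)$ and $(J,K,z,\id)$ with $I\subset J\subset K$, the composite in $\bB_\Kk$ is $(I,K,z,\rho^\Ga_{IJ}(\id)\cdot\id)=(I,K,z,\id)$, so the group component is automatically trivial — this is exactly where additivity of the isotropy groups (the projections $\rho^\Ga_{IJ}$ being group homomorphisms sending $\id\mapsto\id$) buys us the simplification. It remains to see $z\in\TV_{IK}=V_K\cap\rho_{IK}^{-1}(V_I)$. Composability forces the target of the first morphism to equal the source of the second: $\Tphi_{IJ}(y)=\rho_{JK}(z)$, and $y\in\TV_{IJ}$ gives $\rho_{IJ}(y)\in V_I$. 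Using the cocycle identity $\rho_{IK}=\rho_{IJ}\circ\rho_{JK}$ on the relevant overlap (Lemma~\ref{le:compos0}~(ii), valid since $\Kk$ is tame hence satisfies the cocycle condition, and the relevant set is contained in $\TU_{IK}$ by tameness), we get $\rho_{IK}(z)=\rho_{IJ}(\rho_{JK}(z))=\rho_{IJ}(\Tphi_{IJ}(y))=\rho_{IJ}(y)\in V_I$; and $z\in V_K$ because $z\in\TV_{JK}\subset V_K$. Hence $z\in\TV_{IK}$ and the composite $(I,K,z,\id)$ is a pruned morphism. Associativity and unitality are then inherited verbatim from $\bB_\Kk$ (Lemma~\ref{le:Kcat}), since the pruned morphism spaces are subsets closed under the operations. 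Openness of $\TV_{IJ}$ in $\TU_{IJ}$ follows because $V_I,V_J$ are open and $\rho_{IJ}$ continuous, together with the identification $\TU_{IJ}=s_J^{-1}(E_I)$ from tameness \eqref{eq:tame2}; this gives that $\Mor_{\bB_\Kk|_\Vv^{\less\Ga}}$ is an open subset of $\Mor_{\bB_\Kk}$.

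For nonsingularity: a morphism from $(I,x)$ to $(J,y)$ in $\bB_\Kk|_\Vv^{\less\Ga}$ must have $I\subset J$ (else $\Mor=\emptyset$), be of the form $(I,J,w,\id)$ with $\rho_{IJ}(w)=x$ and $\Tphi_{IJ}(w)=y$; since $\Tphi_{IJ}$ is an embedding, $w$ is determined by $y$, so the morphism is unique. The obstruction category $\bE_\Kk|_\Vv^{\less\Ga}$ is handled by the identical argument, carrying along the fiber coordinate $e\in E_I$ and the bundle map $\Hat\phi_{IJ}$, noting the composition formula in Definition~\ref{def:catKu} again produces trivial group component and uses $\Hat\phi_{IK}=\Hat\phi_{JK}\circ\Hat\phi_{IJ}$. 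Finally the functoriality of $\pr_\Kk|_\Vv^{\less\Ga}$ and $\s_\Kk|_\Vv^{\less\Ga}$: these are restrictions of the functors $\pr_\Kk,\s_\Kk$ already established in the text, and one only needs that they map pruned objects/morphisms to pruned objects/morphisms — for $\pr_\Kk$ this is clear by forgetting the $E$-coordinate, and for $\s_\Kk$ one observes $(I,J,y,\id)\mapsto(I,J,y,s_I(y),\id)$ lands in $\TV_{IJ}\times E_I\times\{\id\}$ since the first and last coordinates are unchanged. The relation $\pr_\Kk|_\Vv^{\less\Ga}\circ\s_\Kk|_\Vv^{\less\Ga}=\id$ is then inherited. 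The step I expect to be the genuine (though modest) obstacle is the composition-is-internal verification, specifically making sure the cocycle identity is being invoked on precisely the overlap where it holds and that the resulting point $z$ indeed lies in $\TU_{IK}$ — this is where tameness and the cocycle condition are essential, and it is the only place where more than bookkeeping is involved.
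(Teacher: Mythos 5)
Your proof is correct and takes essentially the same approach as the paper, which gives a much terser version of the same argument: composition is closed because the strong cocycle condition (a consequence of tameness) gives $\rho_{IJ}\circ\rho_{JK}=\rho_{IK}$ with matching domains; nonsingularity holds because the target determines the lifted-domain coordinate $w$; and the projection and section functors restrict because they act trivially on the $\Ga_I$-component. One small overstatement worth flagging: your opening claim that the reduction's nestedness property (Definition~\ref{def:vicin}~(iii), \eqref{eq:HIJ}) is "the key observation that makes all of this work" is not actually used in this lemma. By the definition of the pruned morphism spaces, $\Mor(V_I,V_J)=\emptyset$ unless $I\subset J$, so any composable chain is automatically totally ordered without invoking the reduction separation property. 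What actually carries the proof is the cocycle identity (which you do invoke correctly) together with $\rho^\Ga_{IJ}(\id)=\id$ from additivity. Nestedness becomes genuinely essential only later, in Lemmas~\ref{le:zero1} and~\ref{le:zero2}, where the groupoid completion $\Hat\bZ^\nu$ adds back morphisms indexed by subgroups $\Ga_{I\less F}$ and one must know that the relevant index sets $F$ are linearly ordered.
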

\begin{proof}   
Recall that $(I,J,y,\id)\in\Mor_{\bB_\Kk|_\Vv^{\less\Ga}}$ has source $(I,\rho_{IJ}(y))$ 
and target $(J,y)$ (where as in Lemma~\ref{le:Kcat}  we suppress mention of the inclusion $\Tphi_{IJ}$). 
Now morphisms are closed under composition because the strong cocycle condition guarantees $\rho_{IJ}\circ \rho_{JK} = \rho_{IK}$ with identical domains whenever $I\subset J\subset K$.  
Moreover, the category is nonsingular because source and target determine the morphism uniquely. Similar arguments apply to $\bE_\Kk|_\Vv^{\less\Ga}$. 
Finally, the projection and section functors of $\Kk$ act trivially on the isotropy groups $\Ga_I$, thus restrict to well defined functors when we drop these.
\end{proof}

The following  combines Definitions~7.2.1, 7.2.5, 7.2.6, and~7.2.9 from  \cite{MW2}.

\begin{defn} 
\label{def:sect} 
A  {\bf (cobordism) perturbation} of $\Kk$ is a smooth functor $\nu:\bB_\Kk|_\Vv^{\less\Ga}\to\bE_\Kk|_\Vv^{\less\Ga}$ between the pruned domain and obstruction categories of some (cobordism) reduction $\Vv$ of $\Kk$, such that $\pr_\Kk|_\Vv^{\less\Ga}\circ\nu=\id_{\bB_\Kk|_\Vv^{\less\Ga}}$. 

That is, $\nu=(\nu_I)_{I\in\Ii_\Kk}$ is given by a family of smooth maps $\nu_I: V_I\to E_I$ that are compatible with coordinate changes in the sense that for all $I \subsetneq J$ we have
\begin{equation}\label{eq:compatc}
\nu_J\big|_{\TV_{IJ}}\  =\ 
 \Hat\phi_{IJ}\circ\nu_I\circ \rho_{IJ}\big|_{\TV_{IJ}}
\quad \text{on}\quad
\TV_{IJ} = V_J\cap \rho_{IJ}^{-1}(V_I).
\end{equation}
If $\Kk$ is a Kuranishi cobordism we require in addition that $\nu$ has product form in a collar neighbourhood of the boundary. That is, for $\al=0,1$ and $I\in \Ii_{\Kk^\al}\subset\Ii_{\Kk}$ there is $\eps>0$ and a map $\nu_I^\al: \p^\al V_I\to E_I$ such that 
$$
\nu_I \bigl( \io_I^\al ( t,x ) \bigr) 
= \nu_I^\al (x)  \qquad
\forall\, x\in \p^\al V_I , \   t\in A^\al_\eps .
$$ 
We say that a (cobordism) perturbation $\nu$ is
\begin{itemlist}\item
 {\bf admissible}  if we have 
$\rd_y \nu_J(\rT_y V_J) \subset\im\Hat\phi_{IJ}$ for all $I\subsetneq J$ and $y\in \TV_{IJ}$;
\item {\bf transverse}
if  $s_I|_{V_I} + \nu_I: V_I\to E_I$ is transverse to $0$ 
 for each $I\in \Ii_\Kk$;
 \item
 {\bf precompact} if there is a precompact open subset $\Cc\sqsubset \Vv$, which itself is a (cobordism) reduction, such that 
\begin{equation}\label{eq:piKC}
\pi_\Kk \bigl(\;{\textstyle  \bigcup_{I\in \Ii_\Kk} } (s_I|_{V_I} + \nu_I)^{-1}(0) \bigr) \;\subset\; \pi_\Kk(\Cc).
\end{equation}
\end{itemlist}
\end{defn}

\begin{rmk}\rm
Although $\pi_\Kk:\Obj_{\Bb_\Kk} \to |\Kk|$ is not induced by a functor on $\Bb_\Kk|_\Vv^{\less\Ga}$, we will work with $\pi_\Kk:\bigsqcup_{I\in\Ii_\Kk} U_I \to |\Kk|$ as continuous map -- in particular for the notion of precompactness. 
As in the case of trivial isotropy, we do not have a nicely controlled cover of sets $U_J\cap \pi_\Kk^{-1}(\pi_\Kk(\Cc))$ for $\Cc\subset \bigsqcup U_I$. However, because
$\Cc =\bigsqcup C_I\subset\Vv= \bigsqcup V_I\subset  \bigsqcup U_I$ 
are lifts of reductions of $|\uKk|$ as in Remark~\ref{rmk:vicin},
the morphisms between $V_J$ and $\Cc$ are better understood, yielding
\begin{equation}\label{eq:VCC}
V_J\cap \pi_\Kk^{-1}(\pi_\Kk(\Cc))
\;=\; V_J \cap \bigl( \;{\textstyle \bigcup_{H\supset J} } \rho_{JH}(C_H) \cup {\textstyle \bigcup_{H\subsetneq J} }\rho_{HJ}^{-1}(C_H) \;\bigr) .
\end{equation}
Indeed, by the reduction property, 
$\pi_\Kk(V_J)$ only intersects $\pi_\Kk(C_H)$ for $H\supset J$ or $H\subset J$. 
The morphisms between $V_J$ and $C_H$ are then given by  $\rho_{JH}$ and $\Ga_J$ resp.\ $\rho_{HJ}$ and $\Ga_H$, and the isotropy groups are absorbed by the equivariance $\Ga_J \rho_{JH}(C_H) = \rho_{JH}(\Ga_H C_H )$ and fact that
$\Ga_H C_H = C_H = \pi_H^{-1}(\und C_H)$.
As a result, we can write \eqref{eq:piKC} in terms of the covering maps $(\rho_{IJ})_{I,J\in\Ii_\Kk}$, without explicit reference to the isotropy groups $\Ga_I$, as
\begin{equation}\label{eq:zeroVCC}
(s_J|_{V_J} + \nu_J)^{-1}(0)
\;\subset\;  {\textstyle \bigcup_{H\supset J} } \rho_{JH}(C_H) \cup {\textstyle \bigcup_{H\subsetneq J} }\rho_{HJ}^{-1}(C_H) 
\qquad\forall J\in \Ii_\Kk. \qquad\quad\er 
\end{equation}
\end{rmk}

\begin{defn} \label{def:sect2}
Given a (cobordism) perturbation $\nu$, we define the {\bf perturbed zero set} $|\bZ^\nu|$ to be the realization of the full subcategory $\bZ^\nu$ of $\bB_\Kk|_\Vv^{\less\Ga}$ with object space 
$$
(\s_\Kk|_\Vv^{\less\Ga} + \nu)^{-1}(0)  
 := {\textstyle \bigsqcup_{I\in \Ii_\Kk}}(s_I|_{V_I}+\nu_I)^{-1}(0) \;\subset\;\Obj_{\bB_\Kk|_\Vv^{\less\Ga}}  
$$
given by the local zero sets $Z_I: = (s_I|_{V_I} + \nu_I)^{-1}(0)$. 
That is, we equip
$$
|\bZ^\nu| : = \bigl|( \s_\Kk|_\Vv^{\less\Ga}  + \nu)^{-1}(0)  
\bigr| \,=\; \quotient{ {\textstyle\bigsqcup_{I\in\Ii_\Kk} Z_I }}{\!\sim} 
$$
with the quotient topology generated by the morphisms of $\bB_\Kk|_\Vv^{\less\Ga}$.
Moreover, we denote by $\io^\nu: \bZ^\nu \to \bB_\Kk$ the functor induced by the inclusion
$(\s_\Kk|_\Vv^{\less\Ga} + \nu)^{-1}(0) \to  \Obj_{\bB_\Kk}$ and corresponding inclusion of morphism spaces (to a generally not full subcategory), with resulting continuous map
\begin{equation}\label{eq:Zmap} 
|\io^\nu| \,:\;  |\bZ^\nu| \;\longrightarrow\; |\Kk| .
\end{equation}
\end{defn}

\begin{rmk}\label{rmk:nurest}\rm  
If $\nu:\bB_{\Kk}|_\Vv^{\less\Ga}\to\bE_{\Kk}|_{\Vv}^{\less\Ga}$ is a cobordism perturbation of a tame Kuranishi cobordism $\Kk$, then each {\bf restriction} $\nu|_{\partial^\al\Vv} := \bigl(  \nu_I^\al \bigr)_{I\in\Ii_{\Kk^\al}}$ for $\al=0,1$ forms a perturbation of the Kuranishi atlas $\p^\al\Kk$ with respect to the boundary restriction of the reduction, $\p^\al\Vv$. 

If in addition $\nu$ is admissible/transverse/precompact, then so are the restrictions $\nu|_{\partial^\al\Vv}$.
Moreover, in the case of transversality each perturbed section $s_I|_{V_I}+\nu_I : V_I \to E_I$ for $I\in\Ii_{\p^0\Kk}\cup\Ii_{\p^1\Kk}\subset \Ii_{\Kk}$ is transverse to $0$ as a map on a domain with boundary; i.e.\ the kernel of its differential is transverse to the boundary 
$\partial V_I = \bigsqcup_{\al=0,1}\iota^\al_I ( \{\al\} \times \partial^\al V_I)$.
$\hfill\er$
\end{rmk}

\begin{thm}\label{thm:zero}  
Let $(\Kk,\si)$ be an oriented tame Kuranishi atlas/cobordism of dimension $d$ and let $\nu$ be an admissible, transverse, precompact (cobordism) perturbation of $\Kk$ with respect to nested (cobordism) reductions $\Cc\sqsubset \Vv\sqsubset\Obj_{\Bb_\Kk}$.
Then $\bZ^\nu$ can be completed to a compact, $d$-dimensional wnb (cobordism) groupoid $\Hat\bZ^\nu$ 
in the sense of Definition~\ref{def:brorb} with the same realization $|\Hat\bZ^\nu|=|\bZ^\nu|$.
In addition
$$
\La^\nu(p) := |\Ga_I|^{-1} \# \bigl\{ z\in Z_I \,\big|\, \pi_H(|z|) =p \bigr\} \qquad \text{for}\; p\in |Z_I|_\Hh
$$
defines a weighting function $\La^\nu:|\bZ^\nu|_\Hh\to \Q^+$ on the Hausdorff quotient of the perturbed zero set $|\bZ^\nu|_\Hh$. Together, these give $(|\Hat\bZ^\nu|_\Hh, \La^\nu)$ the structure of a compact, $d$-dimensional weighted branched manifold/cobordism in the sense of Definition~\ref{def:brman}.
It defines a cycle in $|\Cc|$ in the sense that the map 
$|\io^{\nu}|_\Hh:  |\Hat\bZ^\nu|_\Hh\to |\Kk|$ induced by \eqref{eq:Zmap} has image in $|\Cc|$.

Moreover, if $\Kk$ is a Kuranishi cobordism and we denote the boundary restrictions of $\nu$ by $\nu^\al:=\nu|_{\p^\al\Vv}$, then $(\Hat\bZ^\nu, \La^\nu)$ has oriented boundaries $(\Hat\bZ^{\nu^0}, \La^{\nu^0})$ and  $(\Hat\bZ^{\nu^1}, \La^{\nu^1})$ and the cycle $|\io^\nu|_\Hh: |\Hat\bZ^{\nu}|_\Hh\to |\Cc|$ restricts on the boundaries to 
$|\io^{\nu^\al}|_\Hh: |\Hat\bZ^{\nu^\al}|_\Hh\to |\p^\al\Cc|$.
\end{thm}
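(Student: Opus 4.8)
\textbf{Proof proposal for Theorem~\ref{thm:zero}.}
The plan is to reduce everything to the local pictures on the manifolds $V_I$ (where no isotropy is in play), and then track how the morphisms of $\bB_\Kk|_\Vv^{\less\Ga}$ and the added isotropy morphisms assemble these pieces. First I would record the purely local consequences of Lemma~\ref{le:locorient}: for each $I$ transversality of $s_I|_{V_I}+\nu_I$ makes $Z_I$ a smooth oriented $d$-dimensional manifold (with boundary in the cobordism case, by Remark~\ref{rmk:nurest}), the $\Ga_I$-action restricts to orientation-preserving diffeomorphisms $Z_I\to Z_{\ga*\nu_I}$ via part~(ii), and the covering maps $\rho_{HI}$ restrict to orientation-preserving local diffeomorphisms onto their images in $Z_H$ via part~(iii) — here the compatibility \eqref{eq:compatc} of $\nu$ with coordinate changes is exactly what lets us write $\nu_I|_{\TV_{HI}}=\Hat\phi_{HI}\circ\nu_H\circ\rho_{HI}$ so that $\rho_{HI}$ maps $Z_I\cap\TV_{HI}$ into $Z_H$. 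The admissibility hypothesis is used precisely to ensure $\TU_{HI}=s_I^{-1}(E_H)$ is cut out cleanly (via the tameness identity \eqref{eq:tame2}) and that $Z_I\cap\TU_{HI}$ is transverse inside $Z_I$, so that $Z_{HI}:=Z_I\cap\TV_{HI}$ is a submanifold and $\rho_{HI}|_{Z_{HI}}$ is a covering onto $Z_H\cap\rho_{HI}(\TV_{HI})$ with deck group $\Ga_{I\less H}$.

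Next I would build the weighted branched groupoid $\Hat\bZ^\nu$. The objects are $\bigsqcup_I Z_I$; the morphisms of $\bZ^\nu$ (source/target from $\bB_\Kk|_\Vv^{\less\Ga}$) are only the identity-isotropy ones, so I add back the morphisms coming from the full $\Ga_I$-actions and from $\rho_{HI}$ composed with group elements, i.e.\ exactly the restriction to $\bigsqcup Z_I$ of $\Mor_{\bB_\Kk}$. One checks this is closed under composition (strong cocycle condition, Lemma~\ref{le:compos0}) and that the resulting category is étale but not proper — properness fails precisely where $\dim U_H<\dim U_I$. The branching structure is read off the nesting: near a point of $Z_I$, the local branches are indexed by the strata $Z_{HI}$ for $H\subset I$ together with the finitely many $Z_J\cap\pi_\Kk^{-1}(\pi_\Kk(V_I))$ for $J\supset I$, all of which are $d$-manifolds mapping to a neighbourhood in $|\bZ^\nu|_\Hh$; the separation property~(iii) of a reduction guarantees only nested pairs meet, which is what makes the local branch set finite and the weighting well-defined. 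I would then verify that $\La^\nu(p)=|\Ga_I|^{-1}\#\{z\in Z_I\mid \pi_H(|z|)=p\}$ is independent of the chart $I$ used to compute it — this is a counting identity using that $\rho_{HI}$ has fibre $\Ga_{I\less H}$ and $|\Ga_I|=|\Ga_H|\cdot|\Ga_{I\less H}|$ — and that it satisfies the local compatibility axiom of a weighting function from Definition~\ref{def:brman}; compactness of $(|\Hat\bZ^\nu|_\Hh,\La^\nu)$ follows from precompactness of $\nu$: \eqref{eq:zeroVCC} confines each $Z_I$ inside the closed set cut out by the $\rho$-images of $\Cc$, and $\pi_\Kk(\Cc)\sqsubset|\Kk|$ is precompact, so the image of $|\io^\nu|_\Hh$ lies in $|\Cc|$ and the Hausdorff quotient is compact by the same argument as in \cite[\S7]{MW2} applied to the intermediate category.

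For the orientation I would transport the section $\si$ of $\det(\s_\Kk)$ through the isomorphism $\Psi^{s_\Kk}:\det(\Kk)\to\det(\s_\Kk)$ of Proposition~\ref{prop:orient}, then use the contraction isomorphisms $\mathfrak C_{\rd_z f}$ of Lemma~\ref{le:locorient} to orient each $Z_I$ coherently; parts (ii) and (iii) of Lemma~\ref{le:locorient} say exactly that the $\Ga_I$-morphisms and the $\rho_{HI}$-morphisms are orientation-preserving, so the orientations on the $Z_I$ glue to an orientation of the branched manifold. Finally, for the cobordism statement I would invoke Remark~\ref{rmk:nurest}: $\nu|_{\p^\al\Vv}$ is an admissible, transverse, precompact perturbation of $\p^\al\Kk$ with respect to $\p^\al\Cc\sqsubset\p^\al\Vv$, so the construction just described applies to it and produces $(\Hat\bZ^{\nu^\al},\La^{\nu^\al})$; the product collar form of $\nu$ together with the collar form of $\Vv,\Cc$ (Definition~\ref{def:vicin}(v)) identifies a collar of $|\Hat\bZ^\nu|_\Hh$ with $A^\al_\eps\times|\Hat\bZ^{\nu^\al}|_\Hh$, and part~(iv) of Lemma~\ref{le:locorient} gives the orientation-sign conventions $\partial^0$ vs.\ $\partial^1$, matching Remark~\ref{rmk:orientb}. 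The main obstacle I anticipate is the bookkeeping for the weighting function: verifying chart-independence of $\La^\nu$ and the branched-manifold axiom simultaneously requires carefully matching the multiplicities coming from the free $\Ga_{I\less H}$-actions on $\TV_{HI}$ across all nested triples $H\subset I\subset J$, and confirming that the étale-but-nonproper category $\Hat\bZ^\nu$ indeed has the locally finite branch structure demanded by Definition~\ref{def:brorb} — everything else is a routine transfer of the trivial-isotropy arguments of \cite{MW2} through the intermediate category.
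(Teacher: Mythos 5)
Your plan correctly identifies the local ingredients from Lemma~\ref{le:locorient}, the role of admissibility and the tameness identity \eqref{eq:tame2}, and the source of compactness, orientations, and boundary behaviour. However, the central construction -- the definition of $\Hat\bZ^\nu$ -- has a genuine gap that would derail the rest.

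You propose taking $\Hat\bZ^\nu$ to be ``exactly the restriction to $\bigsqcup Z_I$ of $\Mor_{\bB_\Kk}$,'' i.e.\ the full subcategory of $\bB_\Kk$ with object space $\bigsqcup_I Z_I$. This is not what the paper does, and it cannot work, for two reasons. First, because $\nu_I$ is \emph{not} $\Ga_I$-equivariant, the group $\Ga_I$ does not act on $Z_I = (s_I|_{V_I}+\nu_I)^{-1}(0)$: the phrase ``the full $\Ga_I$-actions'' has no referent on the object space. Second, for points $x\in Z_I$ with nontrivial stabilizer $\Ga_I^x$ (which can perfectly well occur in a reduction), the full subcategory has every $(I,I,x,\ga)$ with $\ga\in\Ga_I^x$ as a distinct morphism from $x$ to $x$. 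This makes the category singular, so it is not a ``nonsingular \'etale groupoid'' as Definition~\ref{def:brorb} demands, and its realization can be strictly smaller than $|\bZ^\nu|$ (the $\Ga_I$-orbit relation on $Z_I$ need not be generated by the $\rho_{IJ}$-morphisms of $\bZ^\nu$). The paper's $\Hat\bZ^\nu$ instead takes for $\Mor_{\Hat\bZ^\nu}(Z_I,Z_J)$ only the union over $F\subset I$ of $\bigl(Z_J\cap\TV_{IJ}\cap\TV_{FJ}\bigr)\times\Ga_{I\less F}$: the only group actions that are added back are the \emph{free} $\Ga_{I\less F}$-actions on overlaps $\TV_{FJ}$, which is exactly where the compatibility \eqref{eq:compatc} makes $\nu_J$ locally invariant. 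Verifying that this restricted morphism set is closed under composition (Lemma~\ref{le:zero1}~(i)(c)), uniquely determined by source and target (nonsingularity), and generates the same equivalence relation as $\bZ^\nu$ is the delicate part of the argument -- it involves the strong cocycle condition and a careful case analysis of composites of ``type~(A)'' and ``type~(B)'' morphisms. Your sketch collapses this to ``one checks this is closed under composition,'' and the remaining claims about $|\bZ^\nu|_\Hh$, the closure of the relation (Lemma~\ref{le:zero2}), the counting formula $\#\{z\in Z_I\mid \pi(z)=p\}=|\Ga_{I\less F_x}|$, and the verification of the three wnb axioms are left as a routine transfer, whereas in the paper these form the technical core.

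A minor further remark: you attribute the failure of properness of $\Hat\bZ^\nu$ to ``$\dim U_H<\dim U_I$,'' but all $Z_I$ have dimension $d$; non-properness of $\Hat\bZ^\nu$ comes from the sets $\TV_{IJ}$ not being closed in $V_J$, so the equivalence relation fails to have closed graph. That failure is exactly what forces the passage to $\Hat\bZ^\nu_\Hh$ by closing the morphism relation.
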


We begin the proof of Theorem~\ref{thm:zero} by explaining the structure of the groupoid completion $\Hat\bZ^\nu$. 
Note that the compatibility condition \eqref{eq:compatc} implies partial equivariance of the perturbation: 
$\nu_J(\al y) = \nu_J(y) $ for $y\in \TV_{IJ}, \al\in \Ga_{J\less I}$. This fact is reflected in the structure of the morphisms in the groupoid $\Hat\bZ^\nu$ which contain this action of $\Ga_{J\less I}$ on $\TV_{IJ}\cap Z_J$ as part of the morphism space $\Mor_{\Hat\bZ^\nu}(Z_J,Z_J)$.

 \begin{lemma}\label{le:zero1}  Let  $\nu$ be any (cobordism) perturbation of a tame $d$-dimensional Kuranishi 
 atlas/cobordism  $\Kk$. 
 \begin{enumerate}
 \item 
There is a unique nonsingular groupoid $\Hat\bZ^\nu$ with the same objects and realization as  $\bZ^\nu$.  Its morphism space 
 for $I\subset J$ is given by
 $$
\Mor_{\Hat\bZ^\nu}(Z_I,Z_J) :=
{\textstyle \bigcup_{\emptyset\ne F\subset I} } \; \bigl(Z_J\cap \TV_{IJ}\cap \TV_{FJ}\bigr)\times \Ga_{I\less F}
\;\subset \;
 \TU_{IJ} \times \Ga_I  = \Mor_{\bB_\Kk}(U_I,U_J).
$$
\item If $\nu$ is admissible and tranverse, then the subsets $Z_J\cap \TV_{IJ}\subset Z_J$ are open for all $I\subset J$
and the groupoid  $\Hat\bZ^\nu$  is \'etale and has dimension $d$.  Further, $\Hat\bZ^\nu$  is oriented if in addition $\Kk$ is oriented. 
\item 
If $\Kk$ is an oriented 
tame Kuranishi cobordism
and $\nu$ is admissible and tranverse, then $\Hat\bZ^\nu$ satisfies 
all conditions in 
Appendix
\S\ref{ss:br}
to be an \'etale, oriented, cobordism groupoid, except possibly that of compactness.
\end{enumerate}
 \end{lemma}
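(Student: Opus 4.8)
The plan is to prove the three parts in order, deriving each structure on $\Hat\bZ^\nu$ from the corresponding statement at the level of the reduced category and the perturbation, using the partial equivariance $\nu_J(\al y)=\nu_J(y)$ for $y\in\TV_{IJ}$, $\al\in\Ga_{J\less I}$ (which is immediate from \eqref{eq:compatc}), and the tameness identity $\TU_{IJ}=s_J^{-1}(E_I)$ together with \eqref{eq:HIJ}.

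For part (i), I would first check that the proposed morphism sets $\Mor_{\Hat\bZ^\nu}(Z_I,Z_J)=\bigcup_{\emptyset\ne F\subset I}(Z_J\cap\TV_{IJ}\cap\TV_{FJ})\times\Ga_{I\less F}$ are well-defined subsets of $\Mor_{\bB_\Kk}(U_I,U_J)=\TU_{IJ}\times\Ga_I$ --- here one uses that $\TV_{IJ}\cap\TV_{FJ}\subset\TU_{FJ}$ and that $\Ga_{I\less F}$ embeds in $\Ga_I$ via the splitting $\Ga_I\cong\Ga_F\times\Ga_{I\less F}$ --- and that the source/target maps of $\bB_\Kk$ send such a pair $(y,\ga)$, $\ga\in\Ga_{I\less F}$, to objects in $Z_I$ and $Z_J$: the target $\Tphi_{IJ}(y)\in Z_J$ is clear, and the source $\ga^{-1}\rho_{IJ}(y)$ lies in $Z_I$ because $\rho_{IJ}(y)\in V_I\cap Z_I$ (as $y\in\TV_{IJ}$, so $s_I+\nu_I$ vanishes at $\rho_{IJ}(y)$ by \eqref{eq:compatc}) and because $\Ga_I$ acts on $Z_I$ (as $\nu_I$ --- although not $\Ga_I$-equivariant in general --- need not be; rather, one must note that for $\ga\in\Ga_{I\less F}$ we have $\rho_{FI}(\ga^{-1}\rho_{IJ}(y))=\rho_{FI}(\rho_{IJ}(y))$ by the partial equivariance of $\rho$, hence one should really only include morphisms that stay inside zero sets --- this is exactly the point that forces the index $F$ to range and the group to be $\Ga_{I\less F}$ rather than all of $\Ga_I$). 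Then I would verify closure under composition using the strong cocycle condition $\rho_{IK}=\rho_{IJ}\circ\rho_{JK}$ (Lemma~\ref{le:compos0}(iii)) and the compatibility $\rho^\Ga_{IK}=\rho^\Ga_{IJ}\circ\rho^\Ga_{JK}$, invert morphisms using that $\Ga_{I\less F}$ is a group, and observe that source and target determine the morphism (nonsingularity): if two morphisms from $Z_I$ to $Z_J$ have the same source and target, then they have the same image point $y\in Z_J$ and their group elements $\ga_1,\ga_2$ satisfy $\ga_1^{-1}\rho_{IJ}(y)=\ga_2^{-1}\rho_{IJ}(y)$, and since the relevant stabilizer computation via Lemma~\ref{le:vep}(iii) pins down $\ga_1\ga_2^{-1}$, they agree. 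Uniqueness of such a groupoid with the same objects and realization then follows because the realization equivalence relation forces precisely these morphisms to be present. Finally $|\Hat\bZ^\nu|=|\bZ^\nu|$ because we have only added morphisms that already identify objects identified in $|\bZ^\nu|$ (they come from morphisms of $\bB_\Kk$ restricted to the zero set), so the quotient is unchanged.

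For part (ii), admissibility gives $\rd_y\nu_J(\rT_yV_J)\subset\im\Hat\phi_{IJ}$, which combined with the index/tangent bundle condition \eqref{tbc} and transversality of $s_I+\nu_I$ shows that $s_J+\nu_J$ is transverse to $\Hat\phi_{IJ}(E_I)$ along $\TV_{IJ}$, whence $Z_J\cap\TV_{IJ}=(s_J+\nu_J)^{-1}(0)\cap s_J^{-1}(E_I)$ is an open subset of $Z_J$ (here using $\TU_{IJ}=s_J^{-1}(E_I)$ from tameness). Etaleness is then immediate: every morphism $(y,\ga)$ with $\ga\in\Ga_{I\less F}$ has source and target in these open sets, and source/target are local diffeomorphisms because $\rho_{IJ}$ is a covering and the $\Ga$-action is by diffeomorphisms. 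The dimension count $\dim Z_I=\dim U_I-\dim E_I=d$ is just the definition of $\dim\bK_I$ plus transversality. For orientedness, I would invoke Lemma~\ref{le:locorient}: each $Z_I$ carries the orientation induced by $\si_I$, parts (ii) and (iii) of that Lemma say exactly that the two types of morphisms --- the $\Ga_{I\less F}$-action maps (part (ii) applied with $\ga\in\Ga_{I\less F}$, noting $\ga*\!\,\nu_I=\nu_I$ on the relevant set by partial equivariance, so $Z_{\ga*f}=Z_f$) and the covering maps $\rho_{IJ}$ restricted to the pieces where they are injective, i.e.\ on local sheets of $\TV_{IJ}\to V_I$ (part (iii), using admissibility to see $f(W)\subset\Hat\phi_{IJ}(E_I)$ with $f=s_J+\nu_J$) --- are orientation preserving. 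Since every morphism of $\Hat\bZ^\nu$ factors as such a covering sheet followed by a group element, all structure maps preserve orientation, so $\Hat\bZ^\nu$ is an oriented \'etale groupoid.

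For part (iii), in the cobordism case I would additionally unpack the collar structure: since $\nu$ has product form $\nu_I(\io_I^\al(t,x))=\nu_I^\al(x)$ in the collar and $\Vv$ is a collared reduction (Remark~\ref{rmk:vicin}(ii)), each $Z_I$ meets the collar in $A^\al_\eps\times(Z_{I}\cap\p^\al V_I)$, so $Z_I$ is a manifold with boundary $\p Z_I=\bigsqcup_\al\io_I^\al(\{\al\}\times Z_{I}^\al)$ by Lemma~\ref{le:locorient}(iv), and transversality to the boundary holds by Remark~\ref{rmk:nurest}. The morphisms respect the collar because all charts and coordinate changes do (Definition~\ref{def:CCC}), so $\Hat\bZ^\nu$ restricts over the collar to $A^\al_\eps\times\Hat\bZ^{\nu^\al}$; hence $\Hat\bZ^\nu$ is an \'etale oriented cobordism groupoid in the sense of Appendix~\ref{ss:br}, the only missing hypothesis being compactness, which is not claimed here (it will come from precompactness of $\nu$ in the proof of Theorem~\ref{thm:zero} proper). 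The orientation of the boundary is induced by $\p^\al\si$ via Lemma~\ref{le:locorient}(iv) and matches $\Hat\bZ^{\nu^\al}$ by construction. The main obstacle I anticipate is part (i): getting the bookkeeping of which group elements to include exactly right --- the subtlety that one must include $\Ga_{I\less F}$ acting on $\TV_{IJ}\cap\TV_{FJ}$ for \emph{each} $F\subset I$ (not just $F=I$ or $F=\emptyset$), because these are precisely the ``extra'' isotropy morphisms of $\bB_\Kk$ that preserve the zero set, and checking that this collection is genuinely closed under composition across a chain $I\subset J\subset K$ with different witnesses $F$ at each stage requires care with the nesting and with how $\Ga_{I\less F}\hookrightarrow\Ga_I$ interacts with $\rho^\Ga_{IJ}$.
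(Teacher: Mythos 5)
Your overall outline matches the paper's strategy closely: (i) verify well-definedness of the morphism sets, closure under composition, nonsingularity, and that the realization is unchanged; (ii) invoke Lemma~\ref{le:locorient}(i)--(iii) for smoothness, openness, and orientations; (iii) invoke the collar structure and Lemma~\ref{le:locorient}(iv). However, there is a genuine gap in your argument for nonsingularity (source and target determine the morphism). You write that ``the relevant stabilizer computation via Lemma~\ref{le:vep}(iii) pins down $\ga_1\ga_2^{-1}$,'' but that lemma only says that $\rho^\Ga_{IJ}$ restricts to an isomorphism between the stabilizer of $y\in\TU_{IJ}$ in $\Ga_J$ and the stabilizer of $\rho_{IJ}(y)$ in $\Ga_I$; it does not say the latter stabilizer is trivial, and in general it is not. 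What you actually need is the following two-step argument: first, if $\ga_1\in\Ga_{I\less F_1}$ and $\ga_2\in\Ga_{I\less F_2}$ both give morphisms with the same source and target, then $y$ lies in $\TV_{F_1 J}\cap\TV_{F_2 J}$, so the intersection property of the reduction $\Vv$ forces $F_1,F_2$ to be nested, say $F_1\subset F_2$, hence $\ga_1^{-1}\ga_2\in\Ga_{I\less F_1}$; second, $\rho_{IJ}(y)\in\TV_{F_1 I}\subset\TU_{F_1 I}$ and $\Ga_{I\less F_1}$ acts \emph{freely} on $\TU_{F_1 I}$ (this is built into the definition of a group covering in Definition~\ref{def:change}), so $\ga_1^{-1}\ga_2$ fixing $\rho_{IJ}(y)$ forces $\ga_1=\ga_2$. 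Without the nesting step you cannot even place $\ga_1^{-1}\ga_2$ in a group known to act freely, so the claim does not follow from your argument as written.

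On the closure-under-composition point you correctly flag this as the main obstacle, but you stop short of filling it in. The paper resolves it by factoring every morphism in $\Hat\bZ^\nu$ into a composite $\mu^A\circ\mu^B=\mu^B\circ\mu^A$ of a ``type (A)'' morphism (an element of some $\Ga_{I\less F}$ with $I=J$) and a ``type (B)'' morphism (a $\bZ^\nu$-morphism with trivial group element), then checking each of the finitely many composition patterns $\mu^B\circ\mu^B$, $(\mu^B)^{-1}\circ\mu^B$, $\mu^B\circ(\mu^B)^{-1}$ etc., repeatedly using the reduction nesting property and the cocycle identities $\rho_{IK}=\rho_{IJ}\circ\rho_{JK}$. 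You should carry this out; in particular, the case $(\mu_1^B)^{-1}\circ\mu_2^B$ with $I\subset J$, $I\subset K$, $J\not\subset K$, $K\not\subset J$ does not arise (again by the reduction nesting property), and the surviving cases decompose as $\mu^A\circ\mu^B$ or $\mu^A\circ(\mu^B)^{-1}$. Parts (ii) and (iii) of your proposal are essentially correct and track the paper; the one minor imprecision is that $Z_J\cap\TV_{IJ}$ is open in $Z_J$ because $\TV_{IJ}$ is open in the $\dim U_I$-dimensional submanifold $\TU_{IJ}=s_J^{-1}(E_I)\cap V_J$ and the admissible transverse zero set inside $\TU_{IJ}$ has the same dimension $d$ as $Z_J$ itself (which is exactly the openness conclusion of Lemma~\ref{le:locorient}(iii)), rather than because $Z_J\cap\TV_{IJ}$ literally equals $(s_J+\nu_J)^{-1}(0)\cap s_J^{-1}(E_I)$.
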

\begin{proof} 
First note that there is at most one nonsingular groupoid with the same objects and realization as $\bZ^\nu$ since any such groupoid has a unique morphism  $(I,x)\mapsto (J,y)$ whenever $(I,x)\sim (J,y)$, where $\sim$ is the equivalence relation on $\Obj_{\bZ^\nu}$ generated by $\Mor_{\bZ^\nu}$.   
To prove existence of such a groupoid, we show below that when $I\subset J$
\begin{itemize}\item[(a)] 
 each element in $\Mor_{\Hat\bZ^\nu}(Z_I,Z_J)$  
is uniquely determined by its source and target;
\item[(b)] if there is 
a morphism $(I,J,y,\al)\in \Mor_{\Hat\bZ^\nu}(Z_I,Z_J) $ with source $(I,x)$ and target $(J,y)$
then $(I,x)\sim (J,y)$; and
\item[(c)]  the set of morphisms $\bigcup_{I\subset J} \Mor_{\Hat\bZ^\nu}(Z_I,Z_J) $ together with their inverses 
(which are uniquely defined by (a)) is closed under composition.
\end{itemize}
Parts (a) and (c)  show that there is a nonsingular groupoid $\Hat\bZ^\nu$ with the given morphisms. 
Moreover, since the equivalence relation $\sim$ is generated by the morphisms 
$(I,J,y,\id)\in \Mor_{\bZ^\nu}(Z_I,Z_J)\subset \Mor_{\Hat\bZ^\nu}(Z_I,Z_J)$, 
(c) shows that if $(I,x)\sim (J,y)$ where  $I\subset J$ then $\Mor_{\Hat\bZ^\nu}((I,x), (J,y))\ne \emptyset$.
Together with (b), this implies that  $\Hat\bZ^\nu$  has realization $|\bZ^\nu|$.

To prove (a) we must check that given 
$x\in U_I, y\in Z_J\cap \TV_{IJ}$ where $I\subset J$ there is at most  one element $\al \in \Ga_I$ such that
\begin{itemize}\item  $x = \al^{-1}\rho_{IJ}(y)$;
\item there is $F\subset I$ such that $\al\in \Ga_{I\less F}$ and $y\in \TV_{FJ}\cap \TV_{IJ} $.
\end{itemize}
But if $\al_1,\al_2$ are two such elements, corresponding to $F_1, F_2$, then $\al_1^{-1}\al_2$ fixes the point
$\rho_{IJ}(y)$. On the other hand, because  the set of $F$ such that $y\in \TV_{FJ}$ is nested, we can suppose that
 $F_1\subset F_2$.  Then $\rho_{IJ}(y)\in \TV_{F_1I}$ and $\al_1^{-1}\al_2\in \Ga_{I\less F_1}$. Since $\Ga_{I\less F_1}$  acts freely on $\TV_{F_1 I}$, this implies that $\al_1 = \al_2$ as required.

To prove (b), observe  that if $I\subset J$ and $\Mor_{\Hat\bZ^\nu}((I,x),(J,y))\ne \emptyset$ then 
 there is $F\subset I$ and $\al\in \Ga_{I\less F}$ such that $x = \al^{-1}\rho_{IJ}(y)$, which implies that
$$
\rho_{FI}(x) = \rho_{FI}( \al^{-1}\rho_{IJ}(y)) = \rho_{FI}(\rho_{IJ}(y)) = \rho_{FJ}(y).
$$
Hence, the composite
$ (F,I,x,\id )\circ (I,J,y,\al)$ is well defined and equal to $(F,J,y,\id)$.   Therefore $(F,\rho_{FI}(x))\sim (I,x)$ and 
$(F,\rho_{FI}(x))= (F, \rho_{FJ}(y) )\sim (J,y)$, which gives  $(I,x)\sim (J,y)$ since $\sim$ is an equivalence relation.

Finally, to prove (c), it is convenient to consider two special kinds of morphisms: morphisms denoted $\mu^A$ with $I=J$
and  morphisms denoted $\mu^B$ with  $I\subsetneq J$ and $\al = \id$ that therefore 
belong to $\Mor_{\bZ^\nu}$.
We first observe that every morphism $(I,J,y,\al)$ in $\Mor_{\Hat\bZ^\nu}(Z_I,Z_J)$ can be written in two ways 
as a composite 
of morphisms  of  types (A), (B).  More precisely,
the identity $\mu^A_1\circ \mu^B_1=\mu^B_2\circ \mu^A_2$ holds,
 where $\mu^A_i,\mu^B_j$ are the morphisms in the following 
 diagram:
\begin{equation}\label{eq:ABIJ}
\xymatrix{
\bigl(I,\al^{-1}\rho_{IJ}(y))\bigr) \ar@{->}[d]^{\mu_1^A} \ar@{->}[r]^{\qquad\;\mu_2^B}  & (J,\al^{-1}y)\ar@{->}[d]^{\mu_2^A}\\
\bigl(I, \rho_{IJ} (y)\bigr)\ar@{->}[r]^{\;\;\;\;\mu_1^B} & \bigl(J, y\bigr).
}
\end{equation}
Therefore these morphisms $\mu^A, \mu^B$ and their inverses generate $\Mor_{\Hat\bZ^\nu}$.  The commutativity of the above diagram
 also shows that we can interchange their order: i.e.\ every morphism of the form
$\mu^A_1\circ \mu^B_1$ can also be written as  $\mu^B_2\circ \mu^A_2$, which we abbreviate below
as the identity $\mu^A_1\circ \mu^B_1=\mu^B_2\circ \mu^A_2$.

Next let us consider the other composites.  
Morphisms of type (A)  with fixed 
 $F\subset  I$ are closed under composition since  they are given by the action of  $\Ga_{I\less F}$. 
 Moreover, two morphisms of this type corresponding to different subgroups $F_1,F_2$ can be composed only if 
 the  sets $\pi_\Kk(V_I), \pi_\Kk(V_{F_1}),\pi_\Kk(V_{F_2})$ intersect.  Hence the sets $F_1,F_2$ are nested, either 
 $F_1\subset F_2$ or $F_2\subset F_1$, and in either case the composite is another morphism of this type. 
 The situation for morphisms of type (B) is more complicated 
 (which is precisely why we needed to add the morphisms of type (A) to obtain a groupoid).  We have:
 \begin{itemlist}
 \item $\mu_1^B\circ \mu_2^B = \mu_3^B$: i.e.\ if $I\subset J\subset K$ and $y = \rho_{JK}(z)$ then the following identity holds (this statement includes 
 the claim that the left hand composite is well defined):
 $$
(I,J,y,\id)\circ (J,K,z,\id) = (I,K,z,\id)\ \in\  \Mor_{\Hat\bZ^\nu}\bigl((I,\rho_{IK}(z)),\ (K,z)\bigr);
 $$
 \item  $(\mu_1^B)^{-1}\circ \mu_2^B =  
\mu^A \circ \mu_3^B$ or $=\mu^A \circ (\mu_3^B)^{-1}$: 
 \begin{itemize}\item[-] if $I\subset J\subset K$ and $\rho_{IJ}(y') = \rho_{IK}(y) = \rho_{IJ}\circ\rho_{JK}(y)$ ,
 then  $\rho_{JK}(y)$ and $y'$ lie in the same $\Ga_{J\less I}$-orbit
so that   $y' =\al^{-1} \rho_{JK}(y)$ for some $\al\in \Ga_{J\less I}$ and
 we have
\begin{align*}
(I,J,y',\id)^{-1}\circ (I,K,y,\id) & =(J,J,\rho_{JK}(y),\al)\circ (J,K,y,\id) \\
&\qquad \qquad  \in\  \Mor_{\Hat\bZ^\nu}\bigl((J,\al^{-1}\rho_{JK}(y)),\ (K,y)\bigr);
\end{align*}
\item[-] if $I\subset K\subset J$ and there are $y'\in \TV_{IJ}\cap Z_J,\;  y\in \TV_{IK}\cap Z_K$ with
$$
\rho_{IJ}(y') = \rho_{IK}(\rho_{KJ}(y')) = \rho_{IK}(y)\in Z_I, 
$$
 then
there is $\be \in \Ga_{K\less I}$ such that
$y = \be\, \rho_{KJ}( y') =  \rho_{KJ}(\be y') \in Z_K$, and we have
\begin{align*}
 (I,J,y',\id)^{-1}\circ (I,K,y,\id) \; & =\; (J,J,\be\,y',\be)\circ (K,J,\be\, y',\id)^{-1}\\
 &\qquad \qquad  \in\  \Mor_{\Hat\bZ^\nu}\bigl( (J, y'), (K,y) \bigr).
\end{align*}
\item 
One can check similarly that if $\mu_1^B = (I,J,y,\id)$ and  $\mu_2^B = (K,J,y,\id)$ 
then $\mu_1^B\circ (\mu_2^B)^{-1} = \mu^A \circ \mu_3^B$ if $I\subset K$ and 
$=  \mu^A \circ (\mu_3^B)^{-1}$ if $K\subset I$.
\end{itemize}
\end{itemlist}
Combining these identities  with
 $\mu^A_1\circ \mu^B_1=\mu^B_2\circ \mu^A_2$ and its inverse, we see that if $I\subset J$ every composite
 morphism $Z_I\to Z_J\to Z_K$ can be written
in the form $\mu^B\circ \mu^A$ if $I\subset K$, and in the form $\mu_1^A\circ (\mu_1^B)^{-1} = 
(\mu_2^B)^{-1}\circ \mu_2^A$ if $K\subset I$.
This completes the proof of (c) and hence of part (i) of the  lemma.

The claims in (ii) are proved by applying Lemma~\ref{le:locorient} with $f:W\to E_I$ given by $s_I + \nu_I:V_I\to E_I$.
 Since $s_I + \nu_I\pitchfork 0$, Lemma~\ref{le:locorient}~(i)   shows that 
 $Z_I$ is a manifold, while the admissibility of $\nu$  implies that the hypothesis of 
 Lemma~\ref{le:locorient}~(iii)  holds on $\TV_{HI}$ so that 
 the subset $Z_I\cap \TV_{HI}$ of $Z_I$ is open and
 $\rho_{HI}$ induces  a local  diffeomorphism
 from  $Z_I\cap \TV_{HI}$  to $Z_H\cap \rho_{HI}(\TV_{HI})$.  Further, by  the compatibility condition \eqref{eq:compatc}  we can identify with the zero set of $\rho_{HI}*(s_I + \nu_I) = s_H+ \rho_{HI}*(s_I)$.  Since the maps $\rho_{IJ}$ together with their inverses generate the structure maps in $\Hat\bZ^\nu$, this shows that this groupoid is \'etale. Moreover, if $\Kk$ is oriented, then  Lemma~\ref{le:locorient}~(ii),(iii) also implies that the structure maps in $\Hat\bZ^\nu$ are orientation preserving.
 
 Finally, (iii) holds by Lemma~\ref{le:locorient}~(iv). 
\end{proof}

In order to show that 
$\Hat \bZ^\nu$ represents a weighted branched manifold, we must understand its maximal Hausdorff quotient $|\Hat\bZ^\nu|_\Hh$ as defined in Lemma~\ref{le:Hquotient}.
 The morphisms in a nonsingular groupoid $\bG$ correspond bijectively to the equivalence relation $\sim_{\bG}$ on $\Obj_{\bG}$ where $x\sim_{\bG} y$ if and only if $\Mor_{\bG}(x,y)\ne \emptyset$.  A necessary condition for the quotient $|\bG|: = \qu{\Obj_{\bG}}{\sim_{\bG}}$ to be Hausdorff is that this equivalence relation is given by a closed subset of $\Obj_{\bG}\times \Obj_{\bG}$, in other words, we need the map
 $s\times t: \Mor_{\bG}\to  \Obj_{\bG}\times \Obj_{\bG}$ that takes a morphism to its source and target to have closed image.  The following lemma  shows that in the special case of the groupoid $\Hat \bZ^\nu$ this necessary condition is also sufficient.

 \begin{lemma}\label{le:zero2} Let  $\nu$ be an admissible, transverse, (cobordism) perturbation of a tame Kuranishi    
 atlas/cobordism $\Kk$. 
\begin{enumerate} 
\item Let $\Hat \bZ^\nu_\Hh$ be the  groupoid obtained from $\Hat\bZ^\nu$ by closing the relation 
 $\sim$ on $\Obj_{\Hat\bZ^\nu}$.
 Then $\Hat \bZ^\nu_\Hh$ is nonsingular, and  $|\Hat \bZ^\nu_\Hh|$ is Hausdorff.  Further, we can identify $|\Hat \bZ^\nu_\Hh|$ with 
 the maximal Hausdorff quotient $|\Hat\bZ^\nu|_\Hh$ in such a way that the canonical quotient map  $|\Hat \bZ^\nu|\to  |\Hat \bZ^\nu|_\Hh= |\Hat \bZ^\nu_\Hh|$ is induced by the functor $\io_H: \Hat\bZ^\nu\to \Hat \bZ^\nu_\Hh$.   
 \item 
For each $I\in \Ii_\Kk$, the projection $\pi_{\Hat \bZ^\nu_\Hh}: \Obj_{\Hat \bZ^\nu_\Hh}\to |\Hat \bZ^\nu_\Hh|$ takes $Z_{I}$  onto a subset of $ |\Hat \bZ^\nu_\Hh|$ that is open with respect to the quotient topology.
 Moreover this topology on $ |\Hat \bZ^\nu_\Hh|$ is metrizable.
 \item
If  $x \in Z_I$ and $p = \pi_{\Hat \bZ^\nu_\Hh}(x)\in |\Hat \bZ^\nu_\Hh|$ 
then $\bigl\{ x'\in Z_I \ | \ \pi_{\Hat \bZ^\nu_\Hh}(I,x') = p  \bigr\}$ is the $(\Ga_{I\less F_x})$-orbit of $x$ so that
$$
\# \bigl\{ x\in Z_I \ | \ \pi_{\Hat \bZ^\nu_\Hh}(I,x) = p  \bigr\} =  |\Ga_{I\less F_x}|,
$$
where $F_x = \min \bigl\{F:  Z_I\cap cl(\TV_{FI})\cap \pi_{\Hat \bZ^\nu_\Hh}^{-1}(p) \ne \emptyset\bigr\}=\min\bigl\{F: p\in \ov{\pi_{\Hat \bZ^\nu_\Hh}(Z_{F})}\bigr\}$.
 \end{enumerate}
  \end{lemma}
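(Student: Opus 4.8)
\textbf{Proof proposal for Lemma~\ref{le:zero2}.}

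The plan is to prove the three parts in order, building on the structure of $\Hat\bZ^\nu$ established in Lemma~\ref{le:zero1} and the general facts about maximal Hausdorff quotients of étale groupoids collected in Appendix~\ref{ss:br} (in particular Lemma~\ref{le:Hquotient}).

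For part (i), I would first observe that the relation $\sim$ on $\Obj_{\Hat\bZ^\nu}=\bigsqcup_I Z_I$ is an equivalence relation (it is the restriction of the equivalence relation generated by the morphisms of $\bB_\Kk$), so its closure $\ov\sim$ in $\Obj_{\Hat\bZ^\nu}\times\Obj_{\Hat\bZ^\nu}$ is again an equivalence relation once we check that closing it does not destroy transitivity; here the key input is precompactness of the perturbation, which via \eqref{eq:zeroVCC} confines the relevant pieces of $Z_J$ to the precompact sets $\rho_{JH}(C_H)$ and $\rho_{HJ}^{-1}(C_H)$, so that limits of equivalent points stay in controlled precompact regions and the usual diagonal argument closes up. The resulting groupoid $\Hat\bZ^\nu_\Hh$ is then nonsingular because $\ov\sim$ is still a relation (at most one morphism per ordered pair of objects), and $|\Hat\bZ^\nu_\Hh|=\Obj/\ov\sim$ is Hausdorff exactly because $\ov\sim$ is closed and $\Obj_{\Hat\bZ^\nu}$ is locally compact (the object space is a disjoint union of open subsets of the manifolds $V_I$); the identification with the maximal Hausdorff quotient $|\Hat\bZ^\nu|_\Hh$ and the fact that $\io_H$ induces the canonical map then follow from the universal property in Lemma~\ref{le:Hquotient}.

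For part (ii), openness of $\pi_{\Hat\bZ^\nu_\Hh}(Z_I)$: I would show that $\pi_{\Hat\bZ^\nu_\Hh}^{-1}(\pi_{\Hat\bZ^\nu_\Hh}(Z_I))$ is open in $\Obj_{\Hat\bZ^\nu_\Hh}$ by noting that a point $y\in Z_J$ is $\ov\sim$-equivalent to a point of $Z_I$ iff it lies in one of the sets $Z_J\cap\TV_{IJ}$ (if $I\subset J$) or $\rho_{JI}(Z_J)$ meets $Z_I$ appropriately (if $J\subset I$), together with their closures forced in by $\ov\sim$; by Lemma~\ref{le:zero1}(ii) the sets $Z_J\cap\TV_{IJ}$ are open, and the precompactness again ensures the closure operation only adds points from the same open strata, so the preimage is open and hence $\pi_{\Hat\bZ^\nu_\Hh}(Z_I)$ is open. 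Metrizability of $|\Hat\bZ^\nu_\Hh|$ would be deduced from the fact that it is a compact (by the precompactness/boundedness, to be made precise in the proof of Theorem~\ref{thm:zero}), Hausdorff quotient of a separable, locally compact, metrizable space — concretely, one can push forward an admissible metric on the relevant part of $|\uKk|$, or invoke the general topological criterion (a Hausdorff quotient of a separable metric space by a closed equivalence relation with compact equivalence classes is metrizable) as already used in Lemma~\ref{le:vep}(i).

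For part (iii), which I expect to be the main obstacle, I would fix $x\in Z_I$ with $p=\pi_{\Hat\bZ^\nu_\Hh}(x)$ and analyze the fiber $\{x'\in Z_I\mid\pi_{\Hat\bZ^\nu_\Hh}(I,x')=p\}$. The upshot of Lemma~\ref{le:zero1} is that any morphism in $\Hat\bZ^\nu$ with source and target both in $Z_I$ is of "type (A)", i.e.\ given by the action of $\Ga_{I\less F}$ on $Z_I\cap\TV_{FI}$ for some nested $F\subset I$; passing to $\Hat\bZ^\nu_\Hh$ only adds limit points but — here precompactness is again essential — two points of $Z_I$ that become identified in the Hausdorff quotient must already be related by such a morphism (the closure does not merge distinct $\Ga$-orbits within a single chart, since those are separated by the metric of part (ii)). Among the nested sets $F$ with $p\in\ov{\pi_{\Hat\bZ^\nu_\Hh}(Z_F)}$ there is a minimal one $F_x$, and by the nesting property (Definition~\ref{def:vicin}(iii)) this minimum is well defined; the fiber over $p$ is then precisely the $\Ga_{I\less F_x}$-orbit of $x$, which is free (since $\Ga_{I\less F_x}$ acts freely on $\TV_{F_x I}$ by the coordinate-change axiom), giving cardinality $|\Ga_{I\less F_x}|$. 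The delicate point to get right is the equivalence of the two descriptions of $F_x$, namely $\min\{F:Z_I\cap\cl(\TV_{FI})\cap\pi_{\Hat\bZ^\nu_\Hh}^{-1}(p)\neq\emptyset\}=\min\{F:p\in\ov{\pi_{\Hat\bZ^\nu_\Hh}(Z_F)}\}$; this I would prove by chasing the defining relation $\ov\sim$ through the identification of $Z_I\cap\TV_{FI}$ with (an open subset of) $Z_F$ under $\rho_{FI}$ established in Lemma~\ref{le:zero1}(ii), using continuity of $\rho_{FI}$ and $\pi_{\Hat\bZ^\nu_\Hh}$ to pass between closures, and using minimality together with the nesting property to rule out the incomparable cases.
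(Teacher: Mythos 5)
Your overall skeleton (closed graph for type-B morphisms, explicit closure for type-A morphisms, nesting to identify $F_x$) is roughly aligned with the paper, but two ingredients you invoke are not the ones actually carrying the argument, and one of them creates a circularity.

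First, you repeatedly attribute the key closure and separation properties to \emph{precompactness of the perturbation} $\nu$ via \eqref{eq:zeroVCC}. But Lemma~\ref{le:zero2} does not assume $\nu$ precompact — only admissible and transverse — and \eqref{eq:zeroVCC} controls where the zero set sits relative to $\Cc$, not the closure of the morphism relation. What the paper actually uses is precompactness of the \emph{reduction domains} $V_I\sqsubset U_I$ (Definition~\ref{def:vicin}(ii)), which gives $\cl(\TV_{FI})\subset \TU_{FI}$ so that $\Ga_{I\less F}$ still acts freely on the closure, and gives the nesting property for the closures $\pi_\Kk(\ov{V_F})$. Concretely, the paper writes out $\ov{\Mor_{\Hat\bZ^\nu}(Z_I,Z_I)} = \bigcup_{F\subsetneq I}\bigl(\cl(\TV_{FI})\cap Z_I\bigr)\times\Ga_{I\less F}$, checks this set is invariant under composition because the reduction nesting property persists under closure, and similarly describes $\Mor_{\Hat\bZ^\nu_\Hh}(Z_I,Z_J)$ for $I\subset J$; from this one reads off nonsingularity and the free orbit count in (iii) directly, rather than through a ``diagonal argument'' whose transitivity step you left unexamined. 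Your plan could perhaps be completed, but the mechanism you named is not the one that does the work, and without the explicit closure formula the nonsingularity and part (iii) become harder to pin down.

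Second, your plan for metrizability — ``deduced from the fact that it is compact (by the precompactness/boundedness, to be made precise in the proof of Theorem~\ref{thm:zero})'' — is circular: compactness of $|\Hat\bZ^\nu_\Hh|$ is established in Theorem~\ref{thm:zero}, which uses Lemma~\ref{le:zero2}~(iii) (through the weighting function and local branches). Even setting that aside, compactness is genuinely unavailable here since precompactness of $\nu$ is not a hypothesis. The paper instead observes that once the $\pi_{\Hat\bZ^\nu_\Hh}(Z_I)$ are open, $|\Hat\bZ^\nu_\Hh|$ is second countable (each $Z_I$ is) and regular (from local compactness and regularity of the $Z_I$ plus openness of their images), and then applies the Urysohn metrization theorem — no compactness required, no metric pushed forward from $|\uKk|$. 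That is the route you should take.

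Finally, in (ii) you gesture at ``closures forced in by $\ov\sim$'' possibly enlarging the preimage of $\pi_{\Hat\bZ^\nu_\Hh}(Z_I)$; in fact the closure of the relation does not change these preimages at all. For $I\subsetneq J$ the closure formula shows the target still lies in $\TV_{IJ}\cap Z_J$, so $Z_J\cap\pi_{\Hat\bZ^\nu_\Hh}^{-1}(\pi_{\Hat\bZ^\nu_\Hh}(Z_I)) = Z_J\cap\TV_{IJ}$ exactly (open by Lemma~\ref{le:zero1}), and similarly $= Z_J\cap\rho_{JI}(\TV_{JI})$ for $J\subset I$. This makes openness a one-line consequence of the closure formula, not something requiring a separate precompactness argument.
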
 
\begin{proof}  We use the notation in  Lemma~\ref{le:zero1}. The components of $\Mor_{\Hat \bZ^\nu }(Z_{I}, Z_{J})$ consisting of morphisms of type (B)    are taken by 
$s\times t:  \Mor_{\Hat \bZ^\nu}(Z_{I}, Z_{J}) \to  Z_{I}\times Z_{J}\subset \Obj_{\Hat \bZ^\nu}\times \Obj_{\Hat \bZ^\nu}$ to the set of pairs 
$$
\bigl \{(\rho_{IJ}(y), y):y\in Z_{J}\cap \TV_{IJ}\cap \pi_\Kk^{-1}(V_I)\bigr\}
\subset Z_{I}\times Z_{J},
$$
where we simplify notation by writing $y$ instead of $(J,y)$, and similarly for the source.
If $(\rho_{IJ}(y_n), y_n) \to (x_\infty, y_\infty)\in Z_{I}\times Z_{J}$ 
is a convergent sequence of such points with limit 
$ (x_\infty, y_\infty)\in  Z_{I}\times Z_{J}$, then 
 $y_\infty\in  Z_J\cap \TU_{IJ}$ 
 since $y_n\in Z_J\cap \TV_{IJ}\subset 
 \TU_{IJ}$  and $ \TU_{IJ}$ is closed in $U_J$, 
which implies that $\rho_{IJ}(y_\infty)$ is defined.  We then must have $x_\infty =\rho_{IJ}(y_\infty)$ by the 
continuity of $\rho_{IJ}$.
Thus   $y_\infty\in \rho_{IJ}^{-1}(Z_I)\cap Z_J \subset \rho_{IJ}^{-1}(V_I)\cap V_J =\TV_{IJ} $.
Hence $y_\infty \in Z_J\cap \TV_{IJ}$, so that  $(I,J,y_\infty, \id)\in \Mor_{\Hat \bZ^\nu}(Z_{I}, Z_{J})$.
Therefore the graph of this set of morphisms is  closed in $Z_I\times Z_J$.

However the set of morphisms of type (A) from $Z_{I}\to Z_{I}$ is not closed in general; instead it has closure\footnote
 {
 While we usually denote the closure of a set $A$ by $\ov A$, for sets such as $\TV_{IJ}$ that involve a tilde we will write $cl(\TV_{IJ})$.}
$$
\ov{\Mor_{\Hat \bZ^\nu}(Z_I,Z_I)}: = \bigcup_{F\subsetneq I} \bigl\{(I,I,y,\al): , y\in \cl(\TV_{FI})\cap  Z_{I}, \al\in \Ga_{I\less F} \bigr\}.
$$
Notice that,  as in the proof of 
 Lemma~\ref{le:zero1}~(i), this set $\ov{\Mor_{\Hat \bZ^\nu}(Z_I,Z_I)}$
 is invariant under compositions (and inverses) because  
the intersection properties of the sets in a reduction apply to their closures: 
$\pi_\Kk(\ov{V_{F_1}})\cap \pi_\Kk(\ov{V_{F_2}})\ne \emptyset $ $ \Longrightarrow {F_1}\subset {F_2} \mbox{ or } {F_2}\subset
{F_1}$.
Next, observe that because 
 $\rho_{IJ}: \TU_{IJ}\to U_{IJ}$ is a local diffeomorphism, the map
 $\rho_{IJ}$ induces a local diffeomorphism  from
 $\TV_{IJ}\cap cl(\TV_{FJ}) \cap Z_{J}$ into $ U_{IJ}\cap cl(\TV_{FI})\cap Z_{I}$.  
Similarly, because $\cl(\TV_{FJ})\subset \TU_{FJ}$ whenever $F\subset J$, the group $\Ga_{I\less F}$ acts freely on
$ \cl(\TV_{FJ})$, and, if $F\subset I$, commutes with action of $\rho_{IJ}$ as in diagram \eqref{eq:ABIJ}.
 Therefore the closure of $\Mor_{\Hat \bZ^\nu}(Z_{I}, Z_{J})$ when $I\subset J$  is given as follows:
 \begin{align}\label{eq:morZ3}
&\quad \Mor_{\Hat \bZ^\nu_\Hh}(Z_{I}, Z_{J}) = \bigcup_{F\subset I} \bigl(Z_{J}\cap \TV_{IJ}\cap
cl(\TV_{FJ})\bigr)\times \Ga_{I\less F}, \\ \notag
&\qquad = \bigl\{(I,J,y,\al) \ \big| \ \exists F\subset I, \ \al \in \Ga_{I\less F}\ s.t.\  y\in cl(\TV_{FJ})\cap \TV_{IJ} \cap Z_{J}\bigr\}.
\end{align}
The  arguments in Lemma~\ref{le:zero2} apply to show that this set of morphisms, together with inverses, are closed under composition and
 are uniquely determined by their source and target.
 Thus $\Hat\bZ^\nu_\Hh$ is a nonsingular groupoid.  Its  realization $|\Hat\bZ^\nu_\Hh|$ is Hausdorff because 
it is the quotient of the separable, locally compact metric space 
$\Obj_{\Hat\bZ^\nu_\Hh}$ by a relation with closed graph: cf.  \cite[Ch~I,\S10,~Ex.~19]{Bourb} or \cite [Lemma~3.2.4]{MW1}.  Moreover, the space $|\Hat\bZ^\nu_\Hh|$ can be identified with the maximal Hausdorff quotient of
$|\Hat\bZ^\nu|$ because any continuous  map from $\qu{\Obj_{\Hat\bZ^\nu}}{\sim}$ to a Hausdorff space $Y$ must factor through the closure of the relation
$\sim$ induced by the morphisms in $\Hat\bZ^\nu$, and hence descends to $|\Hat\bZ^\nu_\Hh|$.
This proves (i).  
\smallskip
 
   To see that $\pi_{\Hat\bZ^\nu_\Hh}(Z_I)$ is open in $|\Hat\bZ^\nu_\Hh|$ we must show that 
  each intersection $Z_J\cap \pi_{\Hat\bZ^\nu_\Hh}^{-1}\bigl(\pi_{\Hat\bZ^\nu_\Hh}(Z_I)\bigr)$ is open.  Since 
  $\pi_{\Hat\bZ^\nu_\Hh}(Z_I)\cap \pi_{\Hat\bZ^\nu_\Hh}(Z_J) \ne \emptyset$ only if $I\subset J$ or $J\subset I$, it suffices to consider these two cases.  Now
 $Z_J\cap \pi_{\Hat\bZ^\nu_\Hh}^{-1}\bigl(\pi_{\Hat\bZ^\nu_\Hh}(Z_I)\bigr)$  consists of all elements in $Z_J$ that are targets of morphisms with source in $Z_I$.  Therefore
 if $I\subsetneq J$  then  $Z_J\cap \pi_{\Hat\bZ^\nu_\Hh}^{-1}\bigl(\pi_{\Hat\bZ^\nu_\Hh}(Z_I)\bigr) = Z_J\cap \TV_{IJ}$ which is open by Lemma~\ref{le:zero1}~(i).
  On the other hand, if $J\subset I$ then  because the set $  \rho_{JI}(\TV_{JI})$ is $\Ga_J$ invariant, we have
  $$
  Z_J\cap \pi_{\Hat\bZ^\nu_\Hh}^{-1}\bigl(\pi_{\Hat\bZ^\nu_\Hh}(Z_I)\bigr) =
  Z_J\cap 
  \rho_{JI}(\TV_{JI})
  $$
  which is open by Lemma~\ref{le:zero1}~(ii).  
  Thus $\pi_{\Hat\bZ^\nu_\Hh}(Z_I)$ is open. It follows that   the quotient topology on $|\Hat\bZ^\nu_\Hh|$
 has a countable basis because each $Z_I$ does. 
  Moreover  $|\Hat\bZ^\nu_\Hh|$ is regular.  Indeed, by \cite[Lemma~31.1]{Mun}, we need check only that  each point $p\in 
  |\Hat\bZ^\nu_\Hh|$ with neighbourhood $W\subset |\Hat\bZ^\nu_\Hh|$ has a smaller
 neighbourhood $W_1\subset W$ such that $\ov{W_1}\subset W$, and this is an immediate consequence
 of the regularity and local compactness of the sets $Z_I$ and the openness of the sets $\pi_{\Hat\bZ^\nu_\Hh}(Z_I)$.
 Hence $|\Hat\bZ^\nu_\Hh|$ is metrizable by the Urysohn metrization theorem.
 This proves (ii).
  
To prove (iii), note first that for each $x\in Z_{I}$ the  subsets $F\in \Ii_\Kk $ such that $x\in \cl(\TV_{FI})$ are nested and hence have a minimal element $F_x$.  The precompactness of  $V_I$  in $U_I$ implies that $x\in \cl(\TV_{FI})\subset  \TU_{F_xI}$ so that its orbit under $\Ga_{I\less F_x}$ is free.  Moreover, because $F_x\subset F$ for every $F$ 
  for which $x\in \cl(\TV_{FI})$, this orbit $\Ga_{I\less F_x}(x)$ contains  the targets of all the morphisms in
  $\Mor_{\Hat\bZ^\nu_\Hh}$ with source $(I,x)$.
  This proves the formula  $ |\Ga_{I\less F_x}|=$ $
  \# \bigl\{ x\in Z_I \ | \ \pi_{\Hat\bZ^\nu_\Hh}(I,x) = p  \bigr\}$.
  
   It remains to check that  $F_x$, which we defined as
$  \min \bigl\{F:  Z_I\cap cl(\TV_{FI})\cap \pi_{\Hat\bZ^\nu_\Hh}^{-1}(p) \ne \emptyset\bigr\}$, also equals $F_x': = \min\bigl\{F: p\in \ov{\pi_{\Hat\bZ^\nu_\Hh}(Z_{F})}\bigr\}$.
But if $ Z_I\cap cl(\TV_{FI})\cap \pi_{\Hat\bZ^\nu_\Hh}^{-1}(p) \ne \emptyset$ there is a sequence of elements $x_k\in Z_I\cap \TV_{FI}$ with limit $x_\infty\in
\pi_{\Hat\bZ^\nu_\Hh}^{-1}(p)$, which implies by the continuity of $\pi_{\Hat\bZ^\nu_\Hh}$ that, with $x_k': = \rho_{FI}(x_k)$,
the sequence $\pi_{\Hat\bZ^\nu_\Hh}(x_k') = \pi_{\Hat\bZ^\nu_\Hh}(x_k)$ converges to $p$.  Hence $p\in \ov{\pi_{\Hat\bZ^\nu_\Hh}(Z_{F})}$ which implies $F_x'\subset F_x$. Conversely,
if  $p\in \pi_{\Hat\bZ^\nu_\Hh}(Z_{I})\cap \ov{\pi_{\Hat\bZ^\nu_\Hh}(Z_{F})}$ then because $\pi_{\Hat\bZ^\nu_\Hh}(Z_{I})$ is open in $|\Hat\bZ^\nu_\Hh|$ there is a sequence $p_k$ of elements in $\pi_{\Hat\bZ^\nu_\Hh}(Z_{I})\cap \pi_{\Hat\bZ^\nu_\Hh}(Z_{F})$ that converges to 
$p\in \pi_{\Hat\bZ^\nu_\Hh}(Z_{I})$.     By \eqref{eq:HIJ},
this lifts to a sequence $x_k\in \TV_{FI}\subset Z_I$  whose images $\pi_\Kk(\io_{\Hat\bZ^\nu_\Hh}(x_k))$ in $|\Vv|\subset |\Kk|$ converqe to $|\io_{\Hat\bZ^\nu_\Hh}| (p)$,
where $\io_{\Hat\bZ^\nu_\Hh}$ is as in (ii). 
But the composite $\pi_\Kk\circ \io_{\Hat\bZ^\nu_\Hh} : V_I\to \pi_{\Kk}(V_I) \cong\qu{V_I}{\Ga_I}$ simply quotients out by the action of $\Ga_I$ on $V_I$.
Since the projection $V\to\qu{V_I}{\Ga_I}$ is proper by Lemma~\ref{le:vep}~(i), the
 sequence $(x_k)$ must have a convergent subsequence with limit $x_\infty\in V_I$. But then $\pi_{\Hat\bZ^\nu_\Hh}(x_\infty) = \lim_{k\to \infty}  \pi_{\Hat\bZ^\nu_\Hh}(x_k) = p$ by the uniqueness of limits in the Hausdorff space $|\Hat\bZ^\nu_\Hh|$.
 Therefore $x_\infty\in \cl(\TV_{FI})\cap  \pi_{\Hat\bZ^\nu_\Hh}^{-1}(p)$.  Hence by the minimality of $F_x$ we must have $F_x\subset F_x'$.
   This completes the proof.
\end{proof}

\begin{proof}[Proof of Theorem~\ref{thm:zero}]  Let us first consider the case when $\nu$ is an admissible, transverse, precompact  perturbation of 
an oriented tame Kuranishi  atlas $\Kk$ with respect to nested reductions $\Cc\sqsubset \Vv\sqsubset\Obj_{\Bb_\Kk}$. 
Then Lemma~\ref{le:zero1} shows that $\bZ^\nu$ can be completed to an oriented, nonsingular  \'etale groupoid 
$\Hat\bZ^\nu$.  Moreover, by Lemma~\ref{le:zero2} the maximal Hausdorff quotient $|\Hat\bZ^\nu|_\Hh$
can be identified with  the realization $|\Hat\bZ^\nu_\Hh|$ of the groupoid $\Hat\bZ^\nu_\Hh$.  To complete the proof of the first part of the theorem it remains to show that $|\Hat\bZ^\nu_\Hh|$ is compact, and that $(\Hat\bZ^\nu, \La^\nu)$ has the structure of a wnb groupoid  as in Definition~\ref{def:brorb}.

Because $|\Hat\bZ^\nu_\Hh|$ is metrizable by~Lemma\ref{le:zero1}~(ii), it suffices to prove that $|\Hat\bZ^\nu_\Hh|$ is sequentially compact.  Further we saw in  \eqref{eq:zeroVCC} that the precompactness condition for $\nu$ can be written 
without explicit mention of the isotropy groups $\Ga_I$.  Hence the proof of the 
sequential compactness of the zero set given in 
\cite[Theorem~5.2.2]{MW1} carries through without change to the current situation.

We next check that the weighting function $\La^\nu$ is well defined, and compatible with a local branching structure as required by Definition~\ref{def:brorb}.  To see that it is well defined, suppose that 
$p\in \pi_{\Hat\bZ^\nu_\Hh}(Z_I)\cap \pi_{\Hat\bZ^\nu_\Hh}(Z_J)$. As usual we may suppose that
 $I\subset J$, so that $p =  \pi_{\Hat\bZ^\nu_\Hh}(y)$ for some $y\in \TV_{IJ}\subset Z_J$.   If $F_p$ is the minimal set $F$ such that $p\in \ov{\pi_{\Hat\bZ^\nu_\Hh}(Z_{F})}$, then
  there are $|\Ga_{J\less F_p}|$ distinct elements in $Z_J$ that map to $p$.
 Hence $\La(p) = \frac{|\Ga_{J\less F_p}|}{|\Ga_J|}$, and we must check that this agrees with the calculation provided by
 replacing $J$ by $I$.  But if $x=\rho_{IJ}(y)$ then 
 because $F_p\subset I$ does not depend on $I,J$, we have
$\Ga_{J\less F_p} = \Ga_{I\less F_p} \times \Ga_{J\less I}$.  Hence $ \frac{|\Ga_{J\less F_p}|}{|\Ga_J|} =  \frac{|\Ga_{I\less F_p}|}{|\Ga_I|}$.
Thus $\La^\nu$ is well defined.

Finally we describe the local branches at $p\in  |\Hat\bZ^\nu_\Hh|$.  Given $p\in |\Hat\bZ^\nu_\Hh|$ choose a minimal $I$ such that $p\in \pi_{\Hat\bZ^\nu_\Hh}(Z_I)$ and a minimal $F_p\subset I$ such that $p\in \ov{\pi_{\Hat\bZ^\nu_\Hh}(Z_{F_p})}$.  Then $F_p\subset I$, and there is $x\in Z_I\cap \cl(\TV_{F_p I})$ such that $p= \pi_{\Hat\bZ^\nu_\Hh}(x)$.
Because $\pi_{\Hat\bZ^\nu_\Hh}(Z_I)$ is open in  $|\Hat\bZ^\nu_\Hh|$, we may choose an open neighbourhood $N\subset \pi_{\Hat\bZ^\nu_\Hh}(Z_I)$ of $p$ whose closure 
$\ov N$  is 
disjoint from all sets $\ov{\pi_{\Hat\bZ^\nu_\Hh}(Z_{F})}$ with $F\subsetneq F_p$. 
We saw in Lemma~\ref{le:zero2}~(iii) that $Z_I\cap (\pi_{\Hat\bZ^\nu_\Hh})^{-1}(p) = \Ga_{I\less F_p}(x)$.  
Hence,  by shrinking $N$ further if necessary, we may suppose that
there is an precompact  open neighbourhood  $B_x$ of $x$ in $Z_I$ such that 
\begin{itemize}\item $\bigcup_{\ga\in \Ga_{I\less F_p}}  \pi_{\Hat\bZ^\nu_\Hh}(\ga B_x) = N$; 
\item 
the closure $\ov{B_x}$ in $Z_J$ is disjoint from its images under 
the action of $\Ga_{I\less F_p}$.
\end{itemize}
 Then choose the local branches to be the disjoint subsets $\bigl(\ga B_x\bigr)_{\ga\in \Ga_{I\less F_p}}$ of $Z_I$, each with weight
$\frac{1}{|\Ga_I|}$. 
Notice that
\begin{equation}\label{eq:gaBx}
\bigcup_{\ga\in \Ga_{I\less F_p}} \ga \ov{B_x} = Z_I\cap \pi_{\Hat\bZ^\nu_\Hh}^{-1}(\ov{N})\quad\mbox { and } \quad
\bigcup_{\ga\in \Ga_I\less F_p} \ga {B_x} = Z_I\cap \pi_{\Hat\bZ^\nu}^{-1}(N).
\end{equation}
Here,  the first claim  holds because, by the minimality of $F_p$ and the choice of $B_x$,
 $$
 \ov{B_x}\cap \cl(\TV_{FI})\ne \emptyset \;\; \Longrightarrow \;\; F_p\subset F,
 $$
 so that the only morphisms in $\Hat\bZ^\nu_\Hh$ with source in $\bigcup_{\ga\in \Ga_{I\less F_p}} \ga \ov{B_x}$ and target in $Z_I$ 
 are given by the action of an element in $\Ga_{I\less F_p}$ and hence also have target in this set.  
 The second claim holds similarly.

We must check that the three conditions in Definition~\ref{def:brorb} hold.

\begin{itemlist}\item 
The covering property states that
 $(\pi_{|\Hat\bZ^\nu|}^\Hh)^{-1}(N) = \bigcup_{\ga\in \Ga_{I\less F_p}} |\ga B_x|\subset |\bZ|$.
 If  this were false 
  there would be a point $y \in Z_J$ for some $J$ such that there is a morphism in 
  $\Hat\bZ^\nu_\Hh$ from $(J,y)$ to a point $(I,x')\in \ga B_x$ for some $\ga\in \Ga_{I\less F_p}$,  but no such morphism in $\Hat\bZ^\nu$.  
  By construction, the morphisms in $\Hat\bZ^\nu_\Hh$ from $Z_J$ to $Z_I$ are composites of morphisms of type (B) from $Z_J$ to $Z_I$ 
  (which lie in $\Hat\bZ^\nu$) with  morphisms in the closure of $\Mor_{\Hat\bZ^\nu}(Z_I,Z_I)$.  Therefore it suffices to consider the case $J = I$, and $y\notin \bigcup_{\ga\in \Ga_{I\less F_p}} \ga {B_x}$.  
  But  \eqref{eq:gaBx} implies that the only  elements of $\Mor_{\Hat\bZ^\nu}(Z_I,Z_I)$ with  target in $\ga B_x$ must have source in
  some set $\al B_x$.  Therefore such $y$ does not exist.

\item For local regularity, we must check that for each $\ga$ the projection $\pi_{\Hat\bZ^\nu_\Hh}: \ga B_x \to |\Hat\bZ^\nu_\Hh|$ is a homeomorphism onto a relatively closed subset of $N$.  
But \eqref{eq:gaBx} implies that this map extends to an injective, continuous  map $f$ with compact domain $\ga \ov{B_x}$.  Hence 
$f$ is a homeomorphism onto its image because compact subsets of the Hausdorff space $|\Hat\bZ^\nu_\Hh|$ are closed.
Further, $\pi_{\Hat\bZ^\nu_\Hh}(\ga B_x ) = N\cap \pi_{\Hat\bZ^\nu_\Hh}(\ga \ov{B_x})$ is closed in $N$ because it is the intersection of a compact set with $N$.
\item Finally, note that $\La^\nu$ equals the branching function specified in Definition~\ref{def:brorb}; indeed,
the number of branches through  $q\in N$ is just  the number of preimages of $q\in N$ in 
$\bigcup_{\ga\in \Ga_I\less F_p} \ga {B_x}$, and  
we saw in Lemma~\ref{le:zero2}~(iii) 
that this  is  $|\Ga_{I\less F_q}|$, where $F_q\supset F_p$ is the minimal set $F$ such that
$q\in \ov{\pi_{\Hat\bZ^\nu_\Hh}(Z_F)}$.
\end{itemlist}
This completes the proof that $(\Hat\bZ^\nu,\La^\nu)$ is a compact wnb groupoid. It has a fundamental class by Proposition~\ref{prop:fclass}, and hence  defines a cycle in $\Cc$ as claimed.

The same arguments apply when  $\Kk$ is a Kuranishi cobordism.  In particular,  $|\Hat\bZ^\nu_\Hh|$ is compact so that, by Lemma~\ref{le:zero1}~(iii), $(\Hat\bZ^\nu,\La^\nu)$ is a wnb cobordism groupoid,  and the 
 boundary restrictions have the required properties by  Lemma~\ref{le:locorient}~(iv).
\end{proof}

We end this section by some elementary examples of this construction -- the fundamental class and the Euler class of an orbifold represented by Kuranishi atlases.

\begin{example}\label{ex:foot2}\rm  
Consider the orbifold case, i.e.\ a Kuranishi atlas $\Kk$ on $X$ with trivial obstruction spaces so that $\s_\Kk$ and $\nu$ are identically zero and $\io_\Kk: X \to |\Kk|$ is a homeomorphism. 
In this case the zero set $\bZ$ should represent the fundamental class of the oriented orbifold. 
We suppose that $X = \qu{M}{\Ga}$ is the quotient of a compact oriented smooth manifold $M$ 
by the action of a finite group $\Ga$, and that $\Kk$ is the atlas with a
single chart  with domain $M$ and $E = \{0\}$.  Then 
$\bZ = \bB_\Kk|^{\less \Ga}$ is the category with objects $M$ and only identity morphisms, because there are no pairs  $I,J\in \Ii_\Kk$ such that $\emptyset\ne I\subsetneq J$.
Therefore
 $\bZ = \bZ_\Hh$ has realization 
$|\bZ_\Hh| = M$ and 
  the weighting function $\La:M\to \Q$ is given by $\La(x) = \frac{1}{|\Ga|}$. 
  If the action of $\Ga$ is effective on every open subset of $M$ then
 the pushforward of $\La$ by $\io_{\bZ_\Hh}: M\to X$, which is defined by
$$
(\io_{\bZ_\Hh})_*\La(p): = \sum_{x\in  (\io_{\bZ_\Hh})^{-1}(p)} \La(x)
$$
takes the value $1$ at every smooth point (i.e.\ point with trivial stabilizer) of the orbifold $\qu{M}{\Ga}$.     On the other hand, if $\Ga$ acts by the identity so that the action is totally noneffective then $\io_{\bZ_\Hh}: M\stackrel{\cong}\to X$ is the identity map and 
the weighting function $X\to \Q^+$ takes the constant value $\frac 1{|\Ga|}$.  

Note that if we construct a fundamental class on $|\bZ|_\Hh$ by the method  of  Proposition~\ref{prop:fclass} then our choice of weights gives
a  class that is consistent with standard conventions.
For example, in dimension $d=0$ the branched manifold  $Z = |\bZ|_\Hh$ is a finite collection of points $\{p_1,\dots, p_k\}$, one for each equivalence class in $\Obj_{\bZ}$, where 
the point $p_i$ corresponding to an equivalence class with stabilizer $\Ga^i$ has weight  $\frac 1{|\Ga^i|}$.  If each point is positively oriented, then 
the \lq\lq number of elements" in $|\bZ|_\Hh$ is the sum
$\sum_{i=1}^k \frac 1{|\Ga^i|}$, which gives the Euler characteristic of the groupoid; 
c.f.\ \cite{Wein}.
Other more substantive  examples such as that of the football of Example~\ref{ex:foot} are discussed in \cite{Mcorb}.
\end{example}

\begin{example}\rm\label{ex:sphere} 
Examples of Kuranishi atlases with nontrivial obstruction spaces can be seen in the calculation of
the Euler class of the tangent bundle of $S^2$ and of the football
orbifold using Kuranishi atlases.

\noindent
(i)
To build a Kuranishi  atlas that models $\rT S^2$, cover $S^2$ by two discs $D_1,D_2$ whose intersection $D_1\cap D_2 = : D_{12}=:A$ 
is an annulus, and for $i=1,2$ define $\bK_i: = (U_i: = D_i,\ E_i: = \C,\ s_i : = 0,\ \psi_i: = \id)$.  For $i=1,2$, choose trivializations
$\tau_i: D_i\times \C\to \rT S^2|_{D_i}, (x,e)\mapsto \tau_{i,x}(e)$ and then define the transition chart 
$$
\bK_{12}: = \bigl( U_{12}\subset E_1\times E_2\times A ,\ E_1\times E_2,\ s_{12} = \pr_{E_1\times E_2},\ \psi_{12} = \pr_A|_{0\times 0\times A}\bigr)
$$
where
$$
U_{12}: = \bigl\{ (e_1,e_2,x) \ \big| \ x\in A, \tau_{1,x}e_1 + \tau_{2,x}e_2 = 0\bigr\}.
$$
The coordinate changes $\Hat\Psi_{i,12}$ are given by taking $U_{i,12} = A$ and
$
\psi_{i,12} (x) = (0,0,x)$.
To justify this choice
of Kuranishi atlas note that one can construct a commutative diagram 
\[
\xymatrix{
|\bE_\Kk| \ar@{->}[d]\ar@{->}[r] & \rT S^2\ar@{->}[d]\\
|\bB_\Kk| \ar@{->}[r] & S^2,
}
\]
that restricts over $U_{12}\times E_{12}$ to the map $\bigl((e_1,e_2,x),e_1',e_2')\mapsto \tau_{1,x}(e_1+e_1') + 
\tau_{2,x}(e_2+e_2')\in \rT S^2|_A$. 
This construction is generalized to other (orbi)bundles in \cite{Mcn}.

Next, in order to calculate the Euler class we 
identify $A$ with $[0,1]\times S^1$ and consider the corresponding trivialization  $\rT S^2|_A= A\times \R_t\times \R_{\theta}$ where $t\in [0,1]$ and $\theta\in S^1$ are  coordinates.  Then for $i=1,2$  there is a section $\nu_i: U_i\to E_i$ with one transverse zero such that $\tau_{i,x}(\nu_i(x)) = (x,1,0)\in \rT S^2|_A$ for $x\in A$. (Take suitably modified 
versions of the sections $\nu_1(z) = z, \nu_2(z) = -z$ where $D_i\subset \C$.)

Choose a reduction of the footprint covering with $V_{12} = (\eps,1-\eps)\times S^1$ for some $\eps\in (0,\frac 14)$ 
and so that $\TV_{1,12} = (0,0)\times (\eps,\frac 14]\times S^1 \subset U_{12}$ and
$\TV_{2,12} = (0,0)\times  (\frac 34,1-\eps)\times S^1$, and  choose a cutoff function $\be:[0,1]\to [0,1]$ that equals $1$ in $[0,\frac 14]$ and $0$ in $[\frac 34,1]$.  Then the map $\nu_{12}: V_{12}\to E_1\times E_2$ given by
$$
\nu_{12}(e_1,e_2,x) \;=\;\bigl(\be(x) \nu_1(x),(1-\be(x))\nu_2(x)\bigr)  
\;\in\; E_1\times E_2
$$
defines an admissible perturbation section that restricts to $\nu_i$ on $V_{i, 12}\subset (0,0)\times A$ for $i=1,2$.  Moreover $s_{12} +\nu_{12}$ does not vanish at any point $(e_1,e_2,x)\in V_{12}$ because the equation 
$ \tau_{1,x}(e_1) + \tau_{2,x}(e_2)=0$ together with 
$$
0 = \tau_{1,x}(e_1) + \be(x) (1,0)=\tau_{2,x}(e_2) + (1-\be(x))(1,0)\in  x\times \R_t\times \R_{\theta} \in \rT S^2|_A
$$
imply that the vector $(1,0)$ is zero, a contradiction. Hence the perturbed zero set $\bZ^\nu$ consists of two points, each with weight one.
\MS

\NI (ii)  It is easy to adjust this example to the tangent bundle of 
the \lq\lq football" discussed in Example~\ref{ex:foot}.  In this case, the zero of the section $s_i+\nu_i$ would count with weight $\frac 1{|\Ga_i|}$ so that the Euler class is $\frac 12 + \frac 13$.  For further details of this and other related examples 
see \cite[\S5]{Mcn}.
\end{example}

\subsection{Construction of the Virtual Moduli Cycle and Fundamental Class}\label{ss:A}\hspace{1mm}\\ \vspace{-3mm}

The next step in the Kuranishi regularization Theorem~A is to construct admissible, transverse, precompact perturbations $\nu$ that are unique up to interpolation by admissible, transverse, precompact cobordism perturbations. 
This -- quite complicated -- construction is developed in complete detail in \cite{MW2} in such a way that it applies directly to our present setting, in which the Kuranishi atlas $\Kk$ has nontrivial isotropy groups, but the reduced and pruned category $\Bb_\Kk|_\Vv^{\setminus\Ga}$ is nonsingular -- i.e.\ the remaining isotropy groups act freely. 
While we defer most of the proofs to \cite{MW2}, we will give full technical statements of the existence and uniqueness of perturbations, so that our constructions of VMC/VFC can be compared directly to other approaches, without reference to \cite{MW2}. 
Based on this, Definition~\ref{def:VMCF} and Theorem~\ref{def:VMCF} then define the virtual moduli cycle (VMC) as a cobordism class of closed oriented weighted branched manifolds and construct the virtual fundamental class (VFC) as \v{C}ech homology class.

For the construction of (cobordism) perturbations we will consider a metric tame Kuranishi atlas (or cobordism) $(\Kk,d,\si)$, that is we fix the following data:
\begin{itemlist}
\item 
$\Kk$ is a tame Kuranishi atlas on a compact metrizable space $X$ in the sense of Definitions~\ref{def:Ku} and \ref{def:tame}, or it is a tame Kuranishi cobordism on a compact collared cobordism $Y$ in the sense of Definitions~\ref{def:CKS} and \ref{def:tame}.
\item
$d$ is an admissible metric on $|\Kk|$ in the sense of Definition~\ref{def:metrizable}.
\item
If $(\Kk,d)$ is a metric, tame Kuranishi cobordism on $Y$, then the boundary restrictions
 $(\Kk^\al,d^\al):=(\p^\al\Kk, d|_{|\p^\al\Kk|})$ 
are metric, tame Kuranishi atlases on $\p^\al Y$ for $\al=0,1$.
\end{itemlist}

\NI
For easy reference we list some consequences of this setting and notation conventions.

\begin{itemlist}
\item 
The associated intermediate Kuranishi atlas $\uKk$ is a tame topological Kuranishi atlas (resp.\ cobordism) by Lemma~\ref{le:Ku3} (resp.\ Remark~\ref{rmk:restrict}~(ii)), which has the same realization $|\uKk|=|\Kk|$, equipped with the quotient topology. 

\item
$d$ is a bounded metric on the set $|\uKk|$ such that for each $I\in \Ii_\Kk$ the pullback metric $\und d_I:=(\pi_\uKk|_{\uU_I})^*d$ on $\uU_I$ induces the quotient topology on the intermediate domain $\uU_I=\qu{U_I}{\Ga_I}$. By construction, these also induce $\Ga_I$-invariant pseudometrics $d_I:=(\pi_\uKk|_{U_I})^*d = \pi_I^* \und d_I$ on the Kuranishi domains $U_I$ of $\Kk$. 
Moreover, \cite[Lemma~3.1.8]{MW1} shows that these (pseudo)metrics are compatible with coordinate changes.
We denote the $\de$-balls around subsets $Q\subset |\uKk|$ resp.\ $R\subset \uU_I$ resp.\ $S\subset U_I$ for $\de>0$ by
\begin{align*}
B_\de(Q) &\,:=\; \bigl\{w\in |\Kk|\ | \ \exists q\in Q : d(w,q)<\de \bigr\}, \\
B^I_\de(R) &\,:=\; \bigl\{x\in \uU_I\ | \ \exists r\in R : \und d_I(x,r)<\de \bigr\}, \\
\Hat B^I_\de(S) &\,:=\; \bigl\{y\in U_I\ | \ \exists s \in S : d_I(y,s)<\de \bigr\},
\end{align*}
and note that balls in the pseudometric are $\Ga_I$-invariant preimages of balls in $\uU_I$,
\begin{equation}\label{eq:hat}
\Hat B^I_\de(S) = \pi_I^{-1}\bigl(  B^I_\de(\und S) \bigr)  
\qquad\qquad\text{and}\qquad\qquad
B_\de(\pi_\Kk(S))=B_\de(\pi_{\uKk}(\und S)).
\end{equation}

\item
While the metric topology on $|\uKk|$ is generally not compatible with the quotient topology, we know from \cite[Lemma~3.1.8]{MW1} that the identity map $|\Kk|\to(|\Kk|,d)$ is continuous, and thus $|\Kk|$ is a Hausdorff topology in which the metric $\de$-balls are open, and thus neighbourhoods.
\end{itemlist}

\NI
Given this setting, our goal is to construct admissible, precompact, transverse (cobordism) perturbations of the section $\s_\Kk|_\Vv^{\less\Ga}$ over a pruned domain category $\bB_\Kk|_\Vv^{\less\Ga}$; see Definition~\ref{def:sect} and Lemma~\ref{le:prune}. For that purpose we will also need to fix nested (cobordism) reductions $\Cc \sqsubset \Vv$ of $\Kk$.
These induce the following crucial data, on which the iterative construction of perturbations depends.
The claims here are all consequences of \cite[Theorem~5.1.6~(iii)]{MW1} and \cite[Lemma~7.3.4, Proposition~7.3.10]{MW2} applied to $\uKk$ together with \eqref{eq:hat} and properness of the projections $\pi_I:U_I\to\uU_I$ established in Lemma~\ref{le:vep}~(i).

\begin{itemlist}
\item
Given a reduction $\Vv$ of $\Kk$, there exists $\de_\Vv\in (0,\frac 14]$ such that for any $\de< \de_\Vv$ we have
\begin{align*}
\Hat B^I_{2\de}(V_I)\sqsubset U_I\qquad &\forall I\in\Ii_\Kk , \\
B_{2\de}(\pi_\Kk({V_I}))\cap B_{2\de}(\pi_\Kk({V_J}))
 \neq \emptyset &\qquad \Longrightarrow \qquad I\subset J \;\text{or} \; J\subset I .
\end{align*}
This gives rise to a continuum of nested reductions
$V_I \;\sqsubset\; \ldots V^{k''}_I \;\sqsubset\; V^{k'}_I \ldots \;\sqsubset\; V^0_I$
for $k''>k'> 0$, which for $k \geq 0$ are given by
$$
V_I^k \,:=\; \Hat B^I_{2^{-k}\de}(V_I) \;=\; \pi_I^{-1}\bigl( \uV_I^k \bigr)
\;\sqsubset\; U_I  \qquad\text{with}\qquad \uV_I^k:= B^I_{2^{-k}\de}(\uV_I) .
$$
\item
For suitable $k\geq 0$, the iteration will construct $\nu_J$ by extension of the pullbacks $\rho_{IJ}^*\nu_I$, which are defined for $I\subsetneq J$ on $N^k_{JI} := V^k_J \cap \pi_\Kk^{-1}(\pi_\Kk(V^k_I))$, also given as 
$$
N^k_{JI} \;=\; V^k_J \cap \rho_{IJ}^{-1}(V^k_I)  
\;=\, \pi_J^{-1}\bigl( \uN^k_{JI} \bigr)
\qquad\text{with}\qquad
\uN^k_{JI}  := \uV^k_J \cap \uphi_{IJ}(\uV^k_I \cap \uU_{IJ}).
$$

\item
We need to make a choice of {\bf equivariant norms on the obstruction spaces} as follows: 
For each basic chart $i\in\{1,\ldots,N\}$ we choose a $\Ga_i$-invariant norm $\|\cdot\|$ on $E_i$. Then the $\Ga_J$-invariant norm on $E_J$ for each $J\in\Ii_\Kk$ is given by
$$
\| e \| \,:=\;
\bigl\| {\textstyle \sum_{i\in J}} \Hat\phi_{iJ} (e_i) \bigr\| \,:=\; \max_{i\in J} \| e_i\|
\qquad
\forall e=  {\textstyle \sum_{i\in J}} \Hat\phi_{iJ} (e_i) \in E_J. 
$$

\item
While the sections $s_I: U_I\to E_I$ only induce continuous maps $\und s_I:\uU_I \to \qu{E_I}{\Ga_I}$ to the quotient of obstruction spaces, equivariance of the norms guarantees that the norm of sections descends to a continuous function $\|\und s_I\|: \uU_I \to [0,\infty)$ given by $x \mapsto \|s_I(y)\|$ for any $y\in\pi_I^{-1}(x)$. 
These functions provide (rather nontransverse) topological Kuranishi charts over the intermediate domain with the same footprint: $\upsi_I$ maps $\|\und s_I\|^{-1}(0)=\qu{s_I^{-1}(0)}{\Ga_I}$ homeomorphically to $F_I$.

\item
Given equivariant norms $\|\cdot\|$, nested reductions $\Cc\sqsubset\Vv$, and $0<\de<\de_\Vv$ we have
\begin{align*}
\si(\Vv,\Cc,\|\cdot\|,\de) &\,:=\; \min_{J\in\Ii_\Kk}
\inf \Bigl\{
\; \bigl\| s_J(x) \bigr\| \;\Big| \;
x\in \ov{V^{|J|}_J} \;\less\; \Bigl( \Ti C_J \cup {\textstyle \bigcup_{I\subsetneq J}} \Hat B^J_{\eta_{|J|-\frac 12}}\bigl(N^{|J|-\frac14}_{JI}\bigr) \Bigr) \Bigr\}  \\
&\,=\; \min_{J\in\Ii_\Kk}
\inf \Bigl\{
\; \bigl\| \und s_J \bigr\| (y) \;\Big| \;
y\in \ov{\uV^{|J|}_J} \;\less\; \Bigl(\und{\Ti C}_J  \cup {\textstyle \bigcup_{I\subsetneq J}} B^J_{\eta_{|J|-\frac 12}}\bigl(\uN^{|J|-\frac14}_{JI}\bigr) \Bigr) \Bigr\}  
\; >0 ,
\end{align*}
where $\eta_{k-\frac 12} := 2^{-k+\frac 12} (1 -  2^{-\frac 14} ) \de$ and a set containing $s_J^{-1}(0)=\pi_J^{-1}(\|\und s_J\|^{-1}(0))$ is 
$$
\Ti C_J \,:=\; {\textstyle \bigcup_{K\supset J} } \, \rho_{JK}(C_K)  
\;=\; \pi_J^{-1}\bigl(\und{\Ti C}_J  \bigr)
\qquad\text{with}\qquad
\und{\Ti C}_J \,:=\;  {\textstyle\bigcup_{K\supset J}} \, \uphi_{JK}^{-1}(\uC_K)  .
$$
\item
In the case of a metric tame Kuranishi cobordism $(\Kk,d)$ with equivariant norms $\|\cdot\|$ and nested cobordism reductions $\Cc\sqsubset\Vv$, let $\eps>0$ be the smallest of the collar widths of $\Kk,d,\Cc$, and $\Vv$.
Then for $0<\de<\min\{\eps,\de_\Vv\}$ we obtain positive numbers
\begin{align*}
 \si' (\Vv,\Cc,\|\cdot\|,\de) &\,:=\; \min_{J\in\Ii_\Kk} \inf \Bigl\{ \; \bigl\| s_J(x) \bigr\| \;\Big| \;
x\in \ov{V^{|J|+1}_J} \;\less\; \Bigl( \Ti C_J \cup {\textstyle \bigcup_{I\subsetneq J}} \Hat B^J_{\eta_{|J|+\frac 12}}\bigl(N^{|J|+\frac34}_{JI}\bigr) \Bigr) \Bigr\}  , \\
\si_{\rm rel}(\Vv,\Cc,\|\cdot\|,\de) &\,:=\; \min\bigl\{ \si(\p^0\Vv,\p^0\Cc,\p^0\|\cdot\|,\de), \,\si(\p^1\Vv,\p^1\Cc,\p^1\|\cdot\|,\de), \,\si'(\Vv,\Cc,\|\cdot\|,\de) \bigr\} .
\end{align*}
Here $\p^\al\|\cdot\|$ denotes the collection of equivariant norms on $E_I$ for $I\in\Ii_{\p^\al\Kk}\subset\Ii_\Kk$. 
\end{itemlist}

\MS
\NI
The constants $\de_\Vv$ and $\si(\Vv,\Cc,\|\cdot\|,\de)$ defined here will control the permitted support and norm of the perturbation $\nu$ for a Kuranishi atlas.
In particular, $\de_\Vv$ measures the separation between the components $V_I\neq V_J$ of the reduction $\Vv$, while $\si(\Vv,\Cc,\|\cdot\|,\de)$ measures the minimal norm of $\s_\Kk|_\Vv^{\less\Ga}$ on the complement of an open neighbourhood of the set $\pi_\Kk^{-1}(\pi_\Kk(\Cc))$, in which all perturbed zero sets will need to be contained.
We will construct perturbations $\nu = (\nu_I : V_I \to E_I )_{I\in\Ii_\Kk}$ by an iteration which constructs and controls each $\nu_I$ over the larger set ${V_I^{|I|}}$. Here the domains are determined by a choice of $0<\de<\de_\Vv$ and we ensure that the perturbed zero sets are contained in $\pi_\Kk^{-1}(\pi_\Kk(\Cc))$ by bounding the perturbations $\|\nu_I\|<\si$ by some $0<\si<\si(\Vv,\Cc,\|\cdot\|,\de)$.
In order to prove uniqueness of the VMC, we moreover have to interpolate between any two such perturbations. 
This requires the adjusted bound $\si_{\rm rel}(\Vv,\Cc,\|\cdot\|,\de)$ on the norm of cobordism perturbations for the following reason:
The construction of a cobordism perturbation with prescribed boundary values is achieved by an iteration on the domains $V^{|I|+1}_I$ instead of $V^{|I|}_I$, which guarantees that the boundary values -- which got constructed in iterations over $\p^\al V^{|I|}_I$ -- are given on sufficiently large boundary collars.
In view of this, it is also necessary to keep track of the refined properties arising from the iterative construction of a perturbation by the following notion of $(\Vv,\Cc,\|\cdot\|,\de,\si)$-adapted, as well as a stronger notion which guarantees extensions to Kuranishi concordances.

\begin{defn}  \label{a-e}
Given nested reductions $\Cc\sqsubset\Vv$ of a metric tame Kuranishi atlas $(\Kk,d)$, 
a choice of equivariant norms $\|\cdot\|$ on the obstruction spaces, and constants $0<\de<\de_\Vv$ and $0<\si\le\si(\Vv,\Cc,\|\cdot\|,\de)$, we say that a perturbation $\nu$ of $\s_\Kk|_\Vv^{\less\Ga}$ is {\bf $(\Vv,\Cc,\|\cdot\|,\de,\si)$-adapted} if the sections $\nu_I:V_I\to E_I$ extend to sections over ${V^{|I|}_I}$ (also denoted $\nu_I$) so that the following conditions hold for every $k=1,\ldots, M_\Kk:= \max_{I\in\Ii_\Kk} |I|$ 
\begin{itemize}
\item[a)]
The perturbations are compatible in the sense that for $ H\subsetneq I , |I|\leq k$ we have
$$
\nu_I |_{\rho_{HI}^{-1}(V^k_H)\cap V^k_I} \;=\; \Hat\phi_{HI} \circ \nu_H \circ \rho_{HI}
|_{\rho_{HI}^{-1}(V^k_H)\cap V^k_I} .
$$
\item[b)]
The perturbed sections are transverse, that is $(s_I|_{{V^k_I}} + \nu_I) \pitchfork 0$ for each $|I|\leq k$.
\item[c)]
The perturbations are strongly admissible, that is for all $H\subsetneq I$ and $|I|\le k$ we have $\;\nu_I( \Hat B^I_{\eta_k}(N^{k}_{IH})\bigr) \subset \Hat\phi_{HI}(E_H)$.
\item[d)]  
The perturbed zero sets are controlled by
$\; \pi_\Kk\bigl( (s_I |_{{V^k_I}}+ \nu_I)^{-1}(0) \bigr) \subset \pi_\Kk(\Cc)$
for $|I|\leq k$.
\item[e)]
The perturbations are small, that is $\;\sup_{x\in {V^k_I}} \| \nu_I (x) \| < \si$
for $|I|\leq k$. 
\end{itemize}

\NI  
Also, we say that a perturbation $\nu$ is {\bf strongly $(\Vv,\Cc)$-adapted} 
if it is a ${(\Vv,\Cc,\|\cdot\|,\de,\si)}$-adapted perturbation $\nu$ of $\s_\Kk|_\Vv^{\less\Ga}$ for some choice of equivariant norms $\|\cdot\|$ and constants $0<\de<\de_\Vv$ and 
$$
0<\si\le\si_{\rm rel}([0,1]\times \Vv,[0,1]\times  \Cc,\|\cdot\|,\de) = \min\bigl\{
\si(\Vv,\Cc,\|\cdot\|,\de) , \si'([0,1]\times \Vv, [0,1]\times \Cc,\|\cdot\|,\de) \bigr\} ,
$$
where we use the product metric on $[0,1]\times |\Kk|$.
\end{defn}

\begin{remark}\rm
(i)
Adapted perturbations are automatically admissible, precompact, and transverse in the sense of Definition~\ref{def:sect}. Indeed, this is guaranteed by the inclusions $V^k_I\subset V_I$ and the fact that strong admissibility $\nu_I(x)\in \im\Hat\phi_{HI}$ for $x\in\Hat B^I_{\eta_k}(N^{k}_{IH})$ for $H\subsetneq I$
implies admissibility $\im \rd_y \nu_I \subset\im\Hat\phi_{HI}$ for 
$y\in \TV_{HI} = V_I \cap \rho_{HI}^{-1}(V_H) \subset V^k_I \cap \rho_{IJ}^{-1}(V^k_I) = N^{k}_{IH}$.

\MS
\NI
(ii)
The admissibility condition is crucial for the transfer of transversality as follows:
Let $\nu$ be an admissible perturbation and let $z\in V_I$ and $w\in V_J$ so that $\pi_\Kk(z)=\pi_\Kk(w)\in|\Kk|$. Then $z$ is a transverse zero of $s_I|_{V_I}+\nu_I$ if and only if $w$ is a transverse zero of $s_J|_{V_J}+\nu_J$.

Indeed, by the reduction property we can assume w.l.o.g.\ $I\subset J$ and thus $z=\rho_{IJ}(w)$.
Since $\rho_{IJ}$ is a regular covering, we can pick a local inverse $\phi_IJ$ so that $w=\phi_{IJ}(z)$. Then the proof of \cite[Lemma~7.2.4]{MW2} directly applies -- using the index condition in terms of $\phi_{IJ}$.

\MS\NI
(iii)
Any $(\Vv,\Cc,\|\cdot\|,\de,\si)$-adapted perturbation for fixed $\Vv,\Cc,\|\cdot\|,\de$ and sufficiently small $\si>0$ is in fact strongly adapted.
Indeed, due to the product structure of all sets and maps involved in the definition of $\si'$, we may rewrite the condition on $\si>0$ in the definition of strong adaptivity as
$\si< \| s_J(x) \|$ for all $x\in \ov{V^{k}_J} \;\less\; \bigl( \Ti C_J \cup {\textstyle \bigcup_{I\subsetneq J}} \Hat B^J_{\eta_{k-\frac 12}}\bigl(N^{k-\frac14}_{JI}\bigr) \bigr)$, $J\in\Ii_\Kk$, and $k\in\{|J|,|J|+1\}$.
$\hfill\er$
\end{remark}

By the above remark, the following in particular proves the existence of admissible, precompact, transverse perturbations as well as strongly adapted perturbations.

\begin{prop}\label{prop:ext}
\begin{enumerate}
\item
Let $(\Kk,d)$ be a metric tame Kuranishi atlas with nested reductions $\Cc \sqsubset \Vv$ and equivariant norms $\|\cdot\|$ on the obstruction spaces.
Then for any $0<\de<\de_\Vv$ and $0<\si\le\si(\Vv,\Cc,\|\cdot\|,\de)$ there exists a $(\Vv,\Cc,\|\cdot\|,\de,\si)$-adapted perturbation $\nu$ of $\s_\Kk|_{\Vv}^{\less\Ga}$. 
\item
Let $(\Kk,d)$ be a metric tame Kuranishi cobordism with nested cobordism reductions $\Cc \sqsubset \Vv$, equivariant norms $\|\cdot\|$ on the obstruction spaces, and minimal collar width $\eps>0$ of $(\Kk,d)$
and the reductions $\Cc,\Vv$. 
Then, given $0<\de<\min\{\eps,\de_\Vv\}$, $0<\si\le \si_{\rm rel}(\de,\Vv,\Cc)$, and
perturbations $\nu^\al$ of $\s_{\p^\al\Kk}|_{\p^\al\Vv}^{\less\Ga}$ for $\al=0,1$ that are $(\p^\al\Vv,\p^\al\Cc,\de,\si)$-adapted, 
there exists an admissible, precompact, transverse cobordism perturbation $\nu$ of $\s_\Kk|_\Vv^{\less\Ga}$ with $\pi_\Kk\bigl((\s_\Kk|_\Vv^{\less\Ga}+\nu)^{-1}(0)\bigr)\subset \pi_\Kk(\Cc)$ and $\nu|_{\p^\al\Vv}=\nu^\al$ for $\al=0,1$.
\item
In the case of a product cobordism $[0,1]\times \Kk$ 
with product metric and nested product reductions 
$[0,1]\times \Cc\sqsubset  [0,1]\times \Vv$,
  (ii) holds for $0<\de<\de_{[0,1]\times \Vv}$ without restriction from the collar width.
\end{enumerate}
\end{prop}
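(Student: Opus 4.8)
\textbf{Proof proposal for Proposition~\ref{prop:ext}.}
The plan is to reduce the entire statement, by the philosophy employed throughout the paper, to the corresponding perturbation-existence results of \cite{MW2} applied to the pruned category $\bB_\Kk|_\Vv^{\less\Ga}$, which is nonsingular by Lemma~\ref{le:prune}. The key point is that, although $\Kk$ carries nontrivial isotropy, all of the data entering Definition~\ref{a-e} -- the nested reductions $V^k_I=\pi_I^{-1}(\uV^k_I)$, the sets $N^k_{JI}=\pi_J^{-1}(\uN^k_{JI})$, the control constants $\de_\Vv$ and $\si(\Vv,\Cc,\|\cdot\|,\de)$, and the $\Ga_I$-invariant pseudometrics $d_I=\pi_I^*\und d_I$ -- are $\Ga_I$-invariant and descend to (or are pulled back from) the intermediate atlas $\uKk$, which is a tame topological Kuranishi atlas by Lemma~\ref{le:Ku3}. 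On $\bB_\Kk|_\Vv^{\less\Ga}$ the only remaining morphisms $\TV_{IJ}\to V_J$ are the covering maps $\rho_{IJ}$ (with trivial group label), and the compatibility condition \eqref{eq:compatc} for a perturbation is exactly $\nu_J|_{\TV_{IJ}}=\Hat\phi_{IJ}\circ\nu_I\circ\rho_{IJ}$, which is the same shape of condition as in \cite{MW2} with $\rho_{IJ}$ playing the role there of $\phi_{IJ}^{-1}$.

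First I would carry out the iteration for part~(i): construct $\nu_I:V^{|I|}_I\to E_I$ by induction on $|I|$, exactly following \cite[\S7.3]{MW2}. At stage $k=|I|$ one already has the pullbacks $\rho_{HI}^*\nu_H$ defined and mutually compatible on $N^k_{IH}=V^k_I\cap\rho_{HI}^{-1}(V^k_H)$ for $H\subsetneq I$ (compatibility among these on overlaps $N^k_{IH}\cap N^k_{IH'}$ follows from the strong cocycle condition via Lemma~\ref{le:compos0} together with the reduction property, which forces $H,H'$ to be nested); one then uses a cutoff-function patching argument to extend to a $\Ga_{I\less H}$-invariant-on-$\Hat B^I_{\eta_k}(N^k_{IH})$ section, and a Sard--Smale perturbation to achieve transversality while staying within the norm bound $\si$ and keeping the zero set inside $\pi_\Kk(\Cc)$; admissibility $\rd\nu_I(\rT V_I)\subset\im\Hat\phi_{HI}$ is built in by the extension being constant in the normal directions to $\TU_{HI}$ near $N^k_{IH}$. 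The only change from \cite{MW2} is notational: where \cite{MW2} restricts a section along an open embedding $\phi_{HI}$, here one pulls back along the regular covering $\rho_{HI}$, using the index condition in the form \eqref{tbc} (equivalently \eqref{eq:indexcond}) to transfer transversality, admissibility, and the dimension count between $\TV_{HI}$ and its image -- this is precisely the content of Lemma~\ref{le:locorient}~(iii) and Remark following Definition~\ref{a-e}. Since all the relevant numbers $\de_\Vv,\si(\Vv,\Cc,\|\cdot\|,\de)$ are, by their intermediate-atlas definitions, identical to the ones that make the $\uKk$-iteration work, the estimates in \cite[Proposition~7.3.10]{MW2} go through verbatim.

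For part~(ii) I would invoke the cobordism version: the metric tame Kuranishi cobordism $\Kk$ has all charts, coordinate changes, reductions, and metric in product form over collars $A^\al_\eps$, and the prescribed boundary perturbations $\nu^\al$, being $(\p^\al\Vv,\p^\al\Cc,\de,\si)$-adapted, were produced over domains $\p^\al V^{|I|}_I$. One runs the same iteration as in~(i) but now on the enlarged domains $V^{|I|+1}_I$, starting the induction from the product $\id_{A^\al_\eps}\times\nu^\al$ on the collars -- this is why the bound must be tightened to $\si_{\rm rel}$ via $\si'(\Vv,\Cc,\|\cdot\|,\de)$, which measures $\|\s_\Kk\|$ on the slightly larger sets $\ov{V^{|J|+1}_J}$. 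The resulting $\nu$ restricts to $\nu^\al$ on $\p^\al\Vv$ by construction and is admissible, transverse, and precompact with zero set in $\pi_\Kk(\Cc)$. Part~(iii) is the special case $Y=[0,1]\times X$: here the collar width of the product data is as large as one likes, so no constraint $\de<\eps$ is needed and one may take any $0<\de<\de_{[0,1]\times\Vv}$; since $[0,1]\times\Vv$, $[0,1]\times\Cc$ and the product metric all have product form everywhere, the iteration of~(ii) simply runs with $\eps$ removed from all the constants.

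The main obstacle -- though it is really bookkeeping rather than a genuine difficulty -- is verifying that the pullback sections $\rho_{HI}^*\nu_H$ for different $H$ are mutually compatible on their overlaps and that their common extension can simultaneously be made transverse and strongly admissible (condition (c) of Definition~\ref{a-e}) near \emph{all} of the sets $N^k_{IH}$ at once, $H\subsetneq I$. This is handled exactly as in \cite{MW2}: the reduction property (Definition~\ref{def:vicin}~(iii)) guarantees that any two such $H,H'$ appearing at a common point are nested, so the overlap region is governed by a single chain $H\subset H'\subsetneq I$ and compatibility there is the strong cocycle identity $\rho_{HI}=\rho_{HH'}\circ\rho_{H'I}$ (Lemma~\ref{le:compos0}~(iii)); the admissibility normal-direction conditions for nested $H\subset H'$ are compatible because $\Hat\phi_{HI}(E_H)\subset\Hat\phi_{H'I}(E_{H'})$ by additivity. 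Once this is in place, the Sard--Smale step only needs to perturb transverse to $0$ away from an open neighbourhood of $\bigcup_{H\subsetneq I}N^k_{IH}$ where the section is already transverse by induction, which is possible within the prescribed norm because $\si\le\si(\Vv,\Cc,\|\cdot\|,\de)$ controls $\|\s_\Kk\|$ precisely on the complement of that neighbourhood. All remaining verifications -- continuity, properness of $\pi_I$, openness of $Z_I\cap\TV_{HI}$ -- were already recorded in Lemma~\ref{le:vep} and Lemma~\ref{le:zero1}, so the proof is complete by citation to \cite[\S7]{MW2}.
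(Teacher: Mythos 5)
Your proposal takes essentially the same route as the paper: both reduce the construction to the iterative scheme of \cite[\S7.3]{MW2}, applied to the nonsingular pruned category $\bB_\Kk|_\Vv^{\less\Ga}$, with the two systematic replacements (topological controls checked on the intermediate atlas $\uKk$ via the proper projections $\pi_I$, and the embeddings $\phi_{HI}$ of \cite{MW2} replaced by the regular coverings $\rho_{HI}$ with $N_{HI}$ identified with $\TV_{HI}$). You spell out more of the inner workings of the iteration than the paper does -- in particular the nesting argument for mutual compatibility of the $\rho_{HI}^*\nu_H$ and the role of additivity in reconciling the strong admissibility conditions for nested $H\subset H'$ -- but these are precisely the points the paper delegates to its footnotes in \cite{MW2}, so the two proofs are in substance identical.
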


\begin{proof}
As explained in \cite[Remark~7.3.2]{MW2}, the iterative constructions in \cite[Propositions~7.3.7, 7.3.10]{MW2} generalize directly to our setup based on the pruned domain category $\Bb_\Kk|_{\Vv}^{\less\Ga}$. We indicated the necessary adjustments in a series of footnotes in the proofs of \cite{MW2}.
Beyond the above setting and notations, this requires the following two systematic changes.

Firstly, all relationships between (or definitions/constructions of) subsets of $\Obj_{\Bb_\Kk}=\bigcup_{I\in\Ii_\Kk} U_I$ in \cite{MW2} should be replaced by two statements -- one for subsets of $\Obj_{\Bb_\uKk}=\bigcup_{I\in\Ii_\Kk} \uU_I$ in the intermediate atlas $\uKk$, and one for subsets in the pruned domain category $\Bb_\Kk|_{\Vv}^{\less\Ga}$ with $B_\de$ replaced by $\Hat B_\de$. These two statements will always be equivalent via the projection $\pi_I$. Statements can then be checked by working in the intermediate category, but they will be applied on the level of the pruned domain category. 
Here it is crucial to know that the projections $\pi_I:U_I\to \uU_I$ are continuous (by definition of the quotient topology) and proper by Lemma~\ref{le:vep}~(i).

Secondly, our goal -- constructing a precompact, transverse, admissible (cobordism) perturbation 
 $\nu:\bB_\Kk|_\Vv\to\bE_\Kk|_\Vv$ -- is essentially the same as that of \cite[Definitions~7.2.1,7.2.5,7.2.6]{MW2}.
Writing it in terms of the maps $\nu=(\nu_I:V_I\to E_I)_{I\in\Ii_\Kk}$, the only difference is that the compatibility conditions in \cite[(7.2.1)]{MW2},
$$
\nu_J\big|_{N_{JI}} \; =\; 
 \Hat\phi_{IJ}\circ\nu_I\circ \phi_{IJ}^{-1}\big|_{N_{JI}}
\quad \text{on}\quad
N_{JI}:=V_J \cap \phi_{IJ}(V_I \cap U_{IJ})
$$
for all $I \subsetneq J$ are replaced by
$$
\nu_J\big|_{\TV_{IJ}}\  =\ 
 \Hat\phi_{IJ}\circ\nu_I\circ \rho_{IJ}\big|_{\TV_{IJ}}
\quad \text{on}\quad
\TV_{IJ} := V_J\cap \rho_{IJ}^{-1}(V_I) ,
$$
and the precompactness conditions in \cite[(7.2.5)]{MW2},
$$
(s_J|_{V_J} + \nu_J)^{-1}(0)
\;\subset\;  {\textstyle \bigcup_{H\supset J} } \,\phi_{JH}^{-1}(C_H) \; \cup \; {\textstyle \bigcup_{H\subsetneq J} }\,\phi_{HJ}(C_H) 
$$
for all $J\in \Ii_\Kk$ are replaced by \eqref{eq:zeroVCC} above
$$
(s_J|_{V_J} + \nu_J)^{-1}(0)
\;\subset\;  {\textstyle \bigcup_{H\supset J} } \,\rho_{JH}(C_H) \; \cup \; {\textstyle \bigcup_{H\subsetneq J} }\,\rho_{HJ}^{-1}(C_H) .
$$
Here our setup guarantees that $\rho_{IJ}:\TV_{IJ}\to V_I \cap \rho_{IJ}(V_J)\subset U_{IJ}$ is a regular covering (i.e.\ local diffeomorphism with fibers given by the free action of a finite group $\Ga_{J\less I}\cong \qu{\Ga_J}{\Ga_I}$) analogous to $\phi_{IJ}^{-1}:N_{IJ}\to V_I \cap \phi_{IJ}^{-1}(V_J)\subset U_{IJ}$ in \cite{MW2}, which is a regular covering with trivial fibers.
Thus to adapt the proofs of \cite{MW2} one should replace $\phi_{IJ}$ with $\rho_{IJ}^{-1}$ and identify $N_{IJ}=\TV_{IJ}$.
\end{proof}

Finally, we make the additional choice of an orientation of the Kuranishi atlases/cobordisms in the sense of Definition~\ref{def:orient} to prove Theorem A from the introduction.

\begin{definition}\label{def:VMCF}
Let $(\Kk,\si)$ be an oriented weak Kuranishi atlas of dimension $D$ on a compact, metrizable space $X$. 
Then its {\bf virtual moduli cycle} $\Zz^\Kk:=\bigl[ \bigl( |\bZ^\nu|_\Hh , \La^\nu\bigr) \bigr]$ is the cobordism class of weighted branched manifolds (without boundary) of dimension $D$ given by the choices of a preshrunk tame shrinking $\Kk_{\rm sh}$ of $\Kk$, an admissible metric on $|\Kk_{\rm sh}|$, nested reductions $\Cc\sqsubset\Vv$ of $\Kk_{\rm sh}$, and a strongly $(\Vv,\Cc)$-adapted perturbation $\nu$.

Moreover, the {\bf virtual fundamental class}
$$
[X]^{\rm vir}_\Kk \,:=\; |\psi_{\Kk_{\rm sh}}|_* \bigl( \, \underset{\leftarrow}\lim\, [\, \io^{\nu_k} \,] \, \bigr) \;\in\; \check{H}_D(X;\Q) 
$$
is constructed as follows: 
\begin{itemize}
\item 
Choose a preshrunk tame shrinking $\Kk_{\rm sh}$ of $\Kk$, an admissible metric on $|\Kk_{\rm sh}|$, and a nested sequence of open sets $\Ww_{k+1}\subset \Ww_k\subset \bigl(|\Kk_{\rm sh}|, d\bigr)$ with $\bigcap_{k\in\N}\Ww_k = |\s_{\Kk_{\rm sh}}^{-1}(0)|$.
(These exist by Theorem~\ref{thm:K}, and e.g.\ taking $\Ww_k=B_{\frac 1k}(|\s_{\Kk_{\rm sh}}^{-1}(0)|$.)
Then equip $\Kk_{\rm sh}$ with the orientation induced from $\Kk$ by Lemma~\ref{le:cK}. 
\item
For each $k\in\N$ choose a $(\Vv_k,\Cc_k)$-adapted perturbation $\nu_k$ of $\s_{\Kk_{\rm sh}}|_{\Vv_k}^{\less\Ga}$ for some nested reductions $\Cc_k\sqsubset\Vv_k$ with $\pi_{\Kk_{\rm sh}}(\Cc_k)\subset\Ww_k$. (These exist by Remark~\ref{rmk:vicin} and Proposition~\ref{prop:ext}.)
\item
Denote by $[ |\io^{\nu_k}|_\Hh ] \in \check{H}_D(\Ww_k;\Q)$ the \v{C}ech homology classes induced by the maps
$$
|\io^{\nu_k}|_\Hh \,:\; \bigl( |\bZ^{\nu_k}|_\Hh, \La^{\nu_k}\bigr) \;\hookrightarrow \;  \Ww_k \;\subset\; \bigl(|\Kk_{\rm sh}|, d\bigr) ,
$$
take their inverse limit under pushforward with the inclusions $\Ww_{k+1}\hookrightarrow\Ww_k$, and finally take the pushforward under the homeomorphism $|\psi_{\Kk_{\rm sh}}| = \io_{\Kk_{\rm sh}}^{-1}: |\s_{\Kk_{\rm sh}}^{-1}(0)| \to X$ from Lemma~\ref{le:realization}~(iv).
\end{itemize}
\end{definition}
 
Note here that Lemma~\ref{le:realization}~(iii) identifies the quotient topology on $|\s_{\Kk_{\rm sh}}^{-1}(0)|$ with the relative topology induced by the embedding $|\s_{\Kk_{\rm sh}}^{-1}(0)|\hookrightarrow |\Kk_{\rm sh}|$. 
The latter is also identified with the metric topology given by restriction of $d$, due to the 
nesting uniqueness of Hausdorff topologies and the fact that the identity map $|\Kk|\to(|\Kk|,d)$ is continuous; see \cite[Lemma~3.1.8, Remark~3.1.15]{MW1}.
Hence there is no ambiguity of topologies in the isomorphism explained in \cite[Remark~8.2.4]{MW2} and used in the definition of $[X]^{\rm vir}_\Kk$, 
$$
\check{H}_D(|\s_{\Kk}^{-1}(0)|;\Q) \;\overset{\cong}{\longrightarrow}\; \underset{\leftarrow }\lim\, \check{H}_D(\Ww_k;\Q)  .
$$
Finally, we can prove our main theorem: The VMC/VFC are well defined and are
invariants of the oriented weak Kuranishi cobordism class.
The proof uses the same line of argument as \cite[Theorems~8.2.2, 8.2.5]{MW2}, just replacing manifolds with weighted branched manifolds. We summarize and unify these arguments here for ease of reference.
 
\begin{thm}\label{thm:VMCF}
\begin{enumerate}
\item
The virtual moduli cycle $\Zz^\Kk$ and virtual fundamental class $[X]^{\rm vir}_\Kk$ are well defined and independent of the cobordism class of oriented weak Kuranishi atlases on a fixed compact, metrizable space $X$.
\item
Let $\Kk$ be an oriented weak Kuranishi cobordism, and choose strongly adapted perturbations $\nu^\al$ in the definition of $\Zz^{\p^\al\Kk}=\bigl[  \bigl( |\bZ^{\nu^\al}|_\Hh , \La^{\nu^\al}\bigr) \bigr]$ for $\al=0,1$. Then the perturbed zero sets  $ \bigl( |\bZ^{\nu^0}|_\Hh , \La^{\nu^0}\bigr)  \sim  \bigl( |\bZ^{\nu^1}|_\Hh , \La^{\nu^1}\bigr) $ are cobordant as weighted branched manifolds, and thus $\Zz^{\p^0\Kk}=\Zz^{\p^1\Kk}$.
\item
Let $\Kk$ be an oriented weak Kuranishi cobordism of dimension $D+1$ on a compact, metrizable collared cobordism $(Y,\io_Y^0,\io_Y^1)$.
Then the virtual fundamental classes $[\p^\al Y]^{\rm vir}_{\p^\al\Kk}\simeq [\p^1 Y]^{\rm vir}_{\p^1\Kk}$ of the boundary restrictions are homologous in $Y$, 
$$
(\io_Y^0)_*\bigl([\p^0 Y]^{\rm vir}_{\p^0\Kk}\bigr)
\;=\;
(\io_Y^1)_*\bigl([\p^1 Y]^{\rm vir}_{\p^1\Kk}\bigr)
\quad \in \check{H}_D(Y;\Q) .
$$
\end{enumerate}
\end{thm}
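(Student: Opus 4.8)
The plan is to follow the template of \cite[Theorems~8.2.2 and~8.2.5]{MW2}, replacing smooth manifolds by weighted branched manifolds throughout; the substantive geometric content is already isolated in Theorem~\ref{thm:zero}, which converts an admissible, transverse, precompact (cobordism) perturbation into a compact weighted branched manifold, respectively cobordism, with the expected boundary. What remains is the organizational task of threading the auxiliary choices — preshrunk tame shrinking, admissible metric, nested reductions $\Cc\sqsubset\Vv$, and the constants $\de,\si$ — so that data chosen on the boundary of a cobordism extends across it. Part~(ii) is the geometric core, and parts~(i) and~(iii) follow from it together with formal properties of cobordism classes and \v{C}ech homology.

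For (ii), let $\Kk$ be the given oriented weak Kuranishi cobordism and let $\nu^0,\nu^1$ be the strongly $(\p^\al\Vv,\p^\al\Cc)$-adapted perturbations used to define $\Zz^{\p^\al\Kk}$, built over preshrunk tame shrinkings $(\p^\al\Kk)_{\rm sh}$ with chosen metrics and reductions. First I would use Theorem~\ref{thm:K} to pass to a preshrunk tame shrinking $\Kk_{\rm sh}$ of $\Kk$ whose boundary restrictions are the $(\p^\al\Kk)_{\rm sh}$, equip $|\Kk_{\rm sh}|$ with a collared admissible metric restricting to the two chosen boundary metrics, and (by Remark~\ref{rmk:vicin} and \cite[Theorem~5.1.6]{MW1}) choose nested cobordism reductions $\Cc\sqsubset\Vv$ of $\Kk_{\rm sh}$ restricting to $\p^\al\Cc\sqsubset\p^\al\Vv$. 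The notion of \emph{strongly adapted} in Definition~\ref{a-e}, with its bound by $\si_{\rm rel}$, is tailored precisely so that, after possibly shrinking $\de$ and $\si$, the hypotheses of Proposition~\ref{prop:ext}~(ii) hold; that proposition then yields an admissible, transverse, precompact cobordism perturbation $\nu$ of $\s_{\Kk_{\rm sh}}|_\Vv^{\less\Ga}$ with $\nu|_{\p^\al\Vv}=\nu^\al$ for $\al=0,1$. Applying Theorem~\ref{thm:zero} to $\nu$ produces a compact $(D+1)$-dimensional weighted branched cobordism $(|\Hat\bZ^\nu|_\Hh,\La^\nu)$ whose oriented boundaries are $(|\bZ^{\nu^0}|_\Hh,\La^{\nu^0})$ and $(|\bZ^{\nu^1}|_\Hh,\La^{\nu^1})$, the orientation signs being fixed by Lemma~\ref{le:locorient}~(iv) and Remark~\ref{rmk:orientb}. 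Hence the two VMC representatives are cobordant as weighted branched manifolds, so $\Zz^{\p^0\Kk}=\Zz^{\p^1\Kk}$; moreover composing the associated cycle with $|\io^\nu|_\Hh:|\Hat\bZ^\nu|_\Hh\to|\Kk_{\rm sh}|\to Y$ produces a weighted branched cobordism in $Y$ whose boundary restrictions are the cycles $|\io^{\nu^\al}|_\Hh$ into $\p^\al Y$, which is the geometric content of~(iii).

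For the well-definedness and cobordism invariance in~(i), I would compare any two systems of choices for a fixed atlas $\Kk$ by means of the product cobordism $[0,1]\times\Kk$: Theorem~\ref{thm:K}~(iv) makes the two metric preshrunk tame shrinkings metric tame concordant, and on that concordance one chooses interpolating nested reductions and applies Proposition~\ref{prop:ext}~(iii) — which in the product case carries no collar-width restriction — to extend the two strongly adapted boundary perturbations to a cobordism perturbation; part~(ii) then supplies the cobordism between the two weighted branched representatives. If $\Kk^0,\Kk^1$ are oriented weakly cobordant via some $\Kk$, then (ii) applies to $\Kk$ directly. For the VFC, the classes $\bigl[\,|\io^{\nu_k}|_\Hh\,\bigr]\in\check{H}_D(\Ww_k;\Q)$ form a system compatible with the inclusions $\Ww_{k+1}\hookrightarrow\Ww_k$ — the precompactness requirement $\pi_{\Kk_{\rm sh}}(\Cc_k)\subset\Ww_k$ ensures $|\io^{\nu_k}|_\Hh$ lands in $\Ww_k$ — and for large $k$ any two admissible choices have perturbed zero sets cobordant inside $\Ww_k$ by (ii), hence inducing the same \v{C}ech class there (a weighted branched cobordism in a space yields equal \v{C}ech classes via Proposition~\ref{prop:fclass}). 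Thus the inverse limit exists, and via the isomorphism $\check{H}_D(|\s_{\Kk_{\rm sh}}^{-1}(0)|;\Q)\cong\underset{\leftarrow}\lim\,\check{H}_D(\Ww_k;\Q)$ and the homeomorphism $|\psi_{\Kk_{\rm sh}}|$ of Lemma~\ref{le:realization}, $[X]^{\rm vir}_\Kk$ is independent of all choices and of the cobordism class. Statement~(iii) is then immediate: pushing the \v{C}ech classes of the restricted cycles forward under $\io_Y^\al$ and passing to the inverse limit over shrinking neighbourhoods of $|\s_{\Kk_{\rm sh}}^{-1}(0)|$, the weighted branched cobordism in $Y$ built in the proof of~(ii) forces $(\io_Y^0)_*[\p^0 Y]^{\rm vir}_{\p^0\Kk}=(\io_Y^1)_*[\p^1 Y]^{\rm vir}_{\p^1\Kk}$ in $\check{H}_D(Y;\Q)$.

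I expect the real work to be the bookkeeping in~(ii): producing a single pair of constants $(\de,\si)$ that simultaneously renders both boundary perturbations strongly adapted relative to the restricted reductions and small enough to run the cobordism extension of Proposition~\ref{prop:ext}~(ii) over a collar of the prescribed width — exactly the interplay that the constant $\si_{\rm rel}$ and the distinction between ``adapted'' and ``strongly adapted'' in Definition~\ref{a-e} were introduced to manage, so this should be a careful unwinding of definitions rather than a new idea. A secondary point requiring care is, in the VFC argument, verifying that the topology on $|\s_{\Kk_{\rm sh}}^{-1}(0)|$ entering the inverse-limit isomorphism is unambiguous — for which one quotes \cite[Lemma~3.1.8]{MW1} and Lemma~\ref{le:realization}~(iii) — and that cobordant weighted branched manifolds do induce equal \v{C}ech homology classes.
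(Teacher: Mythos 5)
Your overall strategy — extend boundary perturbations across the cobordism via Proposition~\ref{prop:ext}, feed the result into Theorem~\ref{thm:zero}, and organize part~(i) via product cobordisms and Theorem~\ref{thm:K}~(iv) — matches the paper's architecture. But there is a genuine gap at the heart of your treatment of~(ii), namely the claim that Proposition~\ref{prop:ext}~(ii) ``yields an admissible, transverse, precompact cobordism perturbation $\nu$ of $\s_{\Kk_{\rm sh}}|_\Vv^{\less\Ga}$ with $\nu|_{\p^\al\Vv}=\nu^\al$.'' You cannot, in general, extend the \emph{given} strongly adapted $\nu^0,\nu^1$ across a cobordism. Proposition~\ref{prop:ext}~(ii) requires both boundary perturbations to be $(\p^\al\Vv,\p^\al\Cc,\de,\si)$-adapted for a \emph{single} common tuple of equivariant norms $\|\cdot\|$ and constants $(\de,\si)$ determined by a cobordism reduction; but $\nu^0$ and $\nu^1$ will typically have been built with incompatible norms $\|\cdot\|^0\ne\|\cdot\|^1$, with different $\de^0\ne\de^1$, and with different $\si^0,\si^1$. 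Your proposed fix --- ``after possibly shrinking $\de$ and $\si$'' --- does not repair this: the smallness condition~(e) of Definition~\ref{a-e} is $\sup_{V^k_I}\|\nu_I\|<\si$, which becomes \emph{harder} to satisfy after shrinking $\si$, so a perturbation adapted for $\si^\al$ is not adapted for a smaller $\si$. Likewise, changing the norm $\|\cdot\|$ destroys both~(e) and the validity of the bound $\si\le\si(\Vv,\Cc,\|\cdot\|,\de)$, and neither quantity is monotone in $\de$.

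The paper's proof avoids this by never extending the given $\nu^\al$ directly. Instead, Step~6 first extends a cobordism reduction $\Cc\sqsubset\Vv$ over $\Kk_{\rm sh}$ with $\p^\al\Cc=\Cc^\al$, $\p^\al\Vv=\Vv^\al$, then chooses a \emph{single} dominating norm $\|\cdot\|$ with $\|\cdot\|^\al\le\|\cdot\|$, a single $\de$ below all relevant collar widths, and a single $\si$ below the various $\si_{\rm rel}$ constants; one then applies Proposition~\ref{prop:ext} to obtain a cobordism perturbation $\nu^{01}$ whose \emph{newly constructed} restrictions $\Ti\nu^\al:=\nu^{01}|_{\p^\al\Vv}$ are strongly adapted with respect to these common data. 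This gives $\Zz^{\Ti\nu^0}\sim\Zz^{\Ti\nu^1}$ via $\Zz^{\nu^{01}}$, and one then concludes $\Zz^{\nu^\al}\sim\Zz^{\Ti\nu^\al}$ by invoking the earlier steps (Step~3 in the paper, itself built on Steps~1--2 via an analogous transitivity over product cobordisms). The final identity $\Zz^{\nu^0}\sim\Zz^{\nu^1}$ is the composite of these three cobordisms. Exactly the same transitivity chain is needed for the \v{C}ech homology statement in~(iii), where your proposal inherits the same issue. So the missing idea is: build intermediate perturbations compatible with both boundary tuples, prove cobordism to those, and compose --- rather than trying to make the given boundary data directly admissible for a single application of Proposition~\ref{prop:ext}~(ii).
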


\begin{proof}  
First note that all the necessary choices of data exist, as noted in Definition~\ref{def:VMCF}. Given such choices, Step 1 below constructs a representative of the virtual moduli cycle, and Step 5 constructs the virtual fundamental class.
To prove independence of those choices in (i), we use transitivity of the cobordism relation for compact weighted branched manifolds to prove increasing independence of choices in Steps 1--5.
Parts (ii), (iii) are then proven in Step 6.
In the following, all Kuranishi atlases will be of dimension $D$, and all cobordisms of dimension $D+1$.

\MS\NI
{\bf Step 1:} 
{\it
Fix an oriented, metric, tame Kuranishi atlas $(\Kk,d)$, nested reductions $\Cc\sqsubset\Vv$, equivariant norms $\|\cdot\|$, and constants $0<\de<\de_\Vv$, $0<\si\leq \si_{\rm rel}([0,1]\times\Vv,[0,1]\times\Cc,\|\cdot\|,\de)$. 
Then each $(\Vv,\Cc,\|\cdot\|,\de,\si)$-adapted perturbation $\nu$ induces a
$D$-dimensional weighted branched manifold $\Zz^\nu:=\bigl( |\bZ^{\nu}|_\Hh , \La^{\nu}\bigr)$ 
and a cycle $|\io^{\nu}|_\Hh: \Zz^\nu \to |\Cc|$ whose cobordism class resp.\  \v{C}ech homology class $\bigl[|\io^{\nu}|_\Hh\bigr] \in \check{H}_D(|\Cc|;\Q)$ is independent of the choice of $\nu$.
}

\MS
The regularity of the perturbed zero sets is proven in  Theorem~\ref{thm:zero}.
To prove independence of the choice of $\nu$ we consider two $(\Vv,\Cc,\|\cdot\|,\de,\si)$-adapted perturbations $\nu^0,\nu^1$. Then Proposition~\ref{prop:ext}~(iii) provides an admissible, precompact, transverse cobordism perturbation $\nu^{01}$ of
 $\s_{[0,1]\times\Kk|^{\less \Ga}_{[0,1]\times\Vv}}$ 
 with boundary restrictions $\nu^{01}|_{\{\al\}\times \Vv}=\nu^\al$ for $\al=0,1$.
Moreover, by Lemma~\ref{le:cK}~(iii) the orientation of $\Kk$ induces an orientation of $[0,1]\times\Kk$, whose restriction to the boundaries $\p^\al([0,1]\times \Kk) =\Kk$ equals the given orientation on $\Kk$. 
Now Theorem~\ref{thm:zero} implies that $\Zz:=\bigl(|\bZ^{\nu^{01}}|,\La^{\nu^{01}}\bigr)$ is a cobordism from $\p^0\Zz=\bigl(|\bZ^{\nu^0}|,\La^{\nu^{0}}\bigr)$ to $\p^1\Zz=\bigr(|\bZ^{\nu^1}|,\La^{\nu^{1}}\bigr)$ and induces a cycle $|\io^{\nu^{01}}|_\Hh: \Zz\to [0,1]\times |\Cc|$.
Finally, the boundary restrictions of this cycle prove the equality
$\bigl[|\io^{\nu^0}|_\Hh\bigr]=\bigl[|\io^{\nu^1}|_\Hh\bigr]$ in $\check{H}_D(|\Cc|;\Q)$; 
see \cite[(8.2.6)]{MW2} for the detailed homological argument.

\MS\NI
{\bf Step 2:} {\it 
Fix an oriented, metric, tame Kuranishi atlas $(\Kk,d)$ and nested reductions $\Cc\sqsubset\Vv$. Then the cobordism class of $\Zz^\nu$ as well as $\bigl[|\io^{\nu}|_\Hh\bigr] \in \check{H}_D(|\Cc|;\Q)$ are independent of the choice of strongly $(\Vv,\Cc)$-adapted perturbation~$\nu$. }

\MS
To prove this we consider two strongly $(\Vv,\Cc)$-adapted perturbations~$\nu^\al$ for $\al = 0,1$.
Thus $\nu^\al$ is $(\Vv,\Cc,\|\cdot\|^\al,\de^\al,\si^\al)$-adapted for some choices of equivariant norms $\|\cdot\|^\al$ and constants $0<\de^\al<\de_\Vv$ and  $0<\si^\al\leq \si_{\rm rel}{([0,1]\times\Vv,[0,1]\times\Cc,\|\cdot\|^\al,\de^\al)}$. 
We note that $\de:=\max(\de^0,\de^1)<\de_\Vv=\de_{[0,1]\times\Vv}$, pick equivariant norms $\|\cdot\|$ on $\Kk$ such that $\|\cdot\|^\al\leq\|\cdot\|$ for $\al=0,1$, and choose $\si \leq \min\{ \si^0,\si^1, \si_{\rm rel}([0,1]\times\Vv,[0,1]\times\Cc,\|\cdot\|,\de) \}$. 
Then Proposition~\ref{prop:ext}~(iii) provides an admissible, precompact, transverse cobordism perturbation $\nu^{01}$ of $\s_{[0,1]\times\Kk|^{\less \Ga}_{[0,1]\times\Vv}}$, 
whose restrictions $\Ti\nu^\al:=\nu^{01}|_{\{\al\}\times \Vv}$ for $\al=0,1$ are $(\Vv,\Cc,\|\cdot\|,\de,\si)$-adapted perturbations of $\s_\Kk|^{\less \Ga}_\Vv$.
Since $\de^\al\leq \de$, $\|\nu^{01}|_{\{\al\}\times \Vv}\|^\al \leq \|\nu^{01}|_{\{\al\}\times \Vv}\|<\si$ and $\si \leq \si^\al \leq \si_{\rm rel}([0,1]\times\Vv,[0,1]\times\Cc,\|\cdot\|,\de^\al)$, they are also $(\Vv,\Cc,\|\cdot\|,$ $\de^\al,\si^\al)$-adapted.
Then, as in Step~1, the perturbed zero set of $\nu^{01}$ is a cobordism from $\Zz^{\Ti\nu^{0}}$ to $\Zz^{\Ti\nu^{1}}$, and the induced cycle in $[0,1]\times|\Cc|$ shows
$\bigl[|\io^{\Ti\nu^{0}}|_\Hh\bigr]=\bigl[|\io^{\Ti\nu^{1}}|_\Hh\bigr]$ in $\check{H}_D(|\Cc|;\Q)$.

Moreover, for fixed $\al\in\{0,1\}$ both the restriction $\Ti\nu^\al=\nu^{01}|_{\{\al\}\times\Vv}$ and the given perturbation $\nu^\al$ are $(\Vv,\Cc,\|\cdot\|,\de^\al,\si^\al)$-adapted, so that Step 1 provides cobordisms $\Zz^{\nu^\al}  \sim \Zz^{\Ti\nu^\al}$ and identities 
$\bigl[|\io^{\nu^\al}|_\Hh\bigr]=\bigl[|\io^{\Ti\nu^\al}|_\Hh\bigr]$ in $\check{H}_D(|\Cc|;\Q)$. By transitivity of the cobordism relation this proves $\Zz^{\nu^0} \sim \Zz^{\nu^1}$ as claimed, and also $\bigl[|\io^{\nu^{0}}|_\Hh\bigr]=\bigl[|\io^{\nu^{1}}|_\Hh\bigr] \in \check{H}_D(|\Cc|;\Q)$.

\MS\NI
{\bf Step 3:} {\it
For a fixed oriented, metric, tame Kuranishi atlas $(\Kk,d)$, the oriented cobordism class $\Aa^{(\Kk,d)}$ of weighted branched manifolds $\Zz^\nu$ is independent of the choice of strongly adapted perturbation~$\nu$.
Moreover, given any open neighbourhood $\Ww\subset (|\Kk|,d)$ of $|\s_\Kk^{-1}(0)|$, the 
class
$A^{(\Kk,d)}_{\Ww} := \bigl[ |\io^\nu|_\Hh :\Zz^\nu \to \Ww \bigr] \in \check{H}_D(\Ww;\Q)$
is independent of the choice of strongly $(\Vv,\Cc)$-adapted perturbation $\nu$ for nested reductions $\Cc\sqsubset\Vv$ with $\pi_\Kk(\Cc)\subset\Ww$.
}

\MS
To prove this we consider two strongly $(\Vv^\al,\Cc^\al)$-adapted perturbations $\nu^\al$ with respect to nested reductions $\Cc^\al\sqsubset\Vv^\al$ with $\pi_\Kk(\Cc)\subset\Ww$, equivariant norms $\|\cdot\|^\al$, and admissible metrics $d^\al$ for $\al=0,1$.
Remark~\ref{rmk:vicin} provides a nested cobordism reduction $\Cc\sqsubset\Vv$ of $[0,1]\times \Kk$ with $\p^\al\Cc=\Cc^\al$, $\p^\al\Vv=\Vv^\al$, and $\pi_{[0,1]\times\Cc}\subset [0,1]\times\Ww$.
Now pick equivariant norms $\|\cdot\|$ on $\Kk$ such that $\|\cdot\|^\al\leq\|\cdot\|$ for $\al=0,1$, and choose $0<\de<\de_\Vv$ smaller than the collar width of $d$, $\Vv$, and $\Cc$.
Then, for any $0<\si\leq\si_{\rm rel}(\Vv,\Cc,\|\cdot\|,\de)$, Proposition~\ref{prop:ext}~(ii) provides an admissible, precompact, transverse cobordism perturbation $\nu^{01}$ of 
$\s_{[0,1]\times  \Kk|^{\less \Ga}_\Vv}$ 
whose boundary restrictions $\Ti\nu^\al:=\nu^{01}|_{\p^\al\Vv}$ for $\al=0,1$ are $(\Vv^\al,\Cc^\al,\|\cdot\|,\de,\si)$-adapted perturbations of $\s_{\Kk}|^{\less \Ga}_{\Vv^\al}$.
As before, $\Zz^{\nu^{01}}$ is an oriented cobordism from $\Zz^{\Ti\nu^{0}}$ to $\Zz^{\Ti\nu^{1}}$ and induces a cycle in $[0,1]\times\Ww$ that shows
$\bigl[|\io^{\Ti\nu^{0}}|_\Hh\bigr]=\bigl[|\io^{\Ti\nu^{1}}|_\Hh\bigr]$ in $\check{H}_D(\Ww;\Q)$.
Moreover, we can pick $\si\leq\si_{\rm rel}([0,1]\times  \Vv^\al,[0,1]\times  \Cc^\al,\|\cdot\|^\al,\de)$ 
for $\al=0,1$, so that each $\nu^{01}|_{\p^\al\Vv}$ is also strongly $(\Vv^\al,\Cc^\al)$-adapted. 
Then the claim follows by transitivity as in Step~2.

\MS\NI
{\bf Step 4:} {\it
Let $(\Kk,d)$ be an oriented, metric, tame Kuranishi atlas, and let $\Ww_k\subset (|\Kk|,d)$ be a nested sequence of open sets with $\bigcap_{k\in\N}\Ww_k = |\s_{\Kk}^{-1}(0)|$ as in Definition~\ref{def:VMCF}.
Then the \v{C}ech homology class
$A^{(\Kk,d)} := \underset{\leftarrow}\lim\, A^{(\Kk,d)}_{\Ww_k} \in \check{H}_D\bigl(|\s_{\Kk}^{-1}(0)|;\Q \bigr)$
is well defined and independent of the choice of nested sequence $(\Ww_k)_{k\in\N}$.
}

\MS
The pushforward $\check{H}_D(\Ww_{k+1};\Q)\to \check{H}_D(\Ww_k;\Q)$ by the inclusion $\Ii_{k+1}:\Ww_{k+1}\to\Ww_k$ maps $A^{(\Kk,d)}_{\Ww_{k+1}}=[|\io^{\nu_{k+1}}|_\Hh]$ to $A^{(\Kk,d)}_{\Ww_k}$ since any strongly adapted perturbation $\nu_{k+1}$ with respect to nested reductions $\Cc_{k+1}\sqsubset\Vv_{k+1}$ with $\pi_\Kk(\Cc_{k+1})\subset\Ww_{k+1}$ can also be used as strongly adapted perturbation for $A^{(\Kk,d)}_{\Ww_k}$.
This shows that the homology classes $A^{(\Kk,d)}_{\Ww_k}$ form an inverse system and thus have a well defined inverse limit.
To see that this limit is independent of the choice of nested sequence, note that the intersection $\Ww_k:=\Ww^0_k\cap \Ww^1_k$ of any two such sequences $(\Ww_k^\al)_{k\in\N}$ is another nested sequence of open sets with $\bigcap_{k\in\N}\Ww_k = |\s_{\Kk}^{-1}(0)|$.
Now choose a sequence of strongly adapted perturbations $\nu_k$ w.r.t.\ nested reductions $\Cc_k\sqsubset\Vv_k$ with $\pi_\Kk(\Cc_k)\subset\Ww_k$, then these also fit the requirements for the larger open sets $\Ww_k^\al$ and hence the inclusions $\Ww_k\hookrightarrow\Ww^\al_k$ push $[|\io^{\nu_k}|_\Hh]\in H_D(\Ww_k;\Q)$ forward to $[|\io^{\nu_k}|_\Hh]\in H_D(\Ww^\al_k;\Q)$. Hence, by the definition of the inverse limit, we have equality
$$
\underset{\leftarrow}\lim\, A^{(\Kk,d)}_{\Ww^0_k} 
\;=\; 
\underset{\leftarrow}\lim\, A^{(\Kk,d)}_{\Ww_k}
\;=\;  
\underset{\leftarrow}\lim\, A^{(\Kk,d)}_{\Ww^0_k} 
 \;\in\; \check{H}_D\bigl(|\s_{\Kk}^{-1}(0)|;\Q \bigr) .
$$

\MS\NI
{\bf Step 5:} {\it
Given an oriented weak Kuranishi atlas $\Kk$, the cobordism class $\Zz^\Kk:=\Aa^{(\Kk_{\rm sh},d)}$ of weighted branched manifolds in Step 3 and the pullback $[X]_\Kk^{\rm vir}:=|\psi_{\Kk_{\rm sh}}|_* A^{(\Kk,d)} \in \check{H}_D\bigl(X;\Q \bigr)$ of the \v{C}ech homology classes in Step 4
are independent of the choice of tame shrinking $\Kk_{\rm sh}$ of $\Kk$ and admissible metric $d$ on $|\Kk_{\rm sh}|$.
}
\MS

Here the pushforward under $|\psi_{\Kk_{\rm sh}}|$ is well defined since this is a homeomorphism by Lemma~\ref{le:realization}~(iv). Given different choices $(\Kk_{\rm sh}^\al,d^\al)$ of metric tame shrinkings of $\Kk$ and strongly adapted perturbations $\nu^\al$ resp.\ $(\nu^\al_k)_{k\in\N}$ that define $\Aa^{(\Kk_{\rm sh}^\al,d^\al)}\sim \Zz^{\nu^\al}$ resp.\ 
$A^{(\Kk_{\rm sh}^\al,d^\al)} =\underset{\leftarrow}\lim\, \bigl[ |\io^{\nu^\al_k}|_\Hh \bigr]  \in \check H_D(|\s_{\Kk_{\rm sh}^\al}^{-1}(0)|;\Q)$, we can apply Step 6 below to the cobordism $[0,1]\times \Kk$ to obtain a weighted branched cobordism from $\Zz^{\nu^0}$ to $\Zz^{\nu^1}$ and the identity 
$$
I^0_* \, \bigl([X]^{\rm vir}_{\Kk^0_{\rm sh}}\bigr)
\;=\; I^1_* \, \bigl([X]^{\rm vir}_{\Kk^1_{\rm sh}} \bigr)
\; \in\; \check{H}_D([0,1]\times  X ;\Q ) .
$$
with the natural boundary embeddings $I^\al :X\to \{\al\}\times X\subset [0,1]\times X=Y$.
Further, $I^0_* = I^1_* : \check{H}_D( X ;\Q) \to \check{H}_D( [0,1]\times  X ;\Q)$ are the same isomorphisms, because the two maps $I^0, I^1$ are both homotopy equivalences and homotopic to each other.
Hence we obtain the identity 
$[X]^{\rm vir}_{\Kk^0_{\rm sh}}  = [X]^{\rm vir}_{\Kk^1_{\rm sh}}$ in $\check{H}_D( X ; \Q)$, which
proves Step~5.

\MS\NI
{\bf Step 6:} 
{\it
Let $\Kk$ be an oriented weak Kuranishi cobordism over a compact collared cobordism $Y$. For $\al=0,1$ fix choices of preshrunk tame shrinkings $\Kk_{\rm sh}^\al$ of $\p^\al\Kk$, admissible metrics $d^\al$ on $|\p^\al\Kk|$.
Then, for any choice of strongly adapted perturbations $\nu^\al$ on $\Kk_{\rm sh}^\al$, 
there is a weighted branched cobordism $\Zz^{\nu^{01}}$ from $\Zz^{\nu^0}$ to $\Zz^{\nu^1}$. Moreover, the VFC's of the boundary components pushforward by the embeddings $\io^\al_Y:\{\al\}\times\p^\al Y \to Y$ to the same \v{C}ech homology class in $Y$, 
$(\io^0_{Y})_*\bigl([\p^0 Y]^{\rm vir}_{\p^0\Kk}\bigr)=(\io^1_{Y})_*\bigl([\p^1 Y]^{\rm vir}_{\p^1\Kk}\bigr) \in \check{H}_D(Y;\Q)$.}

\MS
First we use Theorem~\ref{thm:K} to find a preshrunk tame shrinking $\Kk_{\rm sh}$ of $\Kk$ with $\p^\al\Kk_{\rm sh}=\Kk^\al_{\rm sh}$, and an admissible metric $d$ on $|\Kk_{\rm sh}|$ with boundary restrictions $d|_{|\p^\al\Kk_{\rm sh}|}=d^\al$. If we equip $\Kk_{\rm sh}$ with the orientation induced by $\Kk$, then by Lemma~\ref{le:cK} the induced boundary orientation on $\p^\al\Kk_{\rm sh}=\Kk^\al_{\rm sh}$ agrees with that induced by shrinking from $\p^\al\Kk$.
Next, Remark~\ref{rmk:vicin}  provides nested cobordism reductions $\Cc\sqsubset \Vv$ of $\Kk_{\rm sh}$ and we may choose equivariant norms $\|\cdot\|$ on $\Kk_{\rm sh}$. Then Proposition~\ref{prop:ext} with 
$$
\si\;=\; \min\bigl\{  \si_{\rm rel}(\Vv,\Cc,\|\cdot\|,\de) ,\;  \min_{ \al=0,1} \si_{\rm rel}([0,1]\times  \p^\al\Vv, [0,1]\times  \p^\al\Cc,\p^\al\|\cdot\|,\de) \bigr\}
$$
yields an admissible, precompact, transverse cobordism perturbation $\nu^{01}$ of $\s_{\Kk_{\rm sh}}|^{\less \Ga}_\Vv$, whose restrictions $\Ti\nu^\al:=\nu^{01}|_{\p^\al\Vv}$ for $\al=0,1$ are $(\p^\al\Vv,\p^\al\Cc,\p^\al\|\cdot\|,\de,\si)$-adapted perturbations of $\s_{\Kk^\al_{\rm sh}}|^{\less \Ga}_{\p^\al\Vv}$. In particular, these are strongly adapted by the choice of $\si$, and $\Zz^{\nu^{01}}$ is a cobordism from $\Zz_{\Ti\nu^0}$ to $\Zz_{\Ti\nu^1}$. Invariance of the VMC under oriented weak Kuranishi cobordism then follows from Step 3 by transitivity of weighted branched cobordism.

To prove the identity between VFCs, we first construct a sequence of nested cobordism reductions
$\Cc_k \sqsubset \Vv$ of $\Kk_{\rm sh}$ by
$$
\Cc_k \,:=\; \Cc \cap \pi_{\Kk_{\rm sh}}^{-1}\bigl(\Ww_k) \;\sqsubset\; \Vv\qquad\text{with}\quad \Ww_k: =
B_{\frac 1k}(\io_{\Kk_{\rm sh}}(Y) \bigr) \;\subset\;|\Kk_{\rm sh}|,
$$
in addition discarding components $C_k\cap V_I$ that have empty intersection with $s_I^{-1}(0)$.
With that, Proposition~\ref{prop:ext} provides admissible, precompact, transverse cobordism perturbations $\nu_k$ with $|(\s_{\Kk_{sh}}|_\Vv^{\less\Ga} + \nu_k)^{-1}(0)| \subset \Ww_k$, and with boundary restrictions $\nu^\al_k:= \nu_k|_{\p^\al\Vv}$ that are strongly adapted perturbations of $(\Kk^\al_{\rm sh},d^\al)$ for $\al=0,1$.
Since these boundary restrictions satisfy the requirements of Step 4, they define the \v{C}ech homology classes
$A^{(\Kk^\al_{\rm sh},d^\al)} = \underset{\leftarrow}\lim\, \bigl[ |\io^{\nu^\al_k}|_\Hh \bigr] \;\in\; \check{H}_D\bigl(|\s_{\Kk^\al_{\rm sh}}^{-1}(0)|;\Q \bigr)$.

On the other hand, pushforward with the topological embeddings $J^\al: ( |\Kk^\al_{\rm sh}| , d^\al ) \to ( |\Kk_{\rm sh}|, d)$ also yields \v{C}ech homology classes $J^\al_* \bigl[ |\io^{\nu^\al_k}|_\Hh\bigr]$ that form two inverse systems in $H_D(|\Kk_{\rm sh}|;\Q)$.
Now the cycles $\io^{\nu_k}: |\Zz^{\nu_k}| \to \Ww_k$ given by Theorem~\ref{thm:zero} yield
identities 
$J^0 _* \bigl[ |\io^{\nu^0_k}|_\Hh \bigr] = J^1_* \bigl[ |\io^{\nu^1_k}|_\Hh \bigr]$ in $\check{H}_D(\Ww_k; \Q)$, 
and taking the inverse limit -- which commutes with pushforward -- we obtain
$J^0_*\,\bigl(  \underset{\leftarrow}\lim\, \bigl[ |\io^{\nu^0_k}|_\Hh \bigr] \bigr) = J^1_* \,\bigl(\underset{\leftarrow}\lim\, \bigl[ |\io^{\nu^1_k}|_\Hh  \bigr] \bigr)$
in $\check H_D( |\s_{\Kk_{\rm sh}}^{-1}(0)| ;\Q )$.
Further pushforward with $|\psi_{\Kk_{\rm sh}}|$ turns this into an equality in $\check{H}_D(Y ; \Q)$.
Finally, we use the identities
$|\psi_{\Kk_{\rm sh}}| \circ J^\al \big|_{\io_{\Kk^\al_{\rm sh}}(\p^\al Y)}
=\io^\al_Y \circ |\psi_{\Kk^\al_{\rm sh}}|$ to obtain in $\check{H}_D(Y ; \Q)$
$$
\bigl(|\psi_{\Kk_{\rm sh}}| \circ J^\al\bigr)_*\,\Bigl(  \underset{\leftarrow}\lim\, \bigl[ i^{\nu^\al_k} \bigr] \Bigr)
\;=\; 
(\io^\al_Y)_* \Bigl(  |\psi_{\Kk^\al_{\rm sh}}|_* \Bigl(  \underset{\leftarrow}\lim\, \bigl[ i^{\nu^\al_k} \bigr]\Bigr) \Bigr)
\;=\; 
(\io^\al_Y)_* \bigl[ \p^\al Y\bigr]^{\rm vir}_{\Kk^\al_{\rm sh}}.
$$
This proves Step 6 since the left hand side was shown to be independent of $\al=0,1$.
\end{proof}

\appendix
\numberwithin{theorem}{section}
\section{Groupoids and weighted branched manifolds}\label{ss:br}

The purpose of this appendix is to review the definition and properties of weighted branched manifolds from \cite{Mcbr}, and slightly generalize these notions to a cobordism theory. This will be based on the following language of groupoids.

An {\bf  \'etale groupoid} $\bG$ is a small category whose sets of objects $\Obj_\bG$ and morphisms $\Mor_\bG$ are equipped with the structure of a smooth manifold of a fixed finite dimension 
such that
\begin{itemlist}
\item all morphisms are invertible;
\item  all structural maps\footnote{
The structure maps of a category are source and target $s,t:\Mor_\bG\to\Obj_\bG$, identity $\id :\Obj_\bG\to\Mor_\bG$, and composition ${\rm comp}:\Mor_\bG \leftsub{t}{\times}_s \Mor_\bG \to \Mor_\bG$. If source and target are local diffeomorphisms, then the fiber product in the domain of composition is transverse and hence inherits a smooth structure. 
A groupoid has the additional structure map ${\rm inv}:\Mor_\bG \to \Mor_\bG$ given by the unique inverses.
} 
are local diffeomorphisms.
\end{itemlist}

\NI
All groupoids considered in this appendix are \'etale.  
Moreover, a groupoid is called
\begin{itemlist}
\item {\bf proper} if the source and target map $s\times t: \Mor_\bG\to \Obj_\bG\times \Obj_\bG$
is proper (i.e.\ preimages of compact sets are compact);
\item  {\bf nonsingular} if there is at most one morphism between any two of its objects;
\item  {\bf oriented} if its spaces of objects and morphisms 
 are oriented manifolds and if all structural maps preserve these orientations;
\item {\bf d-dimensional} if $\Obj_\bG$ and $\Mor_\bG$ are d-dimensional manifolds;
\item
{\bf compact} if its realization $|\bG|$ is compact.
\end{itemlist}

\NI
\'Etale proper groupoids are often called {\bf ep groupoids}.
It is well known that in the current finite dimensional context the properness assumption
is equivalent to  the condition that the realization $|\bG|$ is Hausdorff.\footnote{To see that proper groupoids have Hausdorff realization one can argue that the equivalence relation has closed graph and then use \cite[Ch~I,\S10,~Ex.~19]{Bourb} or \cite [Lemma~3.2.4]{MW1}.
}
Here the realization $|\bG|$ of $\bG$  is the quotient of the space of objects by the equivalence relation 
given by the morphisms, i.e.\ $x\sim y \ \Longleftrightarrow\ \Mor_\bG(x,y)\ne \emptyset$.  
It is equipped with the quotient topology, and the natural projection is denoted  $\pi_\bG: \Obj_\bG\to |\bG|$.
In general, the realization $|\bG|$ of an ep groupoid is an orbifold. It is a manifold if the groupoid is nonsingular, and an orientation of the groupoid induces an orientation of $|\bG|$.
 
Two kinds of groupoids appear in this paper:
Theorem~\ref{thm:zero} shows that the zero set of a transverse section defines a wnb groupoid (which is
 \'etale but generally not proper, and equipped with an additional weighting function, see Definition~\ref{def:brorb}).
On the other hand, each Kuranishi chart $\bK_I$ comprises two ep groupoids $\bG_{(U_I,\Ga_I)}$ and  $\bG_{(U_I\times E_I,\Ga_I)}$, which arise from group quotients as follows.

\begin{example}\label{ex:grqot}\rm 
(i)  
A group quotient $(U,\Ga)$ in the sense of Definition~\ref{def:gq} defines an ep groupoid $\bG_{(U,\Ga)}$ with $\Obj_\bG= U$, $\Mor_\bG= U\times \Ga$, $(s\times t)(u,\ga)=(u,\ga u)$, 
${\rm id}(u)=(u,\id)$, ${\rm comp}((u,\ga),(\ga u,\de)) = (u, \de\ga )$, ${\rm inv}(u,\ga)=(u,\ga^{-1})$, 
and realization $|\bG|=\qu{U}{\Ga}=\uU$.
In particular, properness is proven in Lemma~\ref{le:vep}~(i). 
This groupoid is nonsingular iff the action of $\Ga$ is free.
It is oriented if $U$ is oriented and the action of each $\ga\in \Ga$ preserves the orientation.  
 \MS
  
\NI (ii)  
The category $\bB_\Kk$ defined by a Kuranishi atlas with trivial obstruction spaces on a compact space $X$
is not a groupoid because when $I\subsetneq J$ the morphisms from $U_I$ to $U_J$ are not invertible.  
However, it is shown in \cite{Mcn} that $\bB_\Kk$ may be completed to an ep groupoid with the same realization (namely, $X$ itself) by adding appropriate inverses and composites to its set of morphisms.
 $\hfill\er$
  \end{example}

When we take restrictions of Kuranishi charts in the sense of Definition~\ref{def:restr}, this is reflected in the associated groupoids by an analogous notion:

\begin{itemlist}
\item  
If $\bG$ is an \'etale groupoid and $\uV\subset |\bG|$ is open, we define the {\bf restriction} $\bG|_{\uV}$ to be the full subcategory of $\bG$ with objects $\pi_\bG^{-1}(\uV)$.
\end{itemlist}

\NI
To discuss the theory of Kuranishi cobordisms in terms of groupoids, we need the following notions.
Here we use the notation $A^0_\eps:=[0,\eps)$ and $A^1_\eps:=(1-\eps,1]$ for 
neighbourhoods of $0,1\in [0,1]$ of size $\eps>0$ as in \cite{MW1}.

\begin{itemlist}
\item  
If $\bG$ is a groupoid and $A\subset \R$ is an interval we define the {\bf  product groupoid}
$A\times \bG$ to be the groupoid with objects $A\times \Obj_\bG$ and morphisms $A\times \Mor_\bG$, and with all structural maps given by products with $\id_A$.  

\item 
A {\bf cobordism groupoid} is a triple $(\bG, \io_\bG^0, \io_\bG^1)$ consisting of a compact proper groupoid $\bG$ and collaring functors $ \io_\bG^\al: A^\al_\eps \times \p^\al \bG \to \bG$ for $\al=0,1$.
Here $\bG$ is required to be ``\'etale with boundary'' in the sense that its object and morphism spaces are manifolds with boundary. Moreover, these boundaries form a strictly full\footnote{A subcategory is strictly full if it contains all morphisms that have source or target in its objects.} subcategory  $\p\bG$ of $\bG$ that splits, 
$\p (\Obj_\bG)= \Obj_{\p^0\bG}\sqcup\Obj_{\p^1\bG}$,  $\p(\Mor_\bG)= \Mor_{\p^0\bG}\sqcup\Mor_{\p^1\bG}$, into the disjoint union of two ep groupoids $\p^0\bG, \p^1\bG$.
Finally, the functors $\io^\al_\bG: A^\al_\eps\times \p^\al\bG\to \bG$ are defined for some $\eps>0$ and required to be tubular neighbourhood diffeomorphisms on both the sets of objects and morphisms. In particular, $\io^\al_\bG(\al, \cdot)$ is the identification between $\p^\al \bG$ and the full subcategories formed by the boundary components of $\bG$.

\item
 An {\bf oriented  cobordism groupoid} is a cobordism groupoid $(\bG, \io_\bG^0, \io_\bG^1)$ such that both $\bG$ and its boundary groupoids $\p^0\bG,\p^1\bG$ are  oriented.  
Moreover the collaring functors are required to consist of orientation preserving maps $\io_\bG^\al: A^\al_\eps\times \Obj_{\p^\al \bG}   \to \Obj_\bG$ and 
$\io_\bG^\al: A^\al_\eps\times \Mor_{\p^\al \bG}   \to \Mor_\bG$ for $\al = 0,1$, 
where products are oriented as in Remark~\ref{rmk:orientb}.
\end{itemlist}

\begin{lemma} \label{le:Hquotient}
Any topological space $Y$ has a unique {\bf maximal Hausdorff quotient} $Y_\Hh$, that is a quotient of $Y$ which is Hausdorff and satisfies the universal property: Any continuous map from $Y$ to a Hausdorff space factors through the quotient map  $\pi_\Hh: Y\to Y_\Hh$. 
\end{lemma}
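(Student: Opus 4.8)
The plan is to construct $Y_\Hh$ explicitly as the quotient of $Y$ by the smallest closed equivalence relation, and then verify the two required properties (Hausdorffness and the universal factorization property), from which uniqueness follows formally. First I would consider the collection $\mathcal{R}$ of all equivalence relations $R\subset Y\times Y$ such that the quotient $Y/R$ is Hausdorff; equivalently (since a quotient of $Y$ is Hausdorff iff distinct points can be separated by a continuous real-valued function, and since continuous functions into $\R$ separating points exist precisely when $R$ is an intersection of preimages of the diagonal) one can take $\mathcal{R}$ to be the set of equivalence relations arising as $R_f:=\{(x,y)\mid f(x)=f(y)\}$ for continuous maps $f$ from $Y$ to Hausdorff spaces. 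Let $R_\Hh:=\bigcap_{R\in\mathcal{R}} R$; this is again an equivalence relation (an arbitrary intersection of equivalence relations is one), and I would set $Y_\Hh:=Y/R_\Hh$ with the quotient topology and $\pi_\Hh:Y\to Y_\Hh$ the projection.

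The key steps are then: (1) Show $Y_\Hh$ is Hausdorff. Given two distinct points $\pi_\Hh(x)\neq\pi_\Hh(y)$, by definition of $R_\Hh$ there is some continuous $f:Y\to Z$ with $Z$ Hausdorff and $f(x)\neq f(y)$; since $R_f\supset R_\Hh$, $f$ descends to a continuous $\bar f:Y_\Hh\to Z$ with $\bar f(\pi_\Hh(x))\neq \bar f(\pi_\Hh(y))$, and pulling back disjoint open neighbourhoods in $Z$ separates the two points. (2) Verify the universal property: if $g:Y\to W$ is continuous with $W$ Hausdorff, then $R_g\in\mathcal{R}$, hence $R_g\supset R_\Hh$, so $g$ is constant on $R_\Hh$-equivalence classes and therefore factors as $g=\bar g\circ\pi_\Hh$ for a unique map $\bar g:Y_\Hh\to W$; continuity of $\bar g$ is immediate from the universal property of the quotient topology. (3) Deduce uniqueness: if $\pi':Y\to Y'$ is any other Hausdorff quotient with the same universal property, then applying the universal property of $Y_\Hh$ to $\pi'$ and of $Y'$ to $\pi_\Hh$ gives mutually inverse continuous bijections between $Y_\Hh$ and $Y'$ commuting with the projections, i.e.\ a canonical homeomorphism.

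The only genuine subtlety — and the step I would expect to require the most care — is point (1), namely that the intersection relation $R_\Hh$ actually yields a Hausdorff quotient rather than merely a $T_1$ or weaker one. The argument above works because the separating functions are built into the definition of $\mathcal{R}$: one should be slightly careful to note that $\mathcal{R}$ is nonempty (the trivial map to a point, or better, $Y\to \{*\}$, and more usefully that $Y$ itself often already admits enough maps) and that $R_\Hh$ is genuinely the intersection of the $R_f$'s, so that each pair of $R_\Hh$-inequivalent points is separated by one of these $f$. No point-set pathology intervenes since Hausdorffness is detected pointwise on pairs. Everything else is formal manipulation of quotient topologies and universal properties, so I would keep the writeup brief and emphasize the construction of $R_\Hh$ and the descent of separating maps.
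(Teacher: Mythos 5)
Your construction is essentially the same as the paper's and is correct, but you package it differently in a way that is slightly cleaner. Both proofs take as the defining relation the intersection $R_\Hh$ of all equivalence relations $R$ on $Y$ for which $\qu{Y}{R}$ is Hausdorff. The paper then realizes $Y_\Hh$ as the image of the diagonal-type map $Y\to\prod_{\sim}\qu{Y}{\sim}$ with the \emph{subspace} topology inherited from the product (a standard embedding argument for separation), and because of this choice it must then devote a separate paragraph to showing that this subspace topology is in fact the quotient topology from $Y$ (the step ``To see that $Y_\Hh$ is in fact a quotient of $Y$\ldots''). You instead set $Y_\Hh:=\qu{Y}{R_\Hh}$ with the quotient topology directly and prove Hausdorffness by the separation argument: if $\pi_\Hh(x)\neq\pi_\Hh(y)$ then $(x,y)\notin R$ for some $R\in\mathcal R$, and the induced continuous map $\qu{Y}{R_\Hh}\to \qu{Y}{R}$ pulls back disjoint neighbourhoods. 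This avoids the paper's extra verification step, and the universal property and uniqueness parts are identical in spirit. Both approaches are correct and of comparable difficulty.

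One genuine (if localized) error in your writeup: the parenthetical claim that ``a quotient of $Y$ is Hausdorff iff distinct points can be separated by a continuous real-valued function'' is false. Separation by real-valued functions is the strictly stronger property of being functionally Hausdorff (Urysohn); there are Hausdorff spaces admitting no nonconstant continuous real-valued functions at all. Your subsequent characterization of $\mathcal R$ as $\{R_f : f:Y\to Z\text{ continuous},\ Z\text{ Hausdorff}\}$ \emph{is} correct, but for a different reason than the one you give: if $\qu{Y}{R}$ is Hausdorff then $R=R_{\pi_R}$ with $\pi_R:Y\to\qu{Y}{R}$, and conversely $\qu{Y}{R_f}$ admits a continuous injection into the Hausdorff space $Z$ and is therefore itself Hausdorff. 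Replace the faulty parenthetical with this two-line argument and the proof is sound.
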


\begin{proof}
To construct the maximal Hausdorff quotient  let $A$ be the set of all equivalence relations $\sim$ on $Y$ for which the quotient topology on $\qu{Y}{\sim}$ is Hausdorff. This is a set since every relation $\sim$ on $Y$ is represented by a subset of $Y\times Y$. 
Then $Y_A := \prod_{\sim\in A} \, \qu{Y}{\sim}$
is a product of Hausdorff spaces, hence Hausdorff.
The map $\pi: Y\to Y_A, y\mapsto \prod_{\sim\in A} \, [y]_\sim$ is continuous by the definition of quotient topologies.
Now the image $Y_\Hh :=\pi(Y) \subset Y_A$ with the relative topology 
is Hausdorff, and $\pi$ induces a continuous surjection ${\pi_\Hh: Y\to Y_\Hh}$. 

To check that $\pi_\Hh:Y\to Y_\Hh$ satisfies the universal property, consider a continuous map ${f:Y\to Z}$ to a Hausdorff space $Z$. This induces an equivalence relation $\sim_f$ on $Y$ given by ${x\sim_f y}$ 
$:\Leftrightarrow f(x)=f(y)$ whose quotient space $\qu{Y}{\sim_f}$ we equip with the quotient topology.
Then $f:Y\to Z$ factors as $Y\stackrel{\pi_f}\to  \qu{Y}{\sim_f}\stackrel{\io_f}\to Z$, where 
$\io_f: [y]\mapsto f(y)$ is continuous by definition of the quotient topology.
Since $\io_f$ is also injective, this implies that $\qu{Y}{\sim_f}$ is Hausdorff.  
Therefore, $\qu{Y}{\sim_f}$ is one of the factors of $Y_A$, so that
 $f:Y\to Z$ factors as the following sequence of continuous maps
 $$
 Y\stackrel{\pi_\Hh}\longrightarrow \; Y_\Hh 
 \; \stackrel{\pr_f}\longrightarrow  \;\qu{Y}{\sim_f} \;\stackrel{\io_f} \longrightarrow Z,
 $$  
where $\pr_f$ denotes the restriction to $Y_\Hh$ of the projection from $Y_A$ to its factor $\qu{Y}{\sim_f}$.
  
To  see that $Y_\Hh$ is in fact a quotient of $Y$, we will identify $Y_\Hh=\pi(Y)$ with the quotient $\qu{Y}{\sim_\pi}$ that is induced by the surjection $\pi_\Hh:Y\to Y_\Hh$. 
In this case the injection $\io_\pi : \qu{Y}{\sim_\pi}\to Y_\Hh$ is in fact a continuous bijection by continuity and surjectivity  of $\pi_\Hh$.
In particular, this implies that $\qu{Y}{\sim_\pi}$ is Hausdorff, so that 
we have a continuous map $\pr_\pi: Y_\Hh \to \qu{Y}{\sim_\pi}$ by restriction of the projection $Y_A\to \qu{Y}{\sim_\pi}$ as above. 
It is inverse to $\io_\pi$ because for $[y]\in \qu{Y}{\sim_\pi}$ we have 
$\pr_\pi \bigl( \io_\pi ([y]) \bigr) = \pr_\pi ( \pi_\Hh(y) ) 
= \pr_\pi ( \ldots \times [y] \times \ldots) = [y]$.
This identifies $Y_\Hh\cong \qu{Y}{\sim_\pi}$ as topological spaces and thus finishes the proof
that a topological space $Y_\Hh$ with the above properties exists.

To prove uniqueness, consider another Hausdorff quotient $\pr : Y \to \qu{Y}{\sim}$
that satisfies the universal property.  Then $\pr$  factors  
$Y\stackrel{\pi_\Hh}\longrightarrow \; Y_\Hh
 \; \stackrel{a}\longrightarrow  \;\ \qu{Y}{\sim} $
and by the universal property, $\pi_\Hh: Y \to Y_\Hh$  factors  $Y\stackrel{\pr}\longrightarrow \; \qu{Y}{\sim} \; \stackrel{b}\longrightarrow  \;\ Y_\Hh $.  Then $a$ is surjective since $\pr$ is.
 Moreover $a$ is injective, because otherwise there are two points $y_1, y_2\in Y$ with 
 $\pi_\Hh(y_1)\ne \pi_\Hh(y_2)$ 
 but $\pr (y_1) = a(\pi(y_1)) = a(\pi(y_2)) = \pr (y_2)$ so that $\pi(y_1) = b(\pr(y_1)) = b(\pr(y_2)) = \pi(y_2)$, a contradiction.    A similar argument shows that $b$ is bijective.  Moreover the composite
 $b^{-1} a:Y_\Hh\to Y_\Hh$ has the property that 
 $b^{-1} a\circ \pi_\Hh= \pi_\Hh$.   Since $\pi_\Hh$
  is surjective this implies that $b^{-1}a = \id$, and similarly $a^{-1}b = \id$.
 Finally, note that because both $Y_\Hh$ and $\qu{Y}{\sim}$ have the quotient topology  $a,b$ are continuous, and hence homeomorphisms.
\end{proof}

In the following we write $|\bG|$ for the realization $\Obj_{\bG}/\!\!\sim$ of an \'etale  groupoid $\bG$, and $|\bG|_\Hh$ for its maximal Hausdorff quotient.
We denote the natural maps by
$$
\pi_\bG:\Obj_{\bG}\to |\bG|,\qquad   \pi_{|\bG|}^\Hh: |\bG|\longrightarrow |\bG|_\Hh, \qquad 
\pi^\Hh_\bG:= \pi_{|\bG|}^\Hh\circ \pi_\bG : \Obj_{\bG}\to |\bG|_\Hh .
 $$
Moreover, for $U\subset \Obj_\bG$ we write $|U| := \pi_\bG(U) \subset |\bG|$ and $|U|_\Hh := \pi_\Hh(U)\subset |\bG|_\Hh$.

\begin{lemma} \label{le:Hquot2}  Let $\bG$ be an \'etale groupoid.  
 \begin{itemlist}\item[(i)]
 Any smooth functor $F:\bG\to \bG'$ induces a continuous map 
 $|F|_\Hh: |\bG|_\Hh\to |\bG'|_\Hh.$
\item[(ii)] If $A\subset \R$ is any interval, we may identify 
 $|A\times \bG|$ with  $A\times |\bG|$ and
 $|A\times \bG|_\Hh$ with  $A\times |\bG|_\Hh$.
More precisely,  
 there are commutative diagrams
\[
\xymatrix
{
\Obj_{A\times \bG} \ar[d]_{\;\;\;\;\;\; \pi_{A\times \bG}}  \ar[r]^{\;\pr_A\times \pr_{\bG}}&\;\;\;
A\times \Obj_{\bG} \ar[d]^{\id_A\times \pi_{\bG} } \\
|A\times \bG| 
\ar[r]^{\;\;\;|\pr_A|\times |\pr_{\bG}|}& \quad A\times |\bG| ,
}
\qquad 
\xymatrix
{
|A\times \bG|\quad \ar[d]_{\;\;\;\;\;\; \pi_{|A\times \bG|}^\Hh}  \ar[r]^{\;\;\;|\pr_A|\times |\pr_{\bG}|}&\quad\quad
A\times |\bG| \ar[d]^{\id_A\times \pi_{|\bG|}^\Hh } \\
|A\times \bG|_\Hh \quad
\ar[r]^{\;\;\;|\pr_A|_\Hh\times |\pr_{\bG}|_\Hh}& \quad\quad\quad A\times |\bG|_\Hh ,
}
\]
where the horizontal maps are homeomorphisms.  Here
$\pr_A: A\times \bG\to \bA$ and $\pr_\bG: A\times \bG\to \bG$ are the two projection functors from the product groupoid to its factors and $\bA$ is the groupoid with objects $A$ and only identity morphisms so that $A = |\bA| = |\bA|_\Hh$.
\end{itemlist}
\end{lemma}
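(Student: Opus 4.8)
\textbf{Proof proposal for Lemma~\ref{le:Hquot2}.}

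For part (i), the plan is to use the universal property of the maximal Hausdorff quotient from Lemma~\ref{le:Hquotient}. A smooth functor $F:\bG\to\bG'$ induces a continuous map $|F|:|\bG|\to|\bG'|$ on realizations (by definition of the quotient topologies, since $F$ sends $\sim$-equivalent objects to $\sim$-equivalent objects). Composing with the canonical projection $\pi^\Hh_{|\bG'|}:|\bG'|\to|\bG'|_\Hh$ gives a continuous map $|\bG|\to|\bG'|_\Hh$ into a Hausdorff space, which by the universal property of $\pi^\Hh_{|\bG|}$ factors uniquely through a continuous map $|F|_\Hh:|\bG|_\Hh\to|\bG'|_\Hh$. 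This step is essentially formal once Lemma~\ref{le:Hquotient} is in hand.

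For part (ii), the first task is the left-hand diagram: identifying $|A\times\bG|$ with $A\times|\bG|$. The morphisms of $A\times\bG$ are $A\times\Mor_\bG$ with structure maps obtained by taking products with $\id_A$, so two objects $(a,x),(a',x')$ are equivalent iff $a=a'$ and $x\sim_\bG x'$. Hence the set-theoretic identification $|A\times\bG|\cong A\times|\bG|$ is immediate; I would then check it is a homeomorphism by verifying that the quotient topology on $|A\times\bG|$ coincides with the product topology on $A\times|\bG|$ --- one inclusion follows since $\id_A\times\pi_\bG$ is an open map (as $\pi_\bG$ is open, being the quotient by an \'etale groupoid relation), the other since $\pr_A\times\pr_\bG$ descends to a continuous bijection. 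For the right-hand diagram, I would show that $A\times|\bG|_\Hh$ has the universal property characterizing $|A\times\bG|_\Hh=(A\times|\bG|)_\Hh$: it is Hausdorff (a product of Hausdorff spaces), and any continuous map $A\times|\bG|\to Z$ to a Hausdorff space factors through $\id_A\times\pi^\Hh_{|\bG|}$ --- here one fixes $a\in A$, uses the universal property of $\pi^\Hh_{|\bG|}$ fiberwise, and then checks continuity of the resulting map on the product, using that $\id_A\times\pi^\Hh_{|\bG|}$ is a quotient map (the product of a quotient map with the identity on a locally compact space $A\subset\R$ is again a quotient map, by Whitehead's theorem).

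The main obstacle I anticipate is precisely this last point: quotient maps are not in general stable under taking products, so one must invoke local compactness of the interval factor $A$ (equivalently, the classical theorem that $q\times\id_A$ is a quotient map when $q$ is a quotient map and $A$ is locally compact Hausdorff) to conclude that $\id_A\times\pi^\Hh_{|\bG|}$ is a quotient map and hence that $A\times|\bG|_\Hh$ genuinely realizes the maximal Hausdorff quotient of $A\times|\bG|$. The commutativity of both diagrams is then a routine check on elements, and the statement that the horizontal maps are homeomorphisms follows by combining the openness of $\pi_\bG$ (respectively the quotient-map property just discussed) with uniqueness of the maximal Hausdorff quotient from Lemma~\ref{le:Hquotient}.
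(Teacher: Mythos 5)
Your proof of part (i) is essentially identical to the paper's: both compose $|F|$ with $\pi^\Hh_{|\bG'|}$ and apply the universal property of the maximal Hausdorff quotient from Lemma~\ref{le:Hquotient}.

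For part (ii) you take a genuinely different route in both diagrams, and arguably a more explicit one. For the left diagram, the paper observes that the top horizontal map is a continuous bijection and then invokes local compactness of $A$ (via \cite[Ex~29.11]{Mun}, i.e.\ the fact that the product of a quotient map with $\id_A$ for $A$ locally compact is again a quotient map) to conclude it is a homeomorphism. You instead use that $\pi_\bG$ is an open map for \'etale groupoids -- the saturation $t(s^{-1}(U))$ of an open set is open since $s,t$ are local diffeomorphisms -- so that $\id_A\times\pi_\bG$ is an open continuous surjection, hence a quotient map that induces the same identifications as $\pi_{A\times\bG}$. This sidesteps local compactness for this step and is a valid alternative. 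For the right diagram the paper's argument is terse: it asserts that the homeomorphism $|A\times\bG|\to A\times|\bG|$ induces a homeomorphism of Hausdorff quotients, implicitly identifying $(A\times|\bG|)_\Hh$ with $A\times|\bG|_\Hh$ without spelling out why. You fill this in by verifying directly that $A\times|\bG|_\Hh$ with $\id_A\times\pi^\Hh_{|\bG|}$ satisfies the universal property of the maximal Hausdorff quotient of $A\times|\bG|$: it is Hausdorff as a product, and given $f:A\times|\bG|\to Z$ with $Z$ Hausdorff you factor it fiberwise through $\pi^\Hh_{|\bG|}$ and recover continuity of the resulting map from the fact that $\id_A\times\pi^\Hh_{|\bG|}$ is a quotient map -- which is where local compactness of $A$ and Whitehead's theorem do enter, and must, since $\pi^\Hh_{|\bG|}$ is a quotient map (by Lemma~\ref{le:Hquotient}) but need not be open. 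Your version is correct and makes explicit a step the paper leaves implicit; the paper's version is shorter but relies on the reader supplying the same identification of $(A\times|\bG|)_\Hh$ with $A\times|\bG|_\Hh$.
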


\begin{proof} 
Any smooth functor $F:\bG\to \bG'$ induces a continuous map $|\bG|\stackrel{|F|}\longrightarrow |\bG'|$. Then by Lemma~\ref{le:Hquotient} applied to $|\bG|$, the composite 
 $$
|\bG|\stackrel{|F|}\longrightarrow |\bG'|  \stackrel{\pi_{|\bG'|}^\Hh} \longrightarrow |\bG'|_\Hh
 $$
factors uniquely through the quotient map $|\bG|\stackrel{\pi_{|\bG|}^\Hh}\longrightarrow |\bG|_\Hh$.
The resulting continuous map $|F|_\Hh: |\bG|_\Hh\to  |\bG'|_\Hh$ is uniquely determined by
$\pi_{|\bG'|}^\Hh\circ |F| =  |F|_\Hh\circ \pi_{|\bG|}^\Hh$.
This proves (i).

To prove (ii), first consider the diagram on the left.  The
 bottom horizontal map is bijective because $\Mor_{A\times \bG} = A\times \Mor_{\bG}$, and continuous by definition of the product topology.  Finally, it is a homeomorphism because $A$ is locally compact; c.f.\ \cite[Ex~29.11]{Mun}. 
In the diagram on the right we define the bottom horizontal arrow using the product of the maps induced as in (i) by the two functors 
$\pr_A,  \pr_{\bG}$.  Hence it is continuous.  Since the diagram commutes and we have already seen that the top horizontal map is homeomorphism, it remains to check this for the bottom  map.  But this holds because  the uniqueness property of the maximal Hausdorff quotient implies that, for any homeomorphism $\phi:Y\to Y'$, the unique continuous map $\phi_\Hh: Y_\Hh\to Y'_\Hh$ such that
$Y\stackrel{\phi}\to Y'\stackrel{\pi_{Y'}}\to Y'_\Hh$ equals $Y\stackrel{\pi_\Hh}\to Y_\Hh\stackrel{\phi_\Hh}\to Y'_\Hh$ must be a homeomorphism.
\end{proof}

The smooth structure on a weighted branched manifold  will be given by a homeomorphism to the realization of an \'etale groupoid with the following weighting structure.

 \begin{defn}[\cite{Mcbr},Def.~3.2]\label{def:brorb}
A   {\bf weighted nonsingular branched groupoid} (or {\bf wnb groupoid} for short) of dimension $d$ is a pair $(\bG,\La)$ consisting of an oriented, nonsingular, \'etale groupoid $\bG$ of dimension $d$, together with a rational weighting function $\La:|\bG|_{\Hh}\to \Q^+: = \Q\cap (0,\infty)$ that satisfies the following compatibility conditions.
For each $p\in |\bG|_{\Hh}$ there is an open neighbourhood $N\subset|\bG|_{\Hh}$ of $p$, a collection $U_1,\dots,U_\ell$ of disjoint open subsets of $(\pi_{\bG}^{\Hh})^{-1}(N)\subset \Obj_{\bG}$ (called {\bf local branches}), and a set of positive rational weights $m_1,\dots,m_\ell$ such that the following holds: 
\SSS

\NI
{\bf (Covering) } $( \pi_{|\bG|}^{\Hh})^{-1}(N) = |U_1|\cup\dots \cup |U_\ell| \subset |\bG|$;
\SSS

\NI
{\bf (Local Regularity)}  
for each $i=1,\dots,\ell$ the projection 
$\pi_{\bG}^{\Hh}|_{U_i}: U_i\to |\bG|_{\Hh}$ is a homeomorphism onto a relatively closed subset of $N$;
\SSS

\NI
{\bf (Weighting)}  
for all $q\in N$, 
the number 
$\La(q)$ is the sum of the weights of the local
branches whose image contains $q$:
$$
\La(q) = 
\underset{i:q\in |U_i|_{\Hh}}\sum m_i.
$$

A {\bf wnb cobordism groupoid} is a tuple  $(\bG, \io_\bG^0, \io_\bG^1, \La)$ in which 
$(\bG, \io_\bG^0, \io_\bG^1)$ is an oriented, nonsingular, \'etale cobordism groupoid of dimension $d$,
and $\La:|\bG|_{\Hh}\to \Q^+$ is a weighting function as above with the additional property that $\La$ and the local branches $U_1,\dots,U_\ell$ have product form in the collars.

In particular, this means that each boundary groupoid $\p^\al \bG$ is equipped with a weighting function $\La^\al$ as above such that the following diagram commutes
\[
\xymatrix
{
A^\al_\eps\times |\p^\al \bG|_\Hh\ar[d]_{\;\;\;\;\;\id_{A^\al_\eps}\times \La^\al }  \ar[r]^{\;\;\;\;\;|\io^\al_\bG|_\Hh}&
|\bG|_\Hh \ar[d]_{ \La } \\
\Q^+ 
\ar[r]^{\id}& \Q^+
}
\]
where  ${|\io_{\bG}^\al |}_\Hh$ is induced by the 
collaring functor $\io_{\bG}^\al: A^\al_\eps\times \p^\al \bG\to  \bG$ and
we identify $|A^\al_\eps\times \p^\al \bG|_\Hh$ with $ A^\al_\eps\times |\p^\al \bG|_\Hh$ as in Lemma~\ref{le:Hquot2} with orientation  as specified in Definition~\ref{def:orient}.
\end{defn}

Now we can formulate the notion of weighted branched manifold resp.\ cobordism.

\begin{defn}\label{def:brman} 
A {\bf weighted branched manifold/cobordism} of dimension $d$ is a pair $(Z, \La_Z)$ consisting of a topological space $Z$ together with a function $\La_Z:Z\to \Q^+$ and an equivalence class\footnote{
The precise notion of equivalence is given in \cite[Definition~3.12]{Mcbr}. In particular it ensures that the induced function $\La_Z: = \La_\bG\circ f^{-1}$ and the dimension of $\Obj_{\bG}$ is the same for equivalent structures $(\bG,\La_\bG, f)$. 
Moreover, if $(\bG, \io_\bG^0, \io_\bG^1)$ is a cobordism groupoid, then the images $f(|\p^\al \bG|_\Hh) : = \p^\al Z \subset Z$ of the two boundary components are well defined.
} 
of wnb (cobordism) $d$-dimensional groupoids $(\bG,\La_\bG)$ and homeomorphisms $f:|\bG|_\Hh\to Z$ that induce the function $\La_Z = \La_\bG\circ f^{-1}$.

For a weighted branched cobordism $(Z, \La_Z, [\bG,\io_\bG^0, \io_\bG^1,\La_\bG, f])$, the induced {\bf boundary components} $\p^\al Z: = f\bigl( |\io^\al_\bG|_\Hh ( |\p^\al \bG|_\Hh) \bigr)\subset Z$ 
for $\al=0,1$ are equipped with the weighted branched manifold structures $[(\p^\al\bG,\La^\al_\bG), f|_{|\p^\al \bG|_\Hh}]$.
\end{defn}

The underlying space $Z$ of a weighted branched 
manifold or cobordism is always Hausdorff due to the homeomorphism $Z\cong|\bG|_\Hh$ to a Hausdorff quotient.
Moreover, since cobordism groupoids are compact by definition, the underlying space $Z$ of a weighted branched cobordism is always compact.

It is shown in \cite[Proposition~3.5]{Mcbr} that the weighting function $\La: |\bG|_\Hh\to (0,\infty)$ is locally constant on the complement of 
the {\bf branch locus} $\Br(\bG)\subset |\bG|_\Hh$. (This 
 is defined to be the set of points in $|\bG|_\Hh$ over which $|\pi|^\Hh_{|\bG|}:|\bG|\to |\bG|_\Hh$ is not injective, and is closed and nowhere dense.) 
 Further, every point in $|\bG|_\Hh\less \Br(\bG)$  has a neighbourhood that is homeomorphic via $\pi^\Hh_{|\bG|}$ to an open subset in a local branch and so has the structure of a smooth oriented manifold.

 \begin{example}\label{ex:wnb} \rm 
(i) 
Any compact oriented smooth manifold/cobordism may be considered as  a weighted branched 
manifold/cobordism with weighting function $\La_Z\equiv 1$ and empty branch locus.  
\MS

\NI
(ii)
A compact weighted branched manifold of dimension $0$ also necessarily has empty branch locus and consists of a finite set of points $\{p_1,\ldots,p_k\}$, each with a positive rational weight $m(p_i)\in \Q^+$  and orientation $\mathfrak o(p_i)\in \{\pm\}$. 
Any representing groupoid $\bG$ has as object space $\Obj_{\bG}$  a set with the discrete topology, that is equipped with an orientation function $\orr: \Obj_{\bG}\to \{\pm\}$.  The morphism space $\Mor_{\bG}$ is also a discrete set and, because we assume that $\bG$ is oriented, 
defines an equivalence relation on $\Obj_{\bG}$ such that  $x\sim y\Longrightarrow \orr(x) = \orr(y)$.  
Moreover, because  $|\bG|$ is  Hausdorff, we can identify $ |\bG|= |\bG|_\Hh$ and hence conclude that $\Obj_{\bG}$ consists of precisely $k$ classes of points that are equivalent under $\Mor_{\bG}$ and project to $p_1,\ldots,p_k$ in $Z\cong|\bG|_\Hh$.
\MS

\NI
(iii)
For the prototypical example of a $1$-dimensional weighted branched cobordism $(|\bG|_\Hh,\La)$, take $\Obj(\bG)=I\sqcup I'$ equal to two copies of the interval $I=I'=[0,1]$ with nonidentity morphisms from $x\in I$ to $x\in I'$ for $x\in [0,\frac 12)$ and their inverses, where we suppose that $I$ is oriented in the standard way. Then the realization and its Hausdorff quotient are
$$
\begin{array}{cl}
 |\bG| &= \;\qq{I\sqcup I'}{\bigl\{(I,x)\sim (I',x) \; \text{iff}\;  x\in [0,\tfrac 12)\bigr\}},\\
 |\bG|_\Hh& = \;\qq{I\sqcup I'}{\bigl\{(I,x)\sim (I',x) \; \text{iff}\;  x\in [0,\tfrac 12]\bigr\}},
\end{array}
$$
and the branch locus is a single point $\Br(\bG)=
\bigl\{[I,\frac 12]=[I',\frac 12]\bigr\}
\subset |\bG|_\Hh$.
The choice of weights $m, m'>0$ on the two local branches $I$ and $I'$ determines the weighting function $\La: |\bG|_\Hh \to (0,\infty)$ as
$$
\La([I,x])
 = \left\{
 \begin{array}{ll} 
 m+m'  & \mbox{ if }\;\; x\in [0,\frac 12],  \vspace{.1in}\\
m &\mbox{ if }\;\;  x\in (\frac 12,1],
\end{array}\right. 
\qquad
\La([I', x]) 
 = \left\{
 \begin{array}{ll} 
m+m'  & \mbox{ if }\;\; x\in [0,\frac 12],  \vspace{.1in}\\
m'  &\mbox{ if }\;\;  x\in (\frac 12,1].   
\end{array}\right.
$$
For example, giving each branch $I, I'$ the weight $m=m'=\frac 12$, together with an appropriate choice of collar functors $\io^\al_\bG$, yields a weighted branched cobordism $(|\bG|_\Hh, \io_\bG^0, \io_\bG^1, \La)$ with
$|\p^0\bG|_\Hh=\bigl\{[I,0]=[I',0]\bigr\}$,
which is a single point with weight $1$, and $|\p^1\bG|_\Hh= \bigl\{[I,1],[I',1]\bigr\}$, 
which consists of two points with weight $\tfrac 12$, all with positive orientation 
because as explained in Remark~\ref{rmk:orientb} the induced  orientation on the boundary $\p^\al \bG$  of a cobordism
is completed to an orientation of the collar by adding as the first component the positive unit vector along $A^\al_\eps$.

Another choice of collar functors for the same weighted groupoid $(\bG,\La)$ 
might give rise to a different partition of the boundary into incoming $\p^0\bG$ and outgoing $\p^1\bG$, for example yielding a weighted branched cobordism with $|\p^0\bG|_\Hh=\bigl\{[I,0]=[I',0] , [I,1]\bigr\}$, consisting of two points with weights and orientations $(1,+)$ and $(\frac 12, -)$ 
and $|\p^1\bG|_\Hh=\bigl\{[I',1]\bigr\}$, consisting of one point with weight $(\tfrac 12,+)$.

\MS
\NI
(iv)
In the situation of Theorem~\ref{thm:zero}, the nonsingular \'etale groupoid $\Hat\bZ^\nu$ with $\Obj_{\Hat\bZ^\nu}=(\s_\Kk|_{\Vv} + \nu)^{-1}(0)$ has a maximal Hausdorff quotient $|\Hat\bZ^\nu|_\Hh=|\Hat\bZ^\nu_\Hh|$ that,
as we show in Lemma~\ref{le:zero2}, is given by the realization of the groupoid $\Hat\bZ^\nu_\Hh$ obtained as in (iii) above by closing the set of morphisms $\Mor_{\Hat\bZ^\nu}\subset \Obj_{\Hat\bZ^\nu}\times \Obj_{\Hat\bZ^\nu}$.
Therefore, in this case we can give a completely explicit description of $|\bZ|_\Hh$ and its weighting function $\La_\bZ$; see the proof of Theorem~\ref{thm:zero}.
$\hfill\er$
\end{example}

The following is a version of some parts of  \cite[Proposition~3.25]{Mcbr}, 
which more generally defines a notion of integration over weighted branched manifolds and cobordisms.

\begin{prop}\label{prop:fclass}  
Any compact $d$-dimensional weighted branched manifold $(Y, \La_Y)$ induces a {\bf fundamental class} $[Y]\in H_d(Y;\Q)$, and any 
$d$-dimensional weighted branched cobordism $(Z, \La_Z)$ with boundary
$\p Z:= \p^0 Z \cup \p^1 Z$ induces a {\bf fundamental class} $[Z]\in H_d(Z,\p Z;\Q)$, whose image under the boundary map 
$$
\partial: H_d(Z,\p Z;\Q)\to H_{d-1}(\p Z;\Q)\cong H_{d-1}(\p^0 Z;\Q)+ H_{d-1}(\p^1 Z;\Q)
$$
 is $\p [Z] = [\p^1 Z] - [\p^0 Z]$.
\end{prop}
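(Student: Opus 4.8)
\textbf{Proof proposal for Proposition~\ref{prop:fclass}.}

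The plan is to reduce everything to the combinatorics of the branch locus and then apply a standard \v Cech--\v Cech or singular argument, exactly as in \cite[Proposition~3.25]{Mcbr}, so the main work is to set up the right local model and verify the boundary formula. First I would recall from \cite[Proposition~3.5]{Mcbr} that the weighting function $\La_Y$ is locally constant away from the closed nowhere-dense branch locus $\Br(\bG)\subset |\bG|_\Hh$, and that $|\bG|_\Hh\less\Br(\bG)$ carries a smooth oriented $d$-manifold structure (via the local homeomorphisms $\pi^\Hh_{|\bG|}$ restricted to local branches). The idea is to build a singular (or \v Cech, to be safe with non-CW spaces) $d$-cycle representing $[Y]$ as a weighted sum of the fundamental cycles of the local branches: near each $p\in |\bG|_\Hh$ choose the neighbourhood $N$, disjoint local branches $U_1,\dots,U_\ell$ and weights $m_1,\dots,m_\ell$ from Definition~\ref{def:brorb}, orient each $\pi^\Hh_{|\bG|}(U_i)$ by the orientation of $\bG$, and form the local chain $\sum_i m_i\,[\pi^\Hh_{|\bG|}(U_i)]$. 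The Weighting axiom guarantees that on the overlap of two such neighbourhoods these local chains agree on the smooth locus, and the Covering and Local Regularity axioms guarantee that the boundaries of the $\pi^\Hh_{|\bG|}(U_i)$ cancel in pairs over $\Br(\bG)$ with the correct multiplicities; since $\Br(\bG)$ has codimension $\ge 1$ this exhibits the glued object as a genuine $d$-cycle. One then checks independence of all choices (neighbourhoods, branch decompositions) up to boundaries, using that any two wnb structures representing the same $(Y,\La_Y)$ are equivalent in the sense of \cite[Definition~3.12]{Mcbr}; this is where I would simply quote \cite[Proposition~3.25]{Mcbr} rather than reprove it.

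For the cobordism statement, I would run the same construction on $(Z,\La_Z)$, noting that $\Br(\bG)$ is now codimension $\ge 1$ in a manifold-with-boundary, so the glued weighted sum of local branches is a relative $d$-cycle in $(Z,\p Z)$ and defines $[Z]\in H_d(Z,\p Z;\Q)$. The boundary formula is then local: by the product-form-in-collars requirement in Definition~\ref{def:brorb}, near $\p^\al Z$ the groupoid, its weighting function, and the chosen local branches all pull back from $\p^\al\bG$ via the collar functor $\io^\al_\bG$, identifying $|\io^\al_\bG|_\Hh(|\p^\al\bG|_\Hh)$ with the product $A^\al_\eps\times|\p^\al\bG|_\Hh$ as in Lemma~\ref{le:Hquot2}. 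Hence the restriction of the chosen local branches of $Z$ to $\p Z$ are precisely the chosen local branches of $\p^0 Z\sqcup\p^1 Z$, with the same weights, so $\partial$ of the relative cycle is the weighted branched fundamental cycle of $\p Z$. The sign in $\p[Z]=[\p^1 Z]-[\p^0 Z]$ comes from the orientation convention in Remark~\ref{rmk:orientb}: since the collar orientation is obtained by prepending the \emph{positive} unit vector along $A^\al_\eps$, the induced boundary orientation at $\al=1$ (outward in the $t$-direction) is $+[\p^1 Z]$ and at $\al=0$ (inward) is $-[\p^0 Z]$; I would check this on the $1$-dimensional model of Example~\ref{ex:wnb}(iii) and then invoke naturality of the connecting homomorphism together with the decomposition $H_{d-1}(\p Z)\cong H_{d-1}(\p^0 Z)\oplus H_{d-1}(\p^1 Z)$.

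The step I expect to be the main obstacle is verifying that the locally defined weighted sums of branch fundamental cycles actually glue to a global \v Cech cycle despite the non-CW, merely metrizable, nature of $|\bG|_\Hh$ and the possibly complicated structure of $\Br(\bG)$ --- i.e.\ checking that boundary contributions of the $\pi^\Hh_{|\bG|}(U_i)$ over the branch locus cancel with the right $\Q$-coefficients globally, not just locally. In \cite{Mcbr} this is handled by the integration-theoretic formulation (pairing with compactly supported forms on the smooth locus and extending by zero across $\Br(\bG)$, which works because $\Br(\bG)$ is null for the relevant measures); I would follow that route, defining $[Y]$ and $[Z]$ via this integration pairing and Poincar\'e--Lefschetz duality on the smooth locus, and then simply cite \cite[Proposition~3.25]{Mcbr} for the existence of the classes and the boundary identity, spelling out only the collar/orientation bookkeeping needed for $\p[Z]=[\p^1 Z]-[\p^0 Z]$ since our sign conventions differ slightly from the usual outward-normal one.
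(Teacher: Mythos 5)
Your proposal captures the paper's two key ingredients exactly: build a rational $d$-cycle from the locally constant weights, and read off the sign in $\p[Z]=[\p^1 Z]-[\p^0 Z]$ from the collar orientation convention of Remark~\ref{rmk:orientb}, deferring the general case to \cite[Proposition~3.25]{Mcbr}. The one place where your mechanism differs from the paper's: you build the cycle by gluing local chains $\sum_i m_i\,[\pi^\Hh_{|\bG|}(U_i)]$ over the open neighbourhoods $N$ from Definition~\ref{def:brorb} and worry (correctly) about whether these patch into a global cycle; as a fallback you propose the integration-pairing route from \cite{Mcbr}. The paper instead, after assuming a piecewise-smooth branch locus, triangulates $|\bG|_\Hh$ once and for all so that $\Br(\bG)$ lies in the codimension-1 skeleton, then simply assigns the locally constant weight to each top simplex; this sidesteps your local-to-global gluing concern, since the only thing to check is boundary cancellation of adjacent weighted simplices along the skeleton, which is exactly what the Weighting and orientation-compatibility axioms supply. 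Both constructions are legitimate and both appear in \cite{Mcbr}; the triangulation approach is slightly more direct here, while the integration approach is what handles the non-piecewise-smooth general case. Your collar/sign bookkeeping for the cobordism statement is correct and matches the paper's.
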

\begin{proof}  
If $(Y, \La_Y)$ has a weighted branched manifold structure $(\bG,\La_\bG)$ with well behaved (e.g.\ piecewise smooth) branch locus,
then one can triangulate $|\bG|_\Hh\cong Y$ so that  the branch locus lies in the codimension-$1$ skeleton. 
We may then define a singular cycle on $Y$ by using  the local weights $m_i$ to assign a rational weight to each top dimensional simplex. 
As explained in Remark~\ref{rmk:orientb}, in the case of a cobordism $Z$ the induced orientation on the  boundary component $\p^\al Z$ is completed to the orientation of the collar by adding the unit positive vector along the collar
as the first component.  In the case of $\p^0 Z$ this yields an  orientation of $\p^0 Z$  that is the opposite of the standard way of orienting a boundary component by adding the outward pointing normal, a fact that is reflected in the minus sign in the formula 
$\p [Z] = [\p^1 Z] - [\p^0 Z]$.  
For more details and the general case, see \cite{Mcbr}. 
\end{proof}

\bibliographystyle{alpha}

\end{document}